\providecommand{\keywords}[1]{\textbf{\textit{Keywords: }} #1}
\date{December 25, 2022}
\newcommand\acknow{Acknowledgements}
    {\begin{center}%
    \large{\bfseries{\acknow}} \end{center}}
\newcommand{\OT}{{OT}}
\newcommand{\EROT}{{E\!ROT}^{\lambda}}
\newcommand{\EROTLambda}[1]{{E\!ROT}^{#1}}
\newcommand{\SOT}{{S}^{\lambda}}
\newcommand{\SOTLambda}[1]{{S}^{#1}}
\newcommand{\R}{\mathbb{R}}
\newcommand{\RR}{\mathbb{R}}
\newcommand{\N}{\mathbb{N}}
\newcommand{\NN}{\mathbb{N}}
\newcommand{\Z}{\mathbb{Z}}
\newcommand{\Indicator}[1]{\mathds{1}_{#1}}
\newcommand{\XF}{\mathcal{X}}
\newcommand{\XC}{\mathcal{X}}
\newcommand{\RC}{\mathcal{R}}
\newcommand{\tXC}{\hat{\mathcal{X}}}
\newcommand{\YC}{\mathcal{Y}}
\newcommand{\YCC}{\mathcal{Y}\backslash\{y_1\}}
\newcommand{\tYC}{\hat{\mathcal{Y}} }
\newcommand{\tYCC}{\hat{\mathcal{Y}}\backslash\{y_1\}}
\newcommand{\FC}{\mathcal{F}}
\newcommand{\HC}{\mathcal{H}}
\newcommand{\Gb}{\boldsymbol{G}}
\newcommand{\ACX}{\mathcal{A}^\XC_{\rb, \sb}}
\newcommand{\ACY}{\mathcal{A}^\YC_{\rb, \sb}}
\newcommand{\AC}{\mathcal{A}_{\rb, \sb}}
\newcommand{\BCX}{\mathcal{B}^\XC_{\rb, \sb}}
\newcommand{\BCY}{\mathcal{B}^\YC_{\rb, \sb}}
\newcommand{\BC}{\mathcal{B}_{\rb, \sb}}
\newcommand{\tACX}{\tilde{\mathcal{A}}^\XC_{\rb, \sb}}
\newcommand{\tACY}{\tilde{\mathcal{A}}^\YC_{\rb, \sb}}
\newcommand{\tAC}{\tilde{\mathcal{A}}_{\rb, \sb}}
\newcommand{\hAC}{\hat{\mathcal{A}}_{\hat\rb, \hat\sb}}
\newcommand{\hBC}{\hat{\mathcal{B}}_{\hat\rb, \hat\sb}}
\renewcommand{\Gb}{G}
\newcommand{\lamdba}{\lambda}
\newcommand{\supp}{\textup{supp}}
\newcommand{\landau}{\mathcal{O}}
\newcommand{\norm}[1]{\left\lVert#1\right\rVert}
\newcommand{\normal}{\mathcal{N}}
\newcommand{\Cov}{\operatorname{Cov}}
\newcommand{\EV}[1]{\mathbb{E}\left[#1 \right]}
\newcommand{\PC}{\mathcal{P}}
\newcommand{\probset}[1]{\mathcal{P}\left(#1\right)}
\newcommand{\sign}{\textup{sign}}
\newcommand{\iid}{\stackrel{\!\textup{i.i.d.}}{\sim}\!}
\newcommand{\DF}{\mathcal{D}^F}
\newcommand{\DCW}{\mathcal{D}^{CW}}
\renewcommand{\DCW}{\mathcal{D}^{}}
\renewcommand{\DH}{\mathcal{D}^H}
\newcommand{\ind}{{k}}
\newcommand{\rb}{{r}}
\renewcommand{\sb}{{s}}
\newcommand{\hb}{{h}}
\newcommand{\cb}{{c}}
\newcommand{\ab}{{a}}
\newcommand{\Ab}{{A}}
\newcommand{\bb}{{b}}
\newcommand{\pib}{{\pi}}
\newcommand{\xib}{{\xi}}
\newcommand{\varthetab}{{\vartheta}}
\newcommand{\zetab}{{\zeta}}
\newcommand{\alphab}{{\alpha}}
\newcommand{\betab}{{\beta}}
\newcommand{\coloneqq}{:=}
\newcommand{\eqqcolon}{=:}
\newcommand{\Id}{\textup{Id}}
\newcommand{\lX}{{\ell^1(\XC)}}
\newcommand{\lY}{{\ell^1(\YC)}}
\newcommand{\lYY}{{\ell^1(\YCC)}}
\newcommand{\lInfX}{{\ell^\infty(\XC)}}
\newcommand{\lInfY}{{\ell^\infty(\YC)}}
\newcommand{\lInfYY}{{\ell^\infty(\YCC)}}
\newcommand{\lXY}{{\ell^1(\XC\times\YC)}}
\newcommand{\lr}{{\ell^1_\rb(\XC)}}
\newcommand{\ls}{{\ell^1_\sb(\YC)}}
\newcommand{\lInfXY}{{\ell^\infty(\XC\times \YC)}}
\newcommand{\lXSub}[1]{{\ell^1_{#1}(\XC)}}
\newcommand{\lYSub}[1]{{\ell^1_{#1}(\YC)}}
\newcommand{\lYYSub}[1]{{\ell^1_{#1}(\YCC)}}
\newcommand{\lInfXSub}[1]{{\ell^\infty_{#1}(\XC)}}
\newcommand{\lInfYSub}[1]{{\ell^\infty_{#1}(\YC)}}
\newcommand{\lInfYYSub}[1]{{\ell^\infty_{#1}(\YCC)}}
\newcommand{\lXYSub}[1]{{\ell^1_{#1}(\XC\times\YC)}}
\newcommand{\lInfXYSub}[1]{{\ell^\infty_{#1}(\XC\times \YC)}}
\newcommand{\cxp}{{c_\XC^+}}
\newcommand{\cxm}{{c_\XC^-}}
\newcommand{\cyp}{{c_\YC^+}}
\newcommand{\cym}{{c_\YC^-}}
\newcommand{\tcxp}{{\tilde c_\XC^+}}
\newcommand{\tcxm}{{\tilde c_\XC^-}}
\newcommand{\tcyp}{{\tilde c_\YC^+}}
\newcommand{\tcym}{{\tilde c_\YC^-}}
\newcommand{\cx}{{\mathcal{C}_\XC}}
\newcommand{\cy}{{\mathcal{C}_\YC}}
\newcommand{\tcx}{{\tilde{\mathcal{C}}_\XC}}
\newcommand{\tcy}{{\tilde{\mathcal{C}}_\YC}}
\newcommand{\cxpb}{{{c}_\XC^+}}
\newcommand{\cxmb}{{{c}_\XC^-}}
\newcommand{\cypb}{{{c}_\YC^+}}
\newcommand{\cymb}{{{c}_\YC^-}}
\newcommand{\kxSup}[1]{{\Phi_{\XC}^{#1}}}
\newcommand{\kySup}[1]{{\Phi_{\YC}^{#1}}}
\newcommand{\exSup}[1]{{\phi_{\XC}^{#1}}}
\newcommand{\eySup}[1]{{\phi_{\YC}^{#1}}}
\newcommand{\texSup}[1]{{\tilde \phi_{\XC}^{#1}}}
\newcommand{\teySup}[1]{{\tilde \phi_{\YC}^{#1}}}
\newcommand{\equalDistr}{\stackrel{\;\mathcal{D}\;}{=}}
\newcommand*{\D}{\mathcal D} % Derivative
\newcommand{\konvD}{\xrightarrow{\;\;\mathcal{D}\;\;}}
\newcommand{\konvW}{\xrightarrow{\;\;\mathcal{D}\;\;}}
 \newcommand{\konvP}{\xrightarrow{\;\;\mathbb{P}^*\;\;}}
\renewcommand*{\epsilon}{\varepsilon}
\newcommand{\closure}{\overline{\textup{Cl}}}
\newcommand{\BL}[1]{\textup{BL}_{1}(#1)}
\DeclareMathOperator*{\argmin}{\textrm{argmin}}
\theoremstyle{plain}
\newtheorem{lemma}{Lemma}[section]
\newtheorem{theorem}[lemma]{Theorem}
\newtheorem{proposition}[lemma]{Proposition}
\newtheorem{corollary}[lemma]{Corollary}
\theoremstyle{remark}
\newtheorem{remark}[lemma]{Remark}
\newtheorem{definition}[lemma]{Definition}
\titleformat{\chapter}[display]
  {\filleft\bfseries\huge}
  {\centering{\Large\chaptertitlename~\thechapter}}
  {1ex}
  {\titlerule\vspace{1.5ex}\filright}
  [\vspace{1ex}\titlerule]
\numberwithin{equation}{section}
\newcommand{\footremember}[2]{
	\footnote{#2}
	\newcounter{#1}
	\setcounter{#1}{\value{footnote}}
}
\newcommand{\footrecall}[1]{
	\footnotemark[\value{#1}]
}
\providecommand{\keywords}[1]{\textbf{\textit{Keywords}}#1}
\providecommand{\keywordsMSC}[1]{\textbf{\textit{MSC 2020 subject classification}} #1}
\begin{document}

\author{Shayan Hundrieser \footremember{ims}{\scriptsize Institute for Mathematical
		Stochastics, University of G\"ottingen,
		Goldschmidtstra{\ss}e 7, 37077 G\"ottingen} \!\!\!\footremember{mbexc} {\scriptsize Cluster of Excellence "Multiscale Bioimaging: from Molecular Machines to Networks of Excitable Cells" (MBExC), University of G\"ottingen, Germany}
	\and 
	Marcel Klatt \footrecall{ims}{}
	\and 
	Axel Munk \footrecall{ims} \!\footrecall{mbexc} \footnote{\scriptsize Max Planck Institute for Multidisciplinary Sciences, Am Fa{\ss}berg 11, 37077 G\"ottingen}}

\title{Limit Distributions and Sensitivity Analysis for Empirical Entropic Optimal Transport on Countable  Spaces}
 
\pagenumbering{arabic}

\maketitle

\begin{abstract}
\noindent For probability measures on countable spaces we derive distributional limits for empirical entropic optimal transport quantities. More precisely, we show that the empirical optimal transport plan weakly converges to a centered Gaussian process and that the empirical entropic optimal transport value is asymptotically normal. The results are valid for a large class of cost functions and generalize distributional limits for empirical entropic optimal transport quantities on finite spaces. Our proofs are based on a sensitivity analysis with respect to norms induced by suitable function classes, which arise from novel quantitative bounds for primal and dual optimizers, that are related to the exponential penalty term in the dual formulation. The distributional limits then follow from the functional delta method together with weak convergence of the empirical process in that respective norm, for which we provide sharp conditions on the underlying measures. As a byproduct of our proof technique, consistency of the bootstrap for statistical applications is shown. 
\end{abstract}

\noindent\keywords{\!\!\!Optimal transport, entropy regularization, central limit theorem, bootstrap, sensitivity analysis}

\noindent\keywordsMSC{Primary: 60B12, 60F05, 62E20\\ Secondary: 90C06, 90C25, 90C31}

\section{Introduction}
Over the last decades, the theory of optimal transport (OT), originating in the seminal work by \cite{monge} and later by \cite{kant58},  has gradually established itself as an active area of modern mathematical research and related areas (see \cite{rachev1998massTheory, rachev1998massApplications}, \cite{vil03, villani2008optimal}, \cite{santambrogio2015optimal}, or \cite{galichon2016optimal} for comprehensive monographs). Recently, OT and variants thereof have also been recognized as an important tool for statistical data analysis, e.g., in genetics \citep{evans2012phylogenetic}, fingerprint identification \citep{sommerfeld2018}, computational biology \citep{Schiebinger19, tameling2021Colocalization}, deformation analysis \citep{zemel2019}, and medical imaging \citep{Chen2020}, among others. However, for routine data analysis the computational speed to solve the underlying linear program is still a bottleneck and the development of algorithms for fast computation is a highly active area of research. 
Up to polylogarithmic terms, common linear OT solvers (for bounded costs) such as the Auction algorithm \citep{bertsekas1981new,bertsekas1989} or Orlin's algorithm \citep{Orlin88} have a worst-case complexity ${\landau}(N^{3})$ for $N$ denoting the size of the data. The best known (theoretical) worst case complexity to solve OT as a linear program is given by ${\mathcal{O}}(N^{2.5})$ \citep{Lee2014} for which, however, no practical implementation is known. 
As an alternative approach \cite{cuturi13} proposed to replace the original OT optimization problem by an entropy regularized surrogate, a well-known approach in linear programming with efficient algorithms whose convergence was already derived by \cite{sinkhorn1964relationship,sinkhorn67} and \cite{cominetti1994asymptotic}. 
In particular, for a given precision  it solves the corresponding optimization problem in about $\landau(N^2)$ elementary operations \citep{altschuler2017near, dvurechensky2018computational}. 
 Since then, entropy regularized OT (EROT) has become a frequently used computational scheme for the approximation of OT \citep{cuturi18, Amari19, Clason21,Tong21}. 

In this paper, we are concerned with EROT on countable spaces $\XC=\{x_1, x_2, \dots \}$ and $\YC = \{ y_1, y_2, \dots \}$ (possibly $\XC = \YC$). 
A probability measure $\rb$ on $\XC$ ($\sb$ on $\YC$) is represented as an element in $\lX$ (resp. $\lY$), the space of absolutely summable sequences indexed over $\XC$ (resp. $\YC)$, such that $\sum_{x \in \XC}r_x = 1$ and $\rb \geq 0$. 
 The set of couplings between $\rb$ and~$\sb$, also known as transport plans, on the product space $\XC\times \YC$  is defined by 
$$\Pi(\rb, \sb) \coloneqq \left\{ \pi \in \lXY \colon  \Ab(\pib) = \begin{pmatrix}
		\rb\\
		\sb
	\end{pmatrix}
, \pi \geq 0 \right\},$$
	where $\Ab$ is the marginalization operator$$\Ab \colon \lXY \rightarrow \lX\times \lY,\quad  \pib \mapsto \begin{pmatrix}(\sum_{y \in \YC}\pi_{xy})_{x \in \XC}\\(\sum_{x \in \XC}\pi_{xy})_{y \in \YC}\end{pmatrix}.$$
	For a cost function $c\,\colon \XC\times \YC \rightarrow \R$ and a regularization parameter $\lambda>0$ the \emph{EROT value} between probability measures $\rb$ and $\sb$ is defined as
	\begin{equation}\label{eq:EntropicOptimalTransport}
\EROT(\rb, \sb) \coloneqq \inf_{\pib \in \Pi(\rb, \sb)} \langle \cb, \pib \rangle  + \lambda M(\pib).\tag{EROT}
\end{equation}
	The quantity $\langle \cb, \pib\rangle=\sum_{(x,y)\in \XC\times \YC}c(x,y) \pi_{xy}$ denotes the total costs associated to a transport plan $\pib\in \Pi(\rb, \sb)$ and $M(\pi)$ represents the mutual information	$$\begin{aligned}%
	 M(\pib) \coloneqq   \sum_{\substack{x\in\XC\\ y \in \YC}}\pi_{xy}\log\left(\frac{\pi_{xy}}{\big(\sum_{y' \in \YC}\pi_{xy'}\big)\big(\sum_{x' \in \XC}\pi_{x'y} \big)}\right) = \sum_{\substack{x\in\XC\\ y \in \YC}}\pi_{xy}\log\left(\frac{\pi_{xy}}{r_x s_y}\right)  \in [0,\infty],  \end{aligned}$$
	 where by convention $0\log(0) = 0$. 
If the cost function $c$ is integrable with respect to $\rb\otimes \sb$, i.e., if it fulfills $c\in \ell^1_{\rb\otimes \sb}(\XC\times \YC)$, then there exists a unique minimizer (Proposition \ref{prop:DualEROT})\begin{equation}
  \pib^\lambda(\rb, \sb) \coloneqq \argmin_{\pib \in \Pi(\rb, \sb)} \;\langle \cb, \pib\rangle + \lambda M(\pib),\label{eq:DefinitionEntropicOptimalTransportPlan}
\end{equation}
 known as \emph{EROT plan}. Plugging $\pib^\lambda(\rb, \sb)$ into the functional for total costs $\langle \cb, \cdot \rangle$ yields the \emph{Sinkhorn costs} %
  $$\SOT(\rb,\sb) \coloneqq \langle \cb, \pib^\lambda(\rb, \sb) \rangle.$$ 
Moreover, \eqref{eq:EntropicOptimalTransport} is a convex optimization problem and hence exhibits a dual formulation \begin{equation}\label{eq:DualEntropicOptimalTransportProblem}
		\sup_{\substack{\alphab\in \lr \\ \betab\in \ls}} \langle\alphab,\rb\rangle + \langle\betab,\sb\rangle - \lambda \bigg[ \sum_{\substack{x \in \XF\\ y \in \YC}}{{\exp\bigg(\frac{\alpha_x + \beta_y - c(x,y)}{\lambda} \bigg)r_x s_y - r_xs_y} } \bigg], \tag{DEROT}
	\end{equation}
	where $\lr$ and $\ls$ denote the spaces of functions on $\XC$, $\YC$ with finite expectation under $\rb$ and $\sb$, respectively (for a general dual formulation on Polish spaces see \cite{chizat16}). 
Optimizers $\alphab \in \lr, \betab\in \ls$ of \eqref{eq:DualEntropicOptimalTransportProblem} are called \emph{EROT potentials} and the quantities $\langle\alphab,\rb\rangle=\sum_{x \in \XC} \alpha_x r_x$, $\langle\betab,\sb\rangle=\sum_{y \in \YC} \beta_y s_y$ denote their expectation with respect to $\rb$ and $\sb$.

Statistical questions arise as soon as the probability measures $\rb$ and $\sb$ are estimated by (discrete) empirical measures 
\begin{equation}
\hat\rb_n = \frac{1}{n}\sum_{i = 1}^{n} \delta_{X_i} \quad \text{ and }\quad \hat \sb_m = \frac{1}{m} \sum_{j = 1}^{m}\delta_{Y_j}
\label{eq:DefEmpiricalMeasures}
\end{equation}
for samples $X_1, \dots, X_n \iid \rb$ and independent  $Y_1, \dots, Y_m \iid \sb$ where $\delta_x$ is the Dirac measure at $x$. This scenario occurs, e.g., if the underlying probability measures are unknown \citep{Genevay2018SampleCO}, when subsampling methods are applied for randomized approximations \citep{sommerfeld19FastProb}, or if statistical inference based on EROT is aimed for \citep{bigot2019CentralLT, klatt2018empirical}. 
 Fundamental to such tasks are asymptotic limit laws of the empirical EROT value and its corresponding plan. 

 For probability measures supported on finitely many points \cite{bigot2019CentralLT} proved that the limit law for the empirical EROT \emph{value} centered by its population counterpart is normal and \cite{klatt2018empirical} showed asymptotic normality for the empirical EROT \emph{plan} and empirical Sinkhorn costs. For the Euclidean space $\R^d$ and squared Euclidean costs \cite{mena2019} derived a normal limit law of the empirical EROT value when sampling from sub-Gaussian probability measures. In contrast to the finite case, the centering constant is given by the expected value of the empirical estimator rather than the population quantity. A refinement for squared Euclidean costs was recently obtained by \cite{del2022improved, goldfeld2022statistical} who showed that the expectation can indeed be replaced by the population quantity if the measures are sub-Gaussian. 
 Nonetheless, an analysis beyond the finite setting or Euclidean spaces with squared Euclidean costs remains largely open. 

 Only recently, parallel and independently to us, \cite{Harchaoui2020} analyzed statistical properties of a \emph{modified} EROT problem that enables an explicit formula for the unique optimal plan between empirical measures. Then, a central limit theorem for the associated modified empirical Sinkhorn cost and related quantities is derived. Their setting subsumes probability measures absolutely continuous with respect to Lebesgue measure and the distributional limit laws are valid for various cost functions. However, this approach seems to be not suitable for an analysis of limit distributions for (non-modified) EROT quantities as the proof crucially depends on the explicit solution for the respective empirical optimal plan. Such an explicit solution is generally not available for the (non-modified) EROT plan between empirical measures.  Still, \cite{Harchaoui2020} conjecture asymptotic normality of empirical non-modified Sinkhorn costs. During the time this work was under review, this conjecture was verified by \cite{gonzalez2022weak} for compactly supported probability measures with squared Euclidean costs, leaving open the question if the conjecture is also fulfilled on non-compact spaces and more general costs.  
 
 Indeed, this is true, at least on \emph{countable} spaces, as we will show in this work. Our theory provides distributional limit both for the one- and two-sample case. For the sake of presentation, we focus for now only on the one-sample case.
 Under suitable assumptions on the cost function and the probability measures $\rb$, $\sb$, with countable support, we prove that the empirical EROT value is asymptotically normal 
 \begin{equation}
	\sqrt{n}\left(\EROT(\hat \rb_n, \sb) - 	\EROT(\rb, \sb)\right) \konvW \mathcal{N}\big(0,\sigma^2_{\lambda}(\rb|\sb)\big),\label{eq:LimitLawEROTC}
\end{equation}
as $n\rightarrow \infty$  (Theorem \ref{them:LimitLawEmpEROT_Value}) with $\konvW$ denoting weak convergence \citep{billingsley1999convergence}. 
The asymptotic variance $\sigma^2_{\lambda}(\rb|\sb)$ depends on the population EROT potential $\alphab^\lambda$ for \eqref{eq:DualEntropicOptimalTransportProblem} and is equal to$$ \sigma^2_{\lambda}(\rb|\sb) = \text{Var}_{X\sim \rb}\left[\alpha^\lambda_X\right].$$  
 Concerning the EROT \emph{plan}, we show for the empirical counterpart centered by its population quantity, as $n \rightarrow \infty$, that  \begin{equation}
	\sqrt{n}\left(\pib^\lambda(\hat \rb_n, \sb) - 	\pib^\lambda(\rb, \sb)\right) \konvW G\big(0,\Sigma_{\lambda,\pib^\lambda}(\rb|\sb)\big), 
	\label{eq:LimitLawEROTP}
\end{equation}
for a centered Gaussian process $G$ with covariance $\Sigma_{\lambda,\pib^\lambda}(\rb|\sb)$ 
(Theorem \ref{them:LimitLawEmpEROT_Plan}).
  The covariance  $\Sigma_{\lambda,\pib^\lambda}(\rb|\sb)$ can be stated explicitly and depends on the regularization parameter $\lambda$ and the EROT plan $\pib^\lambda$ between $\rb$ and $ \sb$. Notably, weak convergence in \eqref{eq:LimitLawEROTP} takes place in a suitable weighted $\ell^1$-space over $\XC\times \YC$. This enables us to derive the limit distributions of suitably bounded (classes of) functions integrated with respect to the empirical EROT plan centered by its population version and verifies a conjecture by \cite{Harchaoui2020} on countable spaces. 
  Notably, their assumptions are in line with our conditions; whereas they demand that a certain operator is invertible, we impose sufficient conditions directly on the cost function, which are more easily verifiable. 
   As a corollary, we derive 
   the limit distribution for the empirical Sinkhorn cost on countable spaces (Corollary \ref{cor:LimitLawEmpSinkhornCosts})	\begin{equation}
	\sqrt{n}\left(\SOT(\hat \rb_n, \sb) - 	\SOT(\rb, \sb)\right) \konvD \mathcal{N}\big(0,\tilde\sigma^2_{\lambda,\pib^\lambda}(\rb|\sb)\big),\label{eq:LimitLawSC}
\end{equation}
as $n\rightarrow \infty$. 
The asymptotic variance $\tilde\sigma^2_{\lambda,\pib^\lambda}(\rb|\sb)$ is given by $$\tilde\sigma^2_{\lambda,\pib^\lambda}(\rb|\sb)= \sum_{\substack{x, x' \in \XC\\y, y' \in \YC}} c(x,y) c(x',y')\big(\Sigma_{\lambda,\pib^\lambda}(\rb|\sb)\big)_{(x,y),(x',y') }.$$
Our  limit laws are generically Gaussian for $\rb \neq \sb$ \emph{and} $\rb = \sb$. This is in strict contrast to limit results obtained for the empirical unregularized ($\lambda = 0$) OT value \citep{sommerfeld2018, tameling18,hundrieser2022unifying} and for the OT plan \citep{klatt2020limit} on discrete spaces.
Heuristically speaking, whereas the asymptotic law of the unregularized OT quantities depends on the geometry of the boundary of the underlying transport simplex, the entropy regularization smooths such quantities (unique solutions are attained in the interior of the simplex). Consequently, Gaussian fluctuations in the marginals translate to Gaussian fluctuations of EROT quantities.

 Additionally, %
 our method of proof implies consistency of the na\" ive $n$-out-of-$n$ bootstrap (Theorem \ref{them:BootstrapEROT}), which is also in contrast to unregularized OT \citep{sommerfeld2018,hundrieser2022unifying}. The latter being useful as the limit distributions in \eqref{eq:LimitLawEROTP} and \eqref{eq:LimitLawSC} are in general not accessible and its estimation from data is challenging. 

For our proof technique we rely on a general functional delta method for Hadamard differentiable functionals with respect to norms induced by certain appropriate function classes.  
Our analysis reveals an interesting interplay between the cost function and the probability measures $\rb$ and $\sb$ in order to guarantee weak convergence of empirical EROT quantities, an issue which does not arise for finite ground spaces \citep{bigot2019CentralLT, klatt2018empirical}. 
 To suitably control the growth of the cost function we assume existence of  functions 
$ \cxm, \cxp\colon \XC \rightarrow \R$ and $\cym, \cyp \colon \YC \rightarrow \R$ with $\cxm \leq \cxp$ and $\cym \leq \cyp$ such that 
\begin{equation}
  \cxm(x) + \cym(y) \leq c(x,y) \leq \cxp(x) + \cyp(y)\label{eq:LowerUpperBoundsC}
\end{equation}
for all $(x,y) \in \XC\times \YC$. Our assumptions on the underlying measures to guarantee the distributional limits for the empirical EROT value and plan are determined by these dominating functions. Hence, there is a particular interest in choosing the dominating functions as sharp as possible.
For most cost functions of interest such dominating functions exist. Various examples and their respective dominating functions are detailed in Section~\ref{sec:Examples}. 
The dominating functions on the cost function enable us to state lower and upper bounds for EROT potentials and plan (Proposition \ref{prop:BoundsOptimalPotentials}) which require $\rb$- and $\sb$-integrability of the functions $\cx, \exSup{}\colon \XC\rightarrow [1,\infty)$ and $\cy, \eySup{}\colon \YC\rightarrow [1,\infty)$, respectively, defined by
\begin{equation}
	\begin{aligned}
		\cx(x)&\coloneqq1 + |\cxp(x)| + |\cxm(x)|,&& \cy(y)\coloneqq1 + |\cyp(y)| + |\cym(y)|,\\
		  \exSup{}(x)&\coloneqq\exp\bigg(\,\frac{\cxp(x)- \cxm(x)}{\lambda}\bigg),&&  \eySup{}(y)\coloneqq\exp\bigg(\,\frac{\cyp(y)- \cym(y)}{\lambda}\bigg).
  \end{aligned}\label{eq:DefWeightingFunctions}
  \end{equation}

\begin{table}[t]
	\centering
	\caption{Interplay between cost functions and summability conditions on probability measures $\rb$, $\sb$ for one-sample limit distribution of empirical EROT value.}
	\label{tab:SummaryResultsAndConstrainsEROTValue}
	\begin{tabular}{cll}
	\hline	
	Type & Cost                                     &  Summability Condition                                                            \\ \hline  
	Bounded costs                  & $\norm{c}_{\lInfXY}< \infty$                              &  $\sum_{x \in \XC}\sqrt{r_x}< \infty$                                              \\[0.4cm]
	\multirow{2}{*}{\begin{tabular}{@{}c@{}}\\[-0.2cm]Costs with \\bounded variation\end{tabular}}   & $\norm{\tcxp - \tcxm}_{\lInfX}< \infty$                  &  $\sum_{x \in \XF}\cx(x)\sqrt{r_x} +\tcx^2(x)r_x  < \infty$                                       \\
			 & $\norm{\cypb - \cymb}_{\lInfY}< \infty$                  &  $\sum_{y \in \YC}(\cy +  \tcy)(y)s_y < \infty$ \;\;\;                                             \\[0.4cm]
	\multirow{2}{*}{\begin{tabular}{@{}c@{}}\\[-0.2cm]Costs without \\bounded variation\end{tabular}}   & $\norm{\tcxp - \tcxm}_{\lInfX}= \infty$                  &  $\sum_{x \in \XF}\cx(x)\sqrt{r_x} +(\tcx^2+\texSup{})(x)r_x  < \infty$                                       \\
	 & $\norm{\cypb - \cymb}_{\lInfY}=\infty$                  &  $\sum_{y \in \YC}(\cy +  \tcy + \eySup{})(y)s_y < \infty$ \;\;\;                                            \\[0.2cm]
	\hline
	\end{tabular}
	\end{table}

For the Hadamard differentiability of the EROT value (Theorem \ref{them:EntropicOTCostUnboundedIsHadamardDifferentiable}), we then consider 
two (possibly identical) collections of dominating functions $\cxm, \cxp, \cym, \cyp$ and $\tcxm, \tcxp,\tcym,\tcyp$ of~$c$ (one collection yields bounds for the EROT potential $\alphab^\lambda$, the other for $\beta^\lambda$) with corresponding functions $\tcx, \texSup{},  \tcy, \teySup{}$ as above and define the function classes 
\begin{equation}\label{eq:FunctionClassFC}
	\begin{aligned}
		\FC_{\XC}&\coloneqq \{f\colon \XC\rightarrow \RR \colon |f|\leq \cx\}\cup\{\tcx\}\cup\{\texSup{}\}\cup\{ \texSup{} \Indicator{\XC\backslash\{x_1, \dots, x_n\}} \colon n \in \NN\},\\%&& \HC_{\XC}^\delta\coloneqq \{f\colon \XC\times \RR \colon |f|\leq \kxSup{\delta}\}\\
		\FC_{\YC}&\coloneqq \{f\colon \YC\rightarrow \RR \colon |f|\leq \tcy\,\}\cup\{\cy\,\}\cup\{\eySup{}\}\cup\{\eySup{} \Indicator{\YC\backslash\{y_1, \dots, y_n\}} \,\,\colon n \in \NN\}.%
	\end{aligned}
	\end{equation}
In particular, to verify Hadamard differentiability with respect to the resulting norm we use the dual formulation \eqref{eq:DualEntropicOptimalTransportProblem}, exploit strong duality, and rely on our quantitative bounds for EROT potentials. 
Hence, for our  distributional limit for the empirical EROT value it is necessary that the function class $\FC_\XC$ is $\rb$-Donsker and that $\sb$ defines a bounded functional on $\FC_\YC$. The latter two conditions are met if and only if (Lemma \ref{lem:WeakConvergence}) 
\begin{align*}
	\sum_{x\in \XC} \cx(x)\sqrt{r_x} + \big(\tcx^2+\texSup{2}\big)(x)r_x <\infty \quad \text{and} \quad \sum_{y\in \YC} \big(\cy+\tcy+\eySup{}\big)(y) s_y <\infty. 
\end{align*}
Multiple special cases and their corresponding assumptions to guarantee the one-sample distributional limits are detailed in Table \ref{tab:SummaryResultsAndConstrainsEROTValue}.
Note, that the different nature of the above conditions is a consequence of the fact that the measure $\rb$ is randomly perturbed (sampled) while $\sb$ is assumed to be fixed. 
For the related two-sample results for which both $\rb$ and $\sb$ are randomly perturbed we require symmetric conditions, i.e,  
\begin{align*}
	\sum_{x\in \XC} \cx(x)\sqrt{r_x} + \big(\tcx^2+\texSup{2}\big)(x)r_x <\infty \quad \text{and} \quad \sum_{y\in \YC} \tcy(y)\sqrt{s_y} + \big(\mathcal{C}^2_{\YC}+\eySup{2}\big)(y)s_y <\infty. 
\end{align*}

For our sensitivity analysis of the EROT plan as an element in the weighted $\ell^1$-space $\lXYSub{\cx\oplus\cy}$ as well as the Sinkhorn cost we only consider a single collection of dominating functions $\cxm, \cxp, \cym, \cyp$ for $c$ and additionally require that 
$$\norm{\cxp - \cxm}_\lInfX<\infty.$$ 
This condition is clearly met for uniformly bounded costs (Section \ref{subsec:Expl:Bounded}) but also for various metric based costs when $\XC$ is bounded while $\YC$ is not (Section \ref{subsec:Expl:BoundedUnbounded}). It is also fulfilled under  metric costs for unbounded spaces $\XC$ and $\YC$ if they fulfill certain geometrical properties (Section \ref{subsec:Expl:Separability}).
It enables us to verify that the Hadamard derivative is well-defined and ensures that a certain operator has a strictly positive eigenvalue gap (Remark \ref{rem:NoveltyOfProofAndDifficulty}). 
We then choose the function classes 
\begin{align}\label{eq:FunctionClassHC}
	\HC_{\XC}&\coloneqq \{f\colon \XC\rightarrow \RR \colon |f|\leq \cx\}, \quad \quad 
	\HC_{\YC}\coloneqq \{f\colon \YC\rightarrow\RR \colon |f|\leq \cy\eySup{4}\,\}
\end{align}
which induce on $\PC(\XC)$ and $\PC(\YC)$ weighted $\ell^1$-norms with weights given by $\cx$ and $\cy\eySup{4}$ (see Appendix \ref{app:BanachSpacesNorms}). 
The sensitivity analysis relies on an 
optimality criterion for primal and dual optimizers of \eqref{eq:EntropicOptimalTransport} and is based on an implicit function approach. 
A notable challenge in our analysis for the plan is caused by an operator that is only invertible 
on a rather small domain. In particular, this avoids an ad-hoc application of a standard implicit function theorem for Hadamard differentiable functionals \cite[Proposition 4]{Roemisch04}. Instead, we carefully assess the individual error terms that are caused by the perturbation and exploit our bounds for primal and dual optimizers (Remark \ref{rem:NoveltyOfProofAndDifficulty}). 

Invoking the functional delta method we then require for our distributional limits to hold that $\HC_\XC$ is $\rb$-Donsker while $\sb$ defines a bounded functional $\HC_\YC$. These assumptions are satisfied if and only if (Lemma \ref{lem:WeakConvergence})
\begin{align*}
	\sum_{x\in \XC} \cx(x)\sqrt{r_x}<\infty \quad \text{and} \quad \sum_{y\in \YC} \cy(y)\eySup{4}(y) s_y <\infty. 
\end{align*}
An overview of the resulting assumptions to guarantee our distributional limits for the empirical EROT plan and Sinkhorn cost is detailed in Table \ref{tab:SummaryResultsAndConstrains}.

\begin{table}[t]
	\centering
	\caption{Interplay between cost functions and summability conditions on probability measures $\rb$, $\sb$ for one-sample limit distribution of empirical EROT plan and Sinkhorn~cost.}
	\label{tab:SummaryResultsAndConstrains}
	\begin{tabular}{cll}
	\hline	
	Type & Cost                                     &  Summability Condition                                                            \\ \hline  
	Bounded costs                  & $\norm{c}_{\lInfXY}< \infty$                              &  $\sum_{x \in \XC}\sqrt{r_x}< \infty$                                              \\[0.4cm]
	\multirow{2}{*}{\begin{tabular}{@{}c@{}}\\[-0.2cm]Costs with \\bounded variation\end{tabular}}   & $\norm{\cxpb - \cxmb}_{\lInfX}< \infty$                  &  $\sum_{x \in \XF}\cx(x)\sqrt{r_x} < \infty$                                       \\
			 & $\norm{\cypb - \cymb}_{\lInfY}< \infty$                  &  $\sum_{y \in \YC}\cy(y)s_y < \infty$ \;\;\;                                             \\[0.4cm]
	\multirow{2}{*}{\begin{tabular}{@{}c@{}}\\[-0.2cm]Costs with  bounded\\ variation in $\XC$-component\end{tabular}}  & $\norm{\cxpb - \cxmb}_{\lInfX}< \infty$ &  $\sum_{x \in \XF}\cx(x)\sqrt{r_x} < \infty$  \,\,\,\;\;\;                                     \\
							&                                                        $\norm{\cypb - \cymb}_{\lInfY}= \infty$  &  $\sum_{y \in \YC}\cy(y)\eySup{4}(y)s_y<\infty$\\[0.2cm]
	\hline
	\end{tabular}
	\end{table}

The paper is structured as follows. Section~\ref{sec:Preliminaries} introduces basic notation, shows existence and uniqueness of optimizers, and states an optimality criterion for primal and dual optimizers of \eqref{eq:EntropicOptimalTransport}. Furthermore, we derive explicit bounds for optimal solutions. %
Section~\ref{sec:LimitDistributions} presents the main results on limit distributions for the empirical EROT value and plan (Sections \ref{subsec:EROTValue} and \ref{subsec:EROTPlan}). Furthermore, we verify that the na\" ive $n$-out-of-$n$ bootstrap is consistent for the empirical EROT value and plan (Section \ref{sec:Bootstrap}) and analyze the behavior of the limit distributions as the regularization parameter tends to zero (Section \ref{sec:RelationUnregularizedOT}). 
Section \ref{sec:Examples} provides various examples where our theory provides novel distributional limits; we consider uniformly bounded costs, costs induced by powers of a metric, and costs on Euclidean spaces. 
 Section~\ref{sec:SensitivityAnalysis} is concerned with the sensitivity analysis of the EROT value and plan with respect to suitable norms.  
Section \ref{sec:Discussion} discusses our results and concludes with open questions for future research.
 For the sake of readability, all the proofs, except for our main results in Section~\ref{sec:LimitDistributions}, are deferred to the appendix. 

We finally stress that our results have immediate statistical applications, which will be detailed in subsequent work. We briefly mention here biological colocalization of protein networks recorded with super-resolution microscopy which can be defined as a certain functional of the regularized OT plan. This has been done by \cite{klatt2018empirical} in the context of finite ground spaces, i.e., for the setting of a finite number of pixels (corresponding to the support of the intensity profile of protein structures). Our conditions in Table \ref{tab:SummaryResultsAndConstrains} can be viewed as a guarantee for robustness of sub-sampling methods in large scale images with many pixels. Moreover, our results imply in the context of high-resolution imaging that sub-sampling is not only computationally feasible but also a statistically viable approach.

\paragraph*{Notation}

Throughout this work we denote by $\lX$ the space of summable sequences indexed over $\XC$ and equipped with total variation norm $\norm{\ab}_{\lX}= \sum_{x \in \XC}|a_x|$. Note that $\lX$ can be interpreted as the space of finite signed measures on $\XC$.  
Its dual space can be identified by $\lInfX$ with norm $\norm{\bb}_\lInfX=\sup_{x \in \XC}|b_x|$. Given a positive function $f\colon \XC \rightarrow (0, \infty)$ we introduce the weighted $\ell^1$-space $\lXSub{f}$ and its corresponding dual space $\lInfXSub{f}$ with respective norms for $\ab \in \lXSub{f}$ and $\bb\in \lInfXSub{f}$ given by 
$$\norm{\ab}_{\lXSub{f}}\coloneqq \sum_{x \in \XC}f(x)|a_x| \quad \text{ and } \quad \norm{\bb}_\lInfXSub{f} \coloneqq \sup_{x \in \XC}f(x)^{-1}|b_x|.$$ Their dual pairing is given by $\langle \bb,\ab \rangle \coloneqq \sum_{x \in \XC} a_x$. The  set of probability measures on $\XC$ is denoted by $\mathcal{P}(\XC)=\{ \rb \in \lX \colon \sum_{x\in \XC} r_x = 1, \rb \geq 0\}$. We emphasize that we equip $\XC$ with the discrete topology and do not embed it, e.g., in $\R^d$. Hence, for any $\rb\in \mathcal{P}(\XC)$ its (topological) support is equal to $\supp(\rb) = \{ x \in \XC \colon r_x >0\}$ and  $\rb$ is of full support if and only if $r_x>0$ for all $x\in \XC$. For a probability measure $\rb\in\mathcal{P}(\XC)$ we define with slight abuse of notation $\lr$ as the space of functions on $\XC$ with finite expectation with respect to~$\rb$.  Moreover, for a pointwise bounded function class $\FC$ on $\XC$ we define $\ell^\infty(\FC)$ as the Banach space of uniformly bounded functionals on $\FC$ equipped with norm $$\norm{\Phi}_{\ell^\infty(\FC)} \coloneqq \sup_{f\in \FC}|\Phi(f)|.$$
 For an envelope\footnote{The function $F\colon \XC\rightarrow [0,\infty)$ is an envelope function for a function class $\FC$ on $\XC$ if $|f|\leq F$ for any $f\in \FC$.}  function $F$ of the class $\FC$ that fulfills $F\in \lr$ for $\rb\in \PC(\XC)$, it follows that $\rb$ defines a uniformly bounded functional on $\FC$ via  $\rb(f) \coloneqq \langle f, \rb\rangle$. Hence, elements of $\PC(\XC)$ that fulfill suitable integrability conditions can be considered as element of $\ell^\infty(\FC)$. 
For weighted $\ell^1$- and $\ell^\infty$-spaces (of function classes) on $\YC$ and $\XC\times \YC$ we use the same notation.

\section{Preliminaries on Entropic Optimal Transport}
\label{sec:Preliminaries}
\label{subsec:PrimalAndDualOptimizers}

In this section, we state some basic results for EROT. Most of which are well known for Euclidean spaces and the purpose here is to generalize them to a broader class of cost functions. %

Without loss of generality, we assume that the cost function $c$ is non-negative. Otherwise, according to \eqref{eq:LowerUpperBoundsC} define the non-negative function $$\tilde c \colon \XC\times \YC \rightarrow [0, \infty), \quad \tilde c(x,y) \coloneqq c(x,y) - \cxm\oplus\cym(x,y)$$ with $\cxm\oplus\cym(x,y) \coloneqq \cxm(x) + \cym(y)$ which is bounded from above by $\tilde c(x,y) \leq (\cxp - \cxm)(x) + (\cyp - \cym)(y)$ for all $(x,y) \in \XC\times\YC$. Further, if $\cxmb \in \lr$ and $\cymb\in \ls$, any transport plan $\pi\in \Pi(\rb, \sb)$ satisfies $ \langle \cb, \pi\rangle =\langle \tilde \cb, \pi\rangle + \langle \cxmb, \rb\rangle + \langle\cymb, \sb\rangle$.
Hence, the objective functions of \eqref{eq:EntropicOptimalTransport} with cost function $\cb$ and with the shifted non-negative cost function $\tilde \cb$ only differ by a constant that only depends on $\rb, \sb$. This does not affect the set of minimizers, and it holds that 
$$\EROT_c(\rb, \sb) = \EROT_{\tilde c}(\rb, \sb) + \langle \cxmb, \rb\rangle + \langle\cymb, \sb\rangle.$$

We now proceed with a result for existence of optimizers for \eqref{eq:EntropicOptimalTransport} and  formulate a necessary and sufficient optimality criterion. Recall that the primal problem \eqref{eq:EntropicOptimalTransport} and its dual \eqref{eq:DualEntropicOptimalTransportProblem} are said to satisfy strong duality if their respective optimal values match.

\begin{proposition}[Structure and Duality of EROT]\label{prop:DualEROT}   
	Assume that the cost function $c\colon \XC\times \YC\rightarrow [0,\infty)$ fulfills $c\in \ell_{\rb\otimes \sb}^1(\XC\times \YC)$. Then, 
	strong duality holds. Further, for the primal problem  \eqref{eq:EntropicOptimalTransport} there is a unique optimizer $\pib^\lambda \in \Pi(\rb, \sb)$, the EROT plan,  and for \eqref{eq:DualEntropicOptimalTransportProblem} there is an $(\rb, \sb)$-a.s.\ unique pair (up to constant shift\footnote{This means for any pair of EROT potentials $(\tilde \alphab^\lambda, \tilde \betab^\lambda)\in \lr\times \ls$ there is a constant $\eta\in \R$ with $ \alpha_x^\lambda - \tilde\alpha_x^\lambda = \tilde \beta_y^\lambda - \beta_y^\lambda = \eta$ for all $x \in \supp(\rb), y \in \supp(\sb)$.}) of optimizers $(\alphab^\lambda, \betab^\lambda)\in \lr \times \ls$, the EROT potentials. 
Moreover, 
	  $\pib\in \lXY$ and $(\alpha, \betab)\in \lr\times \ls$ are optimal for \eqref{eq:EntropicOptimalTransport} and \eqref{eq:DualEntropicOptimalTransportProblem}, respectively, if and only if 	\begin{align}
		& \;\pi_{xy} = \exp\bigg(\frac{\alpha_x + \beta_y - c(x,y)}{\lambda}\bigg)r_xs_y \quad \text{ for all } (x,y) \in \XF\times\YC, \label{eq:optimalityCriterion_1_connectionSolutions} \\ 
		& \sum_{y\in \YC}\pi_{xy} = r_x \quad \text{ and } \quad\sum_{x\in \XF}\pi_{xy} = s_y \quad \text{ for all }x \in \XF \text{ and } y \in \YC\label{eq:optimalityCriterion_2_marginalConstraints}. 
	\end{align}
\end{proposition}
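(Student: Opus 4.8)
The plan is to establish Proposition \ref{prop:DualEROT} in three stages: first existence and characterization of a primal optimizer via direct methods, then strong duality and the dual-optimal characterization via the optimality system \eqref{eq:optimalityCriterion_1_connectionSolutions}--\eqref{eq:optimalityCriterion_2_marginalConstraints}, and finally uniqueness of both the primal plan and the potentials (up to shift).

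\textbf{Step 1: Existence and uniqueness of the primal optimizer.} After the reduction to non-negative $c$ described above, I would note that $M(\pi)$ is well-defined in $[0,\infty]$ for every $\pi \in \Pi(\rb,\sb)$ by non-negativity of relative entropy (it equals $\textup{KL}(\pi \| \rb\otimes\sb)$). The objective $\pi \mapsto \langle \cb,\pi\rangle + \lambda M(\pi)$ is proper (finite at $\pi = \rb\otimes\sb$, since $c\in\ell^1_{\rb\otimes\sb}$ gives $\langle\cb,\rb\otimes\sb\rangle<\infty$ and $M(\rb\otimes\sb)=0$), bounded below by $0$, convex in $\pi$, and \emph{strictly} convex wherever finite because $M$ is strictly convex on its domain (the map $t\mapsto t\log t$ is strictly convex). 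For existence: take a minimizing sequence $(\pi^{(k)})$; since all $\pi^{(k)}\in\Pi(\rb,\sb)$ they are tight (the marginals $\rb,\sb$ are fixed probability measures on countable sets, hence tight), so along a subsequence $\pi^{(k)}\to\pi^*$ weakly, and $\pi^*\in\Pi(\rb,\sb)$ since the marginal constraints pass to the limit. Lower semicontinuity of $\langle\cb,\cdot\rangle$ (Fatou, $c\ge 0$) and of $M$ (joint lower semicontinuity of relative entropy) gives that $\pi^*$ is optimal. Strict convexity yields uniqueness.

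\textbf{Step 2: Strong duality and the optimality criterion.} The cleanest route is to guess the form \eqref{eq:optimalityCriterion_1_connectionSolutions} and verify it gives a primal-dual optimal pair with matching values, then argue it must hold at any optimum. Concretely: I would first show weak duality (Fenchel--Young / the fact that the dual functional always lower-bounds the primal, which is a direct computation using $\exp(t)\ge 1+t$). Then, writing the primal objective's Lagrangian with multipliers $\alpha\in\lr,\beta\in\ls$ for the two marginal constraints and minimizing over $\pi\ge 0$ pointwise, the unconstrained pointwise minimizer is exactly $\pi_{xy}=\exp((\alpha_x+\beta_y-c(x,y))/\lambda)r_xs_y$, and substituting back produces precisely the dual functional \eqref{eq:DualEntropicOptimalTransportProblem}; so any $(\alpha,\beta)$ for which this $\pi$ satisfies the marginal constraints \eqref{eq:optimalityCriterion_2_marginalConstraints} yields equal primal and dual values, hence is optimal for both and certifies strong duality. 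For the converse direction of the ``if and only if'', I would use that the unique primal optimizer $\pi^\lambda$ (Step 1) has full support on $\supp(\rb)\times\supp(\sb)$ — this follows because if some $\pi^\lambda_{xy}=0$ with $r_x,s_y>0$ one can perturb toward $\rb\otimes\sb$ and strictly decrease the objective (the derivative of $M$ blows to $-\infty$ at the boundary) — so that a first-order stationarity/KKT argument on the relative interior of the feasible simplex produces potentials $\alpha,\beta$ with \eqref{eq:optimalityCriterion_1_connectionSolutions}; combined with feasibility of $\pi^\lambda$ this is \eqref{eq:optimalityCriterion_2_marginalConstraints}. One must check the technical point that the constructed $\alpha,\beta$ actually lie in $\lr,\ls$ respectively, which uses $c\in\ell^1_{\rb\otimes\sb}$ together with the marginal equations (taking logs in \eqref{eq:optimalityCriterion_1_connectionSolutions} and integrating).

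\textbf{Step 3: Uniqueness of potentials up to a shift.} Suppose $(\alpha,\beta)$ and $(\tilde\alpha,\tilde\beta)$ both satisfy \eqref{eq:optimalityCriterion_1_connectionSolutions}--\eqref{eq:optimalityCriterion_2_marginalConstraints} with the \emph{same} $\pi^\lambda$ (which is forced, since the primal optimizer is unique and \eqref{eq:optimalityCriterion_1_connectionSolutions} determines $\pi$ from the potentials, while conversely optimality forces $\pi=\pi^\lambda$). Then for all $(x,y)\in\supp(\rb)\times\supp(\sb)$ we get $\alpha_x+\beta_y=\tilde\alpha_x+\tilde\beta_y$, i.e. $\alpha_x-\tilde\alpha_x = \tilde\beta_y-\beta_y$; the left side depends only on $x$, the right only on $y$, so both equal a common constant $\eta$, provided $\supp(\rb)$ and $\supp(\sb)$ are nonempty (true, they are probability measures). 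This is exactly the stated $(\rb,\sb)$-a.s.\ uniqueness.

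\textbf{Main obstacle.} The delicate part is Step 2: verifying that the naive KKT/Lagrangian manipulation is rigorous on a countable (infinite-dimensional) space — in particular justifying that the pointwise minimization can be interchanged with the sum, that the dual supremum is attained by an honest element of $\lr\times\ls$ rather than only approached, and that the stationarity conditions at the primal optimum genuinely yield \emph{summable-against-$\rb$} (resp. $\sb$) potentials rather than merely pointwise-defined ones. The summability of $\alpha^\lambda$ is where the hypothesis $c\in\ell^1_{\rb\otimes\sb}$ is essential and where most of the real work lies; everything else is either soft (Step 1) or algebraic (Step 3).
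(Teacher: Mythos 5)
The paper itself does not prove this proposition from scratch: immediately after the statement it delegates strong duality and primal existence/uniqueness to \citet[Theorem 3.2]{chizat16}, and dual existence/uniqueness plus the optimality criterion to \citet[Theorems 4.2 and 4.7]{nutz2021introduction}. You instead sketch a self-contained derivation, so there is no "same route" to compare against; what matters is whether your sketch is actually complete.

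Your Steps 1 and 3 are fine. Step 1 is the standard direct method (tightness of $\Pi(\rb,\sb)$, joint lower semicontinuity of relative entropy, Fatou for $\langle c,\cdot\rangle$ with $c\geq 0$, strict convexity of $M$ on its effective domain), and Step 3 is the usual separation-of-variables argument once one has $\alpha_x+\beta_y=\tilde\alpha_x+\tilde\beta_y$ on $\supp(\rb)\times\supp(\sb)$. The algebra in Step 2 — pointwise Lagrangian minimization producing \eqref{eq:optimalityCriterion_1_connectionSolutions}, and the value-matching computation once a pair $(\alpha,\beta)$ satisfying \eqref{eq:optimalityCriterion_1_connectionSolutions}--\eqref{eq:optimalityCriterion_2_marginalConstraints} is given — is also correct. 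The full-support argument (perturbing toward $\rb\otimes\sb$ makes the entropy derivative $-\infty$) is right.

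The genuine gap is exactly the one you flag at the end and then do not close: that the pointwise-defined potentials extracted from the stationarity system actually lie in $\lr\times\ls$. Taking logs in \eqref{eq:optimalityCriterion_1_connectionSolutions} at a fixed $y_0\in\supp(\sb)$ gives $\alpha_x = c(x,y_0)-\beta_{y_0}+\lambda\log\bigl(\pi^\lambda_{xy_0}/(r_x s_{y_0})\bigr)$, and averaging against $\sb$ with Jensen gives an \emph{upper} bound $\alpha_x\leq\sum_y c(x,y)s_y-\langle\beta,\sb\rangle$, but that already presupposes $\langle\beta,\sb\rangle$ is finite; symmetrically, bounding $\beta$ needs $\langle\alpha,\rb\rangle$ finite. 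There is a circular dependence, and the complementary \emph{lower} bound requires controlling $\log(\pi^\lambda_{xy}/(r_xs_y))$ from below, which does not follow from $c\in\ell^1_{\rb\otimes\sb}$ by "integrating". Breaking this circle is the actual content of the cited Theorem 4.2 in Nutz's notes (and analogous results elsewhere), where a genuinely different device — a variational argument, or convergence of the Sinkhorn iterates with uniform a priori bounds — is used. Likewise, the "first-order stationarity/KKT argument on the relative interior" is not a theorem you can invoke off the shelf in this infinite-dimensional, non-reflexive setting; the multipliers must be produced by hand. Writing "one must check the technical point" and "this is where most of the real work lies" is an honest flag, but as it stands your Step 2 is a heuristic, not a proof, and the proposition's hardest claim (attainment of the dual supremum by a pair in $\lr\times\ls$) is left unestablished.
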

The assertion on strong duality in conjunction with the existence and uniqueness of  optimizers for \eqref{eq:EntropicOptimalTransport} follows by \citet[Theorem 3.2]{chizat16}. The existence and uniqueness (up to a constant shift) of optimizers for \eqref{eq:DualEntropicOptimalTransportProblem}, as well as the optimality criterion is verified by \citet[Theorems 4.2 and 4.7]{nutz2021introduction}. %

\begin{remark}[Relation between EROT potentials and extension]\label{rem:NonConstantDualPotentials}
	Combining conditions \eqref{eq:optimalityCriterion_1_connectionSolutions} and \eqref{eq:optimalityCriterion_2_marginalConstraints} from Proposition \ref{prop:DualEROT} it follows that $(\alphab^\lambda, \betab^\lambda)\in \lr\times \ls$ denotes a pair of EROT potentials if and only if 
	\begin{equation}
		\label{eq:OptimalityCriterionForDualOptimizers}
		\begin{aligned}
			\alpha_x^\lambda &= - \lambda\log\left[\,\sum_{y \in \YC}\exp\bigg(\frac{ \beta_y^\lambda - c(x,y)}{\lambda}\bigg)s_y\right] \quad \text{for all } x \in \supp(\rb),\\
			\beta_y^\lambda &= - \lambda\log\left[\,\sum_{x \in \XF}\exp\bigg(\frac{ \alpha_x^\lambda - c(x,y)}{\lambda}\bigg)r_x\right]  \quad \text{for all } y \in \supp(\sb).
		\end{aligned}
	\end{equation}
			
		Notably, outside the support of $\rb$ and $\sb$, the EROT potentials can be selected arbitrarily. A natural extension outside the support of the underlying measure is to define them according to the right-hand side. %
\end{remark}

As a consequence of \eqref{prop:DualEROT} it follows that the respective EROT value is equal to
\begin{align}\label{eq:EROTValueRepresentation}
	\EROT(\rb, \sb) = \langle \alphab^\lambda, \rb\rangle  + \langle \betab^\lambda, \sb\rangle + \lambda \langle \Indicator{\XC\times \YC}, \pi^\lambda - \rb\otimes \sb\rangle  = \langle \alphab^\lambda, \rb\rangle  + \langle \betab^\lambda, \sb\rangle.
\end{align}
To assess  the sensitivity of the EROT value and the plan suitable bounds for the entropic optimal potentials are of interest.   
For squared Euclidean costs such bounds are readily available by \cite{mena2019}. In the following we generalize their argument to more general costs which fulfill the bounds \eqref{eq:LowerUpperBoundsC}.%
	\begin{proposition}[Bounds for EROT quantities]\label{prop:BoundsOptimalPotentials}
		Assume that $c\colon \XC\times \YC\rightarrow \RR$ satisfies \eqref{eq:LowerUpperBoundsC} for dominating functions $\cxm, \cxp, \cym, \cyp$ and define $\cx,\cy,\exSup{}, \eySup{}$ as in \eqref{eq:DefWeightingFunctions}.
	Let $\rb \in \PC(\XC)$ and $\sb \in \PC(\YC)$ be probability measures with  
	$\cx, \exSup{}\in \lr $ and $\cy, \eySup{}\in \ls$. Then there exists a unique pair of EROT potentials
		$(\alphab^\lambda, \betab^\lambda)\in \lr\times \ls$ that fulfill $$\langle\alphab^\lambda,\rb\rangle = \langle\betab^\lambda,\sb\rangle = \EROT(\rb, \sb)/2\geq (\langle\cxmb,\rb \rangle + \langle\cymb,\sb \rangle)/2$$ and satisfy for all $x\in \XC, y\in \YC$ the bounds
		 \begin{align*}
			&\cxm(x)- \langle \cxpb, \rb\rangle  + (\langle\cxmb,\rb \rangle + \langle\cymb,\sb \rangle)/2 - \lambda \log\langle \eySup{}, \sb\rangle \\
			\leq  \alpha^{\lambda}_x \leq &  \cxp(x) +\langle \cypb, \sb\rangle -  (\langle\cxmb,\rb \rangle + \langle\cymb,\sb \rangle)/2 , \\[0.2cm]
			&\cym(y)- \langle \cypb, \sb\rangle +(\langle\cxmb,\rb \rangle + \langle\cymb,\sb \rangle)/2 - \lambda \log\langle \exSup{}, \rb\rangle\\
			\leq \beta^{\lambda}_y \leq &  \cyp(y) +\langle \cxpb, \rb\rangle - (\langle\cxmb,\rb \rangle + \langle\cymb,\sb \rangle)/2.
		\end{align*} 
		For the EROT plan $\pib^\lambda$ in \eqref{eq:DefinitionEntropicOptimalTransportPlan} it holds for all $(x,y) \in \XC\times \YC$ that $$\begin{aligned}
			 &r_xs_{y}\Big(\exSup{}(x) \eySup{}(y)\langle \exSup{}, \rb\rangle^{2} \langle \eySup{}, \sb\rangle^{2}\Big)^{\!-1}\!\!\!\leq \pi^\lambda_{xy}
			 \leq r_xs_{y}\Big(\exSup{}(x)  \eySup{}(y)\,\langle \exSup{}, \rb\rangle \langle \eySup{}, \sb\rangle\Big).
		\end{aligned}$$
		In particular, if $\cx\exSup{}\in \lr$ and $\cy\eySup{}\in \ls$, it holds that $\pi^\lambda \in \ell^1_{\cx\oplus\cy}(\XC\times \YC)$. 
	\end{proposition}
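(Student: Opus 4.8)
The plan is to reduce every assertion to the fixed-point characterization of the EROT potentials in Remark~\ref{rem:NonConstantDualPotentials} and then to combine Jensen's inequality with the dominating functions in \eqref{eq:LowerUpperBoundsC}. First I would note that \eqref{eq:LowerUpperBoundsC} gives $|c(x,y)|\leq \cx(x)+\cy(y)$ for all $(x,y)$, so $c\in\ell^1_{\rb\otimes\sb}(\XC\times\YC)$ because $\langle\cx,\rb\rangle+\langle\cy,\sb\rangle<\infty$ by assumption; hence Proposition~\ref{prop:DualEROT} applies (if one prefers to invoke it only for non-negative costs, apply it to $\tilde c=c-\cxm\oplus\cym\geq 0$ and translate back), yielding the unique plan $\pib^\lambda$, a pair of EROT potentials unique up to a constant shift on $\supp(\rb)\times\supp(\sb)$, and the optimality identity $\pi_{xy}^\lambda=\exp((\alpha_x^\lambda+\beta_y^\lambda-c(x,y))/\lambda)\,r_xs_y$. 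I would fix the shift so that $\langle\alphab^\lambda,\rb\rangle=\langle\betab^\lambda,\sb\rangle$; by \eqref{eq:EROTValueRepresentation} both then equal $\EROT(\rb,\sb)/2$, and $\EROT(\rb,\sb)\geq\langle\cxm,\rb\rangle+\langle\cym,\sb\rangle$ is immediate from $c\geq\cxm\oplus\cym$ and $M\geq 0$. Off the supports I would take the potentials extended by the right-hand sides of \eqref{eq:OptimalityCriterionForDualOptimizers}, so that those fixed-point equations hold for \emph{all} $x\in\XC$, $y\in\YC$.

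Next I would derive the four potential bounds. The two \emph{upper} bounds come from a single use of Jensen's inequality for the convex function $\exp$ inside \eqref{eq:OptimalityCriterionForDualOptimizers}: $\alpha_x^\lambda=-\lambda\log\EVsub{Y\sim\sb}{\exp((\beta_Y^\lambda-c(x,Y))/\lambda)}\leq\langle c(x,\cdot),\sb\rangle-\langle\betab^\lambda,\sb\rangle\leq\cxp(x)+\langle\cyp,\sb\rangle-\EROT(\rb,\sb)/2$, and symmetrically for $\beta_y^\lambda$; together with $\EROT(\rb,\sb)/2\geq(\langle\cxm,\rb\rangle+\langle\cym,\sb\rangle)/2$ these are exactly the stated upper bounds. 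For the \emph{lower} bound on $\alpha_x^\lambda$ I would instead estimate the summand pointwise, $\exp((\beta_y^\lambda-c(x,y))/\lambda)\leq e^{-\cxm(x)/\lambda}\exp((\beta_y^\lambda-\cym(y))/\lambda)$, and insert the upper bound on $\beta_y^\lambda$ just obtained, getting $\sum_{y\in\YC}\exp((\beta_y^\lambda-\cym(y))/\lambda)s_y\leq e^{(\langle\cxp,\rb\rangle-\EROT(\rb,\sb)/2)/\lambda}\langle\eySup{},\sb\rangle$; applying $-\lambda\log$ and once more $\EROT(\rb,\sb)/2\geq(\langle\cxm,\rb\rangle+\langle\cym,\sb\rangle)/2$ gives the stated lower bound, and the lower bound on $\beta_y^\lambda$ follows by symmetry. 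There is no circularity: the only coupling between the four estimates is through the scalar $\EROT(\rb,\sb)/2$ and, for the two lower bounds, the already-established upper bounds.

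For the plan I would substitute all four potential bounds together with $\cxm\oplus\cym\leq c\leq\cxp\oplus\cyp$ into the optimality identity; the prefactors $\exSup{}(x)$ and $\eySup{}(y)$ then appear automatically, and all that is left over is the scalar $\exp((\langle\cxp,\rb\rangle+\langle\cyp,\sb\rangle-\EROT(\rb,\sb))/\lambda)$ for the upper bound, and its reciprocal times $(\langle\exSup{},\rb\rangle\langle\eySup{},\sb\rangle)^{-1}$ for the lower bound. Both are controlled by the single estimate
\[
\EROT(\rb,\sb)\;\geq\;\langle\cxm,\rb\rangle+\langle\cym,\sb\rangle\;\geq\;\langle\cxp,\rb\rangle+\langle\cyp,\sb\rangle-\lambda\log\langle\exSup{},\rb\rangle-\lambda\log\langle\eySup{},\sb\rangle,
\]
whose second inequality is Jensen applied to $\langle\exSup{},\rb\rangle\geq e^{\langle\cxp-\cxm,\rb\rangle/\lambda}$ and $\langle\eySup{},\sb\rangle\geq e^{\langle\cyp-\cym,\sb\rangle/\lambda}$. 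This yields precisely the claimed two-sided bounds for $\pi_{xy}^\lambda$. Finally, $\cx\exSup{}\in\lr$ and $\cy\eySup{}\in\ls$ force $\exSup{}\in\lr$ and $\eySup{}\in\ls$ (since $\cx,\cy\geq 1$), so summing the upper bound on $\pi_{xy}^\lambda$ against the weight $\cx\oplus\cy$ factorizes into a finite combination of $\langle\cx\exSup{},\rb\rangle$, $\langle\exSup{},\rb\rangle$, $\langle\cy\eySup{},\sb\rangle$, $\langle\eySup{},\sb\rangle$, proving $\pi^\lambda\in\ell^1_{\cx\oplus\cy}(\XC\times\YC)$.

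The only genuinely nontrivial point is the last one: realizing that closing the plan bounds hinges entirely on the scalar inequality $\EROT(\rb,\sb)\geq\langle\cxp,\rb\rangle+\langle\cyp,\sb\rangle-\lambda\log\langle\exSup{},\rb\rangle-\lambda\log\langle\eySup{},\sb\rangle$ — not on any sharper control of the potentials — and that this reduces, via Jensen, to the elementary lower bound $\EROT(\rb,\sb)\geq\langle\cxm,\rb\rangle+\langle\cym,\sb\rangle$; everything else is bookkeeping with \eqref{eq:OptimalityCriterionForDualOptimizers} and \eqref{eq:LowerUpperBoundsC}.
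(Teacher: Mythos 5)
Your proposal is correct and matches the paper's own argument step for step: Jensen's inequality applied to the fixed-point relations \eqref{eq:OptimalityCriterionForDualOptimizers} yields the upper potential bounds, substituting those upper bounds (together with the cost lower envelope) back into the conjugate fixed-point relation yields the lower bounds, and plugging all four potential bounds together with $\cxm\oplus\cym\leq c\leq\cxp\oplus\cyp$ into the optimality identity \eqref{eq:optimalityCriterion_1_connectionSolutions} yields the two-sided plan bounds, closed by one more application of Jensen. The ``key scalar inequality'' you highlight at the end is the same estimate the paper uses, just presented as a single displayed line rather than being distributed across the chained substitutions; so this is a cleaner exposition of the closing step, not a genuinely different route.
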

	The proof is deferred to Appendix \ref{app:ProofsPreliminaries} and relies on the correspondence between EROT potentials from Remark \ref{rem:NonConstantDualPotentials} in conjunction with Jensen's inequality.

\section{Main Results}\label{sec:LimitDistributions}
We derive the limit distributions of the empirical EROT value and plan. 
More precisely, we estimate the EROT quantities $\EROT(\rb, \sb)$ and $\pib^\lambda(\rb, \sb)$ by plug-in estimators $\EROT(\hat\rb_n, \hat\sb_m)$ and $\pib^\lambda(\hat\rb_n, \hat\sb_m)$  based on empirical counterparts $\hat\rb_n$ and $\hat \sb_m$ \eqref{eq:DefEmpiricalMeasures} of the probability measures $\rb$ and $\sb$. %
We then characterize the statistical fluctuation of these empirical plug-in estimators around their respective population version in terms of weak convergence. %
The underlying metric space in which weak convergence of the empirical EROT plan occurs is a suitable weighted $\ell^1$-space. This is of particular interest in order to conclude uniform distributional limits for the empirical EROT plan evaluated over a suitably bounded function class.
Our results are based on an application of the functional delta method for  tangentially Hadamard differentiable functionals \citep[Chapter 3.6]{van1996weak}. More precisely, for our sensitivity analysis (Section \ref{sec:SensitivityAnalysis})  of the EROT value and plan we consider a suitable norm $\norm{\cdot}_{\ell^\infty(\FC)}$ induced by a function class $\FC$ which is motivated by our bounds for the EROT potentials (Proposition \ref{prop:BoundsOptimalPotentials}) and depends on the choices of the dominating functions from \eqref{eq:LowerUpperBoundsC} for the cost $c$. To employ the functional delta method it is necessary that the empirical process $\sqrt{n}(\hat \rb_n - \rb)$ indexed over $\FC$ weakly converges to a tight limiting process. This is formalized using the notion of Donsker classes \citep[Chapter~2]{van1996weak}. In particular, based on the finite dimensional central limit theorem it follows that the weak limit for the empirical process is given by a Gaussian process $G_\rb$ with covariance, 
	\begin{equation}\label{eq:DefintionCovariance}
		\Cov[G_\rb(f), G_\rb(f')] = \langle ff', \rb\rangle - \langle f, \rb\rangle\langle f', \rb\rangle \quad \text{ for } f,f'\in \FC.
	\end{equation}
Necessary and sufficient conditions for important function classes of this work to fulfill the Donsker property  are given in the following lemma. Its proof is given in  Appendix \ref{app:ProofSectionExamples}. 

\begin{lemma}\label{lem:WeakConvergence}
	Let  $\rb\in \PC(\XC)$  be a probability measure and consider a  positive function $f \colon \XC\rightarrow (0, \infty)$. Define the function classes $\FC_1 \coloneqq\{ \tilde f \colon \XC\rightarrow \RR \,\colon |\tilde f|\leq f\}$ and  $\FC_2\coloneqq \{f\}\cup\{f \Indicator{\XC\backslash\{x_1, \dots, x_n\}} \colon n \in \NN\}$ and let $\FC_3,\FC_4$ be two arbitrary function classes on $\XC$. Then, the following assertions hold. 
	\begin{enumerate}[label=$(\roman*)$]
		\item \;\;The function class $\FC_1 $ is $\rb$-Donsker if and only if $\sum_{x\in \XC}f(x)\sqrt{r_x}<\infty$.
		\item \;The function class $\FC_2$ is $\rb$-Donsker if and only if $\sum_{x\in \XC} (f(x))^2\,r_x<\infty$.
		\item The union $\FC_3\cup \FC_4$ is $\rb$ Donsker if and only if $\FC_3$ and $\FC_4$ are $\rb$-Donsker.
	\end{enumerate}

\end{lemma}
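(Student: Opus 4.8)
The plan is to reduce everything to classical characterizations of Donsker classes over a countably-supported base measure, using the fact that $\XC$ carries the discrete topology so that every function class is a subset of $\ell^\infty(\XC)$ indexed by points of $\XC$. First I would treat part $(i)$. The class $\FC_1$ has envelope $f$, and the key observation is that $\FC_1$ is precisely the order interval $[-f,f]$ in $\ell^\infty(\XC)$; the empirical process $\sqrt{n}(\hat\rb_n-\rb)$ indexed by $\FC_1$ is, after identifying $f\in\FC_1$ with the vector $(f(x))_{x\in\XC}$, a scaled version of the multinomial empirical process. The standard route is: $\FC_1$ is $\rb$-Donsker iff the map $g\mapsto (g(x)\sqrt{r_x})_{x\in\XC}$ sends $\FC_1$ into a relatively compact subset of $\ell^1(\XC)$ \emph{and} the finite-dimensional CLT plus asymptotic equicontinuity hold; equivalently, since the coordinate process at $x$ contributes fluctuation of order $\sqrt{r_x}$ uniformly over $|g(x)|\le f(x)$, one needs $\sum_{x}f(x)\sqrt{r_x}<\infty$ for tightness. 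I would make this precise by writing $G_\rb(g)=\sum_x g(x)(\text{coordinate Gaussian})$ and noting the supremum over $\FC_1$ of the limit process is $\sum_x f(x)|Z_x|$ with $Z_x$ centered Gaussian of variance $\sim r_x$, which is a.s.\ finite iff $\sum_x f(x)\sqrt{r_x}<\infty$ by the three-series / Fubini argument; the matching asymptotic tightness of the empirical process under the same summability condition is the substantive direction and should follow from a direct bracketing or a truncation argument splitting $\XC$ into a finite set and a tail of small total $\rb$-mass.

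For part $(ii)$, the class $\FC_2=\{f\}\cup\{f\Indicator{\XC\setminus\{x_1,\dots,x_n\}}:n\in\NN\}$ is countable and totally ordered by inclusion of supports, so the empirical process over $\FC_2$ is essentially a (reversed) partial-sum / martingale-type process: $\sqrt{n}(\hat\rb_n-\rb)(f\Indicator{\XC\setminus\{x_1,\dots,x_n\}})$ is the tail sum $\sum_{x\notin\{x_1,\dots,x_n\}} f(x)\sqrt{n}(\hat r_{n,x}-r_x)$. Tightness of such a monotone family reduces to an $L^2$-continuity condition at the "endpoint" $n\to\infty$, namely that $\Var_\rb[f\Indicator{\XC\setminus\{x_1,\dots,x_n\}}]\to 0$ fast enough; since this variance is dominated by $\sum_{x\notin\{x_1,\dots,x_n\}} f(x)^2 r_x$, the process is $\rb$-Donsker iff $\sum_x f(x)^2 r_x<\infty$, which is exactly the condition that $f\in L^2(\rb)$ so that the coordinate variances are summable and the tail pieces are genuinely negligible. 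I would invoke the standard criterion (e.g.\ \citet[Chapter 2.1--2.5]{van1996weak}) that a class with square-integrable envelope that is "small" in an appropriate metric-entropy or uniform-entropy sense is Donsker, and for this totally ordered countable class verify the entropy integral converges precisely under $f\in L^2(\rb)$; the necessity direction follows because if $\sum_x f(x)^2 r_x=\infty$ the finite-dimensional limit at the point $f$ itself already has infinite variance, so no Gaussian limit can exist.

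Part $(iii)$ is the routine stability statement: a finite union of Donsker classes is Donsker. One direction is trivial (a subclass of a Donsker class is Donsker, since restriction is continuous), and for the other I would use that the empirical process over $\FC_3\cup\FC_4$ is asymptotically tight iff it is tight over each piece, because asymptotic equicontinuity with respect to the (semi)metric $\rho_\rb(f,g)=\sqrt{\Var_\rho(f-g)}$ can be checked separately on the two pieces and then combined via a union bound over the two sub-$\varepsilon$-nets, while the finite-dimensional convergence is automatic from the multivariate CLT on the combined finite index set; this is \citet[e.g.\ Example 2.10.7 or the remarks on unions of Donsker classes]{van1996weak}. The main obstacle I anticipate is not any single step but getting the \emph{sharp} (iff) constants in $(i)$ and $(ii)$ rather than merely sufficient conditions: the sufficiency directions are soft, but proving necessity — that failure of $\sum_x f(x)\sqrt{r_x}<\infty$ (resp.\ $\sum_x f(x)^2 r_x<\infty$) genuinely destroys tightness (resp.\ forces an infinite-variance coordinate) — requires a careful lower-bound construction, most cleanly done by exhibiting, along a subsequence of points $x_k$ with large $f(x_k)\sqrt{r_{x_k}}$, a sequence of functions in the class whose empirical-process increments do not shrink, contradicting asymptotic equicontinuity.
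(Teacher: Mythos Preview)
Your proposal is essentially correct and would yield a valid proof, but it differs from the paper's argument in parts $(i)$ and $(ii)$; part $(iii)$ matches exactly (both cite \citet[Theorem 2.10.1 and Example 2.10.7]{van1996weak}).

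For $(i)$ the paper does not analyze the limiting Gaussian directly. Instead it invokes the partitioning theorem \citet[Theorem 2.10.24]{van1996weak}: with the partition $\XC=\bigcup_n\{x_n\}$, one bounds $\mathbb{E}\sup_{\tilde f\in \FC_1|_{\{x_n\}}}\langle \tilde f,\sqrt{n}(\hat r_n-r)\rangle\leq f(x_n)\sqrt{r_{x_n}}$ via Cauchy--Schwarz, and summability of these bounds gives the Donsker property. The necessity direction is not reconstructed from first principles but cited from \citet[Lemma 2.2]{van1996new}, which records precisely the weighted Borisov--Dudley--Durst condition. Your three-series/truncation route would work but needs care because the coordinate Gaussians are correlated (covariance $-r_xr_{x'}$), so the ``$\sum f(x)|Z_x|$'' heuristic is not literally a sum of independent terms; the paper avoids this entirely.

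For $(ii)$ the paper uses a structural factorization rather than the monotone-class entropy computation you sketch: it writes $\FC_2=\{f\}\cdot\overline\FC_2$ where $\overline\FC_2=\{\Indicator{\XC},\Indicator{\XC\setminus\{x_1,\dots,x_n\}}:n\in\NN\}$ is a VC-class of index $2$. Since a singleton and a VC-class both have bounded uniform entropy integrals, so does their elementwise product \citep[Theorem 9.15]{kosorok2008introduction}, and with envelope $f\in L^2(\rb)$ the uniform-entropy CLT \citep[Theorem 2.5.2]{van1996weak} gives the Donsker property. Necessity is exactly your observation: $\{f\}\subset\FC_2$, and $\{f\}$ Donsker already forces $f\in L^2(\rb)$. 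Your approach via the linearly-ordered structure of $\FC_2$ is equally valid (linearly ordered classes are VC-subgraph of index $2$), but the product decomposition is a one-line citation rather than an entropy calculation.
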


We like to stress that assertion $(i)$ is a weighted version of the celebrated Borisov-Dudley-Durst theorem \citep{durst1980, borisov1981some,  Borisov1983}. 
The summability condition in $(ii)$ is weaker than the weighted Borisov-Dudley-Durst condition in $(i)$. 
Indeed, if $\sum_{x\in \XC}f(x)\sqrt{r_x}<\infty$, then the subset $\tilde \XC\subseteq \XC$ of elements $x\in \XC$ for which $f(x)\sqrt{r_x}\geq 1$ admits finite cardinality; 
hence, since $t \leq \sqrt{t}$ for any $t\in [0,1]$, we obtain
\begin{align*}
	\sum_{x\in \XC} (f(x))^2\,r_x %
	\leq \sum_{x\in \tilde \XC} (f(x))^2\,r_x + \sum_{x\in\XC\backslash \tilde \XC}f(x)\sqrt{r_x}<\infty. 
\end{align*}
Moreover, since the Banach space $\lXSub{f}$ isometrically embeds into the space $\ell^\infty(\FC)$ (Lemma \ref{lem:equivalenceNorms}) it follows by \citet[Theorem 1.3.10]{van1996weak} that the condition $\sum_{x\in \XC}f(x)\sqrt{r_x}<\infty$ is necessary and sufficient for the empirical process $\sqrt{n}(\hat \rb_n - \rb)$ to weakly converge in $\lXSub{f}$.

\subsection{Distributional Limits for Empirical Entropic Optimal Transport  Value}\label{subsec:EROTValue}

In this subsection we state our main result on the distributional limits for the empirical EROT value under general cost functions.

\begin{theorem}[Distributional limit for EROT value]
	Assume that $c\colon \XC\times \YC\rightarrow \RR$  satisfies \eqref{eq:LowerUpperBoundsC} for two collections of dominating functions $\cxm, \cxp, \cym, \cyp$ and $\tcxm, \tcxp,\tcym,\tcyp$. For these two collections define $\cx,\cy,\eySup{}$ and $\tcx,\tcy,\texSup{}$ as in \eqref{eq:DefWeightingFunctions}. 
	Further, consider probability measures $\rb\in \PC(\XC)$, $\sb\in \PC(\YC)$. 	

\label{them:LimitLawEmpEROT_Value}
	\begin{enumerate}
		\item[$(i)$] (One sample from $\rb$) Denote by $\hat \rb_n \coloneqq \frac{1}{n}\sum_{i = 1}^n \delta_{X_i}$ the empirical measure for random variables $X_1, \dots, X_n \iid \rb$ and assume that  
		\begin{align*}
			&\sum_{x\in \XC} \cx(x)\sqrt{r_x} + \big(\tcx^2+\texSup{2}\big)(x)r_x <\infty \quad \text{and} \quad \sum_{y\in \YC} \big(\cy+\tcy+\eySup{}\big)(y) s_y <\infty.
		\end{align*} 
		Then, for $n\rightarrow \infty$, it follows that$$ \sqrt{n}\Big(\EROT(\hat \rb_n, \sb) - \EROT(\rb, \sb)\Big) \konvW \normal\big(0,\sigma_{\lambda}^2(\rb|\sb)\big),$$
		where $\sigma_\lambda^2(\rb|\sb)=
		\textup{Var}_{X\sim \rb}[\alpha^\lambda_X]$ for $(\alphab^\lambda, \betab^\lambda)\in \lr\times \ls$ denoting EROT potentials for $\rb$ and $\sb$ with cost $c$.
		\item[$(ii)$] (Two samples) Additionally, let $\hat \sb_m \coloneqq \frac{1}{m}\sum_{i = 1}^m \delta_{Y_i}$ be the empirical measure for random variables $Y_1, \dots, Y_m \iid \sb$ independent of $X_1, \dots, X_n \iid \rb$ and assume that 
		\begin{align*}
			&\sum_{x\in \XC} \cx(x)\sqrt{r_x}  + \big(\tcx^2+\texSup{2}\big)(x)r_x<\infty \!\!\quad \text{and} \quad\!\! \sum_{y\in \YC} \tcy(y)\sqrt{s_y} +\big(\cy^2+\eySup{2}\big)(y) s_y <\infty.
		\end{align*}
		Then, for $\min(m,n)\rightarrow \infty$ with $\frac{m}{m+n}\rightarrow \delta \in (0,1)$, it follows that \begin{align*}
			\sqrt{\frac{nm}{n+m}}\Big(\EROT(\hat \rb_n, \hat\sb_{m}) - \EROT(\rb, \sb)\Big) \konvW  \normal\big(0, \delta\sigma_{\lambda}^2(\rb|\sb) + (1-\delta)\sigma_{\lambda}^2(\sb|\rb)\big),
		\end{align*}%
		where $\sigma_{\lambda}^2(\rb|\sb) = \textup{Var}_{X\sim \rb}[\alpha^\lambda_X]$ and $\sigma_{\lambda}^2(\sb|\rb)=\textup{Var}_{Y\sim \sb}[\beta^\lambda_Y]$ for $(\alphab^\lambda,\beta^\lambda)\in \lr\times \ls$ denoting a pair of EROT potentials for $\rb$ and $\sb$ with cost~$c$.
	\end{enumerate}
\end{theorem}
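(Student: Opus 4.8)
The plan is to deduce both parts from the functional delta method \citep[Theorem~3.9.4]{van1996weak} applied to the EROT-value functional, the two inputs being (a) its Hadamard differentiability with respect to the norms $\norm{\cdot}_{\ell^\infty(\FC_\XC)}$ and $\norm{\cdot}_{\ell^\infty(\FC_\YC)}$ generated by the classes in \eqref{eq:FunctionClassFC} (Theorem~\ref{them:EntropicOTCostUnboundedIsHadamardDifferentiable}), whose derivative is the continuous linear map $(h_\XC,h_\YC)\mapsto\langle\alphab^\lambda,h_\XC\rangle+\langle\betab^\lambda,h_\YC\rangle$; and (b) weak convergence of the empirical processes $\sqrt{n}(\hat\rb_n-\rb)$ and $\sqrt{m}(\hat\sb_m-\sb)$ in those spaces. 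First I would invoke the reduction in Section~\ref{sec:Preliminaries} to assume $c\ge0$: replacing $c$ by $\tilde c=c-\cxm\oplus\cym$ shifts $\EROT(\rb,\sb)$ by the affine term $\langle\cxmb,\rb\rangle+\langle\cymb,\sb\rangle$, and since the summability hypotheses imply $\cx\in\lr$ (use $r_x\le\sqrt{r_x}$) and $\cy\in\ls$ (directly in part~$(i)$, and by Cauchy--Schwarz from $\sum_y\cy(y)^2 s_y<\infty$ in part~$(ii)$), this shift is a continuous linear functional on $\ell^\infty(\FC_\XC)\times\ell^\infty(\FC_\YC)$ and may be carried along as a trivially differentiable summand. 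The same integrability makes Propositions~\ref{prop:DualEROT} and~\ref{prop:BoundsOptimalPotentials} applicable, so the EROT potentials $(\alphab^\lambda,\betab^\lambda)$ exist, are $(\rb,\sb)$-a.s.\ unique up to a constant shift, and obey the stated growth bounds; in particular $|\alpha^\lambda_x|$ is bounded by a constant multiple of $\cx(x)$, which both makes the derivative a well-defined continuous functional and, via $\sum_x\cx(x)^2 r_x<\infty$ (a consequence of $\sum_x\cx(x)\sqrt{r_x}<\infty$, cf.\ the discussion following Lemma~\ref{lem:WeakConvergence}), ensures $\Var_{X\sim\rb}[\alpha^\lambda_X]<\infty$.

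For ingredient (b) I would invoke Lemma~\ref{lem:WeakConvergence}. Decomposing $\FC_\XC$ into its four constituent pieces $\{f\colon|f|\le\cx\}$, $\{\tcx\}$, $\{\texSup{}\}$ and $\{\texSup{}\Indicator{\XC\backslash\{x_1,\dots,x_n\}}\colon n\in\NN\}$ and using parts $(i)$--$(iii)$ of that lemma, $\FC_\XC$ is $\rb$-Donsker if and only if $\sum_{x\in\XC}\cx(x)\sqrt{r_x}+(\tcx^2+\texSup{2})(x)r_x<\infty$, which is exactly the $\rb$-part of the hypothesis in both $(i)$ and $(ii)$; hence $\sqrt{n}(\hat\rb_n-\rb)\konvW G_\rb$ in $\ell^\infty(\FC_\XC)$ with covariance given by \eqref{eq:DefintionCovariance}. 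In part $(i)$ the measure $\sb$ is fixed, and the remaining hypothesis $\sum_{y\in\YC}(\cy+\tcy+\eySup{})(y)s_y<\infty$ says precisely that the envelope $\cy+\tcy+\eySup{}$ of $\FC_\YC$ lies in $\ls$, so $\sb$ is a legitimate point of $\ell^\infty(\FC_\YC)$ at which to linearize.

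For $(i)$, the functional delta method applied to $\rb\mapsto\EROT(\rb,\sb)$ at $\rb$, tangentially to the support of $G_\rb$, then gives $\sqrt{n}(\EROT(\hat\rb_n,\sb)-\EROT(\rb,\sb))\konvW G_\rb(\alphab^\lambda)$, and by \eqref{eq:DefintionCovariance} the right-hand side is centered Gaussian with variance $\langle(\alphab^\lambda)^2,\rb\rangle-\langle\alphab^\lambda,\rb\rangle^2=\Var_{X\sim\rb}[\alpha^\lambda_X]=\sigma_\lambda^2(\rb|\sb)$. For $(ii)$, the extra hypothesis $\sum_{y\in\YC}\tcy(y)\sqrt{s_y}+(\cy^2+\eySup{2})(y)s_y<\infty$ is, again by Lemma~\ref{lem:WeakConvergence}, equivalent to $\FC_\YC$ being $\sb$-Donsker, so $\sqrt{m}(\hat\sb_m-\sb)\konvW G_\sb$ in $\ell^\infty(\FC_\YC)$; by independence of the two samples and $\tfrac{m}{m+n}\to\delta$, the rescaled pair $\sqrt{\tfrac{nm}{n+m}}\big((\hat\rb_n,\hat\sb_m)-(\rb,\sb)\big)$ converges jointly in $\ell^\infty(\FC_\XC)\times\ell^\infty(\FC_\YC)$ to $(\sqrt{\delta}\,G_\rb,\sqrt{1-\delta}\,G_\sb)$ with $G_\rb$ and $G_\sb$ independent, and the delta method returns the limit $\sqrt{\delta}\,G_\rb(\alphab^\lambda)+\sqrt{1-\delta}\,G_\sb(\betab^\lambda)\sim\normal\big(0,\delta\sigma_\lambda^2(\rb|\sb)+(1-\delta)\sigma_\lambda^2(\sb|\rb)\big)$.

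I expect the genuine obstacle to be input (a), the Hadamard differentiability of Theorem~\ref{them:EntropicOTCostUnboundedIsHadamardDifferentiable}: everything above is bookkeeping around it and around Lemma~\ref{lem:WeakConvergence}. Its proof starts from the dual \eqref{eq:DualEntropicOptimalTransportProblem}, uses strong duality to represent $\EROT=\langle\alphab^\lambda,\rb\rangle+\langle\betab^\lambda,\sb\rangle$ (cf.\ \eqref{eq:EROTValueRepresentation}), and must show that along any perturbation $\rb+t h_t\to\rb$ in $\ell^\infty(\FC_\XC)$ the optimal potentials move by a vanishing amount, with the error terms of order $o(t)$ in the relevant norm; the exponential weights $\texSup{},\eySup{}$ and the tail-indicator functions appearing in \eqref{eq:FunctionClassFC} are dictated precisely by the need to dominate those error terms through the quantitative bounds of Proposition~\ref{prop:BoundsOptimalPotentials}. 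Apart from that, the only points requiring care are the identification of the function classes $\FC_\XC,\FC_\YC$ and the translation of their Donsker/boundedness properties into the stated summability conditions.
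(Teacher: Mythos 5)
Your proposal is correct and matches the paper's own proof essentially step for step: functional delta method driven by the Hadamard differentiability of the EROT value with respect to the $\ell^\infty(\FC_\XC)\times\ell^\infty(\FC_\YC)$-norm (Theorem~\ref{them:EntropicOTCostUnboundedIsHadamardDifferentiable}) together with the Donsker conditions from Lemma~\ref{lem:WeakConvergence}. The one step you pass over lightly is verifying that the weak limit $G_\rb$ (resp.\ $(\sqrt{\delta}G_\rb,\sqrt{1-\delta}G_\sb)$) is almost surely concentrated on the contingent cone $T_{(\rb,\sb)}$, which the paper handles by characterizing the cone via \citet[Proposition~4.2.1]{aubin2009setValued} and then invoking Portmanteau's closed-set characterization of weak convergence; this hypothesis of \citet[Theorem~3.9.4]{van1996weak} is needed because the derivative is only defined tangentially, so it should be stated explicitly rather than subsumed in the phrase ``tangentially to the support of $G_\rb$.''
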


\begin{proof}
	 Consider the function classes $\FC_\XC$ and $\FC_\YC$ from \eqref{eq:FunctionClassFC}. 
By the summability constraint for the one-sample and two-sample case we obtain from Lemma \ref{lem:WeakConvergence} weak convergence
\begin{align*}
	\sqrt{n} \Big((\hat \rb_n,s) - (\rb, s)\Big) &\konvW (\Gb_{\rb},0)\; &&\text{ in } \ell^\infty(\FC_\XC)\times \ell^\infty(\FC_\YC),\\
	\sqrt{\frac{nm}{n+m}} \Big((\hat \rb_n,\hat s_m) - (\rb, s)\Big) &\konvW (\sqrt{\delta}\Gb_{\rb},\sqrt{1-\delta}\Gb_\sb)\; &&\text{ in } \ell^\infty(\FC_\XC)\times \ell^\infty(\FC_\YC).
\end{align*}
The covariance of the Gaussian process is detailed in \eqref{eq:DefintionCovariance}. 
Further, by Theorem \ref{them:TransportPlanIsHadamardDiff} the  EROT value is Hadamard differentiable at $(\rb, \sb)$  tangentially to $\big(\probset{\XC}\cap \ell^\infty(\FC_\XC)\big)\times \big( \probset{\YC}\cap \ell^\infty(\FC_\YC)\big)$. The Hadamard derivative is defined on the contingent cone and given by $(h^\XC, h^\YC) \mapsto \langle \alpha^\lambda, h^\XC\rangle  + \langle \betab^\lambda, h^\YC\rangle$. In particular, by convexity of the domain it follows by \citet[Proposition 4.2.1]{aubin2009setValued} that the contingent cone at $(\rb, \sb)$ is characterized in terms of, 
\begin{align*}
	&T_{|(\rb, \sb)}\Big(\big(\probset{\XC}\cap \ell^\infty(\FC_\XC)\big)\times \big( \probset{\YC}\cap \ell^\infty(\FC_\YC)\big)\Big) \\
	=& \closure\{t(\tilde \rb - \rb, \tilde \sb- \sb) \colon t>0, \tilde \rb \in \PC(\XC)\cap \ell^\infty(\FC_\XC), \tilde \sb\in \PC(\YC)\cap \ell^\infty(\FC_\YC)\},
\end{align*}
where the closure is taken with respect to $\ell^\infty(\FC_\XC)\times \ell^\infty(\FC_\YC)$.  
Hence,  by Portmanteau's characterization of weak convergence in terms of closed sets \citep[Theorem 1.3.4(ii)]{van1996weak} it follows that the weak limit is almost surely contained in the contingent cone at $(\rb, \sb)$. 
Invoking the functional delta method \cite[Theorem 3.9.4]{van1996weak} thus asserts both distributional limits for the empirical EROT value. 
Note herein that the underlying topology for Hadamard differentiability and for the weak convergence coincide. 
By linearity of the tangential Hadamard derivative, the asymptotic distribution is given by centered normal where the limiting variance for the one-sample case is given by $$\sigma^2_{\lambda}(\rb|\sb) =  \sum_{x,x'\in \XC}\alphab^\lambda_x\alphab^\lambda_{x'} \Sigma(\rb)_{xx'} = \sum_{x \in \XC}(\alphab^\lambda_x)^2 r_x - \Big(\sum_{x\in \XC}\alphab^\lambda_x r_x\Big)^2 = \text{Var}_{X\sim \rb}[\alpha^\lambda_X].$$
The calculation for the two-sample case is analogous.
\end{proof}

\begin{remark}[Comparison to other works and degeneracy]\label{rem:NotesOnLimitLaws}
	A few comments concerning the obtained distributional limits and their relation to related results are in order. 
	\begin{enumerate}
		\item[$(i)$]  Implications to popular costs, e.g., $p$-th power metric based costs are derived Section \ref{sec:Examples}. We also assess uniformly bounded costs and powers of norms on Euclidean spaces.  %
		\item[$(ii)$] 
		The distributional limits for the EROT value are characterized by a centered normal where the asymptotic variance is given by the variance of the respective population EROT potentials. These observations are in line with previous findings for finite spaces \citep{bigot2019CentralLT,klatt2018empirical} and for continuous spaces with squared Euclidean costs \citep{mena2019,del2022improved, goldfeld2022statistical}.  In fact, for probability measures with finite support our required summability constraints are trivially fulfilled, and our findings coincide with results by \cite{bigot2019CentralLT} and  \cite{klatt2018empirical}. 
		\item[$(iii)$] For squared Euclidean costs and sufficiently spread countable spaces $\XC, \YC$ contained in a Euclidean spaces (e.g. $\XC = \YC= \Z$), our conditions require that the underlying measures are sub-Weibull of order $1+\epsilon$, instead of sub-Gaussian (i.e., sub-Weibull of order $2$) as required by \cite{mena2019,del2022improved, goldfeld2022statistical}. In this context, we deem it possible that the sub-Gaussianity condition can be slightly lifted for continuous Euclidean settings. 
		\item[$(iv)$] For a common countable  metric ground space $(\XC,d)$ i.e., $\XC = \YC$ our results for the cost function $c(x,y) = d^p(x,y)$, $p\geq 1$ show no substantial difference in the limit law for empirical EROT quantities between the cases $\rb =  \sb$ and $\rb \neq \sb$. 
		Additionally, our derived limit distributions generally do not degenerate as EROT potentials are typically non-constant due to their relation via the optimality conditions (Remark \ref{rem:NonConstantDualPotentials}). %
		In contrast, limit results for the empirical unregularized OT value obtained by \cite{tameling18} show a clear distinction in the limit behavior between the cases $\rb =  \sb$ and $\rb \neq \sb$. Further, for $\XC = \YC$ and $c(x,y) = d^p(x,y)$ with $p>1$ the obtained limits by \cite{tameling18} degenerate for $\rb= \sb$ with $\supp(\rb) = \XC$ if and only if $\XC$ has no isolated point. This is a consequence of uniqueness and constancy of the underlying unregularized OT potentials. For more general costs, sharp conditions for the existence of constant OT potentials in terms of the cost and the underlying measures are derived by \citet[Section 4]{hundrieser2022unifying}. This illustrates again that limit laws for empirical unregularized OT fundamentally differ from their entropic counterparts. 
		\item[$(v)$] Given the distributional limits, it is a natural question to ask whether the empirical EROT value is asymptotically efficient in the semiparametric sense (see  \citealt[p. 367]{vaart_1998}). Intuitively, this means that any estimator for $EROT(\cdot,\cdot)$  attains, among all one-dimensional submodels passing through the population measures $(\mu, \nu),$ at least the same asymptotic variance. \citet[Proposition 2, Theorem 7]{goldfeld2022statistical} have shown that this is indeed the case for the EROT value due to its linear derivative, demonstrating asymptotic efficiency of the empirical EROT value.
	\end{enumerate}

\end{remark}

When considering $\XC =\YC$ and a positive definite cost function, i.e., $c(x,y) \geq 0$ for any $x, y\in \XC$ and $c(x,y) = 0$ if and only if $x= y$, then for any $\rb\in \PC(\XC)$ which is not equal to a Dirac measure it follows that $\EROT(\rb, \rb)>0$. This is a consequence of entropy regularization which leads to $\supp(\pi^\lambda(\rb,\rb))=\supp(\rb\otimes \rb)$ for any $\lambda>0$. 
	To resolve this drawback of the EROT value, \cite{Feydy2019Interpolating} proposed a debiased quantity. For $\rb, \sb\in \PC(\XC)$ the \emph{Sinkhorn divergence} is defined as
	\begin{align*}
		\overline {E\!ROT}^\lambda(\rb, \sb) &\coloneqq \EROT(\rb, \sb) - \frac{1}{2} \left(\EROT(\rb, \rb) - \EROT(\sb, \sb)\right).%
	\end{align*}
	Clearly, it holds that $\overline {E\!ROT}^\lambda(\rb, \rb) =0$. For this  quantity we also obtain distributional limits when estimating the underlying measures using empirical measures. 
	\begin{theorem}[Distributional limit for Sinkhorn divergence]
		Let $\XC= \YC$ and consider a symmetric, positive definite cost function with two collections of dominating functions $\cxm, \cxp, \cym, \cyp$ and $\tcxm, \tcxp,\tcym,\tcyp$ as in \eqref{eq:LowerUpperBoundsC} and $\cx,\cy,\eySup{}$ and $\tcx,\tcy,\texSup{}$ as in \eqref{eq:DefWeightingFunctions}. 
		Further, consider probability measures $\rb, \sb\in \PC(\XC)$. \begin{enumerate}
			\item[$(i)$] (One sample from $\rb$) Denote by $\hat \rb_n \coloneqq \frac{1}{n}\sum_{i = 1}^n \delta_{X_i}$ the empirical measure for random variables $X_1, \dots, X_n \iid \rb$ and assume that  
		\begin{align}
			&\sum_{x\in \XC} (\cx + \cy)(x)\sqrt{r_x} + \big(\tcx^2 + \tcy^2+\texSup{2} +\eySup{2} \big)(x)r_x <\infty \quad \text{and} \quad \label{eq:SummabilityConstraintDebiasedVersion}\\
			&\sum_{y\in \YC} \big(\cx + \cy + \tcx+ \tcy+\texSup{} + \eySup{}\big)(x) s_x <\infty.
		\end{align} 
		Then, for $n\rightarrow \infty$, it follows that
		$$ \sqrt{n}\Big(\overline{E\!ROT}^\lambda(\hat \rb_n, \sb) - \overline{E\!ROT}^\lambda(\rb, \sb)\Big) \konvW \normal\big(0,\overline\sigma_{\lambda}^2(\rb|\sb)\big),$$
		where $\overline\sigma_\lambda^2(\rb|\sb)=
		\textup{Var}_{X\sim \rb}[\alpha_X^\lambda(\rb,\sb) - \alpha_X^\lambda(\rb, \rb)]$ and  $\alphab^\lambda(r,s)$, $\alphab^\lambda(r,r)$ represent EROT potentials.

		\item[$(ii)$] (Two samples) Additionally, let $\hat \sb_m \coloneqq \frac{1}{m}\sum_{i = 1}^m \delta_{Y_i}$ be the empirical measure for random variables $Y_1, \dots, Y_m \iid \sb$ independent of $X_1, \dots, X_n \iid \rb$ and assume that $\rb$ and $\sb$ fulfill \eqref{eq:SummabilityConstraintDebiasedVersion}. 
		Then, for $\min(m,n)\rightarrow \infty$ with $\frac{m}{m+n}\rightarrow \delta \in (0,1)$, it follows that \begin{align*}
			\sqrt{\frac{nm}{n+m}}\Big(\overline{E\!ROT}^\lambda(\hat \rb_n, \hat\sb_{m}) - \overline{E\!ROT}^\lambda(\rb, \sb)\Big) \konvW  \normal\big(0, \delta\overline\sigma_{\lambda}^2(\rb|\sb) + (1-\delta)\overline\sigma_{\lambda}^2(\sb|\rb)\big),
		\end{align*}%
		where $\overline\sigma_\lambda^2(\rb|\sb)=
		\textup{Var}_{X\sim \rb}[\alpha_X^\lambda(\rb,\sb) - \alpha_X^\lambda(\rb, \rb)]$ and $\overline\sigma_\lambda^2(\sb|\rb)=
		\textup{Var}_{Y\sim \rb}[\betab_Y^\lambda(\rb,\sb) - \beta_Y^\lambda(\sb, \sb)]$ 
		and  $\alphab^\lambda(r,s)$, $\alphab^\lambda(r,r)$, $\betab^\lambda(r,s),\betab^\lambda(s,s)$ represent EROT potentials.
		\end{enumerate}
	\end{theorem}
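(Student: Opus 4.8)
The plan is to express the Sinkhorn divergence as a signed sum of three EROT values and to obtain the limit by the functional delta method, reusing the Hadamard differentiability of the EROT value from Theorem~\ref{them:TransportPlanIsHadamardDiff}. Writing
\[
\overline{E\!ROT}^\lambda(\rb,\sb) = \EROT(\rb,\sb) - \tfrac12\,\EROT(\rb,\rb) - \tfrac12\,\EROT(\sb,\sb),
\]
in the one-sample case only the first two summands depend on the sampled argument $\rb$. The derivative $h\mapsto\langle\alpha^\lambda(\rb,\sb),h\rangle$ of the first summand is already available from the proof of Theorem~\ref{them:LimitLawEmpEROT_Value}. For the second, I would write $\rb\mapsto\EROT(\rb,\rb)$ as the composition of the diagonal embedding $D\colon\rb\mapsto(\rb,\rb)$, which is linear and bounded, hence tangentially Hadamard differentiable with derivative $h\mapsto(h,h)$, with the EROT value functional; the chain rule for tangentially Hadamard differentiable maps \citep[Lemma~3.9.3]{van1996weak} then shows it is Hadamard differentiable with derivative $h\mapsto\langle\alpha^\lambda(\rb,\rb),h\rangle + \langle\beta^\lambda(\rb,\rb),h\rangle$, provided $D$ maps the relevant tangential domain into the product domain on which the EROT value is differentiable.

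Next I would note that symmetry of $c$ and of the pair $(\rb,\rb)$ forces $\alpha^\lambda(\rb,\rb) = \beta^\lambda(\rb,\rb)$ $\rb$-a.s.: both solve the fixed-point relation \eqref{eq:OptimalityCriterionForDualOptimizers} with the same marginal and cost, and both integrate to $\EROT(\rb,\rb)/2$ by Proposition~\ref{prop:BoundsOptimalPotentials}, so uniqueness up to constant shift forces equality. Hence the Hadamard derivative of $\rb\mapsto\overline{E\!ROT}^\lambda(\rb,\sb)$ at $\rb$ is the bounded linear functional $h\mapsto\langle\alpha^\lambda(\rb,\sb)-\alpha^\lambda(\rb,\rb),h\rangle$. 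The two-sample case is entirely analogous: all three summands now contribute, $(\rb,\sb)\mapsto\EROT(\rb,\rb)$ and $(\rb,\sb)\mapsto\EROT(\sb,\sb)$ are treated by composing with the coordinate projections and the diagonal, and the total derivative is $(h^\XC,h^\YC)\mapsto\langle\alpha^\lambda(\rb,\sb)-\alpha^\lambda(\rb,\rb),h^\XC\rangle + \langle\beta^\lambda(\rb,\sb)-\beta^\lambda(\sb,\sb),h^\YC\rangle$.

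It then remains to choose a single function class $\FC$ such that the diagonal embedding maps $\PC(\XC)\cap\ell^\infty(\FC)$ into the product domains on which the three EROT value functionals are differentiable; since both collections of dominating functions are used, $\FC$ must dominate both $\FC_\XC$ and $\FC_\YC$ from \eqref{eq:FunctionClassFC}, which is exactly why the conditions in \eqref{eq:SummabilityConstraintDebiasedVersion} combine $\cx,\tcx,\texSup{}$ with $\cy,\tcy,\eySup{}$ (here symmetry of $c$ lets the two dominating collections be interchanged between the two marginals of $\EROT(\rb,\rb)$ and $\EROT(\sb,\sb)$). By Lemma~\ref{lem:WeakConvergence}$(i)$--$(iii)$ the first line of \eqref{eq:SummabilityConstraintDebiasedVersion} makes $\FC_\XC\cup\FC_\YC$ $\rb$-Donsker — the $\sqrt{r_x}$-part handling the dominated and singleton pieces, the squared-weight part the tail-indicator pieces — while the second line makes $\sb$ a bounded functional on the same class, so $\sqrt{n}((\hat\rb_n,\sb)-(\rb,\sb))$ (resp. the rescaled two-sample process) converges weakly to a tight Gaussian process with covariance \eqref{eq:DefintionCovariance}, almost surely contained in the relevant contingent cone. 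The functional delta method \citep[Theorem~3.9.4]{van1996weak} together with linearity of the derivative then yields asymptotic normality with the stated variances, exactly as in the proof of Theorem~\ref{them:LimitLawEmpEROT_Value}. I expect the main obstacle to be the bookkeeping in this last step — verifying that one combined function class and the conditions \eqref{eq:SummabilityConstraintDebiasedVersion} are simultaneously sufficient for differentiability of all three EROT value terms (in particular that $\EROT(\rb,\rb)$ is differentiable with respect to the norm induced by $\FC_\XC\cup\FC_\YC$) and that the Gaussian limit lies in the appropriate contingent cone — rather than the chain-rule computation of the derivative itself.
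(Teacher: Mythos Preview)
Your proposal is correct and follows essentially the same route as the paper: the paper packages the Hadamard differentiability of $\overline{E\!ROT}^\lambda$ with respect to the norm induced by $\FC_\XC\cup\FC_\YC$ into Corollary~\ref{cor:DebiasedEntropicOTCostUnboundedIsHadamardDifferentiable} (proved exactly as you outline, via the sum decomposition and the symmetry identity $\alpha^\lambda(\tilde\rb,\tilde\rb)=\beta^\lambda(\tilde\rb,\tilde\rb)$), and then invokes the functional delta method as in the proof of Theorem~\ref{them:LimitLawEmpEROT_Value}. One slip: the Hadamard differentiability of the EROT \emph{value} you need is Theorem~\ref{them:EntropicOTCostUnboundedIsHadamardDifferentiable}, not Theorem~\ref{them:TransportPlanIsHadamardDiff} (which is for the plan).
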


	The proof is analogous to the proof of Theorem \ref{them:LimitLawEmpEROT_Value} and based on the functional delta method in conjunction with the Hadamard differentiability of the Sinkhorn divergence (Corollary \ref{cor:DebiasedEntropicOTCostUnboundedIsHadamardDifferentiable}). 
	Notably, inspecting the variance, it follows under $\rb = \sb$ that the distributional limit for the debiased EROT value always degenerates. To analyze the limit law of the empirical debiased EROT value for this setting a second order sensitivity result is required. For discrete spaces and on compact Euclidean spaces with squared Euclidean costs this has been recently done by \cite{bigot2019CentralLT} and \cite{gonzalez2022weak}, respectively. The corresponding limit law can then be characterized as a bilinear form of normally distributed random variables. %

\subsection{Distributional Limits for Empirical Entropic Optimal Transport Plan}
\label{subsec:EROTPlan}
For our main result on the empirical EROT plan we require additional assumptions on the dominating functions for the underlying cost function. 

\begin{theorem}[Distributional limit for EROT plan]\label{them:LimitLawEmpEROT_Plan}
	Assume that $c\colon \XC\times \YC\rightarrow \RR$ satisfies \eqref{eq:LowerUpperBoundsC} for dominating functions $\cxm, \cxp, \cym, \cyp$ such that $\norm{\cxpb - \cxmb}_{\lInfX}< \infty$ and define $\cx,\cy,\exSup{}, \eySup{}$ as in \eqref{eq:DefWeightingFunctions}. Further, consider probability measures $\rb\in \PC(\XC)$, $\sb\in \PC(\YC)$. 	
	\begin{enumerate}
		\item[$(i)$] (One sample from $\rb$) 
		Denote by $\hat \rb_n \coloneqq \frac{1}{n}\sum_{i = 1}^n \delta_{X_i}$ the empirical measure for random variables $X_1, \dots, X_n \iid \rb$ and assume that 
		\begin{align*}
			\sum_{x\in \XC} \cx(x)\sqrt{r_x} <\infty \quad \text{and}\quad  \sum_{y\in \YC}  \cy(y)\eySup{4}(y) s_y <\infty.
		\end{align*}
		Then, for $n\rightarrow \infty$, it follows that %
		$$ \sqrt{n}\Big(\pib^\lambda(\hat \rb_n, \sb) - \pib^\lambda(\rb, \sb)\Big) \konvW \DH_{|(\rb, \sb)}\pib^\lambda (\Gb_{\rb},0) \equalDistr \Gb(0, \Sigma_{\lambda, \pi^\lambda}(\rb|\sb))$$
		in $\lXYSub{\cx\oplus\cy}$, 
		where $\DH_{|(\rb, \sb)}\pib^\lambda$ denotes the Hadamard derivative of $\pi^\lambda$ at $(\rb, \sb)$  (Theorem \ref{them:TransportPlanIsHadamardDiff}) and $\Gb_{\rb}$ is a tight, centered Gaussian process in $\lXYSub{\cx}$ with covariance $\Sigma(\rb)$ characterized by 
		\begin{equation}\label{eq:DefintionCovariance}
				 \Sigma(\rb)_{xx'} = 
				 	\begin{cases}
					   r_x(1 - r_x) & \text{ if } x = x',\\
					   -r_x r_{x'} & \text{ if } x \neq x'.
				   	\end{cases}
		  \end{equation}
		  \item[$(ii)$] (One sample from $\sb$) Denote by $\hat \sb_m \coloneqq \frac{1}{m}\sum_{i = 1}^m \delta_{Y_i}$ the empirical measure for random variables $Y_1, \dots, Y_m \iid \sb$ and assume that 
		  \begin{align*}
			  \sum_{x\in \XC} \cx(x) r_x <\infty \quad \text{and}\quad  \sum_{y\in \YC} \cy(y) \eySup{4}(y) \sqrt{s_y} <\infty.
		  \end{align*}
		  Then, for $m\rightarrow \infty$, it follows that
		  $$ \sqrt{m}\Big(\pib^\lambda(\rb,\hat \sb_m) - \pib^\lambda(\rb, \sb)\Big) \konvW \DH_{|(\rb, \sb)}\pib^\lambda (0,\Gb_{\sb}) \equalDistr \Gb\left(0, \Sigma_{\lambda, \pi^\lambda}(\sb|\rb)\right)$$
		  in $\lXYSub{\cx\oplus\cy}$, 
		  where $\Gb_{\sb}$ is a tight, centered Gaussian process in $\lYSub{\cy \eySup{4}}$ with covariance $\Sigma(\sb)$ characterized as in \eqref{eq:DefintionCovariance}. 

		  \item[$(iii)$] (Two samples) Additionally, suppose that the empirical measures $\hat \rb_n$ and $\hat \sb_m$ from above are based on independent random variables and assume that 
		  \begin{align*}
			  \sum_{x\in \XC} \cx(x) \sqrt{r_x} <\infty \quad \text{and}\quad  \sum_{y\in \YC} \cy(y) \eySup{4}(y) \sqrt{s_y} <\infty.
		  \end{align*}
		  Then, for $\min(m,n)\rightarrow \infty$ with $\frac{m}{m+n}\rightarrow \delta \in (0,1)$, it follows that 
		  \begin{align*}
			\sqrt{\frac{nm}{n+m}}\Big(\pib^\lambda(\hat \rb_n, \hat\sb_{m}) - \pib^\lambda(\rb, \sb)\Big) &\konvW \DH_{|(\rb, \sb)}\pib^\lambda \left(\sqrt{\delta}\Gb_{\rb},\sqrt{1-\delta}\Gb_{\sb}\right) \\
			& \;\;\equalDistr \Gb\left(0,  \delta\Sigma_{\lambda, \pi^\lambda}(\rb|\sb)+(1-\delta)\Sigma_{\lambda, \pi^\lambda}(\sb|\rb)\right)
		  \end{align*}
		  in $\lXYSub{\cx \oplus \cy}$, where $\Gb_{\rb}$ and $\Gb_{\sb}$ are independent Gaussian processes in  $\lXSub{\cx}$ and  $\lYSub{\cy\eySup{4}}$ as in $(i)$ and $(ii)$, respectively.
	\end{enumerate}
\end{theorem}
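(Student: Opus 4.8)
The plan is to mirror the proof of Theorem \ref{them:LimitLawEmpEROT_Value}: establish weak convergence of the relevant empirical process in a suitable weighted $\ell^1$-norm, combine it with the Hadamard differentiability of $\pib^\lambda$ as a map into $\lXYSub{\cx\oplus\cy}$ (Theorem \ref{them:TransportPlanIsHadamardDiff}), and conclude via the functional delta method; the Gaussianity of the limit and the explicit covariance then follow from linearity of the Hadamard derivative. For the weak convergence, I would work with the function classes $\HC_\XC,\HC_\YC$ from \eqref{eq:FunctionClassHC}, whose envelope functions are $\cx$ and $\cy\eySup{4}$. By Lemma \ref{lem:WeakConvergence}$(i)$, $\HC_\XC$ is $\rb$-Donsker if and only if $\sum_{x\in\XC}\cx(x)\sqrt{r_x}<\infty$, and $\HC_\YC$ is $\sb$-Donsker if and only if $\sum_{y\in\YC}\cy(y)\eySup{4}(y)\sqrt{s_y}<\infty$; since $\lXSub{\cx}$ (resp.\ $\lYSub{\cy\eySup{4}}$) embeds isometrically into $\ell^\infty(\HC_\XC)$ (resp.\ $\ell^\infty(\HC_\YC)$) by Lemma \ref{lem:equivalenceNorms}, the Donsker property is equivalent to weak convergence of $\sqrt{n}(\hat\rb_n-\rb)$ in $\lXSub{\cx}$ (resp.\ of $\sqrt{m}(\hat\sb_m-\sb)$ in $\lYSub{\cy\eySup{4}}$) to the tight centered Gaussian process $\Gb_\rb$ (resp.\ $\Gb_\sb$) with covariance \eqref{eq:DefintionCovariance}. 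The remaining (non-square-root) summability assumptions in the three cases ensure that the fixed measure defines a bounded functional on the complementary class and lies in the corresponding weighted space, so that $(\rb,\sb)$ belongs to the domain $\big(\PC(\XC)\cap\lXSub{\cx}\big)\times\big(\PC(\YC)\cap\lYSub{\cy\eySup{4}}\big)$; in case $(iii)$, independence of the two samples yields the joint convergence $\sqrt{\tfrac{nm}{n+m}}\big((\hat\rb_n,\hat\sb_m)-(\rb,\sb)\big)\konvW(\sqrt{\delta}\,\Gb_\rb,\sqrt{1-\delta}\,\Gb_\sb)$.

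Next, since $\norm{\cxpb-\cxmb}_{\lInfX}<\infty$, Theorem \ref{them:TransportPlanIsHadamardDiff} applies and shows that $(\rb,\sb)\mapsto\pib^\lambda(\rb,\sb)$, viewed as a map into $\lXYSub{\cx\oplus\cy}$, is Hadamard differentiable at $(\rb,\sb)$ tangentially to the contingent cone of the domain above, with a bounded linear derivative $\DH_{|(\rb,\sb)}\pib^\lambda$. By convexity of the domain, \citet[Proposition 4.2.1]{aubin2009setValued} identifies this cone as the closure of $\{t(\tilde\rb-\rb,\tilde\sb-\sb)\colon t>0,\ \tilde\rb\in\PC(\XC)\cap\lXSub{\cx},\ \tilde\sb\in\PC(\YC)\cap\lYSub{\cy\eySup{4}}\}$, and since each $\hat\rb_n,\hat\sb_m$ is a finitely supported probability measure, the Gaussian limits of Step~1 lie almost surely in this (product) cone by Portmanteau's theorem for closed sets \citep[Theorem 1.3.4]{van1996weak}. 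The topology used for Hadamard differentiability coincides with that used for the weak convergence. Invoking the functional delta method \citep[Theorem 3.9.4]{van1996weak} in each of the three cases then yields weak convergence in $\lXYSub{\cx\oplus\cy}$ with limits $\DH_{|(\rb,\sb)}\pib^\lambda(\Gb_\rb,0)$, $\DH_{|(\rb,\sb)}\pib^\lambda(0,\Gb_\sb)$, and $\DH_{|(\rb,\sb)}\pib^\lambda(\sqrt{\delta}\,\Gb_\rb,\sqrt{1-\delta}\,\Gb_\sb)$, respectively.

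Finally, each of these limits is the image of a (jointly) Gaussian process under the bounded linear operator $\DH_{|(\rb,\sb)}\pib^\lambda$, hence is itself a tight centered Gaussian process in $\lXYSub{\cx\oplus\cy}$; propagating the covariance \eqref{eq:DefintionCovariance} through $\DH_{|(\rb,\sb)}\pib^\lambda$ gives the covariance operator $\Sigma_{\lambda,\pi^\lambda}(\rb|\sb)$ in case $(i)$ and $\Sigma_{\lambda,\pi^\lambda}(\sb|\rb)$ in case $(ii)$, while in case $(iii)$ the independence of $\Gb_\rb$ and $\Gb_\sb$ yields the convex combination $\delta\,\Sigma_{\lambda,\pi^\lambda}(\rb|\sb)+(1-\delta)\,\Sigma_{\lambda,\pi^\lambda}(\sb|\rb)$; an explicit expression in terms of $\pib^\lambda$ follows from the closed form of $\DH_{|(\rb,\sb)}\pib^\lambda$ established in Theorem \ref{them:TransportPlanIsHadamardDiff}.

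The genuine obstacle is entirely contained in Theorem \ref{them:TransportPlanIsHadamardDiff}, not in the delta-method argument above: proving Hadamard differentiability of $\pib^\lambda$ as a map into the \emph{weighted} space $\lXYSub{\cx\oplus\cy}$ is delicate because the linear operator governing the implicit-function argument is invertible only on a rather small domain, so one cannot simply quote a standard implicit function theorem and must instead track the individual perturbation error terms while exploiting the quantitative bounds for primal and dual optimizers from Proposition \ref{prop:BoundsOptimalPotentials}; the uniform bound $\norm{\cxpb-\cxmb}_{\lInfX}<\infty$ is precisely what makes the derivative well defined and guarantees a strictly positive eigenvalue gap (Remark \ref{rem:NoveltyOfProofAndDifficulty}). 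Granting that result, the present proof reduces to bookkeeping the Donsker and boundedness conditions via Lemma \ref{lem:WeakConvergence}, the contingent-cone verification, and the elementary covariance computation.
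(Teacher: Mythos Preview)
Your proposal is correct and follows essentially the same approach as the paper's proof: invoke Lemma \ref{lem:WeakConvergence}$(i)$ together with Lemma \ref{lem:equivalenceNorms} to obtain weak convergence of the empirical processes in $\lXSub{\cx}\times\lYSub{\cy\eySup{4}}$, then apply the functional delta method via the Hadamard differentiability of $\pib^\lambda$ from Theorem \ref{them:TransportPlanIsHadamardDiff}, and conclude Gaussianity from linearity of the derivative. Your version is in fact slightly more explicit than the paper's (you spell out the contingent-cone verification via \citet[Proposition 4.2.1]{aubin2009setValued} and Portmanteau, which the paper carries out in detail only for Theorem \ref{them:LimitLawEmpEROT_Value} and leaves implicit here), but the substance is identical.
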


\begin{proof}%
For the assertions on the empirical EROT plan we consider the function classes $\HC_\XC$ and $\HC_\YC$ from \eqref{eq:FunctionClassHC} 
which induce according to Lemma \ref{lem:equivalenceNorms} on $\PC(\XC)$ and $\PC(\YC)$ weighted $\ell^1$-norms $\norm{\cdot}_{\lXSub{\cx}}$ and $\norm{\cdot}_{\lYSub{\cy\eySup{4}}}$. Hence, we infer by Lemma \ref{lem:WeakConvergence} using the respective summability constraints for the one-sample case from $\rb$, for the one-sample case from $\sb$, and the two-sample case that 
\begin{align*}
	\sqrt{n} \Big((\hat \rb_n,s) - (\rb, s)\Big) &\konvW (\Gb_{\rb},0)\;&& \text{ in } \lXSub{\cx}\times \lYSub{\cy\eySup{4}},\\
	\sqrt{m} \Big((\rb,\hat \sb_m) - (\rb, s)\Big) &\konvW (0, \Gb_{\sb})\;&& \text{ in } \lXSub{\cx}\times \lYSub{\cy\eySup{4}},\\
	\sqrt{\frac{nm}{n+m}} \Big((\hat \rb_n,\hat \sb_m) - (\rb, s)\Big) &\konvW (\delta\Gb_{\rb},\sqrt{1-\delta}\Gb_\sb)\; &&\text{ in } \lXSub{\cx}\times \lYSub{\cy\eySup{4}}.
\end{align*}
	As the EROT plan is Hadamard differentiable at $(\rb, \sb)$ tangentially to 
	$\big(\probset{\XC}\cap \lXSub{\cx}\big)\times \big( \probset{\YC}\cap \lYSub{\cy\eySup{4}}\big)$ the different claims follow from the functional delta method. Again by linearity of the derivative the weak limit is given by a centered Gaussian process \qedhere

\end{proof}

\begin{remark}[Weak limit for EROT plan]
	If $\rb$ and $\sb$ are not supported on a single point it follows by Remark \ref{rem:NonDegDerivative} that the Hadamard derivative of $\pi^\lambda$ is different from the zero operator, which implies that the weak limit for the empirical EROT plan is non-degenerate. 

	The nature of the weak limits for the empirical EROT plan in the one- and two-sample case is complicated due to the highly non-trivial derivative for the EROT plan. An explicit expression of the Hadamard derivative is provided in Theorem \ref{them:TransportPlanIsHadamardDiff} which is, however, rather complicated to estimate. Already for finite spaces, an explicit representation of the asymptotic covariance for the limit distribution comprises  a major challenge, even if the underlying measures are assumed to be known \citep{klatt2018empirical}. In practice, one would therefore rather resort to bootstrap resampling techniques, whose consistency we show in Section \ref{sec:Bootstrap}.  
\end{remark}

As a corollary from our distributional limit for the EROT plan we obtain from the continuous mapping theorem \citep[Theorem 1.9.5]{van1996new} a (uniform) weak limit for the expectation of a suitably dominated function class with respect to the empirical EROT plan as well as for the empirical Sinkhorn cost. 
We only present the one sample case, the two-sample case is analogous.

 \begin{corollary}\label{cor:LimitLawEmpFunctionWithEROTPlan}
Assume the same setting as for assertion (i) in Theorem \ref{them:LimitLawEmpEROT_Plan} and consider a function class $\FC$ on $\XC\times \YC$ such that $\sup_{f\in \FC}\norm{f}_{\lInfXYSub{\cx\oplus \cy}}<\infty$.  Then, for $n \rightarrow\infty$, it follows that  $$ \sqrt{n}\Big(\langle f, \pi^\lambda(\hat \rb_n, \sb)\rangle - \langle f, \pi^\lambda(\rb, \sb) \rangle \Big)_{f\in \FC} \konvW G(0, \Sigma_{\lambda, \pi^\lambda}^\FC(\rb|\sb)),$$%
where the weak limit is given by a tight, centered Gaussian
with covariance characterized by 
\begin{align*}
	\Sigma_{\lambda, \pi^\lambda}^\FC(\rb|\sb)_{f,f'} = \sum_{\substack{x, x' \in \XC\\y, y' \in \YC}} f(x,y) f'(x',y')\big(\Sigma_{\lambda,\pib^\lambda}(\rb|\sb)\big)_{(x,y),(x',y') }.
\end{align*}
\end{corollary}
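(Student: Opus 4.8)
The statement is a corollary of Theorem~\ref{them:LimitLawEmpEROT_Plan}(i) obtained by post-composing with a continuous linear map, so the plan is to verify that evaluating a transport plan against the function class $\FC$ is a bounded linear operator from the weighted space $\lXYSub{\cx\oplus\cy}$ into $\ell^\infty(\FC)$, and then invoke the continuous mapping theorem. First I would fix the operator
$$
	\Psi \colon \lXYSub{\cx\oplus\cy} \to \ell^\infty(\FC), \qquad \pi \mapsto \big(\langle f, \pi\rangle\big)_{f\in \FC},
$$
and check it is well-defined and continuous: for $\pi\in\lXYSub{\cx\oplus\cy}$ and $f\in\FC$ one has $|\langle f,\pi\rangle|\le \sum_{x,y}|f(x,y)||\pi_{xy}| \le \norm{f}_{\lInfXYSub{\cx\oplus\cy}}\sum_{x,y}(\cx\oplus\cy)(x,y)|\pi_{xy}| = \norm{f}_{\lInfXYSub{\cx\oplus\cy}}\norm{\pi}_{\lXYSub{\cx\oplus\cy}}$, hence $\norm{\Psi\pi}_{\ell^\infty(\FC)} \le \big(\sup_{f\in\FC}\norm{f}_{\lInfXYSub{\cx\oplus\cy}}\big)\norm{\pi}_{\lXYSub{\cx\oplus\cy}}$, which is finite by the hypothesis on $\FC$. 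Linearity of $\Psi$ is immediate.

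Next I would apply $\Psi$ to the weak convergence
$$
	\sqrt{n}\big(\pib^\lambda(\hat\rb_n,\sb) - \pib^\lambda(\rb,\sb)\big) \konvW \Gb\big(0,\Sigma_{\lambda,\pi^\lambda}(\rb|\sb)\big)
$$
in $\lXYSub{\cx\oplus\cy}$ from Theorem~\ref{them:LimitLawEmpEROT_Plan}(i). Since $\Psi$ is continuous and linear, the continuous mapping theorem \citep[Theorem 1.9.5]{van1996new} together with linearity of $\Psi$ (so that $\Psi$ commutes with the centering and the $\sqrt{n}$ scaling) yields
$$
	\sqrt{n}\Big(\langle f,\pi^\lambda(\hat\rb_n,\sb)\rangle - \langle f,\pi^\lambda(\rb,\sb)\rangle\Big)_{f\in\FC} \konvW \Psi\,\Gb\big(0,\Sigma_{\lambda,\pi^\lambda}(\rb|\sb)\big)
$$
in $\ell^\infty(\FC)$. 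Because $\Psi$ is a bounded linear map and the limit is a tight centered Gaussian element of $\lXYSub{\cx\oplus\cy}$, the image $\Psi\,\Gb$ is again a tight, centered Gaussian process in $\ell^\infty(\FC)$; I would note tightness is preserved since continuous images of tight (hence separable-supported) laws are tight, and Gaussianity is preserved since finite-dimensional marginals of $\Psi\,\Gb$ are linear images of Gaussian vectors.

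Finally I would compute the covariance. For $f,f'\in\FC$, writing $\Gb = \Gb(0,\Sigma_{\lambda,\pi^\lambda}(\rb|\sb))$ indexed by pairs $(x,y)$,
$$
	\Cov\big[(\Psi\Gb)(f),(\Psi\Gb)(f')\big] = \Cov\Big[\sum_{x,y}f(x,y)\Gb_{xy},\ \sum_{x',y'}f'(x',y')\Gb_{x'y'}\Big] = \sum_{\substack{x,x'\in\XC\\ y,y'\in\YC}} f(x,y)f'(x',y')\big(\Sigma_{\lambda,\pi^\lambda}(\rb|\sb)\big)_{(x,y),(x',y')},
$$
where interchanging summation and expectation is justified by absolute summability coming from the weighted norm bound and integrability of $\Gb$. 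This is exactly $\Sigma_{\lambda,\pi^\lambda}^\FC(\rb|\sb)_{f,f'}$, completing the proof. The only mild subtlety — and the closest thing to an obstacle — is checking that $\Psi$ genuinely maps into $\ell^\infty(\FC)$ with the correct (uniform over $\FC$) operator norm, i.e.\ that the hypothesis $\sup_{f\in\FC}\norm{f}_{\lInfXYSub{\cx\oplus\cy}}<\infty$ is precisely what is needed; everything else is a routine transfer of tightness, Gaussianity, and covariance structure through a bounded linear map.
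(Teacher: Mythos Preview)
Your proposal is correct and follows exactly the paper's approach: the paper simply states that the corollary follows from Theorem~\ref{them:LimitLawEmpEROT_Plan}(i) via the continuous mapping theorem \citep[Theorem 1.9.5]{van1996new}, and you have spelled out precisely this argument by verifying that the evaluation map into $\ell^\infty(\FC)$ is bounded linear and then transferring the Gaussian limit and its covariance through it.
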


\begin{corollary}\label{cor:LimitLawEmpSinkhornCosts}
	Assume the same setting as for assertion $(i)$ in Theorem \ref{them:LimitLawEmpEROT_Plan}. 
	Then, for $n \rightarrow\infty$, it follows that $$ \sqrt{n}\Big(\SOT(\hat \rb_n, \sb) - \SOT(\rb, \sb)\Big) \konvW \normal(0, \tilde \sigma_{\lambda, \pi^\lambda}^2(\rb|s)),$$ %
	where the limiting covariance is given by $$\tilde \sigma_{\lambda, \pi^\lambda}^2(\rb|s) = \sum_{\substack{x, x' \in \XC\\y, y' \in \YC}} c(x,y) c(x',y')\big(\Sigma_{\lambda,\pib^\lambda}(\rb|\sb)\big)_{(x,y),(x',y') }.$$
\end{corollary}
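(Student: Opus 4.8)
The plan is to obtain this statement as the one-dimensional instance of Corollary~\ref{cor:LimitLawEmpFunctionWithEROTPlan}, taking for $\FC$ the singleton function class $\{c\}$. All that has to be checked is that the cost function itself satisfies the hypothesis $\sup_{f\in\FC}\norm{f}_{\lInfXYSub{\cx\oplus\cy}}<\infty$ required there, i.e.\ that $\norm{c}_{\lInfXYSub{\cx\oplus\cy}}<\infty$.

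To see this, I would use the two-sided bound \eqref{eq:LowerUpperBoundsC} together with the definition \eqref{eq:DefWeightingFunctions}: for every $(x,y)\in\XC\times\YC$ we have $c(x,y)\in[\cxm(x)+\cym(y),\,\cxp(x)+\cyp(y)]$, hence
\[
|c(x,y)| \le |\cxp(x)| + |\cxm(x)| + |\cyp(y)| + |\cym(y)| \le \cx(x) + \cy(y) = (\cx\oplus\cy)(x,y),
\]
so that $\norm{c}_{\lInfXYSub{\cx\oplus\cy}}\le 1$; in particular $c$ defines a bounded linear functional $\mu\mapsto\langle c,\mu\rangle$ on $\lXYSub{\cx\oplus\cy}$. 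Under the standing assumptions the EROT plans $\pi^\lambda(\hat\rb_n,\sb)$ and $\pi^\lambda(\rb,\sb)$ lie in $\lXYSub{\cx\oplus\cy}$ (Proposition~\ref{prop:BoundsOptimalPotentials}), so $\SOT(\cdot,\sb)=\langle c,\pi^\lambda(\cdot,\sb)\rangle$ is well-defined and is precisely the image of the EROT plan under this functional.

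It then remains to push the weak convergence through. Applying Corollary~\ref{cor:LimitLawEmpFunctionWithEROTPlan} with $\FC=\{c\}$ --- equivalently, composing the weak limit $\sqrt{n}(\pi^\lambda(\hat\rb_n,\sb)-\pi^\lambda(\rb,\sb))\konvW\Gb(0,\Sigma_{\lambda,\pi^\lambda}(\rb|\sb))$ of Theorem~\ref{them:LimitLawEmpEROT_Plan}$(i)$ with the bounded linear map $\langle c,\cdot\rangle$ via the continuous mapping theorem \citep[Theorem 1.9.5]{van1996new} --- yields that $\sqrt{n}(\SOT(\hat\rb_n,\sb)-\SOT(\rb,\sb))$ converges weakly to $\langle c,\Gb(0,\Sigma_{\lambda,\pi^\lambda}(\rb|\sb))\rangle$. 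A continuous linear image of a tight centered Gaussian process is a centered (here one-dimensional) Gaussian, and a direct bilinear computation of its variance gives
\[
\Var\big[\langle c,\Gb(0,\Sigma_{\lambda,\pi^\lambda}(\rb|\sb))\rangle\big] = \sum_{\substack{x,x'\in\XC\\ y,y'\in\YC}} c(x,y)\,c(x',y')\,\big(\Sigma_{\lambda,\pi^\lambda}(\rb|\sb)\big)_{(x,y),(x',y')} = \tilde\sigma^2_{\lambda,\pi^\lambda}(\rb|\sb),
\]
which is the asserted limit. The two-sample analogue follows in the same way from Theorem~\ref{them:LimitLawEmpEROT_Plan}$(iii)$.

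There is no substantial obstacle here; the only point needing attention is the interchange of the infinite sum defining $\langle c,\cdot\rangle$ with the limit, and this is exactly what the \emph{weighted}-space formulation of Theorem~\ref{them:LimitLawEmpEROT_Plan} provides: since $\cx\oplus\cy$ dominates $|c|$, the functional $\langle c,\cdot\rangle$ is continuous on $\lXYSub{\cx\oplus\cy}$, so no separate uniform-integrability estimate is required --- the delicate analytic work has already been carried out in establishing weak convergence of the empirical plan in that norm.
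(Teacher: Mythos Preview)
Your argument is correct and matches the paper's approach: the paper states both Corollary~\ref{cor:LimitLawEmpFunctionWithEROTPlan} and Corollary~\ref{cor:LimitLawEmpSinkhornCosts} as immediate consequences of Theorem~\ref{them:LimitLawEmpEROT_Plan} via the continuous mapping theorem, without giving further details. Your write-up supplies exactly the verification that $|c|\le \cx\oplus\cy$ (so $\norm{c}_{\lInfXYSub{\cx\oplus\cy}}\le 1$) which the paper leaves implicit, and the remainder is the same continuous-mapping-plus-Gaussian-image computation.
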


\begin{remark}[Comparison to other works]\label{rem:ComparisonEROTplan}\begin{enumerate}
	\item[$(i)$]  If $\XC$ and $\YC$ are finite spaces, then the conditions for our distributional limits are fulfilled for \emph{any} pair of probability measures $\rb\in \PC(\XC),\sb\in\PC(\YC)$. Our limits then match with results by \cite{klatt2018empirical} when considering the mutual information as the penalization term. 
	\item[$(ii)$] 
	Similar to the weak limit for the empirical EROT plan, the  asymptotic variance for the evaluation of functionals with respect to the empirical EROT plan is difficult to express explicitly. Still, these limits are consistent with the results obtained by \cite{Harchaoui2020} and \cite{gonzalez2022weak}. We verify this observation in Appendix \ref{subsec:ConsistencyOtherContributions}. In particular, Corollary~\ref{cor:LimitLawEmpSinkhornCosts} proves a conjecture by \cite{Harchaoui2020} on the asymptotic normality of the empirical Sinkhorn costs for a general class of cost function in the setting of countable ground spaces.
\end{enumerate}

\end{remark}

\begin{remark}[Sharpness of conditions]\label{rem:SharpnessOfBothLimitLaws}
For cost functions with bounded variation, i.e., if  $\norm{\cxpb - \cxmb}_{\lInfX}+\norm{\cypb - \cymb}_{\lInfY}<\infty$, and in particular bounded costs $\norm{c}_{\lInfXY}<\infty$, our stated assumptions for the validity of our limit laws for the empirical EROT plan are \emph{sharp}. This is a simple consequence of the fact that weak convergence of the empirical EROT plan $\sqrt{n}(\pib^\lambda(\hat \rb_n, \hat \sb_n) - \pib^\lambda(\rb, \sb))$ in $\lXYSub{\cx\oplus \cy}$ requires that the marginal empirical processes $\sqrt{n}(\hat \rb_n - \rb)$ and $\sqrt{n}(\hat \sb_n - \sb)$ converge weakly in $\lXSub{\cx}$ and $\lYSub{\cy}$, respectively. 
Invoking Lemma \ref{lem:WeakConvergence}(i) and \ref{lem:equivalenceNorms} such weak convergence occurs if and only if weighted Borisov-Dudley-Durst conditions $\sum_{x \in \XC}\cx(x) \sqrt{r_x}+\sum_{y \in \YC}\cy(y) \sqrt{s_y}<\infty$ are fulfilled.
\end{remark}

\subsection{Bootstrap Consistency}\label{sec:Bootstrap}
Our findings from Theorems \ref{them:LimitLawEmpEROT_Value} and
\ref{them:LimitLawEmpEROT_Plan} on the distributions of the empirical EROT value and plan  are asymptotic results. In order to estimate the respective non-asymptotic distribution typically bootstrap methods are applied.
On finite and countable spaces \cite{sommerfeld2018} and \cite{tameling18} showed that the unregularized OT value is only \emph{directionally} Hadamard differentiable, i.e., that the Hadamard derivative with respect to $\rb$ and $\sb$ is \emph{non-linear}. As a consequence, the (na\"ive) $n$-out-of-$n$ bootstrap for the approximation of the distribution of the empirical unregularized OT value fails. However, the EROT value and plan on countable spaces are Hadamard differentiable with \emph{linear} derivatives. Therefore, it follows that the na\" ive $n$-out-of-$n$ bootstrap appears to be a consistent estimation method.  

To make this statement precise we follow \cite{van1996weak}. Denote the notion of convergence in outer probability by $\konvP$ and consider for a given Banach space $B$ the set of bounded Lipschitz functions with Lipschitz modulus at most one, $$\BL{B} \coloneqq \left\{ g \colon B \rightarrow \R\; \colon\; \sup_{x \in B}|g(x)| \leq 1,\; |g(x_1) - g(x_2)| \leq \norm{x_1 - x_2}_{B} \forall x_1, x_2 \in B\right\}.$$ 

With this notation we show the consistency of the bootstrap for the EROT value and plan as an application of the functional delta method in conjunction with consistency of the bootstrap empirical process. The proof is an application of the functional delta method for the bootstrap \citet[Theorem 3.6.13]{van1996weak}, details are deferred to Appendix \ref{app:ProofSectionExamples}.

\begin{theorem}[Bootstrap consistency for EROT quantities]\label{them:BootstrapEROT}
Consider an empirical measure $\hat\rb_n$ derived by a sample $X_1, \dots, X_n \iid \rb$ and denote by $\hat \rb_n^* = \frac{1}{n}\sum_{i = 1}^{n}\delta_{X_i^*}$ the empirical bootstrap estimator for $\hat \rb_n$ based on a sample $X_1^*, \dots, X_n^* \iid \hat \rb_n$.
 Under the same setting as in assertion $(i)$ of Theorem \ref{them:LimitLawEmpEROT_Value} the (na\" ive) bootstrap is consistent for the EROT value, i.e., as $n$ tends to infinity it holds that
 $$\begin{aligned}
 	\sup_{h \in \BL{\R}}\Big|\EV{h\big(\sqrt{n}(\EROT(\hat \rb_n^*, \sb) - \EROT(\hat \rb_n, \sb))\big) \Big| X_1, \dots, X_n } \\
 	-  \EV{h\big(\sqrt{n}(\EROT(\hat \rb_n, \sb) - \EROT(\rb, \sb))\big)}\Big|\konvP 0.
 \end{aligned}	$$ 
 Moreover, under the same setting as in assertion $(i)$ of Theorem \ref{them:LimitLawEmpEROT_Plan} the EROT plan is also consistent for the (na\" ive) bootstrap, i.e., as $n$ tends to infinity it holds that $$\begin{aligned}
 	\sup_{h \in \BL{\lXYSub{\cx\oplus\cy} }}\Big|\EV{h\big(\sqrt{n}(\pib^\lambda(\hat \rb_n^*, \sb) - \pib^\lambda(\hat \rb_n, \sb))\big) \Big| X_1, \dots, X_n } \\
 	-  \EV{h\big(\sqrt{n}(\pib^\lambda(\hat \rb_n, \sb) - \pib^\lambda(\rb, \sb))\big)}\Big|\konvP 0.
 \end{aligned}	$$
 \end{theorem}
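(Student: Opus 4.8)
The plan is to deduce bootstrap consistency from the functional delta method for the bootstrap, exactly as stated in \citet[Theorem 3.6.13]{van1996weak}. This theorem takes two ingredients: (1) the underlying map — here $(\rb,\sb)\mapsto \EROT(\rb,\sb)$, respectively $(\rb,\sb)\mapsto\pib^\lambda(\rb,\sb)$ — is Hadamard differentiable at $(\rb,\sb)$ tangentially to the relevant subspace, with the derivative defined and continuous on the whole tangent space; and (2) the bootstrap empirical process is consistent, i.e.\ $\sqrt n(\hat\rb_n^* - \hat\rb_n)$ converges weakly, conditionally on the data, to the same tight Gaussian limit $\Gb_\rb$ as $\sqrt n(\hat\rb_n - \rb)$, in outer probability. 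Ingredient (1) for the value is Theorem \ref{them:EntropicOTCostUnboundedIsHadamardDifferentiable} (with derivative $(h^\XC,h^\YC)\mapsto\langle\alpha^\lambda,h^\XC\rangle+\langle\beta^\lambda,h^\YC\rangle$ on $\ell^\infty(\FC_\XC)\times\ell^\infty(\FC_\YC)$), and for the plan it is Theorem \ref{them:TransportPlanIsHadamardDiff} (with its explicit linear Hadamard derivative $\DH_{|(\rb,\sb)}\pib^\lambda$ into $\lXYSub{\cx\oplus\cy}$); both derivatives are linear, hence automatically continuous, and defined on all of the ambient Banach space, which is exactly the regularity the bootstrap delta method requires. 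Since we are only treating the one-sample case, the $\sb$-coordinate is held fixed, so the relevant map is just $\rb\mapsto\EROT(\rb,\sb)$, respectively $\rb\mapsto\pib^\lambda(\rb,\sb)$.

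First I would fix the function class and the Banach space appropriate to each statement: $\FC_\XC$ from \eqref{eq:FunctionClassFC} with ambient space $\ell^\infty(\FC_\XC)$ for the EROT value, and $\HC_\XC$ from \eqref{eq:FunctionClassHC} with ambient space $\lXSub{\cx}$ (which isometrically embeds into $\ell^\infty(\HC_\XC)$ by Lemma \ref{lem:equivalenceNorms}) for the EROT plan. Under the summability assumptions imported from assertion $(i)$ of Theorem \ref{them:LimitLawEmpEROT_Value} (resp.\ Theorem \ref{them:LimitLawEmpEROT_Plan}), Lemma \ref{lem:WeakConvergence} shows $\FC_\XC$ (resp.\ $\HC_\XC$) is $\rb$-Donsker, so $\sqrt n(\hat\rb_n - \rb)\konvW\Gb_\rb$ in the respective space. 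Next I would invoke the standard bootstrap consistency for the empirical process on a Donsker class, \citet[Theorem 3.6.1 and Lemma 3.6.16]{van1996weak}, which gives that the multinomial (na\"ive $n$-out-of-$n$) bootstrap process $\sqrt n(\hat\rb_n^* - \hat\rb_n)$ satisfies
$$
\sup_{g\in\BL{B}}\Big|\EV{g\big(\sqrt n(\hat\rb_n^* - \hat\rb_n)\big)\,\big|\,X_1,\dots,X_n} - \EV{g(\Gb_\rb)}\Big|\konvP 0
$$
in the appropriate Banach space $B$, together with the asymptotic measurability conditions needed for the delta method. Then I would apply \citet[Theorem 3.6.13]{van1996weak} directly: composing the Hadamard differentiable functional with the consistent bootstrap process yields that $\sqrt n\big(\phi(\hat\rb_n^*) - \phi(\hat\rb_n)\big)$ and $\sqrt n\big(\phi(\hat\rb_n) - \phi(\rb)\big)$ have the same conditional/unconditional limit, which is precisely the displayed convergence to zero over $\BL{\R}$ (value case) and over $\BL{\lXYSub{\cx\oplus\cy}}$ (plan case). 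For the plan case one extra bookkeeping step is needed: the limit lives a priori in $\lXSub{\cx}$ pushed forward by $\DH_{|(\rb,\sb)}\pib^\lambda$, and one checks via Proposition \ref{prop:BoundsOptimalPotentials} (giving $\pib^\lambda\in\ell^1_{\cx\oplus\cy}$) and the continuity of $\DH_{|(\rb,\sb)}\pib^\lambda$ as a map into $\lXYSub{\cx\oplus\cy}$ that the whole argument takes place in $\lXYSub{\cx\oplus\cy}$, matching the norm in the statement.

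The main obstacle is verifying that the hypotheses of \citet[Theorem 3.6.13]{van1996weak} are met in the precise form needed — in particular the joint measurability / asymptotic-measurability conditions on the bootstrap empirical process in the weighted sequence spaces $\ell^\infty(\FC_\XC)$ and $\lXSub{\cx}$, and the requirement that the Hadamard derivative be defined and continuous on the \emph{entire} ambient space rather than merely on the tangent cone. The first is handled because the relevant spaces are (separable after the isometric embedding, or at worst) handled by the same Borisov--Dudley--Durst machinery underlying Lemma \ref{lem:WeakConvergence}, so the bootstrap process inherits the needed measurability from the Donsker property; the second is immediate here since in both cases the Hadamard derivative is a bounded \emph{linear} operator, defined on all of $\ell^\infty(\FC_\XC)$ respectively $\lXSub{\cx}$. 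The remaining work is purely a matter of assembling these cited results in the correct order, which is why the full proof is deferred to Appendix \ref{app:ProofSectionExamples}.
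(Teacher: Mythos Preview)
Your proposal is correct and follows essentially the same route as the paper: verify the relevant class is $\rb$-Donsker via Lemma~\ref{lem:WeakConvergence}, invoke bootstrap consistency of the empirical process in the corresponding Banach space, and then apply the functional delta method for the bootstrap using the Hadamard differentiability established in Theorems~\ref{them:EntropicOTCostUnboundedIsHadamardDifferentiable} and~\ref{them:TransportPlanIsHadamardDiff}. The only cosmetic difference is that the paper keeps the two-variable map $(\rb,\sb)\mapsto\EROT(\rb,\sb)$ and uses McShane's extension theorem to lift bounded Lipschitz functions from $\ell^\infty(\FC_\XC)$ to the product $\ell^\infty(\FC_\XC)\times\ell^\infty(\FC_\YC)$ (so that bootstrap consistency of $\sqrt n((\hat\rb_n^*,\sb)-(\hat\rb_n,\sb))$ holds in the product space), whereas you restrict to the one-variable map $\rb\mapsto\EROT(\rb,\sb)$ with $\sb$ fixed; both handle the same issue and are equally valid.
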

 Analogous statements on bootstrap consistency are valid for the two-samples case, i.e., when $Y_1^*, \dots, Y_m^* \iid \sb_m^*$ are independent bootstrap realizations from $X_1^*, \dots, X_n^* \iid\rb_n^*$.

\subsection{Entropy Regularized Optimal Transport in the Regime of Vanishing Regularization}\label{sec:RelationUnregularizedOT}
In this section, we assess the limit behavior of the empirical EROT value and Sinkhorn costs in the regime $\lambda\searrow 0$. For this purpose, we introduce the unregularized optimal transport value between $\rb$ and $\sb$ as
\begin{equation}
\OT(\rb, \sb) \coloneqq \inf_{\pi\in \Pi(\rb, \sb)} \langle \cb,\pib \rangle .\tag{OT}\label{eq:OptimalTransportProblem}
\end{equation}
Provided that $\cx\in \lr$, $\cy \in \ls$ it follows by \citet[Theorems~4.1 and 5.10]{villani2008optimal} that $\OT(\rb, \sb)$ is finite and that there exists a (possibly non-unique) optimal solution $\pib^0\in \Pi(\rb, \sb)$ such that $OT(\rb, \sb) = \langle \cb, \pib^0\rangle$. 
Under this assumption it also follows that the OT value admits a dual formulation
\begin{align*}
	\OT(\rb, \sb) &= \sup_{(\alphab, \betab)\in \Phi_c(\rb, \sb) 
	} \langle\alphab,\rb\rangle + \langle\betab,\sb\rangle,
\end{align*}
where  $\Phi_c(\rb, \sb) \coloneqq \{(\alphab, \betab)\in \lr\times \ls \colon \alphab\oplus \betab\leq c\}$. Optimizing elements of the dual formulation are referred to as OT potentials. 

For our analysis of the EROT value we require an upper bound for the quantities $|\EROT(\rb, \sb) - \OT(\rb, \sb)|$ and $|\SOT(\rb, \sb) - \OT(\rb, \sb)|$. Let $\pib^0$ be an OT plan for the non-regularized problem \eqref{eq:OptimalTransportProblem} for $\rb, \sb$ and denote for $\lambda>0$ its entropy regularized counterpart by $\pib^\lambda$.
By optimality for their respective optimization problem it follows that 
$$ \langle\cb, \pib^0 \rangle \leq  \langle\cb, \pib^\lambda \rangle\leq \langle\cb, \pib^\lambda \rangle +  \lambda M(\pib^\lambda) \leq \langle\cb, \pib^0 \rangle +  \lambda M(\pib^0),$$ 
where $M(\,\cdot\,)$ represents the mutual information. This yields 
$$ 0 \leq \SOT(\rb, \sb) - \OT(\rb, \sb) \leq \EROT(\rb, \sb) - \OT(\rb, \sb) \leq \lambda M(\pib^0).$$
 Most notably, if one of the probability measures has finite entropy, i.e.,  
 \begin{align}\label{eq:DefinitionEntropyFunctional}
	H(\rb, \sb) \coloneqq \min\bigg(\sum_{x \in \XC}r_x\log\Big(\frac{1}{r_x}\Big), \sum_{y \in \YC}s_y\log\Big(\frac{1}{s_y}\Big)\bigg)< \infty,
 \end{align}
we obtain by the bound $0 \leq M(\pib) \leq H(\rb, \sb)$ for any $\pib\in \Pi(\rb,\sb)$ \citep[Theorem 2.4.1]{cover1991elements} that 
\begin{align}\label{eq:StabilityInTermsOfRegularization}
	|\SOT(\rb, \sb) - \OT(\rb, \sb)|\leq|\EROT(\rb, \sb) - \OT(\rb, \sb)|\leq \lambda H(\rb, \sb)= \landau(\lambda).
\end{align}
We like to point out that if a probability measure fulfills the Borisov-Dudley-Durst condition, then it also admits finite entropy. 

\begin{remark}For finitely supported probability measures it was shown by \cite{cominetti1994asymptotic} that  $|\SOT(\rb, \sb) - \OT(\rb, \sb)|= o(\exp(-\kappa/\lambda))$ for some constant $\kappa>0$ depending on $\rb$ and $\sb$ as $\lambda$ tends to zero. However, as noted by \cite{Weed18_ExplicitAnalysis} the upper bound $o(\exp(-\kappa/\lambda))$ appears to fail in general for countably supported probability measures  where instead the rate $|S^\lambda(\rb, \sb) - \OT(\rb, \sb)|= \landau(\lambda)$ seems to be tight.
\end{remark}

For a sequence of i.i.d. samples $X_1, \dots, X_n \iid \rb$ and its associated empirical measure $\hat \rb_n$ we obtain by  \citet[Corollary 1]{antos2001convergence} that $\lim_{n \rightarrow \infty}H(\hat \rb_n, \sb) = H(\rb, \sb)$, hence $\limsup_{n \in \N}H(\hat \rb_n, \sb) < \infty$. Performing a  decomposition gives $$ \begin{aligned}
	&\;\sqrt{n} \big(\EROTLambda{\lambda(n)}(\hat \rb_n, \sb) - \EROTLambda{\lambda(n)}(\rb, \sb)\big) \\
	=&\; 	\sqrt{n} \big(\EROTLambda{\lambda(n)}(\hat \rb_n, \sb) - \OT(\hat\rb_n, \sb)\big)+ \sqrt{n} \big(\OT(\rb, \sb) - \EROTLambda{\lambda(n)}(\rb, \sb)\big) \\
	 &\;+  \sqrt{n}\big(\OT(\hat \rb_n, \sb) - \OT(\rb, \sb)\big). 
\end{aligned} $$
For $\lambda(n) = o(1/\sqrt{n})$ the first and second term converge to zero.  Similar assertions remain valid if $\EROTLambda{\lambda(n)}$ is replaced by $\SOTLambda{\lambda(n)}$. 
The weak convergence of the third term has been analyzed by \cite{tameling18} for the setting where the cost function is selected as the power of a metric. More precisely, for a general metric space $(M, d)$ and countable subsets $\XC, \YC$ contained therein, they consider costs of the form $c_p(x,y) \coloneqq d^p(x,y)$ for $p \geq 1$. 
Then, for $n\rightarrow \infty$, the empirical non-regularized OT value
converges in distribution to the supremum of a Gaussian process, 
\begin{equation}
	\sqrt{n}\big(\OT(\hat \rb_n, \sb) - \OT(\rb, \sb)\big) \konvD\sup_{\alphab^* \in \mathcal{S}^*} \langle \alphab^*, \Gb_\rb \rangle,\label{eq:LimitLawNonRegOT}
  \end{equation}
 where $\mathcal{S}^*$ denotes the set of dual optimizers of \eqref{eq:OptimalTransportProblem} and $\Gb_\rb$ represents the Gaussian process with covariance $\Sigma(\rb)$.
 This distributional limit is valid if, for some $z\in (M,d)$, it holds that 
 \begin{align*}
	\sum_{x\in\XC}d^p(x,z)\sqrt{r_x}<\infty \quad \text{and}\quad \sum_{y\in\YC}d^p(y,z)s_y<\infty.
 \end{align*}

Recently, distributional limits for the empirical unregularized OT value on countable spaces have been extended by \cite{hundrieser2022unifying} to arbitrary uniformly bounded cost functions $c$. They  assert a similar asymptotic distribution as in \eqref{eq:LimitLawNonRegOT} if the estimated population measure $\rb$ fulfills the Borisov-Dudley-Durst condition $\sum_{x\in\XC}\sqrt{r_x}<\infty$. 

  Both limit laws, for non-regularized OT value and its entropy regularized counterpart, are essentially governed by their respective set of population dual optimizers. However, unlike the entropic variant where the limit law is always characterized by a centered normal due to uniqueness of dual optimizers, the set of dual optimizers for the non-regularized OT problem may not be unique resulting in a supremum of Gaussian distributions rather than a Gaussian distribution. 
  This suggests that  in general it is not sufficient to select a small but fixed regularization parameter $\lambda>0$ as such choice crucially affects the corresponding weak limit, leading to a  centered normal asymptotic law. Instead, the regularization parameter should decrease with increasing sample size.

Nevertheless, under unique OT potentials (up to a constant shift) the distributional limit for the empirical OT value is indeed centered normal. Such uniqueness is in fact often expected under different measures. According to \citet[Lemma 6 and Corollary 1]{staudt2022uniqueness} it is always guaranteed for $\rb\in \PC(\XC), \sb\in \PC(\YC)$ if  
\begin{align*}
	\sum_{x\in A}\rb_x \neq \sum_{y\in B}\sb_y
\end{align*} for every non-empty proper pair of subsets $A\subset \supp(\rb)$ and $B\subset \supp(\sb)$. 
Hence, given a uniformly bounded cost and measures $\rb\in\PC(\XC),\sb\in\PC(\YC)$ which satisfy the above condition, there exists a unique (up to a constant shift) pair of OT potentials $(\alpha^0,\beta^0)$. If the Borisov-Dudley-Durst condition is met by $\rb$, it  follows by \cite{hundrieser2022unifying}~that 
\begin{align*}
	\sqrt{n} \big(OT(\hat \rb_n, \sb) -OT(\rb, \sb)\big)&\konvW \mathcal{N}(0, \textup{Var}_{X\sim \rb}[\alpha^0_X]),
\end{align*}
while for the EROT value with $\lambda>0$ under  identical assumptions Theorem \ref{them:LimitLawEmpEROT_Value} yields that
\begin{align*}
	\sqrt{n} \big(\EROTLambda{\lambda}(\hat \rb_n, \sb) - \EROTLambda{\lambda}(\rb, \sb)\big)&\konvW \mathcal{N}(0, \textup{Var}_{X\sim \rb}[\alpha^\lambda_X]).
\end{align*}
In particular, based on Proposition \ref{prop:ContinuityEROTVanishingRegularization}$(ii)$, we obtain for suitably shifted EROT potentials $\alpha^\lambda$ pointwise convergence to $\alpha^0$, and hence using our quantitative bounds  in terms of  $\norm{c}_\infty$ for these EROT potentials (Proposition \ref{prop:BoundsOptimalPotentials}) it follows from dominated convergence that 
\begin{align*}
	\lim_{\lambda \searrow 0}\textup{Var}_{X\sim \rb}[\alpha^\lambda_X] =  \textup{Var}_{X\sim \rb}[\alpha^0_X].
\end{align*}
Moreover, as the Borisov-Dudley-Durst condition for $\rb$ also implies that $\norm{\hat \rb_n  - \rb}_{\ell^1(\XC)}\rightarrow 0$ a.s.\ (see, e.g., \citealt[Lemma 2.10.14]{van1996weak}),   it also follows for any sequence $\lambda_n = o(1)$ a.s.\ that
$$\lim_{n\rightarrow \infty}\textup{Var}_{X\sim \hat\rb_n}[\alpha^{\lambda_n}_X] =\textup{Var}_{X\sim \rb}[\alpha^0_X] .$$
As an appealing consequence, this showcases under bounded costs and unique OT potentials that the limit distribution for the empirical OT value can be consistently approximated using the empirical EROT quantities. 

\begin{remark}[Degeneracy of limit law]
	As noted by \cite{tameling18} the limit distributions provided in \eqref{eq:LimitLawNonRegOT} may degenerate in certain cases, namely when the set of dual optimal solutions $\mathcal{S}^*$ for the non-regularized OT problem only contains constant elements. This occurs e.g. under $\rb = \sb$ with $\supp(\rb) = \XC$ for a cost function $c(x,y) = d^p(x,y)$ with $p>1$ if and only if $\XC$ has no isolated points. In view of \citet[Section 4]{hundrieser2022unifying} this setting amounts to unique and $\rb$-a.s. constant dual OT potentials.  Hence, in this case, the limit distribution of the empirical EROT value and Sinkhorn cost degenerate for $\lambda(n) = o(1)$.
	In contrast, for fixed $\lambda>0$ the limit law generally does not degenerate since population EROT potentials are typically non-constant (Remark \ref{rem:NonConstantDualPotentials}) and as the Hadamard derivative of the EROT plan differs from zero (Remark \ref{rem:NonDegDerivative}). 
\end{remark}

\section{Examples}\label{sec:Examples}

In this section we discuss a number of examples where our theory asserts novel distributional limits for EROT quantities and compare it to related statistical contributions on EROT. We focus on three settings: (1) uniformly bounded costs on general countable spaces, (2) countable spaces embedded in a metric space $(M,d)$ and a cost function given by the $p$-th power of a metric with $p \geq 1$, and, as a special instance thereof, (3) countable spaces contained within Euclidean spaces  with a norm as the metric. Since our summation constraints for the distributional limits on the EROT value and plan solely depend on the choice of the dominating functions for the cost function we outline possible choices in each respective setting and omit the specific summation constraints. Further, unless we define different collections of dominating functions $\cxm, \cxp, \cym, \cyp$ and $\tcxm, \tcxp,\tcym,\tcyp$ they are to be selected identically.

\subsection{Uniformly Bounded Costs}\label{subsec:Expl:Bounded}
In case of a uniformly bounded cost function $c\colon \XC\times\YC\rightarrow \RR_{\geq 0}$ with $C \coloneqq \sup_{(x,y)\in \XC\times \YC}c(x,y)<\infty$ we consider for \eqref{eq:LowerUpperBoundsC} the dominating functions $\cxm, \cxp\colon\XC\rightarrow \RR$, $\cym, \cyp\colon \YC\rightarrow \RR$ given by
\begin{align*}
	\cxm(x) = \cym(y)= 0\quad \text{ and } \quad \cxp(x)= \cyp(y) = \frac{C}{2}.
\end{align*}
Then it follows for any $x\in \XC$ and $y\in \YC$ that 
\begin{align*}
	\cx(x) = \cy(y) = 1+\frac{C}{2} \asymp 1 \quad \text{ and } \exSup{}(x) = \eySup{}(y) = \exp\left(\frac{C}{2\lambda}\right)\asymp 1.
\end{align*}
Herein, the relation $a\asymp b$ mean that there exists a constant $\kappa>0$ such that $a/k\leq b \leq ak$. 
In consequence, for the distributional limits of the empirical EROT value and plan we require that the respective estimated measure fulfills the standard Borisov-Dudley-Durst condition \citep{durst1980,borisov1981some,Borisov1983}.

\subsection{Costs induced by Powers of a Metric}

For a general metric space $(M, d)$ and countable subsets $\XC, \YC$ contained therein, a suitable cost function is given by $c_p(x,y) \coloneqq d^p(x,y)$ for $p \geq 1$. This is motivated by the $p$-th Wasserstein distance $W_p$, which is defined for probability measures $\rb\in \PC(\XC)$, $\sb\in \PC(\YC)$ by  $$W_p^p(\mu, \nu) \coloneqq \OT_{\!c_p}(\rb, \sb) =  \inf_{\pi\in \Pi(\rb, \sb)}\langle c_p, \pi\rangle.$$
For this cost function, the dominating functions for our limit distribution theory on the empirical EROT value and plan are to be selected as follows. For fixed $z\in M$ we define $$\RC_\XC = \RC_\XC(z) \coloneqq\sup_{x\in \XC}d(x,z) \quad \text{ and } \quad \RC_\YC = \RC_\YC(z) \coloneqq \sup_{y\in \YC}d(y,z),$$
and differentiate between four settings: $(i)$ bounded spaces $\RC_\XC, \RC_\YC<\infty$, and $(ii)$ bounded and unbounded spaces $\RC_\XC<\infty$ and $\RC_\YC = \infty$ (we refer to this setting as semi-bounded), $(iii)$ unbounded spaces $\RC_\XC = \RC_\YC = \infty$, and $(iv)$ potentially unbounded spaces with a separability constraint and $p = 1$. 

\subsubsection{Bounded Setting}\label{subsec:Expl:Bounded2}
For setting $(i)$ it follows by triangle inequality that $c_p$ is uniformly bounded, $$0\leq c_p(x,y)= d^p(x,y) \leq (d(x,z) + d(z,y))^p\leq (\RC_\XC + \RC_\YC)^p<\infty,$$
and the dominating functions are to be selected according to Section \ref{subsec:Expl:Bounded}. In particular, for distributional limits of empirical EROT value and plan under estimated $\rb$ or $\sb$ we require the standard Borisov-Dudley-Durst condition to be satisfied.

\subsubsection{Semi-Bounded Setting}\label{subsec:Expl:BoundedUnbounded}
For setting $(ii)$ we obtain for $(x,y) \in  \XC\times \YC$, upon denoting $(t)_+^p\coloneqq  \max(0,t)^p$ for $t \in \RR$, 
\begin{align*}
	c_p(x,y) = d^p(x,y) \begin{cases}
		\leq (d(y,z) + d(z,x))^p \!\!\!\!\!\!\!&\leq (d(y,z) +\RC_\XC)^p, \\
		\geq (d(y,z) - d(z,x))_+^p\!\!\!\!\!\!\!&\geq (d(y,z) - \RC_\XC)_+^p.
	\end{cases}
\end{align*}

This gives rise to dominating functions  $\cxm, \cxp\colon\XC\rightarrow \RR$ and $\cym, \cyp\colon \YC\rightarrow \RR$ defined by
\begin{equation}\label{eq:choicesDominatingFunctionsSemiBounded}
\begin{aligned}
	\cxm(x) &=  0 && \cym(y) = (d(y,z) - \RC_\XC)_+^p, \\
	\cxp(x) &=  0, &&
	\cyp(y) = (d(y,z) + \RC_\XC)^p, 
\end{aligned}
\end{equation}
and hence 
\begin{align*}
	\cx(x) &=  1, && \cy(y) = 1 + |\cym(y)| + |\cyp(y)| \asymp 1 + d^p(y,z), \\
	\exSup{}(x) &= 1, &&
	\eySup{}(y) = \exp\left(\frac{\cyp(y) - \cym(y)}{\lambda} \right) \leq \exp\left(\frac{(d(y,z) + \RC_\XC)^p}{\lambda}\right).
\end{align*}
In particular, if $p \in \NN$, it holds for any $y\in \YC$ that 
\begin{align*}
	\cyp(y) - \cym(y)
	\begin{cases}
		\leq (d(y,z) + \RC_\XC)^p \leq 2^p\RC_\XC^p & \text{ if }d(y,z)\leq \RC_\XC,\\
		= \sum_{\substack{i = 1\\i \text{ odd}}}^p 2\binom{p}{i}\RC_\XC^{i}d^{p-i}(y,z)& \text{ if }d(y,z)> \RC_\XC,
	\end{cases}
\end{align*}
which yields for any entropy regularization $\lambda>0$ and arbitrary $\gamma, \epsilon>0$  the relations
\begin{align}\label{eq:BoundedUnbounded_EySupBounds}
	\eySup{}(y) \asymp  \exp\Big( 2\lambda^{-1}\textstyle \sum_{\substack{i = 1\\i \text{ odd}}}^p \binom{p}{i}\RC_\XC^{i}d^{p-i}(y,z)\Big)\begin{cases}\lesssim \exp\left( \gamma d^{p-1+\epsilon}(y,z)\right),\\
	\gtrsim \exp\left( \gamma d^{p-1-\epsilon}(y,z)\right).\end{cases}
\end{align}

These considerations assert the validity of distributional limits for the empirical EROT value and plan under estimated $\rb$ if the standard Borisov-Dudley-Durst condition is met, in conjunction with $\cy, \eySup{2}\in \ls$. For the distributional limit of the empirical EROT value if $\sb$ is (also) estimated we (additionally) require that
\begin{align}\label{eq:EROT_Value_BoundedUnbounded}
	\sum_{y\in\YC}\cy(y)\sqrt{s_y} +  \eySup{2}(y)s_y < \infty,%
\end{align}
whereas for the empirical EROT plan we require that 
\begin{align}\label{eq:EROT_Plan_BoundedUnbounded}
	\sum_{y\in\YC}\cy(y)\eySup{4}(y)\sqrt{s_y}<\infty.%
\end{align}
Sufficient conditions for \eqref{eq:EROT_Value_BoundedUnbounded} and \eqref{eq:EROT_Plan_BoundedUnbounded} are provided in the subsequent lemma. 
\begin{lemma}
\label{prop:SemiBounded_Sufficient}
Suppose $\YC\subseteq (M,d)$ and set $f_\YC\colon \NN\rightarrow \NN,  n\mapsto \#\{y\in \YC\colon n-1\leq d(y,z)< n\}.$ Let $\sb \in \PC(\YC)$ and 
assume that there exist $\overline \gamma,  \delta,  \epsilon>0$ such that \begin{align}
	&\sum_{y\in \YC}\exp\left( \overline \gamma d^{p-1+ \epsilon}(y,z)\right)\!s_y<\infty \quad \text{ and }\quad \label{eq:MeasureConditionForSimplicationOfAssumptions} %
	f_\YC(n) \lesssim \exp\left( \delta n^{p-1+ \epsilon/2}\right).
	\end{align}
	Then, for $\gamma \coloneqq \overline \gamma/10$ in \eqref{eq:BoundedUnbounded_EySupBounds},  %
	it follows that conditions \eqref{eq:EROT_Value_BoundedUnbounded} and \eqref{eq:EROT_Plan_BoundedUnbounded} are met. 
\end{lemma}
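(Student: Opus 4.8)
The plan is to decompose the two target series annulus-by-annulus according to the distance to the reference point $z$, and to balance the cardinality bound on each annulus against the exponential moment bound in \eqref{eq:MeasureConditionForSimplicationOfAssumptions} via Cauchy--Schwarz. First I would reduce to power-exponential weights: in the semi-bounded setting $\cy(y)\asymp 1+d^p(y,z)\lesssim(1+d(y,z))^p$, and by \eqref{eq:BoundedUnbounded_EySupBounds} applied with $\gamma=\overline\gamma/10$ one has, for $k\in\{1,2,4\}$,
\[
\eySup{k}(y)\;\lesssim\;\exp\!\Big(\tfrac{k\overline\gamma}{10}\,d^{p-1+\epsilon}(y,z)\Big),
\]
with implied constants depending only on $p,\RC_\XC,\lambda,\overline\gamma,\epsilon$. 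Hence \eqref{eq:EROT_Value_BoundedUnbounded} and \eqref{eq:EROT_Plan_BoundedUnbounded} follow once we show that $\sum_{y\in\YC}(1+d(y,z))^p\sqrt{s_y}<\infty$, that $\sum_{y\in\YC}\exp(\tfrac{\overline\gamma}{5}d^{p-1+\epsilon}(y,z))s_y<\infty$, and that $\sum_{y\in\YC}(1+d(y,z))^p\exp(\tfrac{2\overline\gamma}{5}d^{p-1+\epsilon}(y,z))\sqrt{s_y}<\infty$. The second of these is immediate from \eqref{eq:MeasureConditionForSimplicationOfAssumptions} because $\tfrac{\overline\gamma}{5}\le\overline\gamma$, so only the two series with $\sqrt{s_y}$ remain.

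For these I would set $A_n\coloneqq\{y\in\YC\colon n-1\le d(y,z)<n\}$ for $n\in\NN$, so that $\#A_n=f_\YC(n)$ and $\YC=\bigsqcup_{n\in\NN}A_n$. On $A_n$ one has $d(y,z)<n$, hence $(1+d(y,z))^p\le(1+n)^p$ and $\exp(\tfrac{2\overline\gamma}{5}d^{p-1+\epsilon}(y,z))\le\exp(\tfrac{2\overline\gamma}{5}n^{p-1+\epsilon})$; one also has $d(y,z)\ge n-1$, so \eqref{eq:MeasureConditionForSimplicationOfAssumptions} gives $\sum_{y\in A_n}s_y\le K\exp(-\overline\gamma(n-1)^{p-1+\epsilon})$ with $K\coloneqq\sum_{y\in\YC}\exp(\overline\gamma d^{p-1+\epsilon}(y,z))s_y<\infty$. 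Cauchy--Schwarz on the finite set $A_n$ yields $\sum_{y\in A_n}\sqrt{s_y}\le\sqrt{\#A_n}\,\big(\sum_{y\in A_n}s_y\big)^{1/2}\lesssim\sqrt{f_\YC(n)}\,\exp(-\tfrac{\overline\gamma}{2}(n-1)^{p-1+\epsilon})$, and the cardinality bound in \eqref{eq:MeasureConditionForSimplicationOfAssumptions} gives $\sqrt{f_\YC(n)}\lesssim\exp(\tfrac{\delta}{2}n^{p-1+\epsilon/2})$.

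Summing the resulting per-annulus estimates, it suffices to check that
\[
\sum_{n\in\NN}(1+n)^p\exp\!\Big(\tfrac{2\overline\gamma}{5}n^{p-1+\epsilon}+\tfrac{\delta}{2}n^{p-1+\epsilon/2}-\tfrac{\overline\gamma}{2}(n-1)^{p-1+\epsilon}\Big)<\infty,
\]
together with the analogous sum without the factor $\exp(\tfrac{2\overline\gamma}{5}n^{p-1+\epsilon})$ (which is only easier). Using $(n-1)^q\ge n^q-\max(1,q)\,n^{q-1}$ for $q\coloneqq p-1+\epsilon>0$, the exponent above is at most $\big(\tfrac{2\overline\gamma}{5}-\tfrac{\overline\gamma}{2}\big)n^{p-1+\epsilon}+\landau(n^{p-1+\epsilon/2})+\landau(n^{p-2+\epsilon})+p\log n=-\tfrac{\overline\gamma}{10}n^{p-1+\epsilon}+o(n^{p-1+\epsilon})$, since $p-1+\epsilon/2<p-1+\epsilon$ and $p-2+\epsilon<p-1+\epsilon$. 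Thus the $n$-th term decays like $\exp(-\tfrac{\overline\gamma}{20}n^{p-1+\epsilon})$ for all large $n$ and the series converges because $p-1+\epsilon>0$; the same argument (now with leading exponent $-\tfrac{\overline\gamma}{2}n^{p-1+\epsilon}$) disposes of $\sum_{y\in\YC}(1+d(y,z))^p\sqrt{s_y}$. This yields \eqref{eq:EROT_Value_BoundedUnbounded} and \eqref{eq:EROT_Plan_BoundedUnbounded}.

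The main obstacle is the exponent bookkeeping. The weight $\eySup{4}$ in the plan condition costs $4\gamma=\tfrac{2\overline\gamma}{5}$ in the exponent, and this must stay strictly below the $\tfrac{\overline\gamma}{2}$ of decay that survives the Cauchy--Schwarz halving of \eqref{eq:MeasureConditionForSimplicationOfAssumptions}, which is precisely why $\gamma=\overline\gamma/10$ works (any $\gamma<\overline\gamma/8$ would do). Moreover, the annular cardinality growth $\exp(\delta n^{p-1+\epsilon/2})$ has to sit at a \emph{strictly smaller} power of $n$ than the decay rate $n^{p-1+\epsilon}$, which is exactly the role of the gap between the exponents $p-1+\epsilon/2$ and $p-1+\epsilon$ in \eqref{eq:MeasureConditionForSimplicationOfAssumptions}; and one must keep track of the borderline case $p=1$, where $p-1+\epsilon=\epsilon$, so that no power of $n$ in these bounds degenerates to a constant.
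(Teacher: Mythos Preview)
Your proof is correct and follows the same overall architecture as the paper's: reduce to the three auxiliary series, handle $\langle\eySup{2},\sb\rangle$ directly from the moment bound, and control the two series with $\sqrt{s_y}$ via the annular decomposition and the growth condition on $f_\YC$, arriving at the same exponent balance $-\tfrac{\overline\gamma}{10}n^{p-1+\epsilon}+o(n^{p-1+\epsilon})$.

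The only notable difference is how you get from the moment condition to a bound on $\sqrt{s_y}$. You aggregate mass on each annulus via a Markov-type estimate and then apply Cauchy--Schwarz, obtaining $\sum_{y\in A_n}\sqrt{s_y}\lesssim\sqrt{f_\YC(n)}\exp(-\tfrac{\overline\gamma}{2}(n-1)^{p-1+\epsilon})$. The paper instead observes that finiteness of $\sum_y\exp(\overline\gamma d^{p-1+\epsilon}(y,z))s_y$ immediately gives the \emph{pointwise} bound $s_y\lesssim\exp(-\overline\gamma d^{p-1+\epsilon}(y,z))$, hence $\sqrt{s_y}\lesssim\exp(-\tfrac{\overline\gamma}{2}d^{p-1+\epsilon}(y,z))$, and then sums this directly against the cardinality bound. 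Both routes halve the exponent $\overline\gamma$ and lead to the same annular series; the paper's pointwise shortcut is a bit cleaner and avoids Cauchy--Schwarz, while your argument has the mild advantage of never needing to bound individual $s_y$. Your parenthetical remark that any $\gamma<\overline\gamma/8$ would work is correct and reflects exactly this arithmetic.
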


The proof of Proposition \ref{prop:SemiBounded_Sufficient} is given in Appendix \ref{app:ProofsExamples}. 
The summability constraint in \eqref{eq:MeasureConditionForSimplicationOfAssumptions} means that the measure $\sb$ needs to be \emph{sub-Weibull} for the metric $d$ with an order strictly greater than $p -1$. The growth condition in \eqref{eq:MeasureConditionForSimplicationOfAssumptions} requires that the elements of $\YC$ are sufficiently spread. It is always fulfilled if $f_\YC$ grows at most polynomially as $n$ tends to infinity.

\subsubsection{Unbounded Setting}\label{subsec:Expl:Unbounded}
For setting $(iii)$ it follows by triangle inequality and Jensen's inequality that \begin{align*}
	0 \leq c_p(x,y) 
	\leq (d(y,z) + d(z,x))^p
	\leq  2^{p-1} (d^p(y,z) + d^p(z,x)).
\end{align*}
Hence, for arbitrary $p\geq 1$ we may select dominating functions  $\cxm, \cxp\colon\XC\rightarrow \RR$ and $\cym, \cyp\colon \YC\rightarrow \RR$ defined by
\begin{align*}
	\cxm(x) &=  0, && \cym(y) =0, \\
	\cxp(x) &=  2^{p-1}d^p(x,z), &&
	\cyp(y) = 2^{p-1}d^p(y,z), 
\end{align*}
which yields for $\epsilon, \gamma>0$ the relation
\begin{align*}
	\cx(x) &=  1 + 2^{p-1}d^p(x,z) \asymp 1 + d^p(x,z),&& \cy(y) = 1 + 2^{p-1}d^p(y,z)\asymp 1 + d^p(y,z), \\
	\exSup{}(x) &= \exp\left(\frac{2^{p-1}d^p(x,z)}{\lambda}\right) &&
	\eySup{}(y) = \exp\left(\frac{2^{p-1}d^p(y,z)}{\lambda}\right)\\
	&\lesssim \exp\left(\gamma d^{p+\epsilon}(x,z)\right), && \color{white}{
	\eySup{}(y)}\color{black}\lesssim \exp\left(\gamma d^{p+\epsilon}(y,z)\right).
\end{align*}

Moreover, for $p\in \NN$ more refined dominating functions may be selected. Further we consider different collections of dominating functions $\cxm, \cxp, \cym, \cyp$ and $\tcxm, \tcxp,\tcym,\tcyp$. Given $\epsilon, \gamma>0$ we use triangle inequality and H\"older's inequality onto the $i$-th element of the subsequent sum such that 
\begin{align*}
	d^p(x,y) &\leq (d(y,z) + d(z,x))^p= \textstyle \sum_{i = 0}^{p}  \binom{p}{i} d^{p-i}(y,z)d^i(x,z) \\
	&\leq \textstyle d^{p}(y,z) + \frac{\lambda\gamma}{2} d^{p-1+\epsilon}(y,z) + \sum_{i = 1}^{p-1} K_i d^{q_i}(x,z)   +d^{p}(x,z), 
\end{align*}
where $q_i = q(i,p,\epsilon) \coloneqq \frac{p-1+\epsilon}{i-1+\epsilon}$ and $K_i = K(i,p,\epsilon, \gamma, \lambda) \coloneqq \binom{p}{i}^{\frac{p-1+\epsilon}{i-1+\epsilon}}\left(\frac{\lambda\gamma}{2(p-1)}\frac{p-i}{i-1+\epsilon}\right)^{-\frac{p-i}{p-1+\epsilon}}$ represent for each $i \in \{1, \dots, p-1\}$ the dual H\"older exponent and coefficient, respectively.%

Again invoking triangle inequality we find since $p \in \NN$ that 
\begin{align*}
	d^p(x,y) &\geq(d(y,z) - d(z,x))^p= \textstyle \sum_{i = 0}^{p} (-1)^{i} \binom{p}{i} d^{p-i}(y,z)d^i(x,z) \\
	&\geq \textstyle d^{p}(y,z) - \frac{\lambda\gamma}{2} d^{p-1+\epsilon}(y,z) - \sum_{i = 1}^{p-1} K_i d^{q_i}(x,z)  + (-1)^{p}d^{p}(x,z).
\end{align*}
Hence, under $p\in \NN$ we may select dominating functions given by 
\begin{equation}\label{eq:choicesDominatingFunctionsUnbounded1}
\begin{aligned}
	\cxm(x) &=  (-1)^{p}d^{p}(x,z)- \textstyle\sum_{i = 1}^{p-1} K_i d^{q_i}(x,z),  && \cym(y) =d^{p}(y,z) - \frac{\lambda\gamma}{2} d^{p-1+\epsilon}(y,z) , \\
	\cxp(x) &=  d^{p}(x,z)+\textstyle\sum_{i = 1}^{p-1} K_i d^{q_i}(x,z), &&
	\cyp(y) = d^{p}(y,z) + \frac{\lambda\gamma}{2} d^{p-1+\epsilon}(y,z).
\end{aligned}
\end{equation}
	By symmetry of the metric, we may also consider the dominating functions 
	\begin{equation}
		\begin{aligned}
		\label{eq:choicesDominatingFunctionsUnbounded2}
	\tcxm(x) &=  d^{p}(x,z) - \frac{\lambda\gamma}{2} d^{p-1+\epsilon}(x,z),  && \tcym(y) = (-1)^{p}d^{p}(y,z)- \textstyle\sum_{i = 1}^{p-1} K_i d^{q_i}(y,z), \\
	\tcxp(x) &= d^{p}(x,z) + \frac{\lambda\gamma}{2} d^{p-1+\epsilon}(x,z), &&
	\tcyp(y) = d^{p}(y,z)- \textstyle\sum_{i = 1}^{p-1} K_i d^{q_i}(y,z).
\end{aligned}
\end{equation}
As a consequence, we obtain 
\begin{align*}
	\cx(x) &\asymp 1 +  d^{p}(x,z) +d^{(p-1+\epsilon)/\epsilon}(x,z), &&\cy(y) \asymp 1 +  d^{p}(y,z), \\
	\tcx(x) &\asymp 1 +  d^{p}(x,z), &&\tcy(y) \asymp 1 +  d^{p}(y,z) +d^{(p-1+\epsilon)/\epsilon}(y,z),\\
	\texSup{}(x) &\asymp \exp(\gamma d^{p-1+\epsilon}(x,z)), && \eySup{}(y) \asymp \exp(\gamma d^{p-1+\epsilon}(y,z)).
\end{align*}
	
These functions provide the weights for the summability constraints of our distributional limits for the EROT value if $\rb$ or $\sb$ or both are estimated as outlined in Theorem \ref{them:LimitLawEmpEROT_Value}. 
In particular, based on Proposition \ref{prop:SemiBounded_Sufficient}, it follows that the summation constraints are fulfilled if the measures $\rb$ and $\sb$ are sub-Weibull for the metric $d$ of an order strictly greater than $p-1$ and if additionally the sets $\XC$ and $\YC$ are sufficiently spread out in $(M,d)$. 

Notably, for this setting our theory on distributional limits for the empirical EROT plan does not apply since $\norm{\cxp - \cxm}_\lInfX = \infty$ and $\norm{\cyp - \cym}_\lInfY= \infty$. Nevertheless, as the following subsection highlights, under an additional separability constraint we can formulate refined dominating functions, which enables us to apply our distributional limits for the empirical EROT plan even if both spaces are unbounded.

\subsubsection{Unbounded Setting with Separability Constraint}\label{subsec:Expl:Separability}
Assume that the countable spaces $\XC, \YC$ are unbounded subsets in the metric space $(M,d)$ that fulfill the condition
\begin{equation}
\kappa \coloneqq \sup_{(x,y) \in \XC\times \YC}\Big(d(x,z) + d(z,y) - d(x,y)\Big)< \infty.\label{eq:ConditionCostfunctionWithMetric}
\end{equation} 
This condition is to be interpreted as a separability constraint between the spaces $\XC$ and $\YC$, i.e., the larger the distances $d(x,z)$ and $d(z,y)$, the larger the distance $d(x,y)$. Under condition \eqref{eq:ConditionCostfunctionWithMetric} and by triangle inequality we can select the dominating functions for $c_1$ as 
\begin{equation}\label{eq:SeparabilityDominatingFunctions}
\begin{aligned}
	\cxm(x) &= d(x,z) - \kappa/2, \quad \quad && \cym(x) = d(y,z) - \kappa/2,\\
	\cxp(x) &= d(x,z), && \cyp(x) = d(y,z),
\end{aligned}
\end{equation}
which results in
\begin{align*}
	\cx(x) &\asymp 1+ d(x,z), && \cy(y) \asymp 1 + d(y,z),\\
	\exSup{}(x)&\asymp 1,&& \eySup{}(y) \asymp 1.
\end{align*}
	
This means that the summability constraints for the distributional limits of the empirical EROT value and plan only consist in weighted Borisov-Dudley-Durst conditions with weights given by $\cx$ and $\cy$. 

\subsection{Costs induced by Powers of a Norm on Euclidean Space}
\label{subsec:SquaredEuclideanCosts}

We now assume that $\XC$ and $\YC$ are contained in a Euclidean space and consider a cost function $c(x,y) = \norm{x-y}^p$ for some $p\in \NN$ and some arbitrary norm $\norm{\cdot}$. 
Based on the previous section we provide a single proposition for the two-sample case. 

\begin{proposition}[EROT limits laws for powers of a norm]\label{prop:EROT_EuclideanNorm}
	Let $\XC, \YC$ be countable spaces in $(\RR^d, \norm{\cdot})$, consider costs $c_p(x,y) \coloneqq \norm{x-y}^p$ for $p \in \NN$ and take probability measures $\rb \in \PC(\XC), \sb\in \PC(\YC)$. 
	Consider  one of the following four settings.
	\begin{enumerate}
		\item[$(i)$] Suppose that $\XC, \YC$ are bounded subsets of $\RR^d$ and assume that $$\sum_{x\in \XC}\sqrt{r_x}<\infty \quad \!\text{ and } 
		\quad  \sum_{y\in \YC}\sqrt{s_y}<\infty.$$
		\item[$(ii)$] Suppose that $\XC$ is a bounded subset of $\RR^d$ while $\YC$ is not. Assume that the function $f_\YC\colon \NN\rightarrow \NN,  n\mapsto \#\{y\in \YC\colon n-1\leq \norm{y}< n\}$ grows at most polynomial as $n \rightarrow \infty$. Further, assume there exist $\epsilon, \gamma >0$ such that, $$ \color{white}\exp( \norm{y}^{p-1+\epsilon})\color{black}\sum_{x\in \XC}\sqrt{r_x} <\infty \quad \text{ and } 
		\quad  \sum_{y\in \YC} \exp(\gamma \norm{y}^{p-1+\epsilon}) s_y<\infty.$$
		\item[$(iii)$]Suppose that $\XC, \YC$ are unbounded subsets of $\RR^d$.  Assume that the functions $f_\XC\colon \NN\rightarrow \NN,  n\mapsto \#\{x\in \XC\colon n-1\leq \norm{x}< n\}$ and $f_\YC$ grow at most polynomial as $n \rightarrow \infty$. Further, assume there exist $\epsilon, \gamma >0$ such that, $$\sum_{x\in \XC} \exp(\gamma \norm{x}^{p-1+\epsilon}) r_x <\infty \quad \text{ and } 
		\quad   \sum_{y\in \YC} \exp(\gamma \norm{y}^{p-1+\epsilon}) s_y<\infty.$$ 
		\item[$(iv)$]  Take $\norm{\cdot}= \norm{\cdot}_1$ and let $p = 1$. Assume  that $\XC\subseteq (-\infty, a]$, $\YC\subseteq [b, \infty)$ for elements $a,b\in \R^d$ and suppose $$ \sum_{x\in \XC} (1+\norm{x}_1)\sqrt{r_x} <\infty \quad \text{ and } 
		\quad   \sum_{y\in \YC}(1+\norm{y}_1)\sqrt{s_y}<\infty.$$
	\end{enumerate}
	Then, assertion (ii) of Theorem \ref{them:LimitLawEmpEROT_Value} for the empirical EROT value holds. Further, under setting $(i)$, $(ii)$, or $(iv)$,  assertion (iii) of Theorem \ref{them:LimitLawEmpEROT_Plan} for the empirical EROT plan holds. 
\end{proposition}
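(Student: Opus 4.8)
The plan is to deduce each assertion from the general limit theorems by exhibiting, in each of the four settings, admissible dominating functions for $c_p$ in the sense of \eqref{eq:LowerUpperBoundsC} and then checking that the summability hypotheses of Theorem \ref{them:LimitLawEmpEROT_Value}$(ii)$ — and, for settings $(i)$, $(ii)$, $(iv)$, of Theorem \ref{them:LimitLawEmpEROT_Plan}$(iii)$ — are implied by the stated assumptions on $\rb$ and $\sb$. Since the norm induces the metric $d(x,y)=\norm{x-y}$ and $c_p(x,y)=d^p(x,y)$, the four settings are exactly the bounded, semi-bounded, unbounded, and unbounded-with-separability settings of Section \ref{sec:Examples} (Sections \ref{subsec:Expl:Bounded2}, \ref{subsec:Expl:BoundedUnbounded}, \ref{subsec:Expl:Unbounded}, \ref{subsec:Expl:Separability}); I would fix a reference point $z\in\RR^d$ once and for all, in setting $(iv)$ taking $z$ whose $i$-th coordinate lies between $a_i$ and $b_i$, so that the separation constant $\kappa$ in \eqref{eq:ConditionCostfunctionWithMetric} is finite (indeed zero). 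In each setting one reads off $\cx,\cy,\exSup{},\eySup{}$ — and, where two collections are used, $\tcx,\tcy,\texSup{},\teySup{}$ — from \eqref{eq:DefWeightingFunctions}, after which it only remains to translate the given hypotheses into the summability constraints of the two theorems.

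For settings $(i)$ and $(iv)$ this translation is direct. In $(i)$ the cost is uniformly bounded, so by Section \ref{subsec:Expl:Bounded} the dominating functions may be taken constant, whence $\cx,\cy,\exSup{},\eySup{}\asymp 1$ and $\norm{\cxpb-\cxmb}_{\lInfX}=0$; the summability conditions of both theorems then reduce to $\sum_x\sqrt{r_x}<\infty$ and $\sum_y\sqrt{s_y}<\infty$, which are assumed. In $(iv)$ the dominating functions \eqref{eq:SeparabilityDominatingFunctions} give $\cx(x)\asymp 1+\norm{x}_1$, $\cy(y)\asymp 1+\norm{y}_1$, $\exSup{}\asymp\eySup{}\asymp 1$ and $\norm{\cxpb-\cxmb}_{\lInfX}=\kappa/2<\infty$; the hypotheses $\sum_x(1+\norm{x}_1)\sqrt{r_x}<\infty$ and $\sum_y(1+\norm{y}_1)\sqrt{s_y}<\infty$ then yield all required sums, using for the quadratic contributions — as in the remark following Lemma \ref{lem:WeakConvergence} — that $(1+\norm{x}_1)^2 r_x=\big((1+\norm{x}_1)\sqrt{r_x}\big)^2$ is summable once $(1+\norm{x}_1)\sqrt{r_x}$ is. In setting $(ii)$ one uses the semi-bounded dominating functions \eqref{eq:choicesDominatingFunctionsSemiBounded}, so that $\cx\asymp\exSup{}\asymp 1$, $\norm{\cxpb-\cxmb}_{\lInfX}=0$, $\cy(y)\asymp 1+\norm{y}^p$, and $\eySup{}$ obeys the two-sided bound \eqref{eq:BoundedUnbounded_EySupBounds}; the assumed polynomial growth of $f_\YC$ together with $\sum_y\exp(\gamma\norm{y}^{p-1+\epsilon})s_y<\infty$ verify the hypotheses \eqref{eq:MeasureConditionForSimplicationOfAssumptions} of Lemma \ref{prop:SemiBounded_Sufficient}, which delivers \eqref{eq:EROT_Value_BoundedUnbounded} and \eqref{eq:EROT_Plan_BoundedUnbounded}; combined with $\sum_x\sqrt{r_x}<\infty$ and the elementary fact that $\cy^2(y)s_y\lesssim\eySup{2}(y)s_y$ for large $\norm{y}$, these are precisely the hypotheses of Theorem \ref{them:LimitLawEmpEROT_Value}$(ii)$ and Theorem \ref{them:LimitLawEmpEROT_Plan}$(iii)$.

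The main work lies in setting $(iii)$, which I expect to be the principal obstacle. Here, for $p\in\NN$, I would use the two refined collections of dominating functions \eqref{eq:choicesDominatingFunctionsUnbounded1}--\eqref{eq:choicesDominatingFunctionsUnbounded2}, which give $\cx(x)\asymp 1+\norm{x}^p+\norm{x}^{(p-1+\epsilon)/\epsilon}$, $\tcx(x)\asymp 1+\norm{x}^p$, $\texSup{}(x)\asymp\exp(\gamma\norm{x}^{p-1+\epsilon})$ and symmetric expressions on $\YC$. Checking the hypotheses of Theorem \ref{them:LimitLawEmpEROT_Value}$(ii)$ then requires a two-sided analogue of Lemma \ref{prop:SemiBounded_Sufficient}: its proof should be repeated with \emph{both} $\XC$ and $\YC$ unbounded, the point being that the polynomial growth of $f_\XC$ and $f_\YC$, together with the exponential moment bounds $\sum_x\exp(\gamma'\norm{x}^{p-1+\epsilon})r_x<\infty$ and $\sum_y\exp(\gamma'\norm{y}^{p-1+\epsilon})s_y<\infty$ applied with $\gamma'$ slightly larger than the $\gamma$ appearing in \eqref{eq:choicesDominatingFunctionsUnbounded1}, dominate every polynomial-times-exponential weight occurring in the constraints; a Cauchy--Schwarz split shows in particular that $\sum_x\exp(\gamma\norm{x}^{p-1+\epsilon})\sqrt{r_x}<\infty$ whenever $2\gamma<\gamma'$ and $f_\XC$ grows polynomially, since then $\sum_x\exp\big((2\gamma-\gamma')\norm{x}^{p-1+\epsilon}\big)<\infty$. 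The EROT \emph{plan} is not asserted in setting $(iii)$ precisely because $\norm{\cxpb-\cxmb}_{\lInfX}=\infty$ there, so Theorem \ref{them:LimitLawEmpEROT_Plan} does not apply — this structural obstruction is exactly why the statement restricts the plan to settings $(i)$, $(ii)$, $(iv)$. Once the summability conditions have been verified in each case, the conclusions follow immediately from Theorems \ref{them:LimitLawEmpEROT_Value} and \ref{them:LimitLawEmpEROT_Plan}.
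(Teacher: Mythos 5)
Your proof follows the same route as the paper's: for each setting, pick the dominating functions from the corresponding subsection of Section~\ref{sec:Examples}, then read off and verify the summability constraints of Theorems~\ref{them:LimitLawEmpEROT_Value}(ii) and \ref{them:LimitLawEmpEROT_Plan}(iii). Your sketch of the two-sided adaptation of Lemma~\ref{prop:SemiBounded_Sufficient} in setting~$(iii)$, including the Cauchy--Schwarz split and the tuning $2\gamma<\gamma'$, makes explicit a step the paper compresses into the phrase ``employ Proposition~\ref{prop:SemiBounded_Sufficient},'' and is correct. One small caveat in setting~$(iv)$: the proposition does not assume $a\leq b$ coordinatewise, so your choice of $z$ between $a$ and $b$ (with ``$\kappa$ indeed zero'') is only well-posed under that extra hypothesis. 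The paper instead takes $z=0$ and shows $\kappa\leq\norm{a}_1+\norm{b}_1+\norm{a-b}_1<\infty$ directly from the $\ell^1$-triangle inequality, which covers all $a,b$; that is the safer choice, though it only changes the constants and not the resulting weights $\cx,\cy\asymp 1+\norm{\cdot}_1$, so your argument goes through after this minor repair.
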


\begin{remark}[Comparison to other works on squared Euclidean costs]
	The work by \cite{mena2019} provide distributional limits for the EROT value under squared Euclidean costs for sub-Gaussian measures by centering with the expectation of the empirical quantities.  
	A refinement of this distributional limit has been recently achieved by \cite{del2022improved} and \cite{goldfeld2022statistical} during the time this work was under review, asserting that the centering indeed can be selected as the population quantity. Their limit law is in line with our findings and is also given by a centered normal whose variance is characterized by the respective variance of the EROT potentials. 
	
	Proposition \ref{prop:EROT_EuclideanNorm} complements these contributions as we do not focus only on the squared Euclidean norm but consider more general choices for $p\in \NN$ as well as other norms, while demanding the measures to be concentrated on countable sets. Further, for $p = 2$ and measures concentrated on sufficiently spread countable sets, e.g., $\XC= \YC= \delta\Z^d$ for $\delta>0$, we do not require sub-Gaussianity but only that the measures are sub-Weibull of order $(1+\epsilon)$ for some $\epsilon>0$. 
\end{remark}

	\section{Sensitivity Analysis}\label{sec:SensitivityAnalysis}

	To formalize the underlying norms in which we perform our sensitivity analysis, we assume throughout this section that $c\colon \XC\times \YC\rightarrow \RR$  satisfies \eqref{eq:LowerUpperBoundsC} for two collections of dominating functions $\cxm, \cxp, \cym, \cyp$ and $\tcxm, \tcxp,\tcym,\tcyp$ and define for these two collections the functions $\cx,\cy,\exSup{},\eySup{}$ and $\tcx,\tcy,\texSup{},\teySup{}$ as in \eqref{eq:DefWeightingFunctions}. Note that the functions $\exSup{},\eySup{}$ and $\texSup{},\teySup{}$ also depend on the regularization parameter $\lambda$. 
	Recall the definition of the function classes $\FC_\XC, \FC_\YC$ from \eqref{eq:FunctionClassFC} and $\HC_\XC, \HC_\YC$ from \eqref{eq:FunctionClassHC}. 
	According to Lemma \ref{lem:equivalenceNorms}, the norm induced by $\FC_\XC$ (resp. $\FC_\YC$) consists of a weighted $\ell^1$-component with $\cx$ (resp. $\cy$) in combination with another component that ensures convergence of integrals of certain individual functions. 
	Further, we define for $\delta \geq 0 $ the functions 
	\begin{equation}\label{eq:DefinitionKXSup}
	\begin{aligned}
		\kxSup{\delta}&\colon \XC\rightarrow [1,\infty), \quad x\mapsto  \cx(x)\exSup{\delta}(x),\\
		\kySup{\delta}&\colon \YC\rightarrow [1,\infty), \quad \,y\mapsto  \cy(y)\eySup{\delta}(y), 
	\end{aligned}
	\end{equation}
	which shall serve as weights for a weighted $\ell^1$-norm. 
	The key motivation for the use of these norms in our analysis arises from our quantitative bounds on the EROT potentials and the plan from Proposition \ref{prop:BoundsOptimalPotentials}.

	\subsection{Continuity of Entropic Optimal Transport Quantities}\label{subsec:ExplicitBoundsOnOptimizerts}

	As a first step for our sensitivity analysis we derive a stability result for the EROT value which relates the perturbation of measures in one component to the respective EROT potentials. All proof of this subsection are deferred to Appendix \ref{subsec:ProofContinuity}. 

	\begin{lemma}[Stability of EROT value]\label{lem:LipschitzEROTValue}
		Consider probability measures $\rb, \tilde \rb\in \PC(\XC), \sb\in \PC(\YC)$ such that $\cx\in \lr\cap \ell^1_{\tilde \rb}(\XC)$, $\cy, \eySup{} \in \ls$ and let $\lambda>0$. Then it follows for EROT potentials $(\alpha^\lambda, \betab^\lambda)$ and $(\tilde \alpha^\lambda, \tilde \betab^\lambda)$ (according to Remark \ref{rem:NonConstantDualPotentials}) for the pairs $(\rb,\sb)$ and $(\tilde \rb, \sb)$, respectively, that 
		\begin{align*}
			\langle \tilde \alphab^\lambda, \rb - \tilde \rb\rangle \leq 	\EROT(\rb, \sb) - \EROT(\tilde \rb, \sb) \leq \langle \alphab^\lambda, \rb - \tilde \rb\rangle 
		\end{align*}
		In particular, it holds that 
		\begin{align*}
			\left|\EROT(\rb, \sb) - \EROT(\tilde \rb, \sb)\right| &\leq \left(\langle \cx, \rb+\tilde \rb\rangle +2\langle \cy, \sb\rangle + 2\lambda\log\langle \eySup{}, \sb\rangle \right)\norm{\rb - \tilde \rb}_{\lXSub{\cx}}.%
		\end{align*}
	\end{lemma}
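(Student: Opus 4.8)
The plan is to exploit the dual formulation \eqref{eq:DualEntropicOptimalTransportProblem} together with strong duality (Proposition \ref{prop:DualEROT}) to obtain the two-sided inequality, and then bound each side using the explicit estimates for the EROT potentials from Proposition \ref{prop:BoundsOptimalPotentials}. For the first inequality, observe that by \eqref{eq:EROTValueRepresentation} the EROT value at $(\rb,\sb)$ equals $\langle \alphab^\lambda, \rb\rangle + \langle\betab^\lambda, \sb\rangle$, where $(\alphab^\lambda, \betab^\lambda)$ are the EROT potentials for $(\rb,\sb)$, and similarly $\EROT(\tilde\rb, \sb) = \langle\tilde\alphab^\lambda, \tilde\rb\rangle + \langle\tilde\betab^\lambda, \sb\rangle$. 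The key point is that the pair $(\tilde\alphab^\lambda, \tilde\betab^\lambda)$, being \emph{feasible} for the dual problem associated to $(\rb,\sb)$ (feasibility only requires $\tilde\alphab^\lambda \in \lr$, $\tilde\betab^\lambda \in \ls$, which follows once we know $\cx, \eySup{}$ are $\rb$-integrable via the bounds of Proposition \ref{prop:BoundsOptimalPotentials} applied with the roles of the measures adjusted), is suboptimal for that problem; one similarly uses $(\alphab^\lambda, \betab^\lambda)$ as a feasible (suboptimal) candidate for the dual problem at $(\tilde\rb, \sb)$. Writing out the dual objective at both points and comparing — the exponential penalty term vanishes at the optimum by the marginal constraint \eqref{eq:optimalityCriterion_2_marginalConstraints}, which is the reason \eqref{eq:EROTValueRepresentation} holds — the difference $\EROT(\rb,\sb) - \EROT(\tilde\rb, \sb)$ telescopes to an expression of the form $\langle\alphab^\lambda, \rb-\tilde\rb\rangle$ from above and $\langle\tilde\alphab^\lambda, \rb-\tilde\rb\rangle$ from below, since the $\betab^\lambda$- and $\tilde\betab^\lambda$-terms against the fixed $\sb$ combine favorably with the penalty terms. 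This is a standard ``envelope''-type argument for the value of a parametrized sup.

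For the quantitative bound, I would start from $|\EROT(\rb,\sb) - \EROT(\tilde\rb,\sb)| \leq \max\big(|\langle\alphab^\lambda, \rb-\tilde\rb\rangle|, |\langle\tilde\alphab^\lambda, \rb-\tilde\rb\rangle|\big)$, and then bound each inner product by $\norm{\rb-\tilde\rb}_{\lXSub{\cx}}$ times a sup-norm of the potential measured in the dual weighted space, i.e.\ $|\langle\alphab^\lambda, \rb-\tilde\rb\rangle| \leq \norm{\alphab^\lambda}_{\lInfXSub{\cx}}\,\norm{\rb-\tilde\rb}_{\lXSub{\cx}}$, where $\norm{\alphab^\lambda}_{\lInfXSub{\cx}} = \sup_{x}\cx(x)^{-1}|\alpha_x^\lambda|$. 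Now I plug in the two-sided bounds for $\alpha_x^\lambda$ from Proposition \ref{prop:BoundsOptimalPotentials}: the upper bound gives $\alpha_x^\lambda \leq \cxp(x) + \langle\cypb,\sb\rangle - (\langle\cxmb,\rb\rangle + \langle\cymb,\sb\rangle)/2$ and the lower bound gives $\alpha_x^\lambda \geq \cxm(x) - \langle\cxpb,\rb\rangle + (\langle\cxmb,\rb\rangle+\langle\cymb,\sb\rangle)/2 - \lambda\log\langle\eySup{},\sb\rangle$. Since $|\cxp(x)|, |\cxm(x)| \leq \cx(x)$ and $\cx(x) \geq 1$, dividing by $\cx(x)$ and taking the supremum collapses the $x$-dependent parts to a constant of order $1$, while the remaining terms $\langle\cypb,\sb\rangle$, $\langle\cxpb,\rb\rangle$, $\langle\cxmb,\rb\rangle$, $\langle\cymb,\sb\rangle$, $\lambda\log\langle\eySup{},\sb\rangle$ are all controlled by $\langle\cx,\rb\rangle$, $\langle\cy,\sb\rangle$, and $\lambda\log\langle\eySup{},\sb\rangle$ (using $|\cypb|,|\cymb| \leq \cy$ and $|\cxpb|,|\cxmb| \leq \cx$). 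Doing the same for $\tilde\alphab^\lambda$ (which involves $\langle\cx,\tilde\rb\rangle$ in place of $\langle\cx,\rb\rangle$) and adding the two, one obtains an upper bound of the claimed shape $\langle\cx, \rb+\tilde\rb\rangle + 2\langle\cy,\sb\rangle + 2\lambda\log\langle\eySup{},\sb\rangle$ times $\norm{\rb-\tilde\rb}_{\lXSub{\cx}}$; the factor $2$ on the $\sb$-terms comes from the two contributions (one from each potential) and possibly from symmetrizing the constant.

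The main obstacle I anticipate is \textbf{the bookkeeping for the constant}: making sure that the crude estimates $|\cxpb|, |\cxmb| \leq \cx - 1 \leq \cx$ and the analogous ones for $\cy$, together with the way the $(\langle\cxmb,\rb\rangle + \langle\cymb,\sb\rangle)/2$ term appears with opposite signs in the upper and lower bounds, really collapse to exactly $\langle\cx,\rb+\tilde\rb\rangle + 2\langle\cy,\sb\rangle + 2\lambda\log\langle\eySup{},\sb\rangle$ rather than some larger constant. A subtle point is that the lower bound for $\alpha_x^\lambda$ carries the extra $-\lambda\log\langle\eySup{},\sb\rangle$ term while the upper bound does not, so after taking absolute values the $\lambda\log\langle\eySup{},\sb\rangle$ term must be retained; one should also check $\langle\eySup{},\sb\rangle \geq 1$ so that the logarithm is nonnegative and no sign issue arises (this holds since $\eySup{} \geq 1$ pointwise). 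A secondary, more conceptual point is confirming that $(\tilde\alphab^\lambda,\tilde\betab^\lambda)$ genuinely lies in $\lr\times\ls$ — i.e.\ is dual-feasible for the $(\rb,\sb)$-problem and not merely the $(\tilde\rb,\sb)$-problem — which is where the hypothesis $\cx \in \lr \cap \ell^1_{\tilde\rb}(\XC)$ is used, via the potential bounds from Proposition \ref{prop:BoundsOptimalPotentials} (the upper bound on $\tilde\alpha_x^\lambda$ is dominated by $\cx$, hence $\rb$-integrable). Once these feasibility and constant-tracking checks are in place, the argument is routine.
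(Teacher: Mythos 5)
Your proposal is correct and follows essentially the same route as the paper: plug the optimal potentials for one pair of marginals into the dual objective for the other pair, observe that the exponential penalty term cancels, and then convert the resulting ``envelope'' bound into a quantitative estimate via the potential bounds of Proposition \ref{prop:BoundsOptimalPotentials} combined with H\"older's inequality in the weighted space $\lXSub{\cx}$. One small point of precision: the decisive cancellation of the penalty term when $(\alphab^\lambda,\betab^\lambda)$ is evaluated against the \emph{perturbed} measure $\tilde\rb$ is not a consequence of ``the penalty vanishing at the optimum'' per se, but of the extension convention of Remark \ref{rem:NonConstantDualPotentials}, which forces $\sum_{y\in\YC}\exp\big((\alpha_x^\lambda + \beta_y^\lambda - c(x,y))/\lambda\big)s_y = 1$ for \emph{every} $x\in\XC$ (not merely $x\in\supp(\rb)$), so that $\sum_{x,y}\exp(\cdots)\tilde r_x s_y = \sum_x\tilde r_x = 1$ by Tonelli; this is exactly the computation the paper carries out explicitly, and your argument is sound once this is spelled out.
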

	
	Building upon this stability result for the EROT value we verify continuity properties for the EROT value as well as EROT potentials and plan. Notably, these results crucially rely on our quantitative bounds from Proposition \ref{prop:BoundsOptimalPotentials}. 

	\begin{proposition}[Continuity under fixed $\lambda>0$]\label{prop:ConvergenceOptimalDualSolutions}	\label{cor:ConvergenceEROTP}
		Take sequences of probability measures $\rb,(\rb_\ind)_{\ind\in \NN}\in \PC(\XC)$, $\sb, (\sb_\ind)_{\ind \in \NN}\in \PC(\YC)$ and let $\lambda>0$ be fixed. 
		\begin{enumerate}
			\item[$(i)$] 
			Assume that 
			$\cx, \tcx, \texSup{}\in \lr$, $\cy, \tcy, \eySup{}\in \ls$ and  for $\FC_\XC, \FC_\YC$ suppose that $$\lim_{k\rightarrow \infty}\norm{\rb_k-\rb}_{\ell^\infty(\FC_\XC)} + \norm{\sb_k-\sb}_{\ell^\infty(\FC_\YC)}= 0.$$ 
			Then it follows that $$\lim_{\ind\rightarrow \infty}\EROT(\rb_\ind, \sb_\ind) = \EROT(\rb, \sb).$$
			\item[$(ii)$] Under the assumptions of setting $(i)$, take EROT potentials $(\alphab^{\lambda}, \betab^\lambda), ((\alphab^{\lambda}_\ind,\betab^\lambda_\ind))_{\ind \in \N}\subset$ $\R^\XC\times \R^\YC$ of \eqref{eq:DualEntropicOptimalTransportProblem} for $(\rb_\ind, \sb_\ind)_{\ind \in \N}, (\rb,\sb)$ according to Remark \ref{rem:NonConstantDualPotentials} such that $\beta^\lambda_{\ind,y_1}=\beta^\lambda_{y_1} =0$ for some element $y_1\in \supp(\sb)$. Then it follows  for any $x\in\XC, y\in \YC$ that $$\lim_{\ind \rightarrow \infty}\alpha^{\lambda}_{\ind,x} = \alpha^{\lambda}_{x}, \quad \lim_{\ind \rightarrow \infty} \beta^\lambda_{\ind,y} = \beta^\lambda_y. $$
 			\item[$(iii)$] 
			Assume that $\kxSup{1} \in \lr$, $\kySup{1}\in \ls$  for $\kxSup{1}, \kySup{1}$ defined in \eqref{eq:DefinitionKXSup} and suppose that $$\lim_{k\rightarrow \infty}\norm{\rb_k-\rb}_{\lXSub{\kxSup{1}}} + \norm{\sb_k-\sb}_{\lYSub{\kySup{1}}}= 0.$$
			Then it follows for the EROT plans of $(\rb_\ind, \sb_\ind)_{\ind \in \N}, (\rb,\sb)$, respectively, that $$\lim_{k\rightarrow\infty} \norm{\pib^\lambda(\rb_\ind, \sb_\ind) - \pib^\lambda(\rb, \sb)}_{\ell^1_{\cx\oplus\cy}(\XC\times\YC)}=0.$$
		\end{enumerate}
	\end{proposition}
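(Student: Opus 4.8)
The plan is to prove the three continuity assertions for fixed $\lambda>0$ by carefully combining the stability bound from Lemma \ref{lem:LipschitzEROTValue} with the quantitative bounds from Proposition \ref{prop:BoundsOptimalPotentials} and the optimality characterization in Remark \ref{rem:NonConstantDualPotentials}. The key observation throughout is that convergence in the function-class norms $\norm{\cdot}_{\ell^\infty(\FC_\XC)}$, $\norm{\cdot}_{\ell^\infty(\FC_\YC)}$ (resp.\ the weighted $\ell^1$-norms with weights $\kxSup{1},\kySup{1}$) is strong enough to pass limits through all the sums appearing in the EROT quantities, because these norms dominate the relevant weighted $\ell^1$-norms and additionally control the tail sums $\langle \texSup{}\Indicator{\XC\setminus\{x_1,\dots,x_n\}},\cdot\rangle$ and $\langle\eySup{}\Indicator{\YC\setminus\{y_1,\dots,y_n\}},\cdot\rangle$ uniformly.

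\textbf{Proof of (i).} First I would note that convergence in $\norm{\cdot}_{\ell^\infty(\FC_\XC)}$ implies $\norm{\rb_k-\rb}_{\lXSub{\cx}}\to 0$ (since $\cx$ up to scaling is an element of or an envelope for $\FC_\XC$) and in particular $\rb_k\to\rb$ setwise with $\cx\in\ell^1_{\rb_k}(\XC)$ eventually, and similarly for $\sb_k$. Then apply the stability estimate of Lemma \ref{lem:LipschitzEROTValue} in two steps, comparing $\EROT(\rb_k,\sb_k)$ with $\EROT(\rb,\sb_k)$ and then $\EROT(\rb,\sb_k)$ with $\EROT(\rb,\sb)$. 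For the first comparison the bound gives $|\EROT(\rb_k,\sb_k)-\EROT(\rb,\sb_k)|\le (\langle\cx,\rb_k+\rb\rangle + 2\langle\cy,\sb_k\rangle + 2\lambda\log\langle\eySup{},\sb_k\rangle)\norm{\rb_k-\rb}_{\lXSub{\cx}}$; the prefactor stays bounded because $\langle\cy,\sb_k\rangle\to\langle\cy,\sb\rangle$ and $\langle\eySup{},\sb_k\rangle\to\langle\eySup{},\sb\rangle$ (here is where $\FC_\YC$-convergence, which controls the $\eySup{}$-tails, is essential — dominated convergence alone does not suffice since the $\sb_k$ are not dominated by a fixed measure), so this term vanishes. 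For the second comparison I would use the symmetric form of the stability lemma in the $\sb$-variable (or equivalently bound $|\EROT(\rb,\sb_k)-\EROT(\rb,\sb)|$ by a multiple of $\norm{\sb_k-\sb}_{\lYSub{\cy}}$ plus a $\log$-type term, again controlled by $\FC_\YC$-convergence). Summing the two estimates gives the claim.

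\textbf{Proof of (ii).} Using the normalization $\beta^\lambda_{k,y_1}=\beta^\lambda_{y_1}=0$, the fixed-point relations \eqref{eq:OptimalityCriterionForDualOptimizers} from Remark \ref{rem:NonConstantDualPotentials} express $\alpha^\lambda_{k,x}$ as $-\lambda\log[\sum_{y}\exp((\beta^\lambda_{k,y}-c(x,y))/\lambda)(\sb_k)_y]$ and dually for $\beta^\lambda_{k,y}$. The plan is: by Proposition \ref{prop:BoundsOptimalPotentials} applied with both collections of dominating functions, the potentials $\beta^\lambda_{k,\cdot}$ are uniformly (in $k$) bounded above and below by fixed affine-in-$\tcyp,\tcym$ expressions (the prefactors $\langle\cxpb,\rb_k\rangle$, $\log\langle\texSup{},\rb_k\rangle$ etc.\ converging, hence uniformly bounded), so $\{\beta^\lambda_{k,y}\}_k$ is a bounded sequence for each $y$; a diagonal/subsequence argument extracts a pointwise limit $\bar\beta$. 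Plugging into the fixed-point identity and passing to the limit — justified again by the $\FC_\YC$-norm convergence of $\sb_k$, which dominates the series $\sum_y\exp((\beta^\lambda_{k,y}-c(x,y))/\lambda)(\sb_k)_y$ uniformly via the $\eySup{}$-tail control and the upper bound $\beta^\lambda_{k,y}\le\cyp(y)+\text{const}$ — shows $(\bar\alpha,\bar\beta)$ satisfies \eqref{eq:OptimalityCriterionForDualOptimizers} for $(\rb,\sb)$ with $\bar\beta_{y_1}=0$, hence by the uniqueness up to constant shift (Proposition \ref{prop:DualEROT}) equals $(\alpha^\lambda,\beta^\lambda)$; since every subsequence has a further subsequence converging to the same limit, the full sequences converge.

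\textbf{Proof of (iii).} Here I would start from the explicit form $\pi^\lambda_{k,xy}=\exp((\alpha^\lambda_{k,x}+\beta^\lambda_{k,y}-c(x,y))/\lambda)(\rb_k)_x(\sb_k)_y$ together with the two-sided envelope from Proposition \ref{prop:BoundsOptimalPotentials}, namely $\pi^\lambda_{k,xy}\le (\rb_k)_x(\sb_k)_y\,\exSup{}(x)\eySup{}(y)\langle\exSup{},\rb_k\rangle\langle\eySup{},\sb_k\rangle$ and similarly a lower bound. The hypothesis $\norm{\rb_k-\rb}_{\lXSub{\kxSup{1}}}\to 0$ with $\kxSup{1}=\cx\exSup{}$ (and the analogue in $\YC$) gives $\langle\cx\exSup{},\rb_k\rangle\to\langle\cx\exSup{},\rb\rangle$ and in particular $\langle\exSup{},\rb_k\rangle$, $\langle\eySup{},\sb_k\rangle$ are bounded and convergent. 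Pointwise, $\pi^\lambda_{k,xy}\to\pi^\lambda_{xy}$ by part (ii) and convergence of $(\rb_k)_x,(\sb_k)_y$. To upgrade to $\ell^1_{\cx\oplus\cy}$-convergence I would use a generalized dominated convergence / Scheffé-type argument: the summands $(\cx(x)+\cy(y))\pi^\lambda_{k,xy}$ are dominated by $(\cx(x)+\cy(y))(\rb_k)_x(\sb_k)_y\exSup{}(x)\eySup{}(y)\cdot C_k$, whose total sum $\le C_k(\langle\cx\exSup{},\rb_k\rangle\langle\eySup{},\sb_k\rangle + \langle\exSup{},\rb_k\rangle\langle\cy\eySup{},\sb_k\rangle)$ converges to the corresponding expression for $(\rb,\sb)$; combined with pointwise convergence of both the summands and the dominating sequence, a standard lemma (convergence of sums when dominating sums converge) yields $\norm{\pi^\lambda_{k}-\pi^\lambda}_{\ell^1_{\cx\oplus\cy}(\XC\times\YC)}\to 0$.

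\textbf{Main obstacle.} The delicate point throughout is that the perturbing measures $\rb_k,\sb_k$ are not dominated by any single fixed measure, so ordinary dominated convergence is unavailable; one must instead exploit that the chosen norms (the $\ell^\infty(\FC)$-norms, resp.\ the $\kxSup{1},\kySup{1}$-weighted $\ell^1$-norms) are precisely engineered so that the relevant weighted masses and exponential tail masses $\langle\eySup{}\Indicator{\YC\setminus\{y_1,\dots,y_n\}},\sb_k\rangle$ converge uniformly in $k$. Verifying this uniform tail control — and checking that it is exactly what is needed to pass the limit through the $\log\sum\exp$ expressions in \eqref{eq:OptimalityCriterionForDualOptimizers} and through the bilinear sum defining $\pi^\lambda$ in the $\cx\oplus\cy$-weighted norm — is the technical heart of the argument.
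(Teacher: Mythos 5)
Your proposal is correct and follows essentially the same route as the paper: for (i) a two-step stability decomposition via Lemma~\ref{lem:LipschitzEROTValue} with uniform boundedness of the prefactors; for (ii) extraction of a pointwise convergent subsequence from the uniform bounds of Proposition~\ref{prop:BoundsOptimalPotentials}, identification of the limit through the fixed-point relations \eqref{eq:OptimalityCriterionForDualOptimizers} using the $\eySup{}$-tail control inherent in $\norm{\cdot}_{\ell^\infty(\FC_\YC)}$, and uniqueness up to normalization; for (iii) pointwise convergence of $\pi^\lambda_k$ upgraded to $\ell^1_{\cx\oplus\cy}$-convergence. The one genuine methodological variation is in (iii): you invoke a Pratt/generalized-Scheff\'e lemma (pointwise convergence of both the summands and the dominating sequence, together with convergence of the dominating sums), whereas the paper splits $\pi^\lambda_k-\pi^\lambda$ explicitly into a term controlled by H\"older's inequality against $\norm{\rb_k-\rb}_{\lXSub{\kxSup{1}}}$ and $\norm{\sb_k-\sb}_{\lYSub{\kySup{1}}}$ plus a term handled by ordinary dominated convergence with respect to the fixed product measure $\rb\otimes\sb$. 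Both arguments are valid; the paper's split is slightly more quantitative, while yours is more compact. Two small points worth flagging: in (ii) you apply the bounds of Proposition~\ref{prop:BoundsOptimalPotentials} directly to the $\beta^\lambda_{k,y_1}=0$ normalization, but that proposition bounds the potentials with $\langle\alphab^\lambda,\rb\rangle=\langle\betab^\lambda,\sb\rangle$; you need the intermediate observation (as in the paper) that the constant shift $|\tilde\beta^\lambda_{k,y_1}|$ is uniformly bounded before concluding uniform bounds for the $y_1$-normalized potentials. And in (iii) you should explicitly fix $\tcxm=\cxm$, etc., so that $\norm{\cdot}_{\ell^\infty(\FC_\XC)}\le\norm{\cdot}_{\lXSub{\kxSup{1}}}$ on $\PC(\XC)$, which is what lets you invoke the pointwise convergence from (ii) under the $\kxSup{1}$-norm hypothesis.
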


	We emphasize that for the continuity of the EROT plan in the Banach space $\lXYSub{\cx\oplus \cy}$ we consider for the underlying measures the norm of the space $\lXSub{\kxSup{1}}\times \lYSub{\kySup{1}}$ which is (under appropriate choice of the functions $\cxm, \cxp, \cym, \cyp$) at least as strong as the norm of $\ell^\infty(\FC_\XC)\times \ell^\infty(\FC_\YC)$. %
	Indeed, by selecting $\cxm=\tcxm,\cxp=\tcxp$ and $\cym=\tcym, \cyp=\tcyp$, 
	we have $$\norm{\cdot}_{\ell^\infty(\FC_\XC)}\leq \norm{\cdot}_{\lXSub{\kxSup{1}}} \text{ on }\PC(\XC) \quad \text{ and } \quad \norm{\cdot}_{\ell^\infty(\FC_\YC)}\leq \norm{\cdot}_{\lYSub{\kySup{1}}} \text{ on }\PC(\YC),$$ where the latter norms are strictly stronger if $\|\cxp-\cxm\|_{\lInfX}=\infty$ and $\|\cyp-\cym\|_{\lInfY}=\infty$.
	
	In addition to continuity results for a fixed regularization parameter, we also provide a novel insight on the EROT value and potential under suitably converging measures in the regime of vanishing regularization. This complements a convergence result  for %
	 the EROT potential by \citet[Theorem 1.1]{nutz2022entropic} on general Polish spaces but under fixed marginal probability distributions. 

	\begin{proposition}[Continuity under $\lambda\searrow 0$]\label{prop:ContinuityEROTVanishingRegularization}
		Let $c\colon \XC\times \YC\rightarrow \RR$ be a uniformly bounded cost function. 
		Take sequences of probability measures $\rb,(\rb_\ind)_{\ind\in \NN}\in \PC(\XC), \sb, (\sb_\ind)_{\ind \in \NN}\in \PC(\YC)$ such that $H(\rb, \sb) <\infty$ (defined in \eqref{eq:DefinitionEntropyFunctional}) and 
		$$\lim_{\ind\rightarrow \infty}\norm{\rb_k - \rb}_{\lX} + \norm{\sb_k - \sb}_{\lY} = 0.$$
		Further, let  $(\lambda_k)_{k\in \NN}$ be positive regularization parameters with $\lim_{k\rightarrow \infty}\lambda_k = 0$. 
		\begin{enumerate}
			\item[$(i)$]  Then it follows that 
			\begin{align*}
				\lim_{\ind\rightarrow \infty}\EROTLambda{\lambda_\ind} (\rb_\ind, \sb_\ind) = \OT(\rb, \sb).
			\end{align*}
			\item[$(ii)$]  Assume that the (unregularized) OT potential for $\rb, \sb$ is unique (up to a constant shift) on $\supp(\rb)\times \supp(\sb)$. Consider a pair of OT potentials $(\alphab^{0}, \betab^{0})$ for $(\rb,\sb)$ and take EROT potentials $ ((\alphab^{\lambda_\ind}_\ind,\betab^{\lambda_\ind}_\ind))_{\ind \in \N}\subset \R^\XC\times \R^\YC$ for $(\rb_\ind, \sb_\ind)_{\ind \in \N}$ with regularization parameter $(\lambda_\ind)_{\ind \in \NN}$, respectively, such that $\beta^{\lambda_k}_{\ind,y_1}=\beta^0_{y_1} =0$ for some element $y_1\in \supp(\sb)$. Then it follows for any $x\in \supp(\rb), y\in \supp(\sb)$ that $$\lim_{\ind \rightarrow \infty}\alpha^{\lambda_\ind}_{\ind,x} = \alpha^{0}_{x}, \quad \lim_{\ind \rightarrow \infty} \beta^{\lambda_\ind}_{\ind,y} = \beta^0_y. $$
		\end{enumerate}
	\end{proposition}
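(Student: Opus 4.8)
The plan is to treat the two assertions separately, exploiting that for a uniformly bounded cost $0\le c\le C\coloneqq\norm{c}_{\lInfXY}$ one may take the dominating functions in \eqref{eq:LowerUpperBoundsC} to be the constants $\cxm=\cym=0$, $\cxp=\cyp=C/2$, so that the weights of \eqref{eq:DefWeightingFunctions} become the constants $\cx=\cy=1+C/2$ and $\exSup{}=\eySup{}=e^{C/(2\lambda)}$, and in particular $\lambda\log\langle\eySup{},\sb\rangle=C/2$ for every $\sb\in\PC(\YC)$ and $\lambda>0$. Consequently the two‑sided potential bounds of Proposition \ref{prop:BoundsOptimalPotentials} and the stability constant of Lemma \ref{lem:LipschitzEROTValue} both become independent of $\lambda$, which is what makes the argument go through along a sequence $\lambda_k\searrow0$.

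For $(i)$ I would sandwich $\EROTLambda{\lambda_k}(\rb_k,\sb_k)$. The lower bound is immediate: $M(\cdot)\ge0$ gives $\EROTLambda{\lambda_k}(\rb_k,\sb_k)\ge\OT(\rb_k,\sb_k)$, and for bounded cost $\OT$ is $(\norm{\cdot}_{\lX}+\norm{\cdot}_{\lY})$‑Lipschitz with constant $C$ (restrict the dual of \eqref{eq:OptimalTransportProblem} to potentials of sup‑norm $\le C$ via $c$‑transforms and compare two suprema of affine maps with uniformly bounded coefficients), so $\liminf_k\EROTLambda{\lambda_k}(\rb_k,\sb_k)\ge\OT(\rb,\sb)$. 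For the matching upper bound I would write $\EROTLambda{\lambda_k}(\rb_k,\sb_k)\le\EROTLambda{\lambda_k}(\rb,\sb)+\lvert\EROTLambda{\lambda_k}(\rb_k,\sb_k)-\EROTLambda{\lambda_k}(\rb,\sb_k)\rvert+\lvert\EROTLambda{\lambda_k}(\rb,\sb_k)-\EROTLambda{\lambda_k}(\rb,\sb)\rvert$, bound the first term by $\OT(\rb,\sb)+\lambda_kH(\rb,\sb)\to\OT(\rb,\sb)$ using \eqref{eq:StabilityInTermsOfRegularization} and $H(\rb,\sb)<\infty$, bound the second by Lemma \ref{lem:LipschitzEROTValue} (whose prefactor, by the constants above, is $(4+3C)(1+C/2)$, \emph{independent of $k$}) times $\norm{\rb_k-\rb}_{\lX}\to0$, and bound the third analogously after transposing the cost (the primal \eqref{eq:EntropicOptimalTransport} is invariant under $\pi\mapsto\pi^{\mathsf T}$ since $M$ is symmetric, and $c^{\mathsf T}$ is again bounded by $C$), getting a $k$‑independent multiple of $\norm{\sb_k-\sb}_{\lY}\to0$. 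Together these give $\limsup_k\EROTLambda{\lambda_k}(\rb_k,\sb_k)\le\OT(\rb,\sb)$, proving $(i)$.

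For $(ii)$ I would run a subsequence compactness argument. \emph{Step 1 (uniform bounds).} Proposition \ref{prop:BoundsOptimalPotentials} applied to $(\rb_k,\sb_k)$ bounds the \emph{balanced} EROT potentials by $C$ in absolute value at every point; since $\rb_k\to\rb$, $\sb_k\to\sb$ in $\ell^1$, each fixed $x\in\supp(\rb)$, $y\in\supp(\sb)$ lies in $\supp(\rb_k)$, $\supp(\sb_k)$ for $k$ large, where the prescribed normalization $\beta^{\lambda_k}_{k,y_1}=0$ differs from the balanced one by a constant of modulus $\le C$; hence $\lvert\alpha^{\lambda_k}_{k,x}\rvert,\lvert\beta^{\lambda_k}_{k,y}\rvert\le2C$ for all large $k$. \emph{Step 2 (extraction).} As $\supp(\rb)\cup\supp(\sb)$ is countable, a diagonal argument produces, from any given subsequence, a further subsequence along which $\alpha^{\lambda_k}_{k,x}\to\alpha^*_x$ for all $x\in\supp(\rb)$ and $\beta^{\lambda_k}_{k,y}\to\beta^*_y$ for all $y\in\supp(\sb)$, with $\beta^*_{y_1}=0$ and $\lvert\alpha^*\rvert,\lvert\beta^*\rvert\le2C$. \emph{Step 3 (the limit is an OT potential).} Feasibility: from \eqref{eq:optimalityCriterion_1_connectionSolutions} and $\pi^{\lambda_k}_{k,xy}\le1$, for $x\in\supp(\rb)$, $y\in\supp(\sb)$, $\alpha^{\lambda_k}_{k,x}+\beta^{\lambda_k}_{k,y}-c(x,y)\le\lambda_k\log(r_{k,x}s_{k,y})^{-1}\to0$ because $r_{k,x}\to r_x>0$, $s_{k,y}\to s_y>0$; passing to the limit, $\alpha^*_x+\beta^*_y\le c(x,y)$ on $\supp(\rb)\times\supp(\sb)$. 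Value: \eqref{eq:EROTValueRepresentation} (invariant under constant shifts of the potentials) gives $\EROTLambda{\lambda_k}(\rb_k,\sb_k)=\langle\alpha^{\lambda_k}_k,\rb_k\rangle+\langle\beta^{\lambda_k}_k,\sb_k\rangle$; splitting off $\langle\alpha^{\lambda_k}_k,\rb_k-\rb\rangle$, which vanishes since $\lvert\alpha^{\lambda_k}_k\rvert\le2C$ on $\supp(\rb_k)$ and $\norm{\rb_k-\rb}_{\lX}\to0$, and applying dominated convergence to $\langle\alpha^{\lambda_k}_k,\rb\rangle$ (and likewise for $\beta$), the right‑hand side tends along the subsequence to $\langle\alpha^*,\rb\rangle+\langle\beta^*,\sb\rangle$; by part $(i)$ it also tends to $\OT(\rb,\sb)$. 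Thus $(\alpha^*,\beta^*)$, extended off the supports by the constant $-2C$ to be globally admissible, is dual‑feasible for \eqref{eq:OptimalTransportProblem} with value $\OT(\rb,\sb)$, hence an OT potential. \emph{Step 4 (conclusion).} By the assumed uniqueness up to a constant, pinned down by $\beta^*_{y_1}=\beta^0_{y_1}=0$, we get $(\alpha^*,\beta^*)=(\alpha^0,\beta^0)$ on $\supp(\rb)\times\supp(\sb)$; since every subsequence admits a further subsequence with this same limit, the full sequences converge, which is $(ii)$.

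The main obstacle is Step $3$ of part $(ii)$: one must establish the $\lambda$‑uniform bound on the potentials \emph{under the prescribed normalization} $\beta_{y_1}=0$ (here boundedness of $c$ together with $s_{y_1}>0$ are used) and, above all, pass to the limit in the feasibility inequality, which succeeds only because $\lambda_k\log(r_{k,x}s_{k,y})^{-1}\to0$ when $r_x,s_y>0$ — precisely the reason the statement is confined to the supports. Everything else (the $\ell^1$‑continuity of $\OT$, the transposition symmetry, the dominated‑convergence passages, and converting feasibility plus matching value into dual optimality via strong duality of \eqref{eq:OptimalTransportProblem}) is routine.
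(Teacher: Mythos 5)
Your proof is correct and follows broadly the same skeleton as the paper (for $(i)$: decompose the error into two measure-perturbation terms plus a $\lambda$-vanishing term, handle the first two via Lemma~\ref{lem:LipschitzEROTValue} with the $\lambda$-\emph{uniform} constants that arise from the constant dominating functions $\cxm=\cym=0$, $\cxp=\cyp=C/2$, and handle the third via \eqref{eq:StabilityInTermsOfRegularization}; for $(ii)$: uniform bounds, diagonal extraction, identification of the cluster point as an OT potential, uniqueness). However, you take a genuinely different route at the most delicate step, the feasibility of the cluster point in $(ii)$. The paper argues by contradiction: it assumes $\tilde\alpha_{x'}+\tilde\beta_{y'}>c(x',y')+\delta$ on the support and shows the exponential penalty term in the dual objective would force $\EROTLambda{\lambda_{k}}(\rb_{k},\sb_{k})\to-\infty$, contradicting $(i)$. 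You instead derive the inequality directly from the primal side: combining the explicit Gibbs formula \eqref{eq:optimalityCriterion_1_connectionSolutions} with the trivial bound $\pi^{\lambda_k}_{k,xy}\le 1$ yields, for any $(x,y)\in\supp(\rb)\times\supp(\sb)$ and $k$ large,
\begin{align*}
\alpha^{\lambda_k}_{k,x}+\beta^{\lambda_k}_{k,y}-c(x,y)\le\lambda_k\log\tfrac{1}{r_{k,x}s_{k,y}}\to 0,
\end{align*}
which is both more direct and arguably more transparent about \emph{why} the argument must be restricted to the supports (it is exactly $r_x,s_y>0$ that keeps the logarithm bounded). Likewise, you extend $(\alpha^*,\beta^*)$ off the supports by the explicit constant $-2C$ rather than invoking \citet[Lemma 2.6]{nutz2022entropic}; for uniformly bounded costs this is equally valid and avoids the external citation. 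The only slightly redundant piece is the separate $\liminf$ bound in $(i)$ via $\ell^1$-Lipschitz continuity of $\OT$: the paper's single three-term absolute-value decomposition already controls both directions, so your extra sandwich step is sound but unnecessary. Overall your version trades the paper's contradiction-via-diverging-penalty for a constructive primal bound, which some readers may find cleaner.
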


	In our proof we explicitly make use of the boundedness of the cost function in order to ensure existence of EROT potentials which then can be uniformly bounded independent of the regularization parameter.  Moreover, as part of our proof technique  we show that cluster points of sequences of EROT %
	 potentials with vanishing regularization constitute %
	 dual optimizers of the OT problem. For this reason, it is central in our argument to impose the uniqueness assumption. It remains open to what extent these requirements can be relaxed. Such stability results would be both of statistical as well as computational interest. %

	\subsection{Hadamard Differentiability of Entropic Optimal Transport Value}

	In this section, we prove Hadamard differentiability of the EROT value   with respect to the marginal probability measures for a suitable weighted norm induced by a function class. 
	\begin{definition}  A mapping $\Psi\colon U \to V$ between normed spaces $U, V$ is said to be \emph{Hadamard differentiable} at $u \in U$ if there exists a continuous linear map $\DH_{|u}\Psi \colon U \rightarrow V $ such that for any sequence $h_n\in U$ converging to $h$ and any positive sequence $(t_n)_{n \in \N}$ with $t_n \searrow 0$ such that $u +t_nh_n \in U$ it holds 
			\begin{equation}
			\norm{\frac{\Psi(u + t_nh_n) - \Psi(u)}{t_n} - \DH_{|u}\Psi(h)}_{V} \rightarrow 0
			\label{eq:dirHad}
			\end{equation}
			for $n \rightarrow \infty.$ 
			Further, $\Psi$ is Hadamard differentiable \emph{tangentially} to $U_0\subseteq U$ at $u$ if the limit \eqref{eq:dirHad} exists for all sequences $h_n= t_n^{-1}(k_n - u)$ converging to $h$ where $k_n \in U_0$ and $t_n \searrow 0$.
			The Hadamard derivative  is then defined on the contingent (Bouligand) cone to $U_0$ at $u$
			$$T_{u}(U_0) = \left\{h \in U: h = \lim_{n\to\infty} t_n^{-1}(k_n - u), (k_n)_{n \in \N}\subseteq U_0, (t_n)_{n \in \N} \in \R_{\geq 0}, t_n \searrow0 \right\}. $$
	\end{definition}
	 Concerning the derivative of the EROT value we recall by the relation between primal and dual optimizers $\pib^\lambda, \alphab^\lambda, \betab^\lambda$ for $\rb, \sb$ (Proposition \ref{prop:DualEROT}) that$$ \EROT(\rb, \sb)  = \langle \alphab^\lambda, \rb\rangle + \langle \betab^\lambda, \sb \rangle.$$ This indicates that the Hadamard derivative of $\EROT$ at $(\rb, \sb)$ is characterized by $\langle \alphab^\lambda, \cdot \rangle + \langle \betab^\lambda, \cdot \rangle$, an observation that is in line with findings on finite ground spaces by \cite{bigot2019CentralLT} and \cite{klatt2018empirical}. On countable spaces this is also valid under suitable conditions on the cost functional and the probability measures $\rb, \sb$.

	\begin{theorem}[Hadamard differentiability of EROT value]\label{them:EntropicOTCostUnboundedIsHadamardDifferentiable} Assume that $\cx, \tcx, \exSup{}\in \lr$ and $\cy, \tcy, \texSup{}\in \ls$, and from $\FC_\XC, \FC_\YC$ from \eqref{eq:FunctionClassFC}. 
		Then, for $\lambda>0$ the functional
		\begin{align*}\EROT \colon &\Big( \probset{\XF}\cap\ell^\infty(\FC_\XC)  \Big)\times \Big(\probset{\YC}\cap\ell^\infty(\FC_\YC) \Big) \rightarrow \R%
		\end{align*}
		is Hadamard differentiable at $(\rb, \sb)$ tangentially to $\big( \probset{\XF}\cap\ell^\infty(\FC_\XC) \big)\times \big(\probset{\YC}\cap\ell^\infty(\FC_\YC)  \big) $ with Hadamard derivative \begin{equation}\begin{aligned}
	  \DH_{|(\rb,\sb)}\EROT \colon \,T_{(\rb, \sb)}&\left(\big( \probset{\XF}\cap\ell^\infty(\FC_\XC) \big)\times\big( \probset{\YC}\cap\ell^\infty(\FC_\YC) \big)\right) \rightarrow \R,\\
	  &  (\hb^\XC, \hb^\YC) \mapsto \langle\alphab^\lambda, \hb^\XF \rangle + \langle\betab^\lambda, \hb^\YC \rangle,
	  \end{aligned} \notag\label{eq:ETCOSTSHadamardDerivativeProposed}
	\end{equation}
		 where $(\alphab^\lambda, \betab^\lambda)$ are EROT potentials for probability measures $(\rb,\sb)$. 
	\end{theorem}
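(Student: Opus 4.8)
The plan is to deduce Hadamard differentiability from an \emph{exact} two-sided sandwich of the difference quotient by linear functionals built from EROT potentials, and then to pass to the limit using the continuity statements of Section~\ref{subsec:ExplicitBoundsOnOptimizerts}. Fix $(\rb,\sb)$ with EROT potentials $(\alphab^\lambda,\betab^\lambda)$ (Proposition~\ref{prop:DualEROT}), let $t_n\searrow 0$, and let $k_n=(\rb_n,\sb_n)$ in $\big(\probset{\XC}\cap\ell^\infty(\FC_\XC)\big)\times\big(\probset{\YC}\cap\ell^\infty(\FC_\YC)\big)$ satisfy $h_n\coloneqq t_n^{-1}(k_n-(\rb,\sb))\to h=(h^\XC,h^\YC)$ in $\ell^\infty(\FC_\XC)\times\ell^\infty(\FC_\YC)$. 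Splitting one marginal at a time via
\begin{align*}
	\EROT(\rb_n,\sb_n)-\EROT(\rb,\sb)=\big(\EROT(\rb_n,\sb_n)-\EROT(\rb,\sb_n)\big)+\big(\EROT(\rb,\sb_n)-\EROT(\rb,\sb)\big)
\end{align*}
and applying Lemma~\ref{lem:LipschitzEROTValue} and its analogue for a perturbation of the $\sb$-marginal (using, respectively, the first and second collection of dominating functions to arrange the required integrability) to the two summands, one obtains, after dividing by $t_n>0$ and using $\rb_n-\rb=t_nh_n^\XC$, $\sb_n-\sb=t_nh_n^\YC$, the remainder-free sandwich
\begin{align*}
	\langle\alphab^\lambda(\rb,\sb_n),h_n^\XC\rangle+\langle\betab^\lambda,h_n^\YC\rangle\;\le\;\frac{\EROT(\rb_n,\sb_n)-\EROT(\rb,\sb)}{t_n}\;\le\;\langle\alphab^\lambda(\rb_n,\sb_n),h_n^\XC\rangle+\langle\betab^\lambda(\rb,\sb_n),h_n^\YC\rangle .
\end{align*}

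The heart of the argument is to show that each of these four bilinear pairings converges, to $\langle\alphab^\lambda,h^\XC\rangle$ respectively $\langle\betab^\lambda,h^\YC\rangle$. First, since $(\rb,\sb_n),(\rb_n,\sb_n)\to(\rb,\sb)$ in the function-class norms, the integrals of the dominating functions against these measures stay bounded (they are dominated by elements or envelopes of $\FC_\XC,\FC_\YC$), so Proposition~\ref{prop:BoundsOptimalPotentials} applied to the first collection gives a constant $C_\XC$ with $|\alphab^\lambda(\rb_n,\sb_n)|,|\alphab^\lambda(\rb,\sb_n)|,|\alphab^\lambda|\le C_\XC\cx$ for all $n$, and, applied to the second collection, a constant $C_\YC$ with $|\betab^\lambda(\rb,\sb_n)|,|\betab^\lambda|\le C_\YC\tcy$; by \eqref{eq:FunctionClassFC} this means $C_\XC^{-1}\alphab^\lambda\in\FC_\XC$ and $C_\YC^{-1}\betab^\lambda\in\FC_\YC$. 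Second, normalising all $\betab$-potentials to vanish at a fixed $y_1\in\supp(\sb)$ --- admissible since $y_1\in\supp(\sb_n)$ for $n$ large and since the pairings are unchanged under constant shifts of the potentials, as $\rb_n-\rb$ and $\sb_n-\sb$ have zero total mass --- Proposition~\ref{prop:ConvergenceOptimalDualSolutions}$(ii)$, whose hypotheses $\cx,\tcx,\texSup{}\in\lr$, $\cy,\tcy,\eySup{}\in\ls$ and $\|\rb_n-\rb\|_{\ell^\infty(\FC_\XC)}+\|\sb_n-\sb\|_{\ell^\infty(\FC_\YC)}\to 0$ hold here, yields pointwise convergence $\alphab^\lambda(\rb_n,\sb_n)_x,\alphab^\lambda(\rb,\sb_n)_x\to\alphab^\lambda_x$ and $\betab^\lambda(\rb,\sb_n)_y\to\betab^\lambda_y$. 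Third, as $\lXSub{\cx}$ and $\lYSub{\tcy}$ embed isometrically, hence with closed range, into $\ell^\infty(\FC_\XC)$ and $\ell^\infty(\FC_\YC)$ (Lemma~\ref{lem:equivalenceNorms}), the convergence $h_n\to h$ forces $h_n^\XC\to h^\XC$ in $\lXSub{\cx}$ and $h_n^\YC\to h^\YC$ in $\lYSub{\tcy}$; in particular $h^\XC\in\lXSub{\cx}$, $h^\YC\in\lYSub{\tcy}$, and the families $\{h_n^\XC\}$, $\{h_n^\YC\}$ have uniformly negligible $\cx$- respectively $\tcy$-weighted tails. Combining these three ingredients via a split into a fixed finite index set (on which the potentials and $h_n$ converge coordinatewise) plus a uniformly small tail (estimated by $C_\XC\cx$ resp. $C_\YC\tcy$ against those tails) gives convergence of all four pairings, so that both bounds of the sandwich tend to $\langle\alphab^\lambda,h^\XC\rangle+\langle\betab^\lambda,h^\YC\rangle$.

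It then remains to note that $(h^\XC,h^\YC)\mapsto\langle\alphab^\lambda,h^\XC\rangle+\langle\betab^\lambda,h^\YC\rangle$ is well defined, linear and continuous on the contingent cone: it extends to a bounded functional on $\lXSub{\cx}\times\lYSub{\tcy}$ of norm at most $\max(C_\XC,C_\YC)$ because $C_\XC^{-1}\alphab^\lambda\in\FC_\XC$ and $C_\YC^{-1}\betab^\lambda\in\FC_\YC$, and its value does not depend on the chosen representatives $(\alphab^\lambda,\betab^\lambda)$ by the same shift invariance; this is precisely the asserted Hadamard derivative. The step I expect to be the main obstacle is the uniform-in-$n$ control of the perturbed potentials together with their pointwise convergence, i.e.\ making Propositions~\ref{prop:BoundsOptimalPotentials} and~\ref{prop:ConvergenceOptimalDualSolutions} applicable along the perturbing sequence; this is exactly where the enrichment of the classes in \eqref{eq:FunctionClassFC} by $\tcx$, $\texSup{}$ and the truncated weights $\texSup{}\Indicator{\XC\backslash\{x_1,\dots,x_n\}}$ is used --- it makes $\ell^\infty(\FC_\XC)$-convergence of the marginals imply the convergence hypotheses of Proposition~\ref{prop:ConvergenceOptimalDualSolutions} while keeping the stability constants of Proposition~\ref{prop:BoundsOptimalPotentials} bounded. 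A secondary, purely bookkeeping task is to check that each intermediate pair $(\rb,\sb_n)$ is a genuine EROT problem --- the integrability $c\in\ell^1_{\rb\otimes\sb_n}$ holds because $|c|\le\cx\oplus\cy$ with $\cx,\cy$ dominated by envelopes of $\FC_\XC,\FC_\YC$ and $\rb\in\ell^\infty(\FC_\XC)$, $\sb_n\in\ell^\infty(\FC_\YC)$ --- so that Lemma~\ref{lem:LipschitzEROTValue} may indeed be invoked.
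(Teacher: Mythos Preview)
Your proposal is correct and follows essentially the same route as the paper: the same one-marginal-at-a-time decomposition, the same remainder-free sandwich from Lemma~\ref{lem:LipschitzEROTValue}, and the same passage to the limit via the uniform bounds of Proposition~\ref{prop:BoundsOptimalPotentials} together with the pointwise convergence of potentials from Proposition~\ref{prop:ConvergenceOptimalDualSolutions}$(ii)$. Your explicit use of the \emph{second} collection to bound the $\betab$-potentials by $C_\YC\tcy$ (so that they lie in $\FC_\YC$ up to a constant) is in fact the correct bookkeeping here; the paper's Step~3 writes $\cy$ where $\tcy$ is meant, and your finite-set-plus-tail argument is the discrete-space rendering of the dominated-convergence step the paper invokes.
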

	The proof is deferred to Appendix \ref{subsec:ProofsSensitivityEROTV}. Central to its proof is strong duality (Proposition \ref{prop:DualEROT}) \and the correspondence between the EROT potentials (Remark \ref{rem:NonConstantDualPotentials}) in conjunction with pointwise continuity (Proposition~\ref{prop:ConvergenceOptimalDualSolutions}). In particular, we make explicit use of the norm induced by the function classes $\FC_\XC$ and $\FC_\YC$ in order to suitably control the EROT potentials. 

	As a corollary, we obtain a sensitivity result for the Sinkhorn divergence (Section \ref{subsec:EROTValue}).
	
	\begin{corollary}[Hadamard differentiability of Sinkhorn divergence]\label{cor:HadDiffDebiasedEROTvalue}
		\label{cor:DebiasedEntropicOTCostUnboundedIsHadamardDifferentiable} Suppose that $\XC = \YC$ and take a symmetric, positive definite cost function.  Assume that  $\cx, \cy, \tcx,\tcy,$ $\exSup{}, \texSup{}\in \lr\cap\ls$, and consider $\FC_\XC, \FC_\YC$ from \eqref{eq:FunctionClassFC}. 
		Then, for $\lambda>0$ the functional
		\begin{align*}\overline{E\!ROT}^\lambda \colon &\Big( \probset{\XF}\cap\ell^\infty(\FC_\XC\cup\FC_\YC)  \Big)\times \Big(\probset{\YC}\cap\ell^\infty(\FC_\XC\cup\FC_\YC) \Big) \rightarrow \R%
		\end{align*}
		is Hadamard differentiable at $(\rb, \sb)$ tangentially to $\big( \probset{\XF}\cap\ell^\infty(\FC_\XC\cup\FC_\YC) \big)\times \big(\probset{\YC}\cap\ell^\infty(\FC_\XC\cup\FC_\YC)  \big) $ with Hadamard derivative \begin{equation*}\begin{aligned}
	  \DH_{|(\rb,\sb)}\overline{E\!ROT}^\lambda \colon &\,T_{(\rb, \sb)}\left(\big( \probset{\XF}\cap\ell^\infty(\FC_\XC\cup\FC_\YC) \big)\times\big( \probset{\YC}\cap\ell^\infty(\FC_\XC\cup\FC_\YC) \big)\right) \rightarrow \R,\\
	  &  (\hb^\XC, \hb^\YC) \mapsto \langle\alphab^\lambda( \rb,  \sb) - \alphab^\lambda( \rb,  \rb), \hb^\XF \rangle + \langle\betab^\lambda( \rb,  \sb) - \betab^\lambda( \rb,  \rb), \hb^\YC \rangle,
	  \end{aligned} %
	\end{equation*}
		 where $(\alphab^\lambda(\tilde \rb, \tilde \sb), \betab^\lambda(\tilde \rb, \tilde \sb))$ are EROT potentials for probability measures $\tilde \rb, \tilde \sb\in \PC(\XC)$.
	\end{corollary}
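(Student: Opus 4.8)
The idea is to express the Sinkhorn divergence as a linear combination of the EROT value evaluated along three bounded linear re\-parametrisations of the pair of marginals, and then to obtain the differentiability from Theorem~\ref{them:EntropicOTCostUnboundedIsHadamardDifferentiable} by the chain rule and linearity. On the domain $\big(\PC(\XC)\cap \ell^\infty(\FC_\XC\cup\FC_\YC)\big)^2$ introduce the bounded linear maps $\iota(\rb,\sb)=(\rb,\sb)$, $D_1(\rb,\sb)=(\rb,\rb)$ and $D_2(\rb,\sb)=(\sb,\sb)$, each taking values in $\ell^\infty(\FC_\XC)\times \ell^\infty(\FC_\YC)$; this uses $\|\cdot\|_{\ell^\infty(\FC_\XC)}\le \|\cdot\|_{\ell^\infty(\FC_\XC\cup\FC_\YC)}$ and the analogous bound for $\FC_\YC$, together with $\XC=\YC$ so that a probability measure lies in $\PC(\XC)\cap\ell^\infty(\FC_\XC)$ as well as in $\PC(\YC)\cap\ell^\infty(\FC_\YC)$. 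Since (reading the debiasing with the sign that makes $\overline{E\!ROT}^\lambda(\rb,\rb)=0$) we have $\overline{E\!ROT}^\lambda=\EROT\circ\iota-\tfrac12\,\EROT\circ D_1-\tfrac12\,\EROT\circ D_2$, it suffices to Hadamard-differentiate each of the three summands.

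The first point to verify is that the hypotheses $\cx,\cy,\tcx,\tcy,\exSup{},\texSup{}\in\lr\cap\ls$ are exactly what is needed for Theorem~\ref{them:EntropicOTCostUnboundedIsHadamardDifferentiable} to apply with the classes $\FC_\XC,\FC_\YC$ to each of the pairs $(\rb,\sb)$, $(\rb,\rb)$ and $(\sb,\sb)$: summing the hypotheses of that theorem over these three pairs (where for the diagonal pairs $\lr$ and $\ls$ collapse to $\ell^1_\rb$, resp.\ $\ell^1_\sb$) produces precisely the requirement that all six weight functions be integrable against both $\rb$ and $\sb$. This is also where the index class must be enlarged to $\FC_\XC\cup\FC_\YC$: on the diagonal a single perturbation of one marginal is fed into both slots of $\EROT$, hence must be controlled simultaneously in the $\FC_\XC$- and the $\FC_\YC$-norm. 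Granting this, the chain rule for Hadamard differentiability \citep[Lemma~3.9.3]{van1996weak}, applied tangentially (a bounded linear map carries the contingent cone of the domain into the contingent cone at the image point that appears in Theorem~\ref{them:EntropicOTCostUnboundedIsHadamardDifferentiable}), shows that $\EROT\circ\iota$, $\EROT\circ D_1$ and $\EROT\circ D_2$ are Hadamard differentiable at $(\rb,\sb)$ tangentially to $\big(\PC(\XC)\cap \ell^\infty(\FC_\XC\cup\FC_\YC)\big)^2$, with derivatives $(\hb^\XC,\hb^\YC)\mapsto\langle\alphab^\lambda(\rb,\sb),\hb^\XC\rangle+\langle\betab^\lambda(\rb,\sb),\hb^\YC\rangle$, $(\hb^\XC,\hb^\YC)\mapsto\langle\alphab^\lambda(\rb,\rb)+\betab^\lambda(\rb,\rb),\hb^\XC\rangle$ and $(\hb^\XC,\hb^\YC)\mapsto\langle\alphab^\lambda(\sb,\sb)+\betab^\lambda(\sb,\sb),\hb^\YC\rangle$, respectively. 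Since a finite linear combination of maps that are Hadamard differentiable at a common point tangentially to a common set is again Hadamard differentiable there with the corresponding linear combination of derivatives, $\overline{E\!ROT}^\lambda$ is Hadamard differentiable at $(\rb,\sb)$.

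It remains to put the derivative into the claimed form, for which one invokes the symmetry of the diagonal problem. For a symmetric cost the unique EROT plan for $\EROT(\mu,\mu)$ is symmetric, because its transpose is again optimal; hence the optimality criterion \eqref{eq:optimalityCriterion_1_connectionSolutions} forces $\alpha^\lambda_x(\mu,\mu)-\beta^\lambda_x(\mu,\mu)$ to be constant on $\supp(\mu)$, and together with the normalisation $\langle\alphab^\lambda(\mu,\mu),\mu\rangle=\langle\betab^\lambda(\mu,\mu),\mu\rangle$ from Proposition~\ref{prop:BoundsOptimalPotentials} this gives $\alphab^\lambda(\mu,\mu)=\betab^\lambda(\mu,\mu)$ on $\supp(\mu)$ (and off the support one uses the canonical symmetric extension of Remark~\ref{rem:NonConstantDualPotentials}). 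Substituting $\mu=\rb$ and $\mu=\sb$ and collecting the three contributions according to the definition of $\overline{E\!ROT}^\lambda$ then reduces the derivative to $(\hb^\XC,\hb^\YC)\mapsto\langle\alphab^\lambda(\rb,\sb)-\alphab^\lambda(\rb,\rb),\hb^\XC\rangle+\langle\betab^\lambda(\rb,\sb)-\betab^\lambda(\sb,\sb),\hb^\YC\rangle$, i.e.\ the stated Hadamard derivative.

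I expect that the analytically substantive work (strong duality, continuity of the potentials, the quantitative bounds) has already been carried out in Theorem~\ref{them:EntropicOTCostUnboundedIsHadamardDifferentiable} and Propositions~\ref{prop:DualEROT}--\ref{prop:BoundsOptimalPotentials}, so no new estimate is needed here. The only genuinely delicate steps are the domain bookkeeping — checking that $D_1,D_2$ really map into $\big(\PC(\XC)\cap\ell^\infty(\FC_\XC)\big)\times\big(\PC(\YC)\cap\ell^\infty(\FC_\YC)\big)$ and that the tangential chain rule matches the contingent cone used in Theorem~\ref{them:EntropicOTCostUnboundedIsHadamardDifferentiable} — and the verification that the enlargement to $\FC_\XC\cup\FC_\YC$ is both sufficient (the $\ell^\infty$-norm dominates the two separate ones) and necessary (the diagonal direction must live in both).
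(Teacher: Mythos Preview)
Your proposal is correct and follows essentially the same approach as the paper: decompose the Sinkhorn divergence into its three EROT summands, apply Theorem~\ref{them:EntropicOTCostUnboundedIsHadamardDifferentiable} to each, and then use the symmetry of the cost to conclude $\alphab^\lambda(\mu,\mu)=\betab^\lambda(\mu,\mu)$ on the diagonal. Your write-up is more explicit than the paper's (which just says ``as a sum of Hadamard differentiable functionals'' and asserts the diagonal symmetry without argument), in particular regarding the domain bookkeeping for the enlarged class $\FC_\XC\cup\FC_\YC$ and the chain rule with the linear reparametrisations $\iota,D_1,D_2$; and you correctly derive $\betab^\lambda(\sb,\sb)$ in the second term, matching the paper's proof --- the $\betab^\lambda(\rb,\rb)$ in the stated derivative is a typo, as is the sign in the paper's definition of $\overline{E\!ROT}^\lambda$, which you also catch.
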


	\subsection{Hadamard Differentiability of Entropic Optimal Transport Plan}\label{subsec:HadDiffPlan}

	Deriving the Hadamard derivative of the EROT plan is more challenging and requires more careful considerations. Before we state this in a formal way, we give a heuristic derivation. For this purpose, we consider a fixed element $y_1 \in \YC$ (as in Proposition \ref{prop:ConvergenceOptimalDualSolutions}) and 
	  denote for any $\bb \in \R^\YC$ the element where we omit the entry at $y_1$ by $\bb_*\coloneqq (b_{y_2}, b_{y_3}, \dots)\in \R^{\YCC}$.  
	  We adhere to this notation in order to fix the choice for EROT potentials $(\alpha^\lambda, \beta^\lambda)$ via $\beta_{y_1}^\lambda = 0$, it also enables us to prove invertibility of a certain operator. %
	  
	  In particular, we set $$\probset{\YC}_* = \left\{ \sb_* \in \lYY \colon \sum_{y \in \YCC} s_{*,y} \in [0,1], \;s_{*} \geq 0 \right\}$$ as the set of probability vectors on $\YC$, where we omit the entry for $y_1$. 
	  Note that for a given element $ \sb_* \in \probset{\YC}_*$ we can recover the associated probability measure via the relation $ \sb = (1 - \sum_{y \in \YCC} s_{*,y},  s_{*,y_2},  s_{*,y_3}, \dots).$
	At the core of our approach is the reformulation of the optimality criterion (Proposition \ref{prop:DualEROT}) as an operator defined on suitable spaces.
	We introduce the marginalization operator omitting the entry at $y_1$, i.e., 
	\begin{equation*}
		\Ab_* \colon \ell^1(\XF\times \YC) \rightarrow \ell^1(\XF) \times \ell^1(\YCC), \quad \pib \mapsto \begin{pmatrix}
			\left( \sum_{y\in \YC}{\pi_{xy}} \right)_{x\in \XF}\,\,\,\,\,\,\,\,\,\,\textcolor{white}{.}\\
			\left( \sum_{x\in \XF}{\pi_{xy}} \right)_{y\in \YCC}
		\end{pmatrix}.
	\end{equation*}
	
	Its dual operator, formally a linear mapping $\lInfX\times \lInfYY\rightarrow\lInfXY$, will be extended as follows
	$$\Ab_*^T \colon   \R^\XF \times \R^{\YCC} \rightarrow \R^{\XF \times \YC}, \quad(a, b_*) \mapsto  (a\oplus (0,b_*)),$$
	where $\oplus$ denotes the tensor sum, i.e., $(a\oplus (0,b_*))_{x,y} = a_x + b_{*,y}$ for $(x,y) \in \XC\times \YC$ and $(a\oplus (0,b_*))_{x,y_1} = a_x$ for $x\in \XC$.
	These operators enable us to define %
	\label{text:DefinitionFC}
	$$\begin{aligned} \FC \colon \Big(\ell^1(\XF &\times \YC) \times \R^\XC \times \R^{\YCC} \Big)\times \Big( \ell^1(\XF) \times \ell^1(\YC \backslash \{y_1\}) \Big)  \rightarrow   \R^{\XF \times \YC} \times \R^\XF\times \R^{\YCC},\notag\\
	&\Big((\pib, \alphab, \betab_*), (\rb, \sb_*)\Big) \mapsto  \begin{pmatrix}
		\pib - \exp\left( \frac{1}{\lambda}\left[\Ab_*^T(\alphab, \betab_*) - \cb \right]\right)\odot (\rb \otimes \sb)\\
		\Ab_* (\pib) - \begin{pmatrix}\rb\\
		\sb_*
		\end{pmatrix}
	\end{pmatrix}, \label{text:DefinitionFC}\notag
	\end{aligned}$$
	where $\odot$ denotes the component-wise product and $\otimes$ is defined as the tensor product of the probability measures $(\rb \otimes \sb)_{xy} = r_xs_y$ for all $(x,y) \in \XC\times \YC$. 
	In terms of $\FC$ the optimality criterion from Proposition \ref{prop:DualEROT} is restated as follows.

	\begin{lemma}\label{lem:RewrittenOptimalityCriterionWithFunctionF}
		For given probability measures $(\rb,\sb)\in \probset{\XF}\times \probset{\YC}$ and $\lambda>0$ the element $\pib\in \ell^1(\XF \times \YC)$ and the potentials $(\alphab, (0,\betab_*))\in \lr\times \ls \subseteq \R^\XF \times \R^\YC$ are optimal for \eqref{eq:EntropicOptimalTransport} and \eqref{eq:DualEntropicOptimalTransportProblem}, respectively,  if and only if \begin{equation} \FC\Big((\pib,\alphab, \betab_*), (\rb, \sb_*)\Big) = 0.\label{eq:OptimalityCriterionImplicitlyDefinedFunction}\end{equation}
	\end{lemma}
	Equation \eqref{eq:OptimalityCriterionImplicitlyDefinedFunction} intuitively contains a proposal for the Hadamard derivative of $\pib^\lambda$. 
	Indeed, a na\" ive application of the usual calculus of partial derivatives  (denoted by $\D$) from finite-dimensional spaces with respect to $\rb, \sb_*$ yields the partial derivative of $\FC$ at $\tilde\rb, \tilde\sb_*\in \probset{\XC}\times \probset{\YC}_*$ for fixed elements  $\tilde{\pib}\in \ell^1(\XF\times \YC)$, $(\tilde\alphab, \tilde\betab_*)\in \R^{\XF}\times \R^{\YCC}$, given by 
	\begin{align*}
			& \DCW_{\rb, \sb_*|(\tilde\pib,\tilde\alphab,\tilde\betab_*, \tilde\rb, \tilde\sb_*) }\FC \colon  \; \ell^1(\XF) \times \ell^1(\YC \backslash \{y_1\}) \rightarrow   \R^{\XC\times \YC} \times \R^\XC\times \R^{\YCC},\\
	&\quad \quad (\hb^\XF , \hb_*^\YC) \mapsto  \begin{pmatrix}
		-\exp\left( \frac{1}{\lambda}\left[\Ab_*^T(\tilde\alphab, \tilde\betab_*) - \cb \right]\right)\odot (\tilde\rb \otimes \hb^\YC + \hb^\XF \otimes \tilde\sb)\\[0.1cm]
		- \begin{pmatrix}\hb^\XF\\
		\hb_*^\YC
		\end{pmatrix}
	\end{pmatrix},
		\end{align*}
		where $\hb^\YC \coloneqq (-\sum_{y \in \YCC} h_{*,y}^\YC, h_{*,y_2}^\YC, h_{*,y_3}^\YC, \dots)$. Likewise, the na\" ive partial derivative of $\FC$ with respect to $\pib, \alphab, \betab_*$ for the elements $(\tilde{\pib},\tilde\alphab, \tilde\betab_*,\tilde\rb, \tilde\sb_*)$ is equal to 
		\begin{align*}
			&\DCW_{\pib, \alphab, \betab_*|(\tilde\pib,\tilde\alphab,\tilde \betab_*, \tilde\rb, \tilde\sb_*) } \FC\colon  \; \ell^1(\XF\times \YC)\times \R^\XC\times \R^{\YCC} \rightarrow \R^{\XC\times \YC} \times \R^\XC\times \R^{\YCC},\\
	&\quad (\hb^{\XF \times \YC},\hb^{\XF, \infty}, \hb^{\YC, \infty}_*) \\
	&\quad \mapsto \begin{pmatrix}
			\hb^{\XF \times \YC}- \frac{1}{\lambda}\exp\left( \frac{1}{\lambda}\left[\Ab_*^T\big(\tilde\alphab, \tilde\betab_*\big) - \cb \right]\right)\odot (\tilde\rb\otimes \tilde\sb)\odot \Ab_*^T\big(\hb^{\XF, \infty} ,\hb^{\YC,\infty}_*\big)\\[0.1cm]
			\Ab_*(\hb^{\XF \times \YC})
		\end{pmatrix}.
		\end{align*} 
	Motivated by an implicit function approach a proposal for the  Hadamard derivative   of $\pib^\lambda$ at the pair of probability measures $(\rb, \sb)\in \probset{\XC}\times \probset{\YC}$ is %
	\begin{equation}\label{eq:ProposedDerivativeForPi}
	 - \left[\DCW_{\pib, \alphab, \betab_*|(\pib^\lambda, \alphab^\lambda, \betab^\lambda_*,  \rb, \sb_*)}\FC\right]^{-1}_{\pib} \circ \DCW_{\rb, \sb_*|(\pib^\lambda, \alphab^\lambda, \betab^\lambda_*, \rb, \sb_* ) }\FC.
	\end{equation}
	Much of our proof is devoted to making \eqref{eq:ProposedDerivativeForPi} mathematically precise. This requires assumptions on the cost function and the probability measures $\rb$ and $\sb$. 
	More precisely, for our subsequent differentiability result for the EROT plan we require that $\norm{\cxpb - \cxmb}_{\lInfX}< \infty$ which implies that $\cx \leq \kxSup{\delta} \leq \cx\norm{\exSup{\delta}}_{\ell^\infty(\XC)}$ for any $\delta\geq 0$. 
	 Further, for an explicit expression of the derivative for the EROT plan we introduce the operators 
	 \begin{align*}
		\ACX\colon& \lInfYYSub{\eySup{4}}\rightarrow \lInfX, && h^{\YC, \infty}_{\ast}\mapsto \bigg(\sum_{y\in\YCC}\bigg(\frac{\pi^\lambda_{xy}(\rb, \sb)}{r_x}\bigg)h^{\YC, \infty}_{\ast,y}\bigg)_{x\in \XC}, \\
		 \ACY\colon& \lInfX\rightarrow \lInfYYSub{\eySup{4}}&&h^{\XC, \infty}\mapsto \bigg(\sum_{x\in\XC}\bigg(\frac{\pi^\lambda_{xy}(\rb, \sb)}{s_y}\bigg) h^{\XC, \infty}_{x}\bigg)_{y\in \YCC},\\
		 \BCX\colon& \lYSub{\kySup{4}}\rightarrow \lInfX&& h^{\YC}\mapsto \bigg(\sum_{y\in\YCC}\bigg(\frac{\pi^\lambda_{xy}(\rb, \sb)}{r_xs_y}\bigg)h^{\YC}_{\ast,y}\bigg)_{x\in \XC},\\
		  \BCY\colon& \lXSub{\cx}\rightarrow \lInfYYSub{\eySup{4}}&&h^{\XC}\mapsto \bigg(\sum_{x\in\XC}\bigg(\frac{\pi^\lambda_{xy}(\rb, \sb)}{r_xs_y}\bigg)h^{\XC}_{x}\bigg)_{y\in \YCC}.
	 \end{align*}
	 Based on our quantitative bounds for the EROT plan from Proposition \ref{prop:BoundsOptimalPotentials} it follows that these operators are continuous. Upper bounds on their operator norms are obtained in the proof of the subsequent theorem. With these conventions we can state our main result on the Hadamard differentiability of the EROT plan. 

	\begin{theorem}[Hadamard differentiability of EROT plan]\label{them:TransportPlanIsHadamardDiff}
		Consider probability measures $\rb\in \PC(\XC), \sb\in \PC(\YC)$ which both have full support. 
		Assume that $\norm{\cxpb - \cxmb}_{\lInfX}< \infty$ and 
		$\cx\in\lr, \kySup{4}\in \ls$. 
		 Further, consider the EROT plan for $\lambda>0$  as a mapping $$ \pib^\lambda \colon \big(\probset{\XC}\cap \lXSub{\cx}\big)\times \big( \probset{\YC}\cap \lYSub{\kxSup{4}} \big)\rightarrow \lXYSub{\cx \oplus \cy}.$$ Then $\pib^\lambda$ is Hadamard differentiable at $(\rb, \sb)$ tangentially to $\big(\probset{\XC}\cap \lXSub{\cx}\big)\times \big( \probset{\YC}\cap \lYSub{\kxSup{4}} \big)$ with Hadamard derivative given by \begin{align*} \DH_{|( \rb,  \sb)}\pib^\lambda  \colon & T_{( \rb,  \sb)}\Big(\big(\probset{\XC}\cap \lXSub{\cx}\big)\times\big( \probset{\YC}\cap \lYSub{\kySup{4}} \big)\Big)  \rightarrow \lXYSub{\cx \oplus \cy},\\
		(\hb^\XC, \hb^\YC) &\mapsto \left(-\left[\DCW_{\pib, \alphab, \betab_*|(\pib^\lambda, \alphab^\lambda, \betab^\lambda_*,  \rb, \sb_* )}\FC\right]^{-1}_{\pib} \circ \DCW_{\rb, \sb_*|(\pib^\lambda, \alphab^\lambda, \betab^\lambda_*,  \rb, \sb_* ) }\FC\right) (\hb^\XC, \hb^\YC_*)\\
		&\!\!\!\!\!\!\!\!\!\!\!\!\!\!\!\!\!\!\!\!\!\!\!\!\!= \frac{\pib^\lambda}{ \rb \otimes  \sb }\odot \Big[   \rb \otimes \hb^\YC + \hb^\XF \otimes  \sb \Big]\\
		&\!\!\!\!\!\!\!\!\!\!\!\!\!\!\!\!\!\!\!\!\!\!\!\!\! -\pi^\lambda\odot A_*^T\begin{pmatrix}
			(\Id_{\XC} -\ACX\ACY)^{-1}&-(\Id_{\XC} -\ACX\ACY)^{-1}\ACX\\
			-\ACY(\Id_{\XC} -\ACX\ACY)^{-1}&(\Id_{\YCC} -\ACY\ACX)^{-1}\\
		\end{pmatrix}\begin{pmatrix}
			0 & \BCX\\
			\BCY&0
		\end{pmatrix}\begin{pmatrix}
			h^{\XC}\\
			h^{\YC}
		\end{pmatrix}.
		\end{align*} 
		The derivative is a well-defined and bounded operator and   the contingent cone at $( \rb,  \sb)$ with respect to $\lXSub{\cx}\times \lYSub{\kySup{4}}$ 
		is given by  $$\begin{aligned}
			&\;T_{( \rb,  \sb)}\Big(\big(\probset{\XC}\cap \lXSub{\cx}\big)\times\big( \probset{\YC}\cap \lYSub{\cy} \big)\Big) \\
			=& \;\left\{\hb^{\XC}\in \lXSub{\cx} \colon \sum_{x\in \XF} h^{\XC}_x = 0\right\} \times \left\{\hb^{\YC}\in \lYSub{\cy} \colon \sum_{y\in \YC} h^{\YC}_y = 0\right\}. 
		\end{aligned}$$ %
	\end{theorem}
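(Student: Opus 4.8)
The plan is to verify the conditions of a suitable implicit-function theorem for Hadamard-differentiable maps (e.g.\ \cite[Proposition 4]{Roemisch04}), but—as flagged in Remark~\ref{rem:NoveltyOfProofAndDifficulty}—the operator $\DCW_{\pib,\alphab,\betab_*}\FC$ is only boundedly invertible on a carefully chosen pair of weighted spaces, so a direct black-box application fails. Instead I would proceed in three stages: (1) set up the functional-analytic framework and prove invertibility of the relevant partial derivative on the small domain; (2) use Lemma~\ref{lem:RewrittenOptimalityCriterionWithFunctionF} together with the continuity result Proposition~\ref{prop:ConvergenceOptimalDualSolutions}(iii) to control all the perturbation error terms and identify the candidate derivative; (3) compute the explicit Schur-complement form of the derivative and determine the contingent cone.

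\textbf{Step 1 (Invertibility of the $\pib$-block).} Restricting $\DCW_{\pib,\alphab,\betab_*|(\pib^\lambda,\alphab^\lambda,\betab^\lambda_*,\rb,\sb_*)}\FC$ to its action in the $\pib$-variable and solving the second (marginal) coordinate for $\pib$ in terms of $(\hb^{\XC,\infty},\hb^{\YC,\infty}_*)$, one is left with a fixed-point equation whose kernel is governed by the operators $\ACX,\ACY$ defined above. Using the two-sided bounds for $\pib^\lambda$ from Proposition~\ref{prop:BoundsOptimalPotentials} and the hypothesis $\norm{\cxpb-\cxmb}_{\lInfX}<\infty$, I would bound the operator norms of $\ACX\ACY$ and $\ACY\ACX$; the crucial point (Remark~\ref{rem:NoveltyOfProofAndDifficulty}) is that these composed operators have operator norm \emph{strictly less than one}—equivalently, $\Id-\ACX\ACY$ and $\Id-\ACY\ACX$ have a strictly positive eigenvalue gap—so that the Neumann series $(\Id_\XC-\ACX\ACY)^{-1}=\sum_{k\ge0}(\ACX\ACY)^k$ converges in operator norm. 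This strict contraction is what forces the presence of the weight $\eySup{4}$ in $\HC_\YC$ and $\kySup{4}$, and why weaker weights do not suffice. The normalization $\beta^\lambda_{y_1}=0$ (encoded in working with $\YCC$ and $\Ab_*$) removes the one-dimensional kernel coming from constant shifts of the potentials, making the system genuinely invertible rather than merely invertible modulo constants.

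\textbf{Step 2 (Error control and identification).} With invertibility in hand, for a converging sequence $h_n\to(\hb^\XC,\hb^\YC)$ and $t_n\searrow0$ with $(\rb,\sb)+t_n h_n$ admissible, write $\pib^\lambda_n\coloneqq\pib^\lambda((\rb,\sb)+t_n h_n)$ and expand $\FC((\pib^\lambda_n,\alphab^\lambda_n,\betab^\lambda_{n,*}),(\rb,\sb)_*+t_nh_{n,*})=0$ around $(\pib^\lambda,\alphab^\lambda,\betab^\lambda_*,\rb,\sb_*)$. The zeroth-order term vanishes by Lemma~\ref{lem:RewrittenOptimalityCriterionWithFunctionF}; the first-order terms reproduce the two naïve partial derivatives; and the remainder must be shown to be $o(t_n)$ in $\lXYSub{\cx\oplus\cy}$. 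Here one cannot invoke a generic smoothness estimate because $\FC$ is not uniformly differentiable on the relevant infinite-dimensional domain; instead I would exploit Proposition~\ref{prop:ConvergenceOptimalDualSolutions}(iii) to get $\norm{\pib^\lambda_n-\pib^\lambda}_{\lXYSub{\cx\oplus\cy}}\to0$, combine it with the explicit exponential form \eqref{eq:optimalityCriterion_1_connectionSolutions}, and use the quantitative potential bounds from Proposition~\ref{prop:BoundsOptimalPotentials} to dominate each Taylor remainder term by a product of a vanishing factor and an $(\rb\otimes\sb)$-integrable envelope built from $\cx,\cy,\exSup{},\eySup{}$. Rearranging the first-order identity and applying $-[\DCW_{\pib,\alphab,\betab_*}\FC]^{-1}_\pib$ yields exactly \eqref{eq:ProposedDerivativeForPi}, which is therefore the Hadamard derivative, and linearity in $(\hb^\XC,\hb^\YC)$ is immediate from the formula.

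\textbf{Step 3 (Explicit formula and contingent cone).} Substituting the block forms of $\DCW_{\rb,\sb_*}\FC$ and $[\DCW_{\pib,\alphab,\betab_*}\FC]^{-1}_\pib$ and carrying out the $2\times2$ block inversion gives the stated Schur-complement expression; this is a routine but careful linear-algebra computation with the off-diagonal operators $\BCX,\BCY$ and the resolvents $(\Id_\XC-\ACX\ACY)^{-1}$, $(\Id_{\YCC}-\ACY\ACX)^{-1}$. The $\frac{\pib^\lambda}{\rb\otimes\sb}\odot[\rb\otimes\hb^\YC+\hb^\XC\otimes\sb]$ summand is precisely the contribution of $\DCW_{\rb,\sb_*}\FC$ before correction. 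Finally, the contingent cone of $\bigl(\PC(\XC)\cap\lXSub{\cx}\bigr)\times\bigl(\PC(\YC)\cap\lYSub{\kySup{4}}\bigr)$ at a full-support pair is obtained from \cite[Proposition~4.2.1]{aubin2009setValued} using convexity: it equals the closure of $\{t(\tilde\rb-\rb,\tilde\sb-\sb)\}$, and since full support makes all sign constraints inactive, this closure is exactly the product of the mean-zero subspaces displayed in the statement.

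\textbf{Main obstacle.} The hard part is Step~1 together with the remainder estimate in Step~2: establishing that $\ACX\ACY$ is a strict contraction on $\lInfX$ (not merely bounded), on the \emph{small} domain dictated by the weight $\eySup{4}$, and then showing the Taylor remainders genuinely vanish at rate $o(t_n)$ in the target norm despite the lack of uniform differentiability—this is where the quantitative bounds of Proposition~\ref{prop:BoundsOptimalPotentials} and the refined continuity statement Proposition~\ref{prop:ConvergenceOptimalDualSolutions}(iii) must be deployed in a coordinated way.
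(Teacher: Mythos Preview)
Your Steps 1 and 3 align with the paper: invertibility of $\AC$ via Neumann series using $\norm{\cxpb-\cxmb}_{\lInfX}<\infty$ (this is Proposition~\ref{prop:ProposedDerivativeIsBounded}), and the Schur-complement representation and contingent cone come out exactly as you describe. The gap is in Step~2.

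You propose to expand $\FC\big((\pib^\lambda_n,\alphab^\lambda_n,\betab^\lambda_{n,*}),(\rb,\sb_*)+t_nh_{n,*}\big)=0$ and control the Taylor remainder using only the continuity statement Proposition~\ref{prop:ConvergenceOptimalDualSolutions}(iii). This does not work: continuity gives $\pib^\lambda_n-\pib^\lambda=o(1)$, but the remainder you need to kill contains terms like $\Delta\big(\lambda^{-1}A_*^T(\alphab^\lambda_n-\alphab^\lambda,\betab^\lambda_{*,n}-\betab^\lambda_*)\big)$ and products $(\hat h^\XC\otimes\sb)\odot A_*^T(\alphab^\lambda_n-\alphab^\lambda,\cdot)$, which are $o(t_n)$ only if $\alphab^\lambda_n-\alphab^\lambda$ and $\betab^\lambda_n-\betab^\lambda$ are themselves $O(t_n)$. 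That is a \emph{local Lipschitz} property of the potentials (and of $\pib^\lambda$), not mere continuity; the paper establishes it separately as Proposition~\ref{prop:DualOptimizerAreLipschitz}, by running the implicit function theorem on finite-dimensional truncations, uniformly bounding the derivative norms over a $B_{\rho_0}(\rb,\sb)$-neighbourhood, and then passing to the limit via Proposition~\ref{prop:ConvergenceOptimalDualSolutions}. Without this Lipschitz input your remainder estimate cannot close.

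There is a second, related omission. Even with Lipschitz control, the inverse $[\DCW_{\pib,\alphab,\betab_*}\FC]^{-1}$ is only defined on a very small domain (Remark~\ref{rem:UnboundedDomainInverseOperator}): one must verify that each remainder term $(\xib,0,0)$ satisfies the $\Gamma(\xib)<\infty$ criterion of Lemma~\ref{lem:InverseOfDerivativeOfFIsWellDefined} and that $\Gamma(\xib)=o(t_n)$. The paper does this only for \emph{finitely supported} perturbation directions $\hat h^\XC_l,\hat h^\YC_l$ (Proposition~\ref{prop:DerivativeFiniteSupportPerturbation} and Lemma~\ref{lem:FinishingLemma}), where the factors $|\hat h^\XC_{l,x}|/r_x$ and $|\hat h^\YC_{l,y}|/s_y$ are bounded and the support of the perturbed marginals is guaranteed to remain full (Lemma~\ref{lem:GivenLChooseNBigSoAllProbMeasures}). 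The passage from finitely supported $\hat h_l$ to the general $h_n$ is then handled by the Lipschitz bound for $\pib^\lambda$ together with $\norm{h_n-\hat h_{n,l}}\to0$. Your direct expansion for general $h_n$ would have to confront both the unbounded ratios $|h^\XC_{n,x}|/r_x$ and the possibility that $\rb+t_nh^\XC_n$ loses support points, neither of which is addressed.
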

	If there exists a collection of functions $\cxpb, \cxmb, \cypb,\cymb$ for  $c$ such that $\norm{\cxpb - \cxmb}_{\lInfX}< \infty$ and $\norm{\cypb - \cymb}_{\lInfX}< \infty$ it follows that $\cx \asymp \kxSup{\delta}$ and $\cy\asymp \kySup{\delta}$ for any $\delta\geq 0$. In this case, the norm for the Hadamard differentiability can be replaced by $\lXSub{\cx}\times \lYSub{\cy}$. 
	The proof of Theorem \ref{them:TransportPlanIsHadamardDiff} is deferred to Appendix \ref{subsec:ProofsSensitivityEROTP}, but we like to sketch its main arguments here. 
	
	\begin{proof}[Sketch of Proof]
		We first verify that the proposed operator for the derivative \eqref{eq:ProposedDerivativeForPi} is well-defined and bounded (Proposition \ref{prop:ProposedDerivativeIsBounded}). Herein, we require that  $\norm{\cxpb - \cxmb}_{\lInfX}< \infty$ to ensure the validity of the \emph{Neumann series calculus}.
		 We then proceed with the proof of Hadamard differentiability of the EROT plan.  
		By definition, this requires to take a sequence $(t_n)_{n \in \N}$ such that $t_n \searrow 0$ and a converging sequence $(\hb_n^\XF,\hb_{n}^\YC)_{n\in \N}\subseteq \lXSub{\cx} \times \lYSub{\kySup{4}}$ with limit $(\hb^\XF, \hb^\YC)$ and $(\rb + t_n \hb_n^\XF, \sb + t_n \hb^\YC_{n})\in \big(\probset{\XC}\cap \lXSub{\cx}\big)\times \big( \probset{\YC}\cap \lYSub{\kySup{4}} \big)$ for each $n\in \N$. 
		To this end, we show that the difference quotient of EROT plans for finitely supported perturbations can be approximated by the proposed derivative (Proposition \ref{prop:DerivativeFiniteSupportPerturbation}) and verify that the EROT plan is locally Lipschitz continuous (Proposition \ref{prop:DualOptimizerAreLipschitz}). 
		These results, in conjunction with a notion of finite support approximation (Appendix \ref{subsec:App:FSAProperties}), allow us to find for any $\epsilon>0$ an integer $N\in \N$ such that for all $n \geq N$ holds
				\begin{equation*}
			\norm{\frac{\pib^\lambda(\rb + t_n\hb^\XF_n,\sb + t_n\hb^\YC_{n}) - \pib(\rb,\sb)}{t_n} - \DH_{|( \rb,  \sb)}{\pib^\lambda}(\hb^\XF, \hb^\YC)}_\lXYSub{\cx\oplus \cy} < \epsilon. \notag \qedhere
		\end{equation*}
	\end{proof}

	\begin{remark}[On proof technique]\label{rem:GeneralizationPartiallyBoundedVariation} \label{rem:NoveltyOfProofAndDifficulty}\begin{enumerate}
		\item[$(i)$] Our proof for the sensitivity of the EROT plan does not rely on a standard implicit function theorem for Hadamard differentiable functions \cite[Proposition 4]{Roemisch04}. 
		The main issue in employing such a result lies in the selection of suitable normed spaces for the domain and range of $\mathcal{F}$. To ensure that the mapping $\mathcal{F}$ is well-defined the range space has to be chosen sufficiently large while at the same time the range space has to be sufficiently small such that the operator
		$[\DCW_{\pib, \alphab, \betab_*|(\pib^\lambda, \alphab^\lambda, \betab^\lambda_*,  \rb, \sb_*)}\FC]^{-1}$ is well-defined on a neighborhood around the origin in the range space. 
		As it turns out by Lemma \ref{lem:InverseOfDerivativeOfFIsWellDefined} has the operator $ [\DCW_{\pib, \alphab, \betab_*|(\pib^\lambda, \alphab^\lambda, \betab^\lambda_*,  \rb, \sb_*)}\FC]^{-1}$ only a fairly small domain (Remark \ref{rem:UnboundedDomainInverseOperator}). 
		Hence, to prove the claim on Hadamard differentiability we instead perform a careful analysis of the individual perturbation errors and show that they tend towards zero.
		\item[$(ii)$] Our assumption  $\smash{\norm{\cxp - \cxm}_{\lInfX}<\infty}$ asserts by Proposition \ref{prop:BoundsOptimalPotentials} a strictly positive lower bound for the quotient $\pi_{xy_1}^\lambda\!\!(\rb, \sb)/r_x$ uniformly in $x\in \supp(\rb)$ and may be interpreted as a curvature condition; these are commonly required in the context of $M$-estimators in order to control error bounds. In particular, the uniform lower bound implies that the operators $\ACX\ACY$ and $\ACY\ACX$ have operator norm strictly smaller than one, which asserts that $(\Id_\XC - \ACX\ACY)$ and $(\Id_{\YCC} - \ACY\ACX)$ are invertible. This property was imposed as a direct condition by \cite{Harchaoui2020} in their Assumption 2(3). It is also fulfilled for the setting of \cite{gonzalez2022weak} as they consider compactly supported probability measures in Euclidean spaces with a smooth cost function. It remains open if the analysis can be relaxed to account for general unbounded ground costs (Remark \ref{rem:IssuesUnboundedOperator}) on non-compact spaces.
		
	\end{enumerate}

	\end{remark}

	\begin{remark}[Non-vanishing derivative for EROT plan]\label{rem:NonDegDerivative}
		For any $\rb, \rb' \in \probset{\XC}\cap \lXSub{\cx}$, $\sb, \sb'\in \probset{\YC}\cap \lXSub{\cy}$ it holds that $$\norm{\pib^\lambda(\rb, \sb) - \pib^\lambda(\rb', \sb')}_{\lXYSub{\cx\oplus \cy}}\geq \max\left(\norm{\rb-\rb'}_{\lXSub{\cx}},\norm{\sb-\sb'}_{\lXSub{\cx}}\right).$$ 
		This implies under $\norm{\cxpb - \cxmb}_{\lInfX}< \infty$ and $\norm{\cypb - \cymb}_{\lInfY}< \infty$ for a sequence $(t_n)_{n \in \N}\subseteq (0,\infty)$ such that  $t_n \searrow 0$ and a converging sequence $(\hb_n^\XC,\hb_n^\YC)_{n \in \N}\in \lXSub{\cx}\times\lYSub{\cy}$ with limit $(\hb^\XC,\hb^\YC)$ and $(\rb + t_n \hb_n^\XC,\sb + t_n \hb_n^\YC)\in \left(\probset{\XC}\cap \lXSub{\cx}\right) \times \big(\probset{\YC}\cap \lYSub{\cy}\big)$ for all $n \in \N$ that $$ \begin{aligned}\!\!\norm{\DH_{|(\rb, \sb)}\pib^\lambda(\hb^\XC, \hb^\YC)}_{\lXYSub{\cx\oplus\cy\!}}\!\!\!\!\! = \lim_{n \rightarrow \infty} \norm{\frac{\pib^\lambda(\rb + t_n h_n^\XC, \sb+t_n h_n^\YC) - \pib^\lambda(\rb, \sb)}{t_n}}_{\lXYSub{\cx\oplus \cy}}\\
		 \geq \;\lim_{n \rightarrow \infty}\max\left(\norm{\hb^\XC_n}_{\lXSub{\cx}},\norm{\hb^\YC_n}_{\lXSub{\cy}} \right) =\max\left(\norm{\hb^\XC}_{\lXSub{\cx}}, \norm{\hb^\YC}_{\lYSub{\cy}}\right).
		 \end{aligned}$$ 
		This demonstrates for either $\rb$ or $\sb$ not supported on just a single point that $\DH_{|(\rb, \sb)}\pib^\lambda \neq 0$. 
		\end{remark}

\section{Discussion}\label{sec:Discussion}

For a general class of cost functions on countable spaces we have derived distributional limits for the empirical EROT value and plan. These results are in line with other statistical contributions on the EROT value \citep{bigot2019CentralLT,mena2019,del2022improved, goldfeld2022statistical} and the EROT plan \citep{klatt2018empirical,Harchaoui2020,gonzalez2022weak}, and extend the current state of research concerning empirical EROT.

It remains open to what extent the conditions on limit laws can be weakened, possibly to omit the exponential term in our summability conditions. 
In particular, it would be interesting to investigate the sensitivity of the EROT plan under generally unbounded ground costs. 
Notably, in our results for the EROT plan the condition $\norm{\cxpb - \cxmb}_{\lInfX}< \infty$ is of particular use to employ the Neumann series calculus of bounded operators. 
 In case of ground costs that are generally unbounded this proof technique does not generalize well due to a lack of a guarantee for an eigenvalue gap of the operators $\ACX\ACY$ and $\ACY\ACX$ (Remark \ref{rem:NoveltyOfProofAndDifficulty}). Therefore, weak limits for this setting remain unknown, although a similar structure is reasonable to conjecture. 

In addition to our limit results for fixed regularization parameter $\lambda >0$, we characterize in Section \ref{sec:RelationUnregularizedOT} the asymptotic behavior of the empirical EROT value and Sinkhorn costs for the regime of a  decreasing regularization parameter $\lambda(n) = o(1/\sqrt{n})$. 
We see that the resulting limit law is given by the respective limit law of the empirical \emph{unregularized} OT value which is fundamentally different when OT potentials are non-unique \citep{tameling18,hundrieser2022unifying}. 
In contrast, under bounded costs and unique OT potentials, our analysis shows that the empirical EROT value behaves in line with its unregularized counterpart, independent of the convergence speed of $\lambda(n)$ to zero. 

Naturally, in the regime of non-unique OT potentials the question arises for $\lambda(n)$ of slower order than $1/\sqrt{n}$ whether the empirical EROT value centered by its population version still converges weakly towards a suitable limit distribution and when a phase transition to the obtained normal limit occurs. 
Our bounds on EROT potentials (Proposition \ref{prop:BoundsOptimalPotentials}) combined with our stability result (Lemma \ref{lem:LipschitzEROTValue}) and arguments from empirical process theory \citep{mena2019} suggest at least for bounded costs and $\rb, \sb$ satisfying the Borisov-Dudley-Durst condition that  $\mathbb{E}\left[\left| \EROTLambda{\lambda}(\hat \rb_n, \hat \sb_n)-\EROTLambda{\lambda}(\rb, \sb)\right|\right]=\landau(\norm{c}_\infty/\sqrt{n})$, independent of the regularization parameter $\lambda$.
By Markov's inequality this causes for $\lambda(n)\rightarrow 0$ the sequence $\sqrt{n}\big(\EROTLambda{\lambda(n)}(\hat \rb_n, \hat \sb_n)-\EROTLambda{\lambda(n)}(\rb, \sb)\big)$ to be tight, asserting with Prokhorov's Theorem the existence of a weakly converging subsequence.

The analysis of the limit behavior of the empirical EROT \emph{plan} on countable spaces for the regime $\lambda\searrow 0$ is even more involved. Mimicking Section \ref{sec:RelationUnregularizedOT} two aspects appears crucial to us. First, it is necessary to obtain suitable bounds between the (empirical) EROT plan and a suitable unregularized (empirical) OT plan. Although it is known under mild conditions that the EROT plan $\pi^\lambda(\rb, \sb)$ converges for $\lambda\rightarrow 0$  in $\lXY$ to the OT plan $\pi^*\in \Pi(\rb, \sb)$ with minimal entropy \citep[Theorem 5.5]{nutz2021introduction}, quantitative bounds for this convergence are only available on finite spaces \citep{Weed18_ExplicitAnalysis}. %
Second, the limit distribution of the empirical unregularized OT plan on countable spaces has to be characterized. For finitely supported probability measures  with a unique unregularized OT plan \cite{klatt2020limit} recently obtained such a limit law explicitly relying on finite-dimensional linear programming. 
However, this approach does not generalize well beyond finite settings and hence similar statements for countable spaces remain open.

 \section*{Acknowledgements}
 S. Hundrieser and A. Munk acknowledge funding by the Deutsche Forschungsgemeinschaft (DFG, German Research
Foundation) under Germany's Excellence Strategy - EXC 2067/1- 390729940. Further, S. Hundrieser and M. Klatt acknowledge funding from the DFG Research Training Group 2088 \emph{Discovering structure in complex data: Statistics meets Optimization and Inverse Problems}.

\newpage

\begin{appendix}

	\newpage
	\section{Proofs for Section \ref{sec:Preliminaries}}\label{app:ProofsPreliminaries}
	\begin{proof}[Proof of Proposition \ref{prop:BoundsOptimalPotentials}]
		The proof is based on techniques by \cite{mena2019}. By relation \eqref{eq:optimalityCriterion_1_connectionSolutions} between primal and dual optimizers for \eqref{eq:EntropicOptimalTransport} it follows that any pair of EROT potentials $(\alphab^\lambda, \betab^\lambda)\in \lr\times \ls$ satisfies $\EROT(\rb, \sb) = \langle \alphab^\lambda, \rb\rangle + \langle \betab^\lambda, \sb\rangle.$ 
		Since $\cx\in \lr$ and $\cy\in \ls$,  there exists $(\rb,\sb)$-a.s. unique  EROT potentials up to a constant shift (Proposition \ref{prop:DualEROT}), allowing us to select potentials such that  $$\langle\alphab^\lambda,\rb\rangle = \langle\betab^\lambda,\sb\rangle = \EROT(\rb, \sb)/2 \geq (\langle \cxm, \rb\rangle + \langle \cym, \rb\rangle)/2.$$ Optimality of $(\alphab^\lambda, \betab^\lambda)$ implies by Proposition \ref{prop:DualEROT} for all $x \in \supp(\rb)$ and $y \in\supp(\sb)$ that 
		\begin{align*}
			\alpha_x^\lambda = - \lambda\log\left[\sum_{y \in \YC}\exp\bigg(\frac{ \beta_y^\lambda - c(x,y)}{\lambda}\bigg)s_y\right], &&
			 \beta_y^\lambda = - \lambda\log\left[\sum_{x \in \XF}\exp\bigg(\frac{ \alpha_x^\lambda - c(x,y)}{\lambda}\bigg)r_x\right] .
		\end{align*}
		As noted in Remark \ref{rem:NonConstantDualPotentials} we may extend the potentials to the whole spaces $\XC$ and $\YC$ according to the respective right-hand sides. 
		Applying Jensen's inequality for the convex function $-\log(\,\cdot \,) $ and by our choice of $(\alphab^\lambda,\betab^\lambda)$ it then follows for each $x\in \XC$ that 
		\begin{align*}
			\alpha_x^\lambda &= -\lambda\log\bigg[\sum_{y \in \YC}\exp\bigg(\frac{ \beta_y^\lambda - c(x,y)}{\lambda}\bigg)s_y\bigg] \\
			&\leq  \sum_{y \in \YC}c(x,y)s_y -\langle \betab^\lambda, \sb \rangle\\
			&\leq \cxp(x) + \langle \cypb, \sb\rangle - (\langle \cxm, \rb\rangle + \langle \cym, \rb\rangle)/2. 
		\end{align*}
		Likewise, it follows for all $y\in \YC$ that $$\beta_y^\lambda \leq \cyp(y) + \langle \cxp, \rb\rangle - (\langle \cxm, \rb\rangle + \langle \cym, \rb\rangle)/2.$$
				For the lower bound of $\alpha_x^{\lambda}$, take the upper bound of $\beta^{\lambda}_y$ and the lower bound on $c$ to infer 
			\begin{align*}
				\alpha_x^\lambda &= -\lambda\log\bigg(\sum_{y \in \YC}\exp\bigg(\frac{\beta^\lambda_y - c(x,y)}{\lambda}\bigg)s_y\bigg) \\
				&\geq -\lambda\log\bigg(\sum_{y \in \YC}\exp\bigg(\frac{\cyp(y) + \langle \cxpb, \rb\rangle -(\langle \cxm, \rb\rangle + \langle \cym, \rb\rangle)/2 - \cxm(x) - \cym(y)}{\lambda}\bigg)s_y\bigg) \\
				&\geq \cxm(x)- \langle \cxpb, \rb\rangle + (\langle \cxm, \rb\rangle + \langle \cym, \rb\rangle)/2 - \lambda \log\langle\eySup{},\sb\rangle.
			\end{align*}
		The lower bound for $\betab^{\lambda}$ follows analogously. 
		The bounds for the EROT plan follows from the correspondence to the EROT potentials (Proposition \ref{prop:DualEROT}) and the above derived bounds. More precisely, for the upper bound it holds for any $(x,y)\in \XC\times \YC$ that
		\begin{align*}
			\pi^{\lambda}_{xy} &= \exp\left(\frac{\alpha^\lambda_x + \beta^\lambda_y -c(x,y)}{\lambda}\right)r_xs_y \\
			&\leq \exSup{}(x)\eySup{}(y) \exp\left(\frac{ \langle \cxpb - \cxmb, \rb\rangle+ \langle \cypb - \cymb, \sb\rangle}{\lambda}\right)r_xs_y\\
			&\leq \exSup{}(x)\eySup{}(y)\langle \exSup{},\rb\rangle \langle \eySup{},\sb\rangle.
		\end{align*}
	The lower bound for the EROT plan follow by invoking the lower bounds on the potentials in combination with the lower bound on the cost function.
	\end{proof}
	
	\section{Proofs for Section \ref{sec:LimitDistributions}}
	\label{app:ProofSectionExamples}

	\begin{proof}[Proof of Lemma \ref{lem:WeakConvergence}]
		First note that assertion $(iii)$ follows from the fact that subsets and unions of Donsker classes are again Donsker \citep[Theorem 2.10.1 and Example 2.10.7]{van1996weak}.
	
		For the proof of $(i)$ we consider the partition $\XC = \bigcup_{n \in \NN}\{x_n\}$ and note that the restricted function class $\FC_1|_{\{x_n\}}$ fulfills 
		\begin{align*}
			\mathbb{E}\bigg[\sup_{\tilde f\in \FC_1|_{\{x_n\}}}\!\!\!\!\!\!\!\!\langle \tilde f, \sqrt{n}(\hat \rb_ n -\rb)\rangle\bigg]&= \EV{\Big|\Big\langle f(x_n)\Indicator{x_n}, \sqrt{n}(\hat \rb_ n -\rb)\Big\rangle\Big|}= f(x_n) \EV{\sqrt{n}|\hat \rb_{n,x} - \rb_{x}|}  \\
			&\leq f(x_n) \EV{n|\hat \rb_{n,x} - \rb_{x}|^2}^{1/2} \leq f(x_n)\sqrt{r_x(1-r_x)} \leq f(x)\sqrt{r_x}. 
		\end{align*}
		Hence, by \citet[Theorem 2.10.24]{van1996weak}, $\FC_1$ is $\rb$-Donsker if the weighted Borisov-Dudley-Durst condition $\sum_{x\in \XC}f(x) \sqrt{r_x}<\infty$ is met; since $r_x\leq \sqrt{r_x}$ this also implies that $f\in \lr$. The converse follows by \citet[Lemma~2.2]{van1996new}.
	
		For $(ii)$ note that the class $\tilde \FC_2\coloneqq \{f\}$ is $\rb$-Donsker if and only if $\sum_{x\in \XC}(f(x))^2r_x<\infty$. Both function classes $\tilde \FC_2$ and $\overline \FC_2 \coloneqq \{\Indicator{\XC}, \Indicator{\XC\backslash\{x_1, \dots, x_n\}} \colon n \in \NN\}$ have bounded uniform entropy integrals, since $\tilde \FC_2$ is a singleton and $\overline \FC_2$ is a VC-class of index $2$ \citep[Chapter 9.1]{kosorok2008introduction}. Hence, by \citet[Theorem 9.15]{kosorok2008introduction}, their element-wise product $\FC_2 = \tilde \FC_2\cdot \overline \FC_2$ also has bounded uniform entropy integral and admits envelope function $f$. Thus, by \citet[Theorem 2.5.2]{van1996weak} it is also $\rb$-Donsker if $\sum_{x\in \XC}(f(x))^2r_x<\infty$. For the converse, recall that $\sum_{x\in \XC}(f(x))^2r_x<\infty$ is necessary for $\tilde \FC_2$ to be $\rb$-Donsker; this implies that it is also necessary for $\FC_2$ to be $\rb$-Donsker. 
	\end{proof}

	\begin{proof}[Proof of Remark \ref{rem:ComparisonEROTplan}(ii)]%
	\label{subsec:ConsistencyOtherContributions}
	
	We show the evaluation of suitably bounded functions with respect to the empirical EROT plan yields a distributional limit which is consistent with results by  \cite{gonzalez2022weak} and thus also \cite{Harchaoui2020}. 	%
	
	Assume for dominating functions $\cxp, \cxm, \cym, \cyp$ for $c$ that $\norm{\cxp - \cxm}_{\lInfX}<\infty$ and consider the two-sample case from Theorem \ref{them:LimitLawEmpEROT_Plan}; the arguments for the one-sample case are analogous. Then, under suitable weighted Borisov-Dudley-Durst summability constraints and the explicit representation from Theorem \ref{them:TransportPlanIsHadamardDiff} it follows for a function $f\colon \XC\times \YC\rightarrow \RR$ with $\norm{f}_{\lInfXYSub{\cx\oplus\cy}}<\infty$ by the continuous mapping theorem \citep[Theorem 1.3.9]{van1996weak}  for $\min(n,m)\rightarrow \infty$ with $\frac{m}{n+m}\rightarrow \delta\in (0,1)$  that
		\begin{align*}
			&\sqrt{\frac{nm}{n+m}}\left(\langle f,\pib^\lambda(\hat \rb_n, \hat \sb_n)\rangle  - \langle f,\pib^\lambda(\rb, \sb)\rangle\right) \konvD  \left\langle f, \frac{\pib^\lambda}{ \rb \otimes  \sb }\odot \Big[   \sqrt{\delta}\rb \otimes \Gb_\sb + \sqrt{1-\delta}\Gb_\rb \otimes  \sb \Big]\right\rangle\\
			&+\left\langle f, \pi^\lambda\odot A_*^T\begin{pmatrix}-
				(\Id_{\XC} -\ACX\ACY)^{-1}&(\Id_{\XC} -\ACX\ACY)^{-1}\ACX\\
				\ACY(\Id_{\XC} -\ACX\ACY)^{-1}&-(\Id_{\YCC} -\ACY\ACX)^{-1}\\
			\end{pmatrix}\begin{pmatrix}
				\sqrt{1-\delta}\BCX\Gb_\sb\\
				\sqrt{\delta}\BCY\Gb_\rb
			\end{pmatrix}\right\rangle.
		\end{align*}
		We now decompose the limit into three components, $A_\infty, B_\infty, C_\infty$ given by, 
		\begin{align*}
			A_\infty\coloneqq\left\langle f, \pi^\lambda\odot A_*^T\begin{pmatrix}-
				(\Id_{\XC} -\ACX\ACY)^{-1}&(\Id_{\XC} -\ACX\ACY)^{-1}\ACX\\
				\ACY(\Id_{\XC} -\ACX\ACY)^{-1}&-(\Id_{\YCC} -\ACY\ACX)^{-1}\\
			\end{pmatrix}\begin{pmatrix}
				\sqrt{1-\delta}\BCX\Gb_\sb\\
				\sqrt{\delta}\BCY\Gb_\rb
			\end{pmatrix}\right\rangle,\\
			B_\infty\coloneqq\left\langle f, \frac{\pib^\lambda}{ \rb \otimes  \sb }\odot \Big[   \sqrt{1-\delta}\Gb_\rb \otimes \sb\Big]\right\rangle,\quad \quad 
			C_\infty\coloneqq \left\langle f, \frac{\pib^\lambda}{ \rb \otimes  \sb }\odot \Big[   \sqrt{\delta}\rb \otimes \Gb_\sb \Big]\right\rangle.\quad \quad \quad \quad 
		\end{align*}
		Upon defining the empirical processes $\Gb_\rb^{n,m} = \sqrt{\frac{nm}{n+m}}(\hat \rb_n - \rb)$ and $\Gb_\sb^{n,m} = \sqrt{\frac{nm}{n+m}}(\hat \sb_m - \sb)$ \quad  we introduce the empirical quantities
		\begin{align*}
			A_n\coloneqq\left\langle f, \pi^\lambda\odot A_*^T\begin{pmatrix}-
				(\Id_{\XC} -\ACX\ACY)^{-1}&(\Id_{\XC} -\ACX\ACY)^{-1}\ACX\\
				\ACY(\Id_{\XC} -\ACX\ACY)^{-1}&-(\Id_{\YCC} -\ACY\ACX)^{-1}\\
			\end{pmatrix}\begin{pmatrix}
				\BCX\Gb_\sb^{n,m}\\
				\BCY\Gb_\rb^{n,m}
			\end{pmatrix}\right\rangle,\\
			B_n\coloneqq\left\langle f, \frac{\pib^\lambda}{ \rb \otimes  \sb }\odot \Big[   \Gb_\rb^{n,m} \otimes \sb\Big]\right\rangle,\quad \quad 
			C_n\coloneqq \left\langle f, \frac{\pib^\lambda}{ \rb \otimes  \sb }\odot \Big[ \rb \otimes \Gb_\sb^{n,m}\Big]\right\rangle.\quad \quad \quad \quad 
		\end{align*}
		In particular, the terms $A_n, B_n, C_n$ all converge for $\min(n,m)\rightarrow \infty$ with $\frac{m}{n+m}\rightarrow \delta\in (0,1)$  in distribution to $A_\infty, B_\infty, C_\infty$, respectively. Herein, $B_n$ and $C_n$ are exactly the same quantities as the terms $B_n$ and $C_n$ from \citet[p. 15]{gonzalez2022weak} with $P$ representing $\rb$ and $Q$ representing $\sb$. 
		Also note that the random variables $(\BCX\Gb_\sb^{n,m},\BCY\Gb_\rb^{n,m})$ exactly coincide with $\mathbb{G}^{n}_{P,s}$ and $\mathbb{G}^{m}_{Q,s}$ in their notation. 
		Hence, the expression in $A_n$ matches the integral expression from Equation (25) in that reference. 
		
		Since the distributional limit of \cite{gonzalez2022weak} decomposes into these three terms, we conclude that our distributional limit is consistent  with the limit law by \cite{gonzalez2022weak}. In particular, their limits are in line with results by \cite{Harchaoui2020}, which also asserts that our results coincide with their limit. 
	\end{proof}
	
	\begin{proof}[Proof of Theorem \ref{them:BootstrapEROT}]
		First recall the function classes  $\FC_\XC, \FC_\YC$ from \eqref{eq:FunctionClassFC} and $\HC_\XC, \HC_\YC$ from \eqref{eq:FunctionClassHC}. 
		For the first assertion note by the summability constraint from Theorem \ref{them:LimitLawEmpEROT_Value}(i) using Lemma \ref{lem:WeakConvergence} that $\FC_\XC$ is $\rb$-Donsker. Hence, we infer for the bootstrap empirical process from \citet[Theorem 3.6.13]{van1996weak} that  
		\begin{equation}\label{eq:ConditionsEmpiricalBootstrapProcess}
			\sup_{h \in \BL{\ell^\infty(\FC_\XC)} }\left| \EV{h\big(\sqrt{n}(\hat \rb_n^* - \hat \rb_n)\big) \Big| X_1, \dots, X_n } -  \EV{h\big(\sqrt{n}(\hat \rb_n - \rb)\big)} \right|\konvP 0.
		  \end{equation}
		  Further, note by assumption that $\sb\in \ell^\infty(\FC_\YC)$. Hence, since any bounded Lipschitz function on $\ell^\infty(\FC_\XC)$ whose modulus is bounded by one can be extended to a bounded Lipschitz function on $\ell^\infty(\FC_\XC)\times \ell^\infty(\FC_\YC)$ \cite[Theorem 1]{mcshane34} we conclude for $B = \ell^\infty(\FC_\XC)\times \ell^\infty(\FC_\YC)$ that
		  \begin{align*}%
			\sup_{h \in \BL{B} }\Big| \EV{h\big(\sqrt{n}((\hat \rb_n^*,\sb) - (\hat \rb_n, \sb))\big) \Big| X_1, \dots, X_n }
			-  \EV{h\big(\sqrt{n}((\hat \rb_n, \sb) - (\rb, \sb))\big)} \Big|&\konvP 0.
		  \end{align*}
		  Since the EROT value is Hadamard differentiable at $(\rb, \sb)$ tangentially to $\big(\probset{\XC}\cap \ell^\infty(\FC_\XC)\big)\times \big( \probset{\YC}\cap \ell^\infty(\FC_\YC)\big)$ for $\FC_\XC, \FC_\YC$ defined in \eqref{eq:FunctionClassFC}, the assertion follows from the functional delta method for the bootstrap \citep[Theorem 3.9.11]{van1996weak}.
		  
		  For the second setting note that the summability constraint from Theorem \ref{them:LimitLawEmpEROT_Plan}(i) guarantees Lemma \ref{lem:WeakConvergence} that $\HC_\XC$ is $\rb$-Donsker, while $\sb \in \ell^\infty(\HC_\YC)$. Invoking a similar extension argument of bounded Lipschitz functions as above, we obtain for $B = \lXSub{\cx}\times \lYSub{\cy\eySup{4}}$ that 
		  \begin{align*}%
			\sup_{h \in \BL{B} }\Big| \EV{h\big(\sqrt{n}((\hat \rb_n^*,\sb) - (\hat \rb_n, \sb))\big) \Big| X_1, \dots, X_n }
			-  \EV{h\big(\sqrt{n}((\hat \rb_n, \sb) - (\rb, \sb))\big)} \Big|&\konvP 0.
		  \end{align*}
		  The functional delta method for the bootstrap and Hadamard differentiability of the EROT plan at $(\rb, \sb)$ tangentially to $\big(\probset{\XC}\cap \lXSub{\cx}\big)\times \big( \probset{\YC}\cap \lYSub{\cy\eySup{4}}\big)$ yield the second claim.
	\end{proof}
	
	\section{Proofs for Section \ref{sec:Examples}}\label{app:ProofsExamples}
	\begin{proof}[Proof of Proposition \ref{prop:SemiBounded_Sufficient}]
		First note that $\eySup{2}\in \ls$ since $$\langle \eySup{2}, \sb\rangle \lesssim \sum_{y\in \YC}\exp\left( \frac{\overline \gamma}{5} d^{p-1+\overline \epsilon}(y,z)\right) \!s_y\leq \sum_{y\in \YC}\exp\left( \overline \gamma d^{p-1+\overline \epsilon}(y,z)\right) \!s_y<\infty.$$
	Moreover, note by the growth condition on $f_\YC$ that 
	\begin{align}
	&\sum_{y\in \YC} (1+d^p(y,z))\exp\left(-\frac{\overline \gamma}{10} d^{p-1+ \epsilon}(y,z)\right)\label{eq:NiceSumForNicerConditions}\\
	\leq &\sum_{n = 1}^\infty f_\YC(n)(1+n^p)\exp\left(-\frac{\overline \gamma}{10}(n-1)^{p-1+ \epsilon}\right) \notag\\
	\lesssim &\sum_{n = 1}^\infty (1+n^p)\exp\left(\delta n^{p-1+\epsilon/2}  -\frac{\overline \gamma}{10}(n-1)^{p-1+ \epsilon}\right)<\infty. \notag
	\end{align}
	Further, condition \eqref{eq:MeasureConditionForSimplicationOfAssumptions} implies $\sqrt{s_y}\lesssim \exp(- \frac{\overline \gamma}{2} d^{p-1+\overline \epsilon}(y,z))\leq \exp(- \frac{\overline \gamma}{10} d^{p-1+\overline \epsilon}(y,z))$. Hence, from \eqref{eq:NiceSumForNicerConditions} we infer that
		$$\sum_{y\in\YC}\cy(y)\sqrt{s_y}  \lesssim\sum_{y\in\YC}(1+d^p(y,z)) \exp\left(- \frac{\overline \gamma}{10} d^{p-1+\overline \epsilon}(y,z)\right) <\infty, $$
		 and conclude that \eqref{eq:EROT_Value_BoundedUnbounded} is fulfilled. Further, we find that \eqref{eq:EROT_Plan_BoundedUnbounded} is satisfied since
		\begin{equation*}
			\sum_{y\in\YC}\cy(y)\eySup{4}(y)\sqrt{s_y} \lesssim \sum_{y\in\YC}(1+d^p(y,z)) \exp\left( \left(\frac{4}{10} - \frac{1}{2}\right)\overline \gamma d^{p-1+\overline \epsilon}(y,z)\right)<\infty.  \qedhere
		\end{equation*}
	\end{proof}
	
	\begin{proof}[Proof of Proposition \ref{prop:EROT_EuclideanNorm}]
		All assertions follow via a suitable selection of dominating functions to be detailed below by invoking Theorems \ref{them:LimitLawEmpEROT_Value}(ii) for the empirical EROT value and \ref{them:LimitLawEmpEROT_Plan}(iii) for the empirical EROT plan.  
	
		For assertion $(i)$ we consider the dominating functions according to Section \ref{subsec:Expl:Bounded}, which results in standard Borisov-Dudley-Durst conditions for the measures. 
		To show assertion $(ii)$, we take the dominating functions from \eqref{eq:choicesDominatingFunctionsSemiBounded} for $z = 0$ and employ Proposition \ref{prop:SemiBounded_Sufficient} to simplify the summability constraint on $\sb$ to guarantee distributional limits for the EROT value and plan. For assertion $(iii)$, we choose the dominating function according to \eqref{eq:choicesDominatingFunctionsUnbounded1} and \eqref{eq:choicesDominatingFunctionsUnbounded2} for $z = 0$, and employ Proposition \ref{prop:SemiBounded_Sufficient} to simplify the summability constraints for the distributional limits of the EROT value. Finally, for assertion $(iv)$ we note that the separability condition \eqref{eq:ConditionCostfunctionWithMetric} is met since 
		for any $(x,y)\in \XC\times \YC\subseteq (-\infty, a]\times [b, \infty)$ it holds that
		\begin{align*}
			\norm{x-y}_1 =& \norm{x - a + a-b + b -y}_1 \geq \norm{x - a+b -y}_1 - \norm{ a-b }_1 \\
			=& \norm{x - a}_1 + \norm{b -y}_1 - \norm{ a-b }_1 \geq \norm{x}_1 + \norm{y}_1 - \norm{a}_1 - \norm{b}_1 - \norm{ a-b }_1.
		\end{align*}
		Hence, using the dominating functions from \eqref{eq:SeparabilityDominatingFunctions} with $z = 0$ it follows that the summability constraints are given by $\sum_{x\in \XC}(1+ \norm{x}_1)\sqrt{r_x} <\infty$ and $\sum_{y\in \YC} (1+\norm{y}_1)\sqrt{s_y} <\infty$. 
	\end{proof}

	\section{Proofs for Section \ref{sec:SensitivityAnalysis}}\label{sec:ProofsSensitivityAnalysis}

	\subsection{Continuity of Entropic Optimal Transport Quantities}\label{subsec:ProofContinuity}
	
	\begin{proof}[Proof of Lemma \ref{lem:LipschitzEROTValue}]
		The proof of this lemma is strongly inspired by \citet[Proposition 2]{mena2019}. 	
		First note by assumptions on $\rb, \tilde \rb, \sb$ from Proposition \ref{prop:DualEROT} that the EROT potentials $(\alpha^\lambda, \betab^\lambda)$, $(\tilde \alpha^\lambda, \tilde \betab^\lambda)$ are contained in $(\lr\cap \ell^1_{\tilde \rb}(\XC))\times \ls$.  
		Moreover, based on the dual representation of the EROT value as a supremum it follows by plugging in $(\alpha^\lambda, \betab^\lambda)$ for the EROT problem with $(\tilde \rb, \sb)$ that 
		\begin{align*}
			& \EROT(\rb, \sb) - \EROT(\tilde \rb, \sb) \\
			\leq \; & \langle \alpha^\lambda, \rb\rangle +   \langle \betab^\lambda, \sb\rangle - \lambda \bigg[ \sum_{\substack{x \in \XF \\ y \in \YC}}{{\exp\left( \frac{\alpha_x^\lambda + \beta_y^\lambda - c(x,y)}{\lambda} \right){r}_x {s}_y - {r}_x{s}_y} } \bigg]\\
			-&  \langle \alpha^\lambda, \tilde \rb\rangle -  \langle \betab^\lambda, \sb\rangle - \lambda \bigg[ \sum_{\substack{x \in \XF \\ y \in \YC}}{{\exp\left( \frac{\alpha_x^\lambda + \beta_y^\lambda - c(x,y)}{\lambda} \right)\tilde{r}_x {s}_y - \tilde {r}_x{s}_y} } \bigg]\\
			= \; & \langle \alphab^\lambda, \rb - \tilde \rb\rangle -  \lambda \bigg[ \sum_{\substack{x \in \XF \\ y \in \YC}}{{\exp\left( \frac{\alpha_x^\lambda + \beta_y^\lambda - c(x,y)}{\lambda} \right)({r}_x - \tilde r_x) {s}_y} } \bigg].
		\end{align*}
		Due to the convention of Remark \ref{rem:NonConstantDualPotentials} we obtain for any $x\in \XC$ that
		\begin{align*}
			& \sum_{\substack{y\in\YC}}{{\exp\left( \frac{\alpha_x^\lambda + \beta_y^\lambda - c(x,y)}{\lambda} \right)({r}_x - \tilde r_x) {s}_y} }  \\
			= \;& ({r}_x - \tilde r_x) \exp\bigg(\frac{ \alpha^\lambda_{x}}{\lambda}\bigg)\sum_{\substack{y\in\YC}}\exp\bigg(\frac{ \beta^\lambda_{y} - c(x,y)}{\lambda}\bigg)\sb_y = ({r}_x - \tilde r_x).%
		\end{align*}
		Hence, by Tonelli's theorem we infer that 
		\begin{align*}
			\lambda \bigg[ \sum_{\substack{x \in \XF \\ y \in \YC}}{{\exp\left( \frac{\alpha_x^\lambda + \beta_y^\lambda - c(x,y)}{\lambda} \right)({r}_x - \tilde r_x) {s}_y} } \bigg] = \lambda \sum_{x\in \XC}(r_x - \tilde r_x) = 0, 
		\end{align*}
		which asserts the upper bound of the first assertion. The lower bound follows analogously by repeating the argument for $(\tilde \alpha^\lambda, \tilde \betab^\lambda)$.
	
		For the second assertion we employ Proposition \ref{prop:BoundsOptimalPotentials} to infer for any $x\in \XC$ that 
		\begin{align*}
			 |\alpha_x^\lambda|&\leq |\cxpb(x)| + |\cxmb(x)| + \langle \cx, \rb\rangle + \langle \cy, \sb\rangle + \lambda \log \langle \eySup{}, \sb \rangle\\
			 & \leq \cx(x)\left( \langle \cx, \rb\rangle + \langle \cy, \sb\rangle + \lambda \log \langle \eySup{}, \sb \rangle \right).
		\end{align*}
		Consequently,  we obtain by H\"older's inequality that 
		\begin{align*}
			\langle \alphab^\lambda, \rb - \tilde \rb\rangle \leq \left(\langle \cx, \rb\rangle + \langle \cy, \sb\rangle + \lambda\log\langle \eySup{}, \sb\rangle \right)\norm{\rb - \tilde \rb}_{\lXSub{\cx}}.
		\end{align*}
		In conjunction with the analogous bound for $	\langle \alphab^\lambda, \rb - \tilde \rb\rangle$ the second assertion follows. 
	\end{proof}
	
	\begin{proof}[Proof of Proposition \ref{prop:ConvergenceOptimalDualSolutions}]	
			
		The proof is divided into three steps, where we show each respective assertion. The continuity of the EROT value is based on Lemma \ref{lem:LipschitzEROTValue}. For the pointwise convergence of EROT potentials we follow an approach by \cite{mena2019}, who were inspired by \cite{Feydy2019Interpolating}, and afterwards exploit for the convergence of the EROT plan the relation between primal and dual optimizers (Proposition \ref{prop:DualEROT}). 
	
		\emph{Step 1 - Continuity of EROT value.} By assumption of the convergence it follows that \begin{equation*}
			K_{\XC} \coloneqq \sup_{\ind \in \N}\norm{\rb_\ind}_{\ell^\infty(\FC_\XC)}  < \infty \quad \text{ and } \quad  K_{\YC} \coloneqq \sup_{\ind \in \N}\norm{\sb_\ind}_{\ell^\infty(\FC_\YC)} < \infty.
		  \end{equation*}	
		Moreover, invoking Lemma \ref{lem:LipschitzEROTValue} in conjunction with triangle inequality asserts that 
		\begin{align*}
			& \left|\EROT(\rb_k, \sb_k) - \EROT(\rb, \sb)\right|\\
			 \leq \;& \left|\EROT(\rb_k, \sb_k) - \EROT(\rb_k, \sb)\right| + \left|\EROT(\rb_k, \sb) - \EROT(\rb, \sb)\right|\\
			 \leq \; & \left(\langle \cy, \sb_k+\sb\rangle +2\langle \cx, \rb_k\rangle + 2\lambda\log\langle \exSup{}, \rb_k\rangle \right)\norm{\sb_k - \sb}_{\lYSub{\cy}}\\
			 &+  \left(\langle \cx, \rb_k+ \rb\rangle +2\langle \cy, \sb\rangle + 2\lambda\log\langle \eySup{}, \sb\rangle \right)\norm{\rb_k - \rb}_{\lXSub{\cx}}\\
			 \leq \;&\left(2K_{\YC} +2(1+\lambda)K_{\XC} \right)\norm{\sb_k - \sb}_{\ell^\infty(\FC_\YC)}+  \left(2K_{\XC} +2(1+\lambda)K_{\YC} \right)\norm{\rb_k - \rb}_{\ell^\infty(\FC_\XC)}.
		\end{align*}
		Hence, by assumption on the convergence of $(\rb_\ind, \sb_\ind)_{\ind \in \NN}$ to $(\rb, \sb)$ it follows that the right-hand side tends to zero and assertion $(i)$ follows.

		\emph{Step 2 - Pointwise continuity of EROT potentials.} %
		Denote by $(\tilde \alphab^{\lambda}_\ind, \tilde \betab^\lambda_\ind)_{\ind \in \N}$ the EROT potentials for probability measures $(\rb_\ind, \sb_\ind)_{\ind\in \N}$ which satisfy $\langle \tilde\alphab^{\lambda}_\ind, \rb_\ind \rangle = \langle \tilde \betab^\lambda_\ind, \sb_\ind \rangle$ and are defined according to the convention in Remark \ref{rem:NonConstantDualPotentials} outside the support of the underlying measures. By Proposition \ref{prop:BoundsOptimalPotentials} we then infer for all $\ind\in \N$ and $x\in\XC, y\in \YC$ that
		 \begin{align*} \cxm(x) - K_{\XC} - (1+\lambda) K_{\YC}  &\leq \tilde\alphab^{\lambda}_{\ind,x}\leq \cxp(x)  + K_{\YC},\\
		\tcym(y) - (1+\lambda)K_{\XC} - K_{\YC}  &\leq \tilde\beta^{\lambda}_{\ind,y}\leq \tcyp(y) + K_{\XC}.
	 \end{align*}
	 Hence, the quantity $K_\beta\coloneqq \sup_{\ind \in \N}|\tilde\beta^\lambda_{n, y_1}|< \infty$ is finite. Further, note that the pair $(\tilde\alphab^{\lambda}_{\ind} + \tilde\beta^\lambda_{\ind,y_1}, \tilde\betab^{\lambda}_{\ind} -\tilde\beta^\lambda_{\ind,y_1})$ 
	 exactly coincides on $\XC$ and $\YC$ with the EROT potentials $(\alphab^{\lambda}_{\ind}, \betab^{\lambda}_{\ind})$ from the assertion. This yields for all 
	 $\ind \in \N$ and $x\in\XC, y\in \YC$ the following bounds
	  \begin{align} \begin{aligned}\cxm(x) - K_{\XC} - (1+\lambda)K_{\YC} -K_\beta &\leq \alpha^{\lambda}_{\ind,x}\leq \cxp(x) + K_{\YC} +K_\beta,\\
		\tcym(y) -  (1+\lambda)K_{\XC} - K_{\YC} -K_\beta &\leq \beta^{\lambda}_{\ind,y}\,\leq \tcyp(y) + K_{\XC} +K_\beta.\end{aligned} \label{eq:LowerUpperBounds}
	 \end{align}
	 Invoking a diagonalization argument asserts existence of a subsequence $(\alphab_{\ind_m}^\lambda, \betab_{\ind_m}^\lambda)_{m \in \N}$ converging pointwise for each $x\in\XC$ and $y\in\YC$ to a limit $(\alphab_\infty^\lambda, \betab_\infty^\lambda)\in \R^{\XC}\times\R^\YC$. It remains to show that $(\alphab_\infty^\lambda, \betab_\infty^\lambda)$ is a pair of EROT potentials for probability measures $\rb$ and $\sb$ that follows the convention from Remark \ref{rem:NonConstantDualPotentials} and satisfies $\beta^\lambda_{\infty,y_1}=0$. By uniqueness, pointwise continuity of the EROT potentials then follows. %
	  Upon relabelling, we may assume that $(\alphab^{\lambda}_\ind, \betab^\lambda_\ind)$ converges pointwise, i.e.,  for each $x \in \XC, y \in \YC$ we have
		\begin{align*}
			\exp\bigg(\frac{ -\alpha^{\lambda}_{\infty,x}}{\lambda} \bigg) &= \lim_{\ind \rightarrow \infty} \exp\bigg( \frac{-\alpha^{\lambda}_{\ind,x}}{\lambda} \bigg) = \lim_{\ind \rightarrow \infty} \sum_{y \in \YC}\exp\bigg( \frac{\beta^\lambda_{\ind,y} -c(x,y)}{\lambda} \bigg) s_{\ind,y},\\
			\exp\bigg( \frac{-\beta^\lambda_{\infty,y}}{\lambda} \bigg) &= \lim_{\ind \rightarrow \infty} \exp\bigg(\frac{ -\beta^\lambda_{\ind,y}}{\lambda} \bigg) = \lim_{\ind \rightarrow \infty} \sum_{x \in \XF}\exp\bigg( \frac{\alpha^{\lambda}_{\ind,y} -c(x,y)}{\lambda} \bigg) r_{\ind,x}.
		\end{align*}
		In the following, we show that the limit expression and the sum on the right-hand side can be interchanged for each $x\in \XC$ and $y \in \YC$. As noted in Remark \ref{rem:NonConstantDualPotentials} is this condition necessary and sufficient for EROT potentials.

		 Based on bound \eqref{eq:LowerUpperBounds} it follows for all  $y \in \YC$ and $\ind \in \N$ that
		\begin{equation}
		\begin{aligned}
	  \exp\bigg( \frac{\beta^\lambda_{\ind,y} -c(x,y)}{\lambda} \bigg) &\leq \exp\bigg(\frac{\cyp(y) - \cym(y)}{\lambda} + \frac{K_\XC + K_\beta}{\lambda}\bigg).
	  \end{aligned}
	  \label{eq:UpperboundExpBeta}
	\end{equation}
		Since $\eySup{}\in \ls$ there exists some $N_1\in \NN$ such that 
		$$\sum_{y \not\in \{y_1, \dots, y_{N_1}\}}\exp\bigg( \frac{\beta^\lambda_{\infty,y} -c(x,y)}{\lambda} \bigg) s_{\ind,y} \leq   \frac{\epsilon}{8}.$$
		By convergence of $(\sb_\ind)_{\ind\in\N}$ in $\ell^\infty(\FC_\XC)$ and due to \eqref{eq:UpperboundExpBeta} there is  $N_2 \in \NN$ such that for $\ind \geq N_2$, 
		$$ \sum_{y \not\in \{y_1, \dots, y_{N_1}\}}\exp\bigg( \frac{\beta^\lambda_{\ind,y} -c(x,y)}{\lambda} \bigg) s_{\ind,y} \leq   \frac{\epsilon}{4}.$$
		By pointwise convergence of $\beta^\lambda_\ind$ there also exists $N_3\in \N$ such that it holds for all $\ind \geq N_3$ and $y \in \{y_1, \dots, y_{N_1}\}$ that $$ \left|\exp\bigg(\frac{\beta^\lambda_{\ind,y} -c(x,y)}{\lambda} \bigg) - \exp\bigg(\frac{\beta^\lambda_{\infty,y} -c(x,y)}{\lambda} \bigg) \right| \leq \frac{\epsilon}{4}. $$
		Moreover, by \eqref{eq:UpperboundExpBeta} 
		there exists $N_4\in \N$ such that for all $\ind \geq N_4$ follows
		$$ \left| \sum_{y \in \YC}\exp\bigg(\frac{\beta^\lambda_{\infty,y} -c(x,y)}{\lambda} \bigg)( s_{\ind,y} - %
		s_y )\right| \leq \frac{\epsilon}{4}.$$
		These previous four inequalities yield for $\ind \geq \max\{N_1, N_2, N_3, N_4\}$ that \begin{eqnarray*}
			&&\left| \sum_{y \in \YC}\exp\bigg(\frac{\beta^\lambda_{\ind,y} -c(x,y)}{\lambda} \bigg)s_{\ind,y} -  \sum_{y \in \YC}\exp\bigg(\frac{\beta^\lambda_{\infty,y} -c(x,y)}{\lambda} \bigg)s_y \right|
			\leq \frac{7}{8}\epsilon \leq  \epsilon.
		\end{eqnarray*}
		As $\epsilon>0$ can be chosen arbitrarily small, it holds for all $x\in \XC$  that \begin{equation*}
			\exp\bigg( \frac{-\alpha^{\lambda}_{\infty,x} }{\lambda}\bigg) = \lim_{\ind \rightarrow \infty} \sum_{y \in \YC}\exp\bigg(\frac{\beta^\lambda_{\ind,y} -c(x,y)}{\lambda} \bigg) s_{\ind,y} = \sum_{y \in \YC}\exp\bigg(\frac{\beta^\lambda_{\infty,y} -c(x,y)}{\lambda} \bigg) s_{y}.
			\end{equation*}
			Likewise, it follows by an analogous argument for all $y\in \YC$ that  \begin{equation*}\exp\bigg( \frac{-\beta^\lambda_{\infty,y}}{\lambda} \bigg) = \lim_{\ind \rightarrow \infty} \sum_{x \in \XF}\exp\bigg(\frac{\alpha^{\lambda}_{\ind,x} -c(x,y)}{\lambda} \bigg) r_{\ind,x} = \sum_{x \in \XF}\exp\bigg(\frac{\alpha^{\lambda}_{\infty,x} -c(x,y)}{\lambda} \bigg) r_{x}.\end{equation*}
			This shows that $(\alpha^\lambda_\infty, \betab^\lambda_\infty)$ is a pair of EROT potentials which fulfills the correspondence from Remark \ref{rem:NonConstantDualPotentials} on spaces $\XC$ and $\YC$ while also satisfying $\beta_{\infty, y_1}^\lambda = 0$. Hence, $(\alpha^\lambda_\infty, \betab^\lambda_\infty)= (\alpha^\lambda, \betab^\lambda)$, which verifies assertion $(ii)$.

		\emph{Step 3 - Continuity of EROT plan in $\lXYSub{\cx\oplus\cy}$.} To prove assertion $(iii)$ we first choose the dominating function for the cost function as $\cxm = \tcxm, \cxp = \tcxp, \cym = \tcym$, $\cyp= \tcyp$. Then it follows by Lemma \ref{lem:InclusionMeasures} that $\norm{\cdot}_{\ell^\infty(\FC_\XC)}\leq \norm{\cdot}_{\lXSub{\kxSup{1}}}$ on  $\PC(\XC)$ and $\norm{\cdot}_{\ell^\infty(\FC_\YC)}\leq \norm{\cdot}_{\lYSub{\kySup{1}}}$ on  $\PC(\YC)$. Hence, the convergence of the probability measures $(\rb_\ind)_{\ind\in \NN}$ to $\rb$ in $\lXSub{\kxSup{1}}$ and $(\sb_{k})_{\ind\in \NN}$  to $\sb$ in $\lYSub{\kySup{1}}$ yields uniform bounds and pointwise convergence of the EROT potentials. 
		We thus obtain from the correspondence between EROT plan and potentials (Proposition \ref{prop:BoundsOptimalPotentials}) that 
		\begin{align}
			&\quad \norm{\pi^\lambda(\rb_\ind, \sb_\ind)-\pi^\lambda(\rb, \sb)}_{\lXYSub{\cx\oplus \cy}} \notag \\
			&\leq \sum_{\substack{x\in \XC\\y\in \YC}}(\cx(x) + \cy(y))\exp\left(\frac{\alpha^\lambda_{k,x}\!+\!\beta^\lambda_{k,y}\! - \!c(x,y)}{\lambda}\right) \left(r_{k,x}\left| s_{k,y}\!-\!s_{y}\right| + \left|r_{k,x}\! - \!r_x\right|s_{y}\right)\notag\\
			&\; +\sum_{\substack{x\in \XC\\y\in \YC}} (\cx(x) + \cy(y))\left|\exp\left(\frac{\alpha^\lambda_{k,x}\!+\!\beta^\lambda_{k,y}\! -\! c(x,y)}{\lambda}\right) -  \exp\left(\frac{\alpha^\lambda_{x}\!+\!\beta^\lambda_{y}\! - \!c(x,y)}{\lambda}\right) \right|r_{x}s_{y}.\notag
		\end{align}
		Our bounds from \eqref{eq:LowerUpperBounds} for the EROT potentials assert that 
		\begin{align*}
			& (\cx(x) + \cy(y))\exp\left(\frac{\alpha^\lambda_{k,x}+\beta^\lambda_{k,y} - c(x,y)}{\lambda}\right) \\
			\leq \;&(\cx(x)+\cy(y)) \exSup{}(x)\eySup{}(y)\exp\left(\frac{K_\XC+ K_\YC + 2K_\beta}{\lambda}\right)\\
			\leq \;& 2\kxSup{1}(x)\kySup{1}(y)\exp\left(\frac{K_\XC+ K_\YC + 2K_\beta}{\lambda}\right).
		\end{align*}
		Hence, the first term in the bound for $\|\pi^\lambda(\rb_\ind, \sb_\ind)\!-\!\pi^\lambda(\rb, \sb)\|_{\lXYSub{\cx\oplus \cy}}$ is dominated by 
		\begin{align*}
			2 \exp\left(\frac{K_\XC+ K_\YC + 2K_\beta}{\lambda}\right) \left(\norm{r_k}_{\lXSub{\kxSup{1}}}\norm{s_k-s}_{\lYSub{\kySup{1}}} + \norm{r_k-r}_{\lXSub{\kxSup{1}}}\norm{s}_{\lYSub{\kySup{1}}}\right),
		\end{align*}
		which converges by assumption to zero. For the second term in the upper bound note that the integrand with respect to $\rb\otimes \sb$ pointwise converges to zero and is dominated by $$4\kxSup{1}(x)\kySup{1}(y)\exp\left(\frac{K_\XC+ K_\YC + 2K_\beta}{\lambda}\right).$$ Hence, by dominated convergence it also tends to zero, thus finishing the proof for $(iii)$. \qedhere
	\end{proof}
	
	\begin{proof}[Proof of Proposition \ref{prop:ContinuityEROTVanishingRegularization}]
	
		The proof is divided into two~steps. First we prove the continuity of the EROT value, then we proceed with pointwise continuity of EROT potentials. %
	
		\emph{Step 1 - Continuity of EROT value. } We first perform the decomposition 
		\begin{align*}
			\EROTLambda{\lambda_\ind} (\rb_\ind, \sb_\ind) -  \OT(\rb, \sb)				 = \;& \EROTLambda{\lambda_\ind} (\rb_\ind, \sb_\ind) -   \EROTLambda{\lambda_\ind} (\rb_\ind, \sb)\\
			 & +  \EROTLambda{\lambda_\ind} (\rb_\ind, \sb) -   \EROTLambda{\lambda_\ind} (\rb, \sb)\\
			 & +  \EROTLambda{\lambda_\ind} (\rb, \sb) -   \OT (\rb, \sb).
		 \end{align*}
		 Define $C \coloneqq \norm{c}_{\ell^\infty(\XC\times \YC)}$ and select the dominating functions $\cxpb(x) = \cypb(y) = C/2$  and $\cxmb(x) = \cymb(y) = -C/2$ for the cost function. Then, by Lemma \ref{lem:LipschitzEROTValue} it follows that  
		 \begin{align*}
			& \;|\EROTLambda{\lambda_\ind} (\rb_\ind, \sb_\ind) -   \EROTLambda{\lambda_\ind} (\rb_\ind, \sb)| + |\EROTLambda{\lambda_\ind} (\rb_\ind, \sb) -   \EROTLambda{\lambda_\ind} (\rb, \sb)|\\
			\leq & \; \Big(4(C+1) +
			2 \lambda \log \exp(C/\lambda) \Big)\cdot \left(\norm{\rb_\ind- \rb}_{\lXSub{\cx}}+\norm{\sb_\ind- \sb}_{\lXSub{\cy}}\right)\\
			\leq &\; 6(C+1)C \left(\norm{\rb_\ind- \rb}_{\lXSub{}} + \norm{\sb_\ind- \sb}_{\lXSub{}} \right).
		 \end{align*}
		For the third term if follows from \eqref{eq:StabilityInTermsOfRegularization} and since $H(\rb,\sb)<\infty$ that 
		\begin{align*}
			|\EROTLambda{\lambda_\ind} (\rb, \sb) -   \OT (\rb, \sb)| \leq \lambda_\ind H(\rb,\sb) = \landau(\lamdba_\ind).
		\end{align*}
		Combining these inequalities with the assumption the claim proves Assertion $(i)$. 
		
		\emph{Step 2 - Pointwise continuity of EROT potentials.}
		To show Assertion $(ii)$, note by uniform boundedness of the cost function from Proposition \ref{prop:BoundsOptimalPotentials} that there exists a constant $K(c)>0$ such that the sequence of EROT potentials $(\alphab^{\lambda_\ind}_\ind,\betab^{\lambda_\ind}_\ind)$ is  uniformly bounded by $K(c)$.
	 
		Using a diagonal argument a pointwise converging subsequence  $(\alphab^{\lambda_{\ind_l}}_{\ind_l},\betab^{\lambda_{\ind_l}}_{\ind_l})_{l\in\NN}$ exists with limiting element $(\tilde \alphab,\tilde \betab)$. Note that $\tilde \betab_{y_1} = 0$ necessarily.
		
		Once we prove that the limit $(\tilde \alphab,\tilde \betab)$ coincides on $\supp(\rb)\times \supp(\sb)$ with an OT potential for $(\rb,\sb)$, Assertion $(ii)$ follows by the assumed uniqueness (up to a constant shift) of OT potentials. 
		To this end, first note by Assertion $(i)$ and dominated convergence that 
		\begin{align*}
			\left|\OT(\rb, \sb) - \langle \tilde \alphab, \rb \rangle + \langle \tilde \betab, \sb \rangle \right| =&\left|\lim_{l\rightarrow\infty} \left(\EROTLambda{\lambda_{\ind_l}}(\rb_{\ind_l}, \sb_{\ind_l}) -  \langle \alphab^{\lambda_{\ind_l}}_{\ind_l}, \rb \rangle - \langle \betab^{\lambda_{\ind_l}}_{\ind_l}, \sb \rangle \right)\right|\\
			\leq  & \left| \lim_{l\rightarrow\infty} \left(\EROTLambda{\lambda_{\ind_l}}(\rb_{\ind_l}, \sb_{\ind_l}) -  \langle \alphab^{\lambda_{\ind_l}}_{\ind_l}, \rb_{\ind_l} \rangle - \langle \betab^{\lambda_{\ind_l}}_{\ind_l}, \sb_{\ind_l} \rangle \right)\right| \\
			& + K(c)  \lim_{l\rightarrow \infty}\left(\norm{ \rb-\rb_{\ind_l}}_{\lX}+\norm{ \sb-\sb_{\ind_l}}_{\lY}\right) = 0.
		\end{align*}
	The remainder of this step is concerned with showing that  
	\begin{align}
		\tilde \alpha(x) +\tilde \beta(y)&\leq c(x,y) \quad \text{for each } (x,y) \in \supp(\rb)\times \supp(\sb).\label{eq:ineqOnSupp}
	\intertext{It indeed suffices to verify this inequality only on  $\supp(\rb)\times \supp(\sb)$ since its validity implies by \citet[Lemma 2.6]{nutz2022entropic} the existence of functions $\overline \alpha\colon \XC\rightarrow \RR, \overline \beta\colon \YC\rightarrow \RR$ which coincide on $\supp(\rb), \supp(\sb)$ with $\tilde \alpha, \tilde \betab$, respectively, and fulfill }
		\overline \alpha(x) +\overline \beta(y)&\leq c(x,y) \quad \text{for each } (x,y)\in\XC\times \YC.\notag
	\end{align} 
		To prove \eqref{eq:ineqOnSupp}, suppose there existed a pair $(x',y') \in \supp(\rb)\times \supp(\sb)$ and $\delta>0$ such that $\tilde \alpha_{x'} +\tilde \beta_{y'}> c(x',y') + \delta$. Then, by pointwise convergence, for sufficiently large $l$ it would follow that $\alpha^{\lambda_{\ind_l}}_{x'} +\beta^{\lambda_{\ind_l}}_{\ind_l, y'}> c(x',y') + \delta/2$. In consequence, a contradiction would ensue,
		\begin{align*}
			&-\norm{c}_{\lInfXY}\leq OT(\rb, \sb) = \liminf_{k\rightarrow \infty} \EROTLambda{\lambda_{\ind}}(\rb_{\ind}, \sb_{\ind}) \\
			= \,& \liminf_{k\rightarrow \infty} \langle \alphab^{\lambda_{\ind}}_{\ind}, \rb_{\ind} \rangle + \langle \betab^{\lambda_{\ind}}_{\ind}, \sb_{\ind} \rangle - \lambda \bigg[\sum_{\substack{x\in \XC\\y\in \YC}}\exp\bigg(\frac{\alpha^{\lambda_{\ind}}_{\ind,x} + \betab^{\lambda_{\ind}}_{\ind,y} - c(x,y)}{\lambda_k}\bigg)\rb_{\ind,x} \sb_{\ind,y} -\rb_{\ind,x} \sb_{\ind,y}\bigg]\\
			\leq \,& \liminf_{k\rightarrow \infty} 2 K(c) -\lambda_k\Big( \exp\Big(\frac{\delta}{2\lambda_k}\Big)\rb_{\ind,x'}\sb_{\ind,y'} - 1\Big)  =  - \infty.
		\end{align*}
		Hence, we conclude the validity of \eqref{eq:ineqOnSupp} and thus finish the proof of Assertion $(ii)$. \qedhere

	\end{proof}
	
	\subsection{Hadamard Differentiability of Entropic Optimal Transport Value}\label{subsec:ProofsSensitivityEROTV}

	\begin{proof}[Proof of Theorem \ref{them:EntropicOTCostUnboundedIsHadamardDifferentiable}]\label{prf:HadamardDifferentiabilityOfEntropicOptimalCosts}
	The proof is divided into three steps. After providing some relevant notation and conventions in Step 1, we show suitable lower and upper bound in Step 2 and prove in Step 3 that they converge to the asserted Hadamard derivative. 
	
		\emph{Step 1 - Preliminary conventions.}
		To show Hadamard differentiability, consider a positive sequence $(t_n)_{n\in \N}$ with $t_n \searrow 0$ and a converging sequence $(\hb_{n}^{ \XF}, \hb_{n}^{\YC})_{n \in \N} \in \ell^\infty(\FC_\XC) \times \ell^\infty(\FC_\YC)$  with limit $(\hb^{\XF}, \hb^{\YC})$ such that  $(\rb + t_n \hb_{n}^{ \XF}, \sb + t_n \hb_{n}^{\YC}) \in \big(\probset{\XC}\times \ell^\infty(\FC_\XC)\big)\times \big(\probset{\YC}\times \ell^\infty(\FC_\YC)\big)$ for all $n \in \N$. By definition of $\FC_\XC$ and $\FC_\YC$ and Lemma \ref{lem:InclusionMeasures} the sequence $(\hb_{n}^{ \XC},\hb_n^\YC)$ may be interpreted as a signed measure in $\lXSub{\cx}\times \lYSub{\cy}$ with $\sum_{x\in \XC}\hb_{n,x}^{ \XC} = 0$ and $\sum_{y\in \YC}\hb_{n,y}^{ \YC} = 0$ and where convergence to $(\hb^\XC,\hb^\YC)$ is even stronger than induced by that weighted $\ell^1$-spaces. 
		Further, let $(\alphab^\lambda_n, \betab^\lambda_n)$, $(\tilde\alphab^\lambda_n, \tilde \betab^\lambda_n)$, $(\alphab^\lambda, \betab^\lambda)$ be the EROT potentials for $(\rb + t_n \hb_{n}^{\XF}, \sb + t_n \hb_{n}^{\YC})$, $(\rb, \sb + t_n \hb_{n}^{\YC})$,  $(\rb, \sb)$, respectively,  according to the convention in Remark~\ref{rem:NonConstantDualPotentials} with $\beta^\lambda_{n,y_1}= \tilde \beta^\lambda_{n,y_1}=\beta^\lambda_{y_1} = 0$ for all $n \in \NN$. %
	
		\emph{Step 2 - Bound for difference quotient.}
		Adding a non-trivial zero to the difference quotient for the EROT value yields 
		\begin{align*}
			& \;\frac{1}{t_n}\left(EROT(\rb + t_n\hb_{n}^{ \XF}, \sb + t_n\hb_{n}^{ \YC}) - \EROT(\rb, \sb) \right)\\
			= &\; \frac{1}{t_n}\left(\EROT(\rb + t_n\hb_{n}^{ \XF}, \sb + t_n\hb_{n}^{ \YC}) - \EROT(\rb , \sb+ t_n\hb_{n}^{ \YC})\right)\\
			& +\frac{1}{t_n}\left(\EROT(\rb , \sb+ t_n\hb_{n}^{ \YC})- \EROT(\rb, \sb)\right).
		\end{align*}
		This asserts by Lemma \ref{lem:LipschitzEROTValue} that 
		\begin{align*}
			 & \langle \tilde \alphab^\lambda_n, \hb_{n}^\XC\rangle +  \langle\betab^\lambda,  h_n^\YC\rangle \\ \leq \;& \frac{1}{t_n}\left(EROT(\rb + t_n\hb_{n}^{ \XF}, \sb + t_n\hb_{n}^{ \YC}) - \EROT(\rb, \sb)\right) \\ \leq \;&  \langle \alphab^\lambda_n,\hb_{n}^{ \XF}\rangle + \langle \tilde \betab^\lambda_n,\hb_{n}^{ \YC}\rangle.
	   \end{align*}
	   Hence, once we show that the lower and upper bound tend for $n\rightarrow \infty$ to $\langle \alphab^\lambda,\hb^{ \XF}\rangle + \langle \betab^\lambda,\hb^{ \YC}\rangle$, Hadamard differentiability tangentially to $\PC(\XC)\times \PC(\YC)$ follows.

	   \emph{Step 3 - Convergence of lower and upper bound. }
	   We only state the proof for the upper bound, the argument for the lower bound is analogous. 
	   Adding a non-trivial zero implies
	   \begin{align}
		\label{eq:UpperBoundConvergence1}
		&\left|\langle  \alphab^\lambda_n, \hb_{n}^\XC\rangle +  \langle \tilde \betab_n^\lambda,  h_n^\YC\rangle - \langle \alphab^\lambda,\hb^{ \XF}\rangle - \langle \betab^\lambda,\hb^{ \YC}\rangle\right|\\
		\leq \;& \left|\langle  \alphab^\lambda_n, \hb_{n}^\XC - \hb^\XC\rangle\right| + \left|\langle \alphab^\lambda_n - \alphab^\lambda, \hb^\XC\rangle\right| + \left|\langle \tilde \beta^\lambda_n, \hb_{n}^\YC - \hb^\YC\rangle\right| + \left|\langle \tilde \beta^\lambda_n - \beta^\lambda, \hb^\YC\rangle\right|.\label{eq:UpperBoundConvergence2}
	   \end{align}
		 Since $\lim_{n\rightarrow\infty} t_n(\|\hb^\XC_n\|_{\ell^\infty(\FC_\XC)} + \|\hb^\YC_n\|_{\ell^\infty(\FC_\YC)})= 0$ it follows that $(\alphab^\lambda_n,\tilde \beta^\lambda_n)$ converges pointwise on $\XC$ and $\YC$ to $(\alphab^\lambda,\beta^\lambda)$. Following along the proof of Proposition \ref{prop:ConvergenceOptimalDualSolutions}, we infer that $$\|\alpha^\lambda\|_{\lInfXSub{\cx}} +\|\betab^\lambda\|_{\lInfYSub{\cy}}
		+\limsup_{n\rightarrow \infty}\left(\|\alpha_n^\lambda\|_{\lInfXSub{\cx}} +\|\tilde \betab_n^\lambda\|_{\lInfYSub{\cy}}\right)<\infty.$$
		Moreover, since $\hb^\XC$ and $\hb^\YC$ can be viewed as elements of $\lXSub{\cx}$ and $\lYSub{\cy}$ respectively (Lemma \ref{lem:InclusionMeasures}), we obtain from dominated convergence that the second and fourth term in \eqref{eq:UpperBoundConvergence2} converges to zero. Further, in conjunction with $$\norm{\hb_{n}^\XC - \hb^\XC}_{\lXSub{\cx}} +\norm{\hb_{n}^\YC - \hb^\YC}_{\lYSub{\cy}} \leq \norm{\hb_{n}^\XC - \hb^\XC}_{\ell^\infty(\FC_\XC)}+ \norm{\hb_{n}^\YC - \hb^\YC}_{\ell^\infty(\FC_\YC)}\rightarrow 0$$  we thus conclude using the previous two displays that the first and third term of \eqref{eq:UpperBoundConvergence2} also converge to zero. This shows that the difference in  \eqref{eq:UpperBoundConvergence1} converges to zero, which finishes the proof of Hadamard differentiability. \qedhere

		 \end{proof}
		 
		 \begin{proof}[Proof of Corollary \ref{cor:HadDiffDebiasedEROTvalue}]
			As a sum of Hadamard differentiable functionals (Theorem \ref{them:EntropicOTCostUnboundedIsHadamardDifferentiable}) it follows that $\overline{E\!ROT}^\lambda$ is also Hadamard differentiable with derivative
				\begin{align*}
			\DH_{|(\rb, \sb)} \overline {E\!ROT}^\lambda(h^\XC, h^\YC) &=\DH_{|(\rb, \sb)} {E\!ROT}^\lambda(h^\XC, h^\YC) \\
			&\quad -  \frac{1}{2}\left( \DH_{|(\rb, \rb)} {E\!ROT}^\lambda(h^\XC, h^\XC) +  \DH_{|(\sb, \sb)} {E\!ROT}^\lambda(h^\YC, h^\YC)\right).
		\end{align*}
		Since the cost function is symmetric it follows for any $\tilde \rb \in \PC(\XC)$ that  $\alphab^\lambda(\tilde \rb,\tilde \rb)= \betab^\lambda(\tilde \rb,\tilde \rb)$. The claim now follows at once since 
		\begin{equation*}
			\DH_{|(\rb, \rb)} {E\!ROT}^\lambda(h^\XC\!, h^\XC) = 2\langle \alphab^\lambda(\rb,\rb), h^\XC\rangle, \;\;\;
			\DH_{|(\sb, \sb)} {E\!ROT}^\lambda(h^\YC\!, h^\YC) = 2\langle \betab^\lambda(\sb,\sb), h^\YC\rangle. \qedhere
		\end{equation*}
		\end{proof}

	\subsection{Hadamard Differentiability of Entropic Optimal Transport Plan}\label{subsec:ProofsSensitivityEROTP}
	The proof of Theorem \ref{them:TransportPlanIsHadamardDiff} for the Hadamard differentiability of the EROT plan requires three key results: (1) well-definedness and boundedness of the proposed derivative, (2) the convergence of the difference quotient of $\pib^\lambda$ for finitely supported perturbations to the proposed derivative and (3) a local Lipschitz property for the EROT plan $\pib^\lambda$ as a map of its marginal probability measures. These three statements are provided in Propositions \ref{prop:ProposedDerivativeIsBounded}, \ref{prop:DerivativeFiniteSupportPerturbation}, and \ref{prop:DualOptimizerAreLipschitz}, respectively, whose proofs are deferred to the end of this chapter. We stress, for all of these results we assume $$\norm{\cxpb -\cxmb}_{\lInfX}< \infty.$$
	
		\begin{proposition}[Proposal for derivative of EROT plan]\label{prop:ProposedDerivativeIsBounded}
			Let $ \rb\in \PC(\XC)\cap\lXSub{\cx}$ and $\sb\in \PC(\YC)\cap\lYSub{\kySup{4}}$ be probability measures with full support. Then the linear map given by \begin{align*}
				&-\left[\DCW_{\pib, \alphab, \betab_*|( \varthetab( \rb, \sb_*),  \rb, \sb_* )}\FC\right]^{-1} \circ  \DCW_{\rb, \sb_*|( \varthetab( \rb, \sb_*),  \rb, \sb_*) }\FC\colon \; \ell^1_{\cx}(\XF)\times \ell^1_{\kySup{4}}(\YCC) \rightarrow \\ 
				&\quad \quad \quad \quad \quad \quad \quad \quad \quad \quad \quad \quad \quad \quad \ell^1_{\cx\oplus \cy}(\XF\times \YC) \times \ell^\infty_{\exSup{}}(\XF)\times \ell^\infty_{\eySup{}}(\YCC)
			\end{align*}
			is well-defined and bounded. 
			In particular, the $\pi$-component is given by 
			\begin{align*}
				&-\left[\DCW_{\pib, \alphab, \betab_*|( \varthetab( \rb, \sb_*),  \rb, \sb_* )}\FC\right]^{-1}_\pi \circ  \DCW_{\rb, \sb_*|( \varthetab( \rb, \sb_*),  \rb, \sb_*) }\FC(\hb^\XC, \hb^\YC) = \frac{\pib^\lambda}{ \rb \otimes  \sb }\odot \Big[   \rb \otimes \hb^\YC + \hb^\XF \otimes  \sb \Big]\\
				&-\pi^\lambda\odot A_*^T\begin{pmatrix}
					(\Id_{\XC} -\ACX\ACY)^{-1}&-(\Id_{\XC} -\ACX\ACY)^{-1}\ACX\\
					-\ACY(\Id_{\XC} -\ACX\ACY)^{-1}&(\Id_{\YCC} -\ACY\ACX)^{-1}\\
				\end{pmatrix}\begin{pmatrix}
					0 & \BCX\\
					\BCY&0
				\end{pmatrix}\begin{pmatrix}
					h^{\XC}\\
					h^{\YC}
				\end{pmatrix}.
			\end{align*}
		\end{proposition}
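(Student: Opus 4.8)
The plan is to establish Proposition~\ref{prop:ProposedDerivativeIsBounded} by carefully decomposing the operator $\DCW_{\pib, \alphab, \betab_*|(\pib^\lambda, \alphab^\lambda, \betab^\lambda_*, \rb, \sb_*)}\FC$ into its $\pi$-block and its $(\alphab,\betab_*)$-block, inverting the latter by means of a Schur-complement argument, and then verifying boundedness of every constituent operator by appealing to the quantitative bounds for the EROT plan from Proposition~\ref{prop:BoundsOptimalPotentials}. First I would recall the explicit form of the two partial derivatives displayed before the proposition: $\DCW_{\pib, \alphab, \betab_*}\FC$ sends $(\hb^{\XF\times\YC}, \hb^{\XF,\infty}, \hb^{\YC,\infty}_*)$ to a pair whose first coordinate is $\hb^{\XF\times\YC} - \tfrac1\lambda \pi^\lambda \odot \Ab_*^T(\hb^{\XF,\infty},\hb^{\YC,\infty}_*)$ and whose second coordinate is $\Ab_*(\hb^{\XF\times\YC})$, using $\pi^\lambda = \exp(\tfrac1\lambda[\Ab_*^T(\alphab^\lambda,\betab^\lambda_*)-\cb])\odot(\rb\otimes\sb)$. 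Solving the equation $\DCW_{\pib,\alphab,\betab_*}\FC(\hb^{\XF\times\YC},\hb^{\XF,\infty},\hb^{\YC,\infty}_*) = (\xib, \zetab)$ for given $(\xib,\zetab)$ proceeds by first reading off $\hb^{\XF\times\YC} = \xib + \tfrac1\lambda\pi^\lambda\odot\Ab_*^T(\hb^{\XF,\infty},\hb^{\YC,\infty}_*)$ from the first block and substituting into the marginal constraint $\Ab_*(\hb^{\XF\times\YC}) = \zetab$; this yields a linear system for $(\hb^{\XF,\infty},\hb^{\YC,\infty}_*)$ whose coefficient matrix, after scaling rows by $1/r_x$ and $1/s_y$, has exactly the $2\times 2$ block structure $\bigl(\begin{smallmatrix}\Id_\XC & \ACX\\ \ACY & \Id_{\YCC}\end{smallmatrix}\bigr)$ appearing in the statement.

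Next I would invert this $2\times 2$ block operator using the Schur complement: the inverse is
\[
\begin{pmatrix}
(\Id_\XC - \ACX\ACY)^{-1} & -(\Id_\XC-\ACX\ACY)^{-1}\ACX\\
-\ACY(\Id_\XC-\ACX\ACY)^{-1} & (\Id_{\YCC}-\ACY\ACX)^{-1}
\end{pmatrix},
\]
which requires exactly that $\Id_\XC - \ACX\ACY$ and $\Id_{\YCC}-\ACY\ACX$ be boundedly invertible. Here I would invoke the hypothesis $\norm{\cxpb-\cxmb}_{\lInfX}<\infty$: by Proposition~\ref{prop:BoundsOptimalPotentials} this gives a uniform strictly positive lower bound on $\pi^\lambda_{xy_1}/r_x$ over $x\in\XC$, and combined with the marginal identity $\sum_{y\in\YC}\pi^\lambda_{xy}/r_x = 1$ it forces $\sum_{y\in\YCC}\pi^\lambda_{xy}/r_x \le 1-\delta$ for some $\delta>0$, so the composition $\ACX\ACY$ (which is an averaging-type operator with weights summing to at most $1-\delta$ along each coordinate) has operator norm at most $1-\delta<1$ on the relevant weighted $\ell^\infty$-space; the Neumann series then furnishes the inverse as a bounded operator, and symmetrically for $\ACY\ACX$. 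I would write out the operator-norm estimates for $\ACX, \ACY, \BCX, \BCY$ on the weighted spaces $\lInfX$, $\lInfYYSub{\eySup{4}}$, $\lYSub{\kySup{4}}$, $\lXSub{\cx}$ directly from the two-sided bound $r_xs_y(\cdots)^{-1}\le \pi^\lambda_{xy}\le r_xs_y(\cdots)$ of Proposition~\ref{prop:BoundsOptimalPotentials}, so that the quotients $\pi^\lambda_{xy}/(r_xs_y)$, $\pi^\lambda_{xy}/r_x$, $\pi^\lambda_{xy}/s_y$ are controlled pointwise by products of $\exSup{}, \eySup{}$ and the relevant weights; this is where the exponent $4$ on $\eySup{}$ in the target space $\lYSub{\kySup{4}}$ enters, absorbing the $\eySup{}$-growth picked up from $\BCX$ and from the $\ACX$-term acting on $\BCX$.

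Having the inverse of the $2\times 2$ block, I would assemble the $\pi$-component: $\hb^{\XF\times\YC} = \xib + \tfrac1\lambda\pi^\lambda\odot\Ab_*^T(\hb^{\XF,\infty},\hb^{\YC,\infty}_*)$ with $(\hb^{\XF,\infty},\hb^{\YC,\infty}_*)$ the solution just constructed, and then compose with $\DCW_{\rb,\sb_*}\FC$, which maps $(\hb^\XC,\hb^\YC)$ to $(-\pi^\lambda\odot(\rb\otimes\hb^\YC + \hb^\XF\otimes\sb)/(\rb\otimes\sb), -(\hb^\XC,\hb^\YC_*))$. Tracking the two coordinates of this input through the inversion, the $\xib$-part produces the leading term $\tfrac{\pi^\lambda}{\rb\otimes\sb}\odot[\rb\otimes\hb^\YC + \hb^\XF\otimes\sb]$ and the $\zetab$-part, after scaling by $1/r_x, 1/s_y$, is precisely $(\BCX\hb^\YC, \BCY\hb^\XC)$ (up to the off-diagonal placement of $\BCX,\BCY$ shown in the statement), which is then hit by the inverse block operator and by $-\pi^\lambda\odot\Ab_*^T(\cdot)$; this reproduces the displayed formula exactly. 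Boundedness of the whole composition into $\lXYSub{\cx\oplus\cy}\times\lInfXSub{\exSup{}}\times\lInfYYSub{\eySup{}}$ then follows by chaining the individual operator-norm bounds, noting that multiplication by $\pi^\lambda/(\rb\otimes\sb)$ maps into the weighted space $\lXYSub{\cx\oplus\cy}$ by the last claim of Proposition~\ref{prop:BoundsOptimalPotentials} (under $\cx\exSup{}\in\lr$, $\cy\eySup{}\in\ls$, both of which hold here since $\exSup{}\asymp 1$ and $\cy\eySup{4}\in\ls$). The main obstacle I anticipate is bookkeeping the weights correctly across the chain of operators so that the exponents of $\exSup{}$ and $\eySup{}$ and the polynomial weights $\cx,\cy$ balance — in particular verifying that no more than the fourth power of $\eySup{}$ is needed on the $\YC$-side — and ensuring the Neumann-series argument for $(\Id-\ACX\ACY)^{-1}$ is carried out in the correct pair of weighted $\ell^\infty$-spaces (rather than $\ell^1$-spaces) so that the adjoint structure of $\Ab_*$ versus $\Ab_*^T$ is respected; this is the step that genuinely uses $\norm{\cxpb-\cxmb}_{\lInfX}<\infty$ and is flagged in Remark~\ref{rem:NoveltyOfProofAndDifficulty} as the crux.
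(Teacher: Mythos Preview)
Your plan is essentially the paper's own proof: reduce \eqref{eq:DerivativeEquality} to a countable linear system for $(\hb^{\XF,\infty},\hb^{\YC,\infty}_*)$, identify the $2\times 2$ block operator $\bigl(\begin{smallmatrix}\Id_\XC & \ACX\\ \ACY & \Id_{\YCC}\end{smallmatrix}\bigr)$, invert it by Neumann/Schur, bound $\BCX,\BCY$ via Proposition~\ref{prop:BoundsOptimalPotentials}, and finally control $\hb^{\XF\times\YC}$ in $\lXYSub{\cx\oplus\cy}$. The formula you display for the $\pi$-component is exactly what the paper obtains.

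The one place where your sketch is imprecise is the phrase ``and symmetrically for $\ACY\ACX$''. Your direct computation $\|\ACX\ACY\|_{\lInfX\to\lInfX}\le 1-\delta$ is correct and clean (it only uses $\sum_{y\in\YCC}\pi^\lambda_{xy}/r_x=1-\pi^\lambda_{xy_1}/r_x\le 1-\eta$ together with $\sum_x \pi^\lambda_{xy}/s_y=1$, and does not see the $\eySup{}$-weight at all). But $\ACY\ACX$ acts on the \emph{weighted} space $\lInfYYSub{\eySup{}}$, and there the naive estimate gives $\eySup{}(y)^{-1}\sum_x (\pi^\lambda_{xy}/s_y)\sum_{y'}(\pi^\lambda_{xy'}/r_x)\eySup{}(y')$, which is not $\le 1-\delta$ because the inner sum picks up the growth of $\eySup{}$. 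The paper's fix is an equivalent-norm trick: choose $N$ so large that $\sum_{i>N}(\eySup{}(y_i)-1)\pi^\lambda_{xy_i}/r_x\le \eta/2$ uniformly in $x$ (possible since $\eySup{2}s\in\ell^1$ and $\exSup{}$ is bounded), replace $\eySup{}$ by $\teySup{}$ equal to $1$ on $\{y_1,\dots,y_N\}$ and to $\eySup{}$ elsewhere, and observe that in this equivalent norm $\|\tACX\|\le 1-\eta/2$ and $\|\tACY\|\le 1$, so both compositions have norm $<1$. Alternatively, your Schur-complement route already suffices without this: once $(\Id_\XC-\ACX\ACY)^{-1}$ exists on $\lInfX$, the identity $(\Id_{\YCC}-\ACY\ACX)^{-1}=\Id_{\YCC}+\ACY(\Id_\XC-\ACX\ACY)^{-1}\ACX$ yields the other inverse as a bounded operator on $\lInfYYSub{\eySup{}}$, using only that $\ACX,\ACY$ are individually bounded (which you verify). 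Either route closes the gap; just do not leave it at ``symmetrically''.
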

	
		Based on the correspondence between EROT plan and potentials (Proposition \ref{prop:DualEROT}) it follows that the support of the EROT plan coincides with the support of the product measure of the underlying marginal distributions. Hence, as the support of perturbed measures may be affected by the perturbation, additional mathematical challenges in the analysis of the EROT plan come into play. To circumvent these issues, we define for an element $\hb^\XF\in \lX$ its \emph{finite support approximation} of order $l\geq 2$, denoted by $ \hat \hb^\XF_l$, for $x\in \XF = \{x_1, x_2, \dots \}$ as 
		\begin{align}\label{eq:FiniteSupportApproximation}
			\hat h^\XF_{l,x} \coloneqq \begin{cases}
				h^\XF_{x_1} + \sum_{i = l+1}^{\infty}h^\XF_{x_i} & \text{ if } x = x_1, \\
				h^\XF_{x} & \text{ if } x \in \{x_2, \dots, x_l\},\\
				0 & \text{ else.}
			\end{cases} 	
		\end{align}
		Analogously, we denote the finite support approximation of an element $\hb^\YC\in \lY$ by $\hat \hb^\YC_{l}$. Useful technical properties of this type of approximation are derived in Appendix \ref{subsec:App:FSAProperties} (see Lemmata \ref{lem:FiniteSupportApproximationIsUniformlyGoodForLargeL} and \ref{lem:UpperboundNorms}) and will play in important role in the proofs of the subsequent results. 
		
		An appealing consequence of the finite support approximation in the context of Hadamard differentiability is	that it allows us to effectively bypass the challenges of varying supports.
	
		\begin{lemma}\label{lem:GivenLChooseNBigSoAllProbMeasures}
			Consider a strictly positive function $f\colon \XC\rightarrow [1, \infty)$  and let  $\rb \in \PC(\XC)\cap \lXSub{f}$ with full support $\supp(\rb) = \XC$. Take a positive sequence $(t_n)_{n \in \N}$ such that $t_n \searrow 0$ and $(\hb_n^\XF)_{n \in \N}\subseteq \lXSub{f}$ with limit $\hb^\XF$ such that $(\rb + t_n \hb_n^\XF)\in \probset{\XC}\cap \lXSub{f}$ for all $n \in \NN$. 
			Then, for any $l\in \N$ there exists an integer $N\in \N$ such that $(\rb + t_n\hat\hb^\XF_{l}), (\rb + t_n\hat\hb^\XF_{n,l}) \in \probset{\XF}\cap \lXSub{f}$ and $\supp(\rb + t_n\hat\hb^\XF_{l}) = \supp(\rb + t_n\hat\hb^\XF_{n,l}) = \XF$ for all $n\geq N$.
		\end{lemma}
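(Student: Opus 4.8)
The plan is to exploit that the finite support approximation of an $\ell^1$-element is a \emph{finitely supported} signed measure with the \emph{same total mass}, so that after perturbing $\rb$ by $t_n$ times such an object the only thing that can go wrong is (strict) positivity at finitely many points, and there $t_n\searrow 0$ together with $\supp(\rb)=\XF$ settles the matter.

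First I would record the relevant elementary facts. By the definition \eqref{eq:FiniteSupportApproximation}, $\hat\hb^\XF_l$ is supported on $\{x_1,\dots,x_l\}$ and $\sum_{x}\hat h^\XF_{l,x}=\sum_x h^\XF_x$; likewise $\hat\hb^\XF_{n,l}$ is supported on $\{x_1,\dots,x_l\}$ with $\sum_x\hat h^\XF_{n,l,x}=\sum_x h^\XF_{n,x}$. Since $f\ge 1$, the inclusion $\lXSub{f}\hookrightarrow\ell^1(\XF)$ is contractive, so $\hb^\XF_n\to\hb^\XF$ in $\ell^1(\XF)$; in particular $C:=\sup_{n\in\N}\norm{\hb^\XF_n}_{\ell^1(\XF)}<\infty$. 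Moreover $\rb+t_n\hb^\XF_n\in\probset{\XF}$ and $\rb\in\probset{\XF}$ force $\sum_x h^\XF_{n,x}=0$ for every $n$, and by $\ell^1$-continuity of $\ab\mapsto\sum_x a_x$ also $\sum_x h^\XF_x=0$. Hence $\rb+t_n\hat\hb^\XF_l$ and $\rb+t_n\hat\hb^\XF_{n,l}$ always have total mass $1$ and, being $\rb$ plus a finitely supported element, lie in $\lXSub{f}$.

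Next I would produce the threshold $N$. Off the finite set $\{x_1,\dots,x_l\}$ both perturbed measures coincide with $\rb$, which is strictly positive by the full-support assumption, so no constraint arises there. On $\{x_1,\dots,x_l\}$ put $\rho:=\min_{1\le i\le l}r_{x_i}>0$; the coefficients $\hat h^\XF_{l,x_i}$ are fixed reals, and $|\hat h^\XF_{n,l,x_i}|\le 2C$ for all $n$ and all $i\le l$ (the only infinite piece, the tail $\sum_{j>l}h^\XF_{n,x_j}$ appearing in the $x_1$-coordinate, is bounded by $\norm{\hb^\XF_n}_{\ell^1(\XF)}$). Setting $M:=\max\big(2C,\,\max_{1\le i\le l}|\hat h^\XF_{l,x_i}|\big)$ and using $t_n\searrow 0$, pick $N$ with $t_nM<\rho$ for all $n\ge N$; then $r_x+t_n\hat h^\XF_{l,x}>0$ and $r_x+t_n\hat h^\XF_{n,l,x}>0$ for every $x\in\{x_1,\dots,x_l\}$ and every $n\ge N$. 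Combining the two preceding observations gives $(\rb+t_n\hat\hb^\XF_l),(\rb+t_n\hat\hb^\XF_{n,l})\in\probset{\XF}\cap\lXSub{f}$ with support equal to $\XF$ for all $n\ge N$.

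I do not expect a genuine obstacle: the statement is a bookkeeping lemma whose purpose is to reduce Hadamard differentiability of $\pib^\lambda$ to finitely supported perturbations with unchanged support. The one point deserving attention is the uniform-in-$n$ control of the coefficients of $\hat\hb^\XF_{n,l}$, especially the $x_1$-coordinate with its infinite tail sum; this is handled by the contractive embedding $\lXSub{f}\hookrightarrow\ell^1(\XF)$ and the boundedness of convergent sequences, the same embedding also being what transports the mass-zero identity $\sum_x h^\XF_x=0$ to the limit.
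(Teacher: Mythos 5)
Your proof is correct and follows essentially the same route as the paper: both arguments reduce the claim to strict positivity of finitely many coordinates (positivity off $\{x_1,\dots,x_l\}$ and total mass $1$ being immediate from the mass-zero identity and the definition of the finite support approximation), and both obtain a uniform-in-$n$ bound on the coefficients of $\hat\hb^\XF_{n,l}$ from boundedness of the convergent sequence $\hb^\XF_n$ so that $t_n\searrow 0$ and $\min_{i\le l}r_{x_i}>0$ finish the argument. The paper phrases the uniform bound via Lemma \ref{lem:UpperboundNorms} in the $\lXSub{\cx}$-norm rather than your contractive embedding into $\ell^1(\XF)$, but these are the same estimate.
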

		
		The proof of Lemma \ref{lem:GivenLChooseNBigSoAllProbMeasures} is given in Appendix \ref{subsec:App:FSAProperties}. 
		With Lemma \ref{lem:GivenLChooseNBigSoAllProbMeasures} at our disposal, we can state a directional  differentiability result for the EROT plan, which represents the second key result for the proof of Theorem \ref{them:TransportPlanIsHadamardDiff}.
	
		\begin{proposition}[Differentiability for finitely supported perturbations]\label{prop:DerivativeFiniteSupportPerturbation}
			Let $ \rb\in \PC(\XC)\cap\lXSub{\cx}$ and $\sb\in \PC(\YC)\cap\lYSub{\kySup{4}}$ be probability measures with full support. Given $l\in \N$ consider $N\in \N$ according to Lemma \ref{lem:GivenLChooseNBigSoAllProbMeasures} such that for all $n\geq N$ it holds that  $$(\rb + t_n\hat{\hb}^\XF_l,{\sb} + t_n\hat{\hb}^\YC_{l})\in \big(\probset{\XF}\times \lXSub{\cx}\big) \times \big(\probset{\YC}\cap \lYSub{\kySup{4}}\big)$$ with $\supp(\rb + t_n\hat{\hb}^\XF_l) = \XF$ and  $\supp(\sb + t_n\hat{\hb}^\YC_l) = \YC$. Then,  for $n\rightarrow \infty$, it follows  that \begin{align*}\bigg\|& \frac{\pib^\lambda(\rb + t_n\hat{\hb}^\XF_l,{\sb} + t_n\hat{\hb}^\YC_{l}) - \pib^\lambda(\rb,{\sb})}{t_n}  - \DH_{| \rb, {\sb} }\pib^\lambda(\hat\hb^{\XF}_l, \hat \hb^{\YC}_{l})\bigg\|_{\lXYSub{\cx\oplus\cy}}\rightarrow 0. \end{align*}
			\end{proposition}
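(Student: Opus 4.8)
The strategy is to reduce the statement to the implicit-function heuristic from \eqref{eq:ProposedDerivativeForPi} by carefully tracking error terms, exploiting that for finitely supported perturbations all the measures $\rb + t_n\hat\hb^\XF_l$, $\sb + t_n\hat\hb^\YC_l$ keep full support for $n$ large (Lemma~\ref{lem:GivenLChooseNBigSoAllProbMeasures}). Write $(\alphab^\lambda_n,\betab^\lambda_n)$ for the EROT potentials (fixed via $\beta^\lambda_{n,y_1}=0$) associated with the perturbed pair, and $(\alphab^\lambda,\betab^\lambda)$ for those of $(\rb,\sb)$. Since the supports agree with $\XC\times\YC$, the optimality criterion of Proposition~\ref{prop:DualEROT} gives $\FC\big((\pib^\lambda_n,\alphab^\lambda_n,\betab^\lambda_{n,*}),(\rb+t_n\hat\hb^\XF_l,\sb_*+t_n\hat\hb^\YC_{l,*})\big)=0$ for all $n\geq N$ (Lemma~\ref{lem:RewrittenOptimalityCriterionWithFunctionF}), and the same at $t_n=0$. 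The first step is to subtract these two identities and perform a first-order Taylor expansion of $\FC$ in the $(\pib,\alphab,\betab_*)$-variables around $(\pib^\lambda,\alphab^\lambda,\betab^\lambda_*)$; because $\FC$ is, componentwise, a finite sum of exponentials and bilinear terms, the remainder is controlled by quadratic expressions in the increments $\Delta\pib_n=\pib^\lambda_n-\pib^\lambda$, $\Delta\alphab_n$, $\Delta\betab_{n,*}$.

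\textbf{Key steps.} First I would establish that the increments are $O(t_n)$ in the relevant norms: the $\alphab$- and $\betab$-increments are controlled via the pointwise-continuity argument of Proposition~\ref{prop:ConvergenceOptimalDualSolutions}(ii) together with the uniform bounds of Proposition~\ref{prop:BoundsOptimalPotentials}, and the plan increment via the local Lipschitz property stated in Proposition~\ref{prop:DualOptimizerAreLipschitz}, i.e. $\norm{\Delta\pib_n}_{\lXYSub{\cx\oplus\cy}}\lesssim t_n(\norm{\hat\hb^\XF_l}_{\lXSub{\cx}}+\norm{\hat\hb^\YC_l}_{\lYSub{\kySup{4}}})$. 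Second, dividing the subtracted optimality identity by $t_n$, the linear part reads
\[
\DCW_{\pib,\alphab,\betab_*}\FC\Big(\tfrac{\Delta\pib_n}{t_n},\tfrac{\Delta\alphab_n}{t_n},\tfrac{\Delta\betab_{n,*}}{t_n}\Big) + \DCW_{\rb,\sb_*}\FC(\hat\hb^\XF_l,\hat\hb^\YC_{l,*}) = R_n,
\]
where $R_n$ collects the Taylor remainders and is $o(1)$ by the quadratic bound and the $O(t_n)$ estimates above. Third, I would invert: by Proposition~\ref{prop:ProposedDerivativeIsBounded}, $[\DCW_{\pib,\alphab,\betab_*}\FC]^{-1}$ is a well-defined bounded operator on the appropriate range space (here the Neumann-series calculus uses $\norm{\cxpb-\cxmb}_{\lInfX}<\infty$ to guarantee $\norm{\ACX\ACY}<1$), so applying it yields
\[
\Big(\tfrac{\Delta\pib_n}{t_n},\tfrac{\Delta\alphab_n}{t_n},\tfrac{\Delta\betab_{n,*}}{t_n}\Big) = -[\DCW_{\pib,\alphab,\betab_*}\FC]^{-1}\DCW_{\rb,\sb_*}\FC(\hat\hb^\XF_l,\hat\hb^\YC_{l,*}) + [\DCW_{\pib,\alphab,\betab_*}\FC]^{-1}R_n.
\]
Reading off the $\pib$-component and recalling that the first term on the right is precisely $\DH_{|\rb,\sb}\pib^\lambda(\hat\hb^\XF_l,\hat\hb^\YC_l)$ by definition, boundedness of the inverse turns $R_n=o(1)$ into the claimed convergence $\norm{t_n^{-1}\Delta\pib_n - \DH_{|\rb,\sb}\pib^\lambda(\hat\hb^\XF_l,\hat\hb^\YC_l)}_{\lXYSub{\cx\oplus\cy}}\to 0$.

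\textbf{Main obstacle.} The delicate point is that the inverse operator $[\DCW_{\pib,\alphab,\betab_*}\FC]^{-1}$ is only well-defined on a rather small domain (cf.\ Remark~\ref{rem:NoveltyOfProofAndDifficulty} and Lemma~\ref{lem:InverseOfDerivativeOfFIsWellDefined}), so one cannot simply apply it to the raw remainder $R_n$ — one must verify that both $\DCW_{\rb,\sb_*}\FC(\hat\hb^\XF_l,\hat\hb^\YC_{l,*})$ and $R_n$ actually lie in that domain, with norms in the correct weighted spaces $\ell^\infty_{\exSup{}}(\XF)\times\ell^\infty_{\eySup{}}(\YCC)$ for the potential components and $\lXYSub{\cx\oplus\cy}$ for the plan component. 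This requires the quantitative bounds of Proposition~\ref{prop:BoundsOptimalPotentials} to estimate the quadratic remainder terms: the exponential factors $\exp((\alpha_x+\beta_y-c(x,y))/\lambda)$ in the Taylor remainder must be absorbed by the weights $\cx\exSup{}$ and $\cy\eySup{4}$, and this is exactly where the finite-support reduction pays off, since $\hat\hb^\XF_l$ is supported on finitely many points so the $\lXSub{\cx}$-convergence is automatic. The bookkeeping of which weighted norm each error term is measured in, and checking the quadratic terms decay at rate $o(1)$ after division by $t_n$, is the technical heart of the argument.
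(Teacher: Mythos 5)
Your proposal follows essentially the same route as the paper's proof: start from the optimality identity of Lemma~\ref{lem:RewrittenOptimalityCriterionWithFunctionF}, decompose the difference into the linear approximation $-t_n\,\DCW_{\rb,\sb_*}\FC(\hat\hb^\XF_l,\hat\hb^\YC_{*,l})$ plus remainder terms, apply the $\pib$-component of the restricted inverse $[\DCW_{\pib,\alphab,\betab_*}\FC]^{-1}_\pib$, and verify that each remainder lies in the small domain of that inverse (Lemma~\ref{lem:InverseOfDerivativeOfFIsWellDefined}) with norm $o(t_n)$, using the quantitative bounds of Proposition~\ref{prop:BoundsOptimalPotentials} and the local Lipschitz property of Proposition~\ref{prop:DualOptimizerAreLipschitz}. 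The one place where the paper is more careful than your sketch: the remainder $R_n$ is not a single quadratic term but splits (as in Lemma~\ref{lem:FinishingLemma}) into three pieces of different structure --- a pure second-order term in $(\rb,\sb_*)$ giving $t_n^2(\pib^\lambda/(\rb\otimes\sb))\odot(\hat\hb^\XF_l\otimes\hat\hb^\YC_l)$, a second-order term in $(\pib,\alphab,\betab_*)$ controlled via $\Delta(x)=\exp(x)-1-x$, and a mixed derivative-difference term --- and each requires a separate computation of the $\Gamma$-functional to show the required $o(t_n)$ decay, including the choice of the auxiliary weight $\psi$ from Lemma~\ref{lem:ExistenceUnboundedFunctionPhi}; lumping these together would obscure that the three estimates rely on genuinely different bounds.
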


		Finally, we also show that the EROT plan fulfills a local Lipschitz property with respect to a suitable weighted $\ell^1$-norm of the underlying measures. 
	
		\begin{proposition}[Local Lipschitzianity of EROT plan]\label{prop:TransportPlanIsLipschitz}\label{prop:DualOptimizerAreLipschitz}
			Let $ \rb\in \PC(\XC)\cap\lXSub{\cx}$ and $\sb\in \PC(\YC)\cap\lYSub{\kySup{4}}$ be probability measures with full support. Denote for $\rho>0$ the set$$\begin{aligned}B_\rho( \rb, \sb) \coloneqq \bigg\{ (\tilde\rb, \tilde\sb) \in \Big(\probset{\XC}\cap\lXSub{\cx}\Big)\times\Big( \probset{\YC}\cap\lYSub{\kySup{4}}\Big) \colon \\ \norm{ \rb - \tilde\rb}_{\lXSub{\cx}}+\norm{ \sb - \tilde\sb}_{\lYSub{\kySup{4}}} \leq \rho  \bigg\}.\end{aligned}$$ 
			Then there exist $\rho_0, \Lambda, \Lambda'>0$ such that for any $(\tilde\rb, \tilde\sb), (\tilde \rb', \tilde \sb')\in B_{\rho_0}( \rb,  \sb)$ 
			 with
			 $\supp(\tilde \rb') \subseteq \supp(\tilde\rb)$ and $\supp(\tilde \sb')\subseteq\supp(\tilde\sb)$ it follows that
			\begin{align}\label{eq:LocalLipschitzForEROTP}
		  \norm{\pib^\lambda(\tilde\rb, \tilde\sb) - \pib^\lambda(\tilde\rb', \tilde\sb')}_{\lXYSub{\cx \oplus \cy}} &\leq \Lambda \norm{(\tilde\rb, \tilde\sb) - (\tilde\rb', \tilde\sb') }_{\lXSub{\cx} \times\lYSub{\kySup{4}}},\\
		  \norm{\big(\alphab^\lambda(\tilde\rb, \tilde\sb) - \alphab^\lambda(\tilde \rb', \tilde\sb')\big) \Indicator{\supp(\tilde \rb')}}_{\lInfXSub{}}& \leq \Lambda' \norm{(\tilde\rb, \tilde\sb) - (\tilde\rb', \tilde\sb') }_{\lXSub{\cx} \times\lYSub{\kySup{4}}},\\
		  \norm{\big(\betab^\lambda(\tilde\rb, \tilde\sb) - \betab^\lambda(\tilde \rb', \tilde\sb')\big)\Indicator{\supp(\tilde \sb')}}_{\lInfYSub{\eySup{}}}\,&\leq \Lambda' \norm{(\tilde\rb, \tilde\sb) - (\tilde\rb', \tilde\sb') }_{\lXSub{\cx} \times\lYSub{\kySup{4}}},
		\label{eq:LocalLipschitzForDualOptimizer}
		\end{align}
		where $(\alphab^\lambda, \betab^\lambda)$ represent EROT potentials as in Proposition \ref{prop:ConvergenceOptimalDualSolutions}, i.e., $\beta^\lambda_{y_1} =0$.
		\end{proposition}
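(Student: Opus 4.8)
The plan is to split the statement into three pieces: a package of \emph{uniform a priori bounds} valid on the whole ball $B_{\rho_0}(\rb,\sb)$ for a sufficiently small $\rho_0$; a \emph{linearized comparison} of the optimality system of $(\tilde\rb,\tilde\sb)$ with that of $(\tilde\rb',\tilde\sb')$, which yields \eqref{eq:LocalLipschitzForEROTP} and \eqref{eq:LocalLipschitzForDualOptimizer} simultaneously; and a bookkeeping step handling the mismatch of supports. The reason one can hope to get all three inequalities at once is that $\pi^\lambda$, $\alphab^\lambda$ and $\betab^\lambda$ are jointly pinned down by $\FC((\pi,\alpha,\beta_*),(\tilde\rb,\tilde\sb_*))=0$ (Lemma \ref{lem:RewrittenOptimalityCriterionWithFunctionF}), and $\FC$ depends on its arguments only through exponentials and products, so differences can be expanded by an elementary mean--value identity and estimated in the same weighted spaces that appear in the derivative of Theorem \ref{them:TransportPlanIsHadamardDiff}.

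First I would fix $\rho_0>0$ so small that, for every $(\tilde\rb,\tilde\sb)\in B_{\rho_0}(\rb,\sb)$, the weighted masses $\langle\cx,\tilde\rb\rangle$, $\langle\cy,\tilde\sb\rangle$, $\langle\exSup{},\tilde\rb\rangle$ and $\langle\kySup{4},\tilde\sb\rangle$ are bounded by a universal constant and $y_1\in\supp(\tilde\sb)$ (possible since $\sb$ has full support, hence $s_{y_1}>0$); under the nesting hypothesis the same holds for $(\tilde\rb',\tilde\sb')$, so both normalizations $\beta^\lambda_{y_1}(\tilde\rb,\tilde\sb)=\beta^\lambda_{y_1}(\tilde\rb',\tilde\sb')=0$ are legitimate. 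Feeding these into Proposition \ref{prop:BoundsOptimalPotentials}, and using that $\norm{\cxpb-\cxmb}_{\lInfX}<\infty$ forces $\exSup{}\asymp 1$, one obtains the two--sided bound $c_0\,\eySup{}(y)^{-1}\le \pi^\lambda_{xy}(\tilde\rb,\tilde\sb)/(\tilde r_x\tilde s_y)\le C_0\,\eySup{}(y)$ uniformly over $B_{\rho_0}(\rb,\sb)$, and in particular the uniform \emph{curvature} bound $\pi^\lambda_{xy_1}(\tilde\rb,\tilde\sb)/\tilde r_x\ge\delta_0>0$ for $x\in\supp(\tilde\rb)$. As in Remark \ref{rem:NoveltyOfProofAndDifficulty}, this last bound makes the operators $\ACX\ACY$ and $\ACY\ACX$ built from $\pi^\lambda(\tilde\rb,\tilde\sb)$ have operator norm $\le 1-\delta_0'<1$, \emph{uniformly} on the ball; this uniform eigenvalue gap is the engine of the Neumann--series argument below.

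Next I would subtract the two optimality systems. Writing $(\pi,\alpha,\beta)$ and $(\pi',\alpha',\beta')$ for the primal/dual optimizers of $(\tilde\rb,\tilde\sb)$ and $(\tilde\rb',\tilde\sb')$, the identity $\FC((\pi,\alpha,\beta_*),(\tilde\rb,\tilde\sb_*))-\FC((\pi',\alpha',\beta'_*),(\tilde\rb',\tilde\sb'_*))=0$ can be rearranged, using
\[
 e^{(\alpha_x+\beta_y-c(x,y))/\lambda}-e^{(\alpha'_x+\beta'_y-c(x,y))/\lambda}=\overline K_{xy}\big((\alpha_x-\alpha'_x)+(\beta_y-\beta'_y)\big),
\]
where $\overline K_{xy}:=\int_0^1\lambda^{-1}e^{(\alpha^t_x+\beta^t_y-c(x,y))/\lambda}\,dt$ lies between the two endpoint Gibbs kernels and hence obeys the uniform two--sided bounds of the previous step, into a linear system of the shape $(\Id-\overline{\mathcal M})\big(\pi-\pi',\alpha-\alpha',\beta_*-\beta'_*\big)=R$. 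Here $\overline{\mathcal M}$ is the averaged analogue of $\DCW_{\pib,\alphab,\betab_*}\FC$ (so the block Neumann--series calculus of Proposition \ref{prop:ProposedDerivativeIsBounded} applies verbatim with $\pi^\lambda(\rb,\sb)$ replaced by $\overline K$), and $R$ collects only the marginal perturbations $\tilde\rb\otimes\tilde\sb-\tilde\rb'\otimes\tilde\sb'$, $\tilde\rb-\tilde\rb'$, $\tilde\sb_*-\tilde\sb'_*$. Bounding $R$ in $\lXYSub{\cx\oplus\cy}\times\lInfXSub{}\times\lInfYSub{\eySup{}}$ by a constant multiple of $\norm{(\tilde\rb,\tilde\sb)-(\tilde\rb',\tilde\sb')}_{\lXSub{\cx}\times\lYSub{\kySup{4}}}$ — splitting $\tilde r_x\tilde s_y-\tilde r'_x\tilde s'_y=\tilde r_x(\tilde s_y-\tilde s'_y)+(\tilde r_x-\tilde r'_x)\tilde s'_y$ and using the uniform kernel bound together with $\eySup{}\le\kySup{4}$ — and inverting $\Id-\overline{\mathcal M}$ with a uniformly bounded inverse via the uniform eigenvalue gap, one reads off \eqref{eq:LocalLipschitzForEROTP} and \eqref{eq:LocalLipschitzForDualOptimizer} at once. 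The asymmetric norms, plain supremum for $\alphab$ and $\eySup{}$--weighted for $\betab$, are forced by the asymmetric hypothesis, since only $\exSup{}$ and not $\eySup{}$ is bounded; on the $\XC$--side $\alpha_x-\alpha'_x$ is in fact uniformly bounded because $\cxm(x)-C\le\alpha_x,\alpha'_x\le\cxp(x)+C$ and $\norm{\cxpb-\cxmb}_{\lInfX}<\infty$.

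The hard part will be keeping the linearized operator $\overline{\mathcal M}$ under control. Exactly as flagged in Remark \ref{rem:NoveltyOfProofAndDifficulty}, $\DCW_{\pib,\alphab,\betab_*}\FC$, and hence its averaged version, is invertible only on a rather small domain, so rather than quoting an implicit--function theorem one must follow each perturbation error individually: track that the nested supports $\supp(\tilde\rb')\subseteq\supp(\tilde\rb)$, $\supp(\tilde\sb')\subseteq\supp(\tilde\sb)$ make $\pi'-\pi$ supported inside $\supp(\pi)$ and make the averaged analogues of $\ACX,\ACY,\BCX,\BCY$ act between the intended weighted $\ell^1$/$\ell^\infty$ spaces; verify that all residual $\cx\oplus\cy$--weighted sums converge uniformly on $B_{\rho_0}(\rb,\sb)$ using $\eySup{4}=\kySup{4}/\cy$ and the uniform control of $\langle\kySup{4},\tilde\sb\rangle$; and, to avoid the vanishing--support pathologies near the boundary of $\PC(\XC)$, route the comparison through the finite--support approximation of Lemma \ref{lem:GivenLChooseNBigSoAllProbMeasures} and let the approximation order grow at the end. (An alternative for \eqref{eq:LocalLipschitzForDualOptimizer} alone is to compare the Sinkhorn fixed--point equations \eqref{eq:OptimalityCriterionForDualOptimizers} directly; the same curvature bound $\pi^\lambda_{xy_1}/\tilde r_x\ge\delta_0$, together with the anchoring $\beta_{y_1}=\beta'_{y_1}=0$, supplies the contraction factor, but one still meets the unbounded $\YC$--weights, which is precisely why the spaces $\lInfYSub{\eySup{}}$ and $\lYSub{\kySup{4}}$ cannot be avoided.)
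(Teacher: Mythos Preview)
Your proposal assembles the right ingredients—uniform a priori bounds from Proposition \ref{prop:BoundsOptimalPotentials}, the curvature estimate $\pi^\lambda_{xy_1}/\tilde r_x\ge\delta_0$ coming from $\norm{\cxpb-\cxmb}_{\lInfX}<\infty$, a Neumann-series inversion, and finite-support approximation—but the organization differs from the paper's, and one step does not go through as written.

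The paper does \emph{not} subtract the two optimality systems directly. Instead it first restricts to \emph{finitely supported} $(\hat\rb,\hat\sb)\in B_{\rho_0}$, where $\hat\FC$ is a smooth map between finite-dimensional spaces, and invokes the ordinary implicit function theorem to obtain a Fr\'echet derivative $\DF_{|(\hat\rb,\hat\sb_*)}\hat\vartheta=-[\DF_{\pi,\alpha,\beta_*}\hat\FC]^{-1}\circ\DF_{\rb,\sb_*}\hat\FC$. It then bounds $\norm{\DF\hat\vartheta}_{OP}$ \emph{uniformly} over the ball by redoing the Neumann calculus of Proposition \ref{prop:ProposedDerivativeIsBounded} at each $(\hat\rb,\hat\sb)$; the point is that the operators $\hat{\mathcal A}^\XC,\hat{\mathcal A}^\YC$ appearing there are built from the genuine plan $\pi^\lambda(\hat\rb,\hat\sb)$, which satisfies the exact marginal identities $\sum_y\pi^\lambda_{xy}/\hat r_x=1$ and $\sum_x\pi^\lambda_{xy}/\hat s_y=1$. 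These identities are what make $\norm{\tACX}\le1-\eta/2$ and $\norm{\tACY}\le1$, and the uniform choice of $N$ (hence of $L_\YC$) in the tail estimate $\sum_{i>N}(\eySup{}(y_i)-1)\pi^\lambda_{xy_i}/\hat r_x\le\eta/2$ is obtained by dominating via $\kySup{4}(y_i)\kappa\,\hat s_{y_i}$ and using $\norm{\hat\sb-\sb}_{\lYSub{\kySup{4}}}\le\rho_0$. The resulting uniform derivative bound gives Lipschitz for finitely supported pairs with nested supports, and the countable case is then recovered by finite-support approximation together with the continuity of Proposition \ref{cor:ConvergenceEROTP}.

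Your mean-value linearization produces an \emph{averaged} kernel $\overline K$, and the associated ``plan'' $\overline\pi_{xy}:=\lambda\overline K_{xy}\tilde r_x\tilde s_y$ does \emph{not} satisfy either marginal constraint: convex combinations $(\alpha^t,\beta^t)$ of two pairs of EROT potentials are not EROT potentials for any marginals, so $\sum_y\overline\pi_{xy}/\tilde r_x$ is in general neither $1$ nor close to $1$. Consequently the Neumann estimates of Proposition \ref{prop:ProposedDerivativeIsBounded} do not apply ``verbatim with $\pi^\lambda(\rb,\sb)$ replaced by $\overline K$''; the inequality $\sum_{y\in\YCC}\overline\pi_{xy}/\tilde r_x\le1-\delta_0$ simply fails without the marginal identity. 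One can repair this by dividing through by the uniformly bounded-away-from-zero quantity $\sum_y\overline K_{xy}\tilde s_y$ to manufacture a substochastic operator, but that is additional work, not a verbatim transfer. The cleaner route—and the one the paper takes—is to integrate the \emph{actual} derivative along the path of measures (where at each point the true plan has exact marginals), which is precisely what the finite-dimensional implicit function theorem delivers.
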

		
		\noindent 
		Combining the previous three propositions enables us to state the proof for Theorem \ref{them:TransportPlanIsHadamardDiff}.

	\begin{proof}[Proof of Theorem \ref{them:TransportPlanIsHadamardDiff}]
		The characterization of the contingent cone follows by \citet[Proposition 4.2.1]{aubin2009setValued} in conjunction with $\rb$ and $\sb$ having full support. 
	To show Hadamard differentiability, take a positive sequence $(t_n)_{n \in \N}$ with $t_n \searrow 0$ 
	 and $(\hb_n^\XF,\hb_{n}^\YC)\subseteq \lXSub{\cx} \times \lYSub{\kySup{4}}$ converging to $(\hb^\XF, \hb^\YC)$ such that$$(\rb + t_n \hb_n^\XF, \sb + t_n \hb^\YC_{n})\in \big(\probset{\XC}\cap \lXSub{\cx}\big)\times \big( \probset{\YC}\cap \lYSub{\kySup{4}} \big)$$
	 for all $n \in \NN$. The quantity of interest is $$\norm{\frac{\pib^\lambda(\rb + t_n\hb^\XF_n,\sb + t_n\hb^\YC_{n}) - \pib^\lambda(\rb,\sb)}{t_n} - \DH_{|(\rb, \sb)}{\pib^\lambda}(\hb^\XF, \hb^\YC)}_\lXYSub{\cx\oplus \cy}  $$
	for which we need to prove that it converges to zero as $n$ tends to infinity. 
	Let $\epsilon>0$, then there exists by Lemma \ref{lem:FiniteSupportApproximationIsUniformlyGoodForLargeL} and by boundedness of $\DH_{|( \rb,  \sb)}{\pib^\lambda}$ (Proposition \ref{prop:ProposedDerivativeIsBounded}) an integer $l\in \N$ such that for the finite support approximations $\hat\hb^\XC_l$ of $\hb^\XC$ and $\hat\hb^\YC_l $ of $\hb^\YC$ it follows that  $$\norm{\hb^\XC - \hat\hb^\XC_l}_{\lXSub{\cx}} + \norm{\hb^\YC - \hat\hb^\YC_l}_{\lYSub{\kySup{4}}}<\frac{\epsilon}{4}\max\left(1, \norm{\DH_{|( \rb,\sb)}{\pib^\lambda}}_{OP}\right)^{-1}.$$
	Further, denote by $\rho_0>0$ the radius from Proposition \ref{prop:DualOptimizerAreLipschitz} such that $\pib^\lambda$ is Lipschitz with modulus $\Lambda> 0$ on the set $$\begin{aligned}B_{\rho_0}( \rb, \sb) \coloneqq \bigg\{ (\tilde\rb, \tilde\sb) \in \Big(\probset{\XC}\cap\lXSub{\cx}\Big)\times\Big( \probset{\YC}\cap\lYSub{\kySup{4}}\Big) \colon \\ \norm{ \rb - \tilde\rb}_{\lXSub{\cx}}+\norm{ \sb - \tilde\sb}_{\lYSub{\kySup{4}}} \leq \rho_0  \bigg\}.\end{aligned}$$
	Moreover, by Lemma \ref{lem:GivenLChooseNBigSoAllProbMeasures} there exists $N_1\in\N$ such that it follows for all $n\geq N_1$ that $\rb + t_n\hat\hb^\XF_{l}, \rb + t_n\hat\hb^\XF_{n,l} \in \probset{\XF}$ with $\supp(\rb + t_n\hat\hb^\XF_{l}) = \supp(\rb + t_n\hat\hb^\XF_{n,l}) = \XF$, and likewise $\sb + t_n\hat\hb^\YC_{l}, \sb + t_n\hat\hb^\YC_{n,l} \in \probset{\YC}$ with $\supp(\sb + t_n\hat\hb^\YC_{l}) = \supp(\sb + t_n\hat\hb^\YC_{n,l}) = \YC$. Additionally, Lemma \ref{lem:UpperboundNorms} asserts existence of $N_2 \in \N$ such that it holds for all $n\geq N_2$ that 
	$$( \rb + t_n \hb^\XC_n,  \sb + t_n \hb^\YC_n), 	( \rb + t_n \hat \hb^\XC_{n,l},  \sb + t_n \hat \hb^\YC_{n,l}), ( \rb + t_n \hat \hb^\XC_{l},  \sb + t_n \hat \hb^\YC_l) \in B_{\rho_0}( \rb,  \sb).$$
	Using Lemma \ref{lem:FiniteSupportApproximationIsUniformlyGoodForLargeL}  there also exists $N_3 \in \N$ such that it follows for all $n \geq N_3$ 
	$$\begin{aligned}
	 \norm{\hb^\XC_n - \hat\hb^\XC_{n,l} }_{\lXSub{\cx}} + \norm{\hb^\YC_n - \hat\hb^\YC_{n,l}}_{\lYSub{\kySup{4}}}&<\frac{\epsilon}{4 \Lambda} \quad \text{ and }\\
	 \norm{\hat\hb^\XC_{n,l} - \hat\hb^\XC_l}_{\lXSub{\cx}} + \norm{\hat\hb^\YC_{n,l} - \hat\hb^\YC_l}_{\lYSub{\kySup{4}}}&<\frac{\epsilon}{4 \Lambda}.
	 \end{aligned}$$
	Finally, by Proposition \ref{prop:DerivativeFiniteSupportPerturbation} there exists $N_4 \in \N$ such that it follows for $n \geq N_4$ 
	$$\norm{\frac{\pib^\lambda(\rb + t_n\hat\hb^\XF_{l},\sb + t_n\hat\hb^\YC_{l}) - \pib^\lambda(\rb ,\sb)}{t_n} - \DH_{|( \rb,  \sb)}{\pib^\lambda}(\hat\hb^\XF_l,\hat\hb^\YC_{l})}_\lXYSub{\cx\oplus \cy}<\frac{\epsilon}{4}.$$
	Summarizing, for all $n \geq \max\{N_1, N_2, N_3,N_4\}$ we obtain that 
				\begin{align*}
			& \,\,\,\,\,\,\norm{\frac{\pib^\lambda(\rb + t_n\hb^\XF_n,\sb + t_n\hb^\YC_{n}) - \pib^\lambda(\rb,\sb)}{t_n} - \DH_{|( \rb,  \sb)}{\pib^\lambda}(\hb^\XF, \hb^\YC)}_\lXYSub{\cx\oplus \cy} \notag\\
			 &\leq\norm{\frac{\pib^\lambda(\rb + t_n\hb^\XF_n,\sb + t_n\hb^\YC_{n}) - \pib^\lambda(\rb + t_n\hat\hb^\XF_{n,l},\sb + t_n\hat\hb^\YC_{n,l})}{t_n}}_\lXYSub{\cx\oplus \cy}\\
			&+  \norm{\frac{\pib^\lambda(\rb + t_n\hat\hb^\XF_{n,l},\sb + t_n\hat\hb^\YC_{n,l}) - \pib^\lambda(\rb + t_n\hat\hb^\XF_{l},\sb + t_n\hat\hb^\YC_{l})}{t_n}}_\lXYSub{\cx\oplus \cy}\\
			&+  \norm{\frac{\pib^\lambda(\rb + t_n\hat\hb^\XF_{l},\sb + t_n\hat\hb^\YC_{l}) - \pib^\lambda(\rb ,\sb)}{t_n} - \DH_{|( \rb,  \sb)}{\pib^\lambda}(\hat\hb^\XF_l,\hat\hb^\YC_{l})}_\lXYSub{\cx\oplus \cy} \\
			&+  \norm{\DH_{|( \rb, \sb)}{\pib^\lambda}(\hat\hb^\XF_l, \hat\hb^\YC_{l}) - \DH_{|( \rb,  \sb)}{\pib^\lambda}(\hb^\XF, \hb^\YC)}_\lXYSub{\cx\oplus \cy} \\
			&\leq \;\;\frac{\epsilon}{4} + \frac{\epsilon}{4} + \frac{\epsilon}{4}+ \frac{\epsilon}{4} \;\;\,\,= \;\;\,\,\epsilon,\notag
		\end{align*}
		which proves the claim. Moreover, the assertion on the representation of the derivative is shown in Proposition \ref{prop:ProposedDerivativeIsBounded}.
	\end{proof}

	\label{app:ProofsSensitivityEROTP}

	\begin{proof}[Proof of Proposition \ref{prop:ProposedDerivativeIsBounded}]
	
		The proof is divided into several steps. In Step 1 we reduce the analysis to the perturbation for EROT potentials. This reformulation is expressed in  Step 2 as a countable system of equations which can be written in terms of two operators. In Steps 3 and 4 we show that one operator is suitably bounded while the other is invertible. In particular, for the invertibility the condition $\norm{\cxpb - \cxmb}_{\lInfY}< \infty$ is exploited which enables the use of von Neumann series. Finally, in Step 5 we express the perturbation for the EROT plan in terms of the perturbation for EROT potentials and prove in Step 6 that the proposed derivative has bounded operator norm. 
		
		\emph{Step 1 - Reduction to perturbation of EROT potentials. }
		Given a pair $(\hb^\XF, \hb^\YC)\in \lXSub{\cx} \times\lYSub{\kySup{4 }}$ we need to show that there exists a unique element $(\hb^{\XF \times \YC},\hb^{\XF, \infty}, \hb_*^{\YC, \infty})\in \lXYSub{\cx\oplus \cy} \times \lInfXSub{} \times \lInfYYSub{\eySup{}}$ such that \begin{equation} \DCW_{\pib, \alphab, \betab_*|(\varthetab( \rb, \sb_*),  \rb, \sb_* )} \FC(\hb^{\XF \times \YC},\hb^{\XF, \infty}, \hb_*^{\YC, \infty}) =  \DCW_{\rb, \sb_*|(\varthetab( \rb, \sb_*),  \rb, \sb_* ) }\FC(\hb^\XF, \hb^\YC_*).\label{eq:DerivativeEquality}
		\end{equation}
		Denoting the EROT plan for $\rb, \sb$  as $\pib^\lambda$ and by the relation between optimizers of  \eqref{eq:EntropicOptimalTransport} and \eqref{eq:DualEntropicOptimalTransportProblem} (Proposition \ref{prop:DualEROT}) Equation \eqref{eq:DerivativeEquality} can be rewritten as \begin{equation}
		  \begin{pmatrix}
			\hb^{\XF\times \YC}- \frac{\pib^\lambda}{\lambda}\odot \Ab_*^T\Big( \hb^{\XF, \infty}, \hb_{*}^{\YC, \infty}\Big)\\[0.15cm]
			\Ab_*(\hb^{\XF\times \YC})
		\end{pmatrix} =  \begin{pmatrix}
			- \frac{\pib^\lambda}{ \rb \otimes  \sb }\odot \Big[   \rb \otimes \hb^\YC + \hb^\XF \otimes  \sb  \Big]\\
			\begin{pmatrix}
				-\hb^\XF\\
			-\hb_{*}^{\YC}	\end{pmatrix}
		\end{pmatrix}.\label{eq:MasterEquation}
		\end{equation}
		In order to solve this system of countably many equations we set 
		\begin{equation} \hb^{\XF\times \YC} \coloneqq \frac{\pib^\lambda}{\lambda}\odot \Ab_*^T\Big( \hb^{\XF, \infty}, \hb_{*}^{\YC, \infty}\Big) - \frac{\pib^\lambda}{ \rb \otimes  \sb }\odot \Big[   \rb \otimes \hb^\YC + \hb^\XF \otimes  \sb \Big],\label{eq:ProofWelldefinedOperatorDefinitionH^XFtimesYC}\end{equation}
		which reduces \eqref{eq:MasterEquation} to 
		\begin{equation}
			A_*\bigg(\frac{\pib^\lambda}{\lambda}\odot \Ab_*^T\Big( \hb^{\XF, \infty}, \hb_{*}^{\YC, \infty}\Big) - \frac{\pib^\lambda}{ \rb \otimes  \sb }\odot \Big[   \rb \otimes \hb^\YC + \hb^\XF \otimes  \sb \Big]\bigg) = \begin{pmatrix}
				-\hb^\XF\\
			-\hb_{*}^{\YC}	\end{pmatrix}.\label{eq:ToBeSolvedEquation}
		\end{equation}
		We now have to show that a unique solution $(\hb^{\XF, \infty}, \hb_*^{\YC, \infty})$ for \eqref{eq:ToBeSolvedEquation} exists. For this purpose, we evaluate for $x \in \XC$ the corresponding component-wise equation of \eqref{eq:ToBeSolvedEquation} and obtain
		\begin{align}
			\frac{\pi^{\lambda}_{xy_1} }{\lambda}h^{\XF, \infty}_x +\left[\sum_{y\in \YCC}{\frac{\pi^\lambda_{xy}}{\lambda} (  h_{x}^{\XF, \infty} + h_{*,y}^{\YC, \infty} ) }\right]  - \left[\sum_{y \in\YC}  \frac{\pi^\lambda_{xy}}{ r_x s_y} [ r_xh^{\YC}_y + h^{\XF}_x s_y]\right] &= -h^{\XF}_x,\notag\\
			\frac{ r_x}{\lambda}h^{\XF, \infty}_x +\left[\sum_{y\in \YCC}{\frac{\pi^\lambda_{xy}}{\lambda} h_{*,y}^{\YC, \infty} }\right]  - \left[\sum_{y \in\YC}  \frac{\pi^\lambda_{xy}}{ s_y}h^{\YC}_y \right] - h_{x}^{\XF}\left[\sum_{y\in \YC}\frac{\pi^\lambda_{xy}}{ r_x}\right] &= -h^{\XF}_x. \notag
		\end{align}
		Note, for all $x \in \XC$ it holds that $\sum_{y\in \YC}\frac{\pi^\lambda_{xy}}{ r_x} = 1$. This leads to 
		\begin{align*}
			 h^{\XF, \infty}_x +\left[\sum_{y\in \YCC}{\frac{\pi^\lambda_{xy}}{ r_x}  h_{*,y}^{\YC, \infty} }\right] &= \lambda\left[\sum_{y\in \YC}{\frac{\pi^\lambda_{xy}}{ r_x s_y}h_y^\YC}\right].
		\end{align*}
		Similarly, since  $\sum_{x \in \XC}\frac{\pi^\lambda_{xy}}{s_y} = 1$ for any $y \in \YCC$ we deduce that 
		\begin{align*}
			 \left[\sum_{x\in \XF}{\frac{\pi^\lambda_{xy}}{ s_y}  h^{\XF, \infty}_x }\right]+h_{*,y}^{\YC, \infty}&= \lambda\left[\sum_{x\in \XF}{\frac{\pi^\lambda_{xy}}{ r_x s_y}h_x^\XF}\right].
		\end{align*}
	
		\emph{Step 2 - Reduction to countable system of linear equations. }
		In summary, we have to solve a system of countably many linear equations. 
		This can be expressed in terms of the $\ACX, \ACY, \BCX, \BCY$, introduced in Section \ref{subsec:HadDiffPlan}, 
		\begin{equation}\label{eq:LinearEquationForInverse}
			\begin{aligned}
				\begin{pmatrix}
					\Id_{\XC} & \ACX\\
					\ACY\;\;\;& \Id_{\YCC}
				\end{pmatrix}
				\begin{pmatrix}
					h^{\XC,\infty}\\
					h^{\YC,\infty}_{*}
				\end{pmatrix} = \lambda 
				\begin{pmatrix}
					0 & \BCX\\
					\BCY& 0
				\end{pmatrix}
				\begin{pmatrix}
					h^{\XC}\\
					h^{\YC}%
				\end{pmatrix},
			\end{aligned}
		\end{equation}
		\noindent where we denote the operator of the l.h.s. by $\AC$ and the operator on the r.h.s. with the factor $\lambda$ by $\BC$. We will show that these operators are defined on the following spaces 
		$$\begin{aligned}
		\AC&\colon \lInfXSub{}\times \lInfYYSub{\eySup{}} \rightarrow \lInfXSub{}\times \lInfYYSub{\eySup{}},\\
		 \BC&\colon \lXSub{\cx} \times \lYYSub{\kySup{4 }}\rightarrow \lInfXSub{}\times \lInfYYSub{\eySup{}}.\end{aligned}$$
		Herein, we equip the product of $\ell^1$-spaces with sum of the norms of each subspace and  for the $\ell^\infty$-spaces we consider the maximum of the norms of the subspaces. In the subsequent two steps we derive a bound for the operator norm of $\BC$ and show that $\AC$ is invertible. 
		
		\emph{Step 3 - Bound on operator $Q$ on r.h.s. of \eqref{eq:LinearEquationForInverse}.}
		For the operator norm of $\BCX$ we employ the quantitative bounds for the EROT plan (Proposition \ref{prop:BoundsOptimalPotentials}). Denote the unit ball of $\lYSub{\kySup{4}}$ by $B_1(\lYSub{\kySup{4}})$, then since $1\leq \exSup{}$ and $\eySup{}\leq \kySup{4}$ we obtain 
		\begin{align*}
			\norm{\BCX}_{OP}& = \sup_{h^\YC\in B_1(\lYSub{\kySup{4}})}\sup_{x\in \XC} \left|\sum_{y \in \YC} \frac{\pi^\lambda_{xy}}{ r_x  s_y} h^\YC_y \right|\\
			& \leq \sup_{h\in B_1(\lYSub{\kySup{4}})} \sup_{x\in \XC} \left| \exSup{}(x)\langle \exSup{}, \rb\rangle \langle \eySup{}, \sb\rangle \norm{h^\YC}_{\lYSub{\eySup{}}}\right|\\
			& \leq \norm{\exSup{}}_{\lInfX}^2 \langle \eySup{}, \sb\rangle. 
		\intertext{Likewise, upon denoting the unit ball of $\lYSub{\cx}$ by $B_1(\lXSub{\cy})$,  it follows since $1\leq \cx$ that }
			\norm{\BCY}_{OP}& = \sup_{h^\XC\in B_1(\lXSub{\cx})}\sup_{y\in \YCC}\eySup{}(y)^{-1}\left|\sum_{x \in \XC}  \frac{\pi^\lambda_{xy}}{ r_x  s_y}h^\XC_x \right|\\
			&\leq  \sup_{h^\XC\in B_1(\lXSub{\cx})}\sup_{y\in \YCC} \norm{\exSup{}}_{\lInfX}^2\langle \eySup{}, \sb\rangle \norm{h^\XC}_{\lX}\\
			&\leq  \norm{\exSup{}}_{\lInfX}^2\langle \eySup{}, \sb\rangle \norm{h^\XC}_{\lXSub{\cx}}. 
		\end{align*}
		\noindent Hence, we obtain for the operator norm of $\BC$ that
		\begin{equation}
		  \norm{\BC}_{OP} \leq \lambda \norm{\exSup{}}_{\lInfX}^2\langle \eySup{}, \sb\rangle.\label{eq:boundOperatorQ}
		\end{equation}
	
		\emph{Step 4 - Invertibility of operator $\AC$ on l.h.s. of \eqref{eq:LinearEquationForInverse}.}
		Next, we show that the operator $\AC$ is invertible for which we apply the Neumann series calculus \citep[Theorem 2.9]{sasane2017FriendlyApproach_FA}. This requires that the Neumann series 
		\begin{align}\sum_{k = 0}^{\infty}
			(\Id - \AC)^k = \sum_{k = 0}^{\infty}
			\begin{pmatrix}
				0 &-\ACX\\
				- \ACY &0\\
			\end{pmatrix}^k
		\end{align}
		converges in operator norm. 
		To this end, we replace the norm of $\lInfYYSub{\eySup{}}$ by an equivalent norm.
		We first label $\YC = \{y_1, y_2, \dots, \}$ and set $\eta \coloneqq \inf_{x \in \XC}\frac{\pi^\lambda_{xy_1}}{ r_x}>0$ which is strictly positive by the lower bounds for $\pib^\lambda$ (Proposition \ref{prop:BoundsOptimalPotentials}) and since $\norm{\cxp - \cxm}_{\lInfX}<\infty$. Using the upper bound for $\pib^\lambda$ (Proposition \ref{prop:BoundsOptimalPotentials}) we obtain for any $x\in \XC$ that \begin{align*}&\eySup{}(y)\frac{\pi^\lambda_{xy}}{ r_x} \leq \norm{\exSup{}}_\lInfX^2\eySup{2}(y)s_y\leq \norm{\exSup{}}_\lInfX^2\kySup{4 }(y)s_y,
		\end{align*}
		which shows that $\eySup{}(y)\frac{\pi^\lambda_{xy}}{ r_x}$ is summable over $y \in \YC$ as $ \sb\in \lYSub{\kySup{4 }}$. Notably, the dominating function is independent of $x\in \XC$. Hence, there exists some $N\in \N$ such that 
		\begin{equation}\label{eq:ImportantConstraintForInvertibilityOfM}
		  \sum_{i = N+1}^{\infty} \big(\eySup{}(y_i)-1\big)\frac{\pi^\lambda_{xy_i}}{ r_x}  \leq \frac{\eta}{2}.\notag
		\end{equation}
		For such $N \in \N$ it follows for all $x \in \XC$ that  
		\begin{equation}
		  \sum_{ i = 2}^{\infty} \frac{\pi^\lambda_{xy_i}}{ r_x} + 
		  \sum_{i = N+1}^{\infty} \big(\eySup{}(y_i)-1\big)\frac{\pi^\lambda_{xy_i}}{ r_x}  \leq 1-\frac{\pi^\lambda_{xy_1}}{ r_x} + \frac{\eta}{2} \leq 1- \frac{\eta}{2}.\label{eq:lowerBoundPiQuotient}
		\end{equation}
		To change the norm of $\lInfYYSub{\exSup{}}$ we define the weight function $$\teySup{}\colon \YC \rightarrow [1, \infty), \quad \teySup{}(y) = \begin{cases}
			1 & \text{ if } y \in \{y_1, \dots, y_N\},\\
			\eySup{}(y) & \text{ else,}
		\end{cases} $$
		where we note that $L_{\YC}^{-1}\eySup{}(y)\leq \teySup{}(y)\leq \eySup{}(y)$ for $L_{\YC} = \max_{i= 1, \dots, N}\eySup{}(y)$. Consequently, it follows that $\norm{\cdot }_{\lInfYSub{\eySup{}}}\leq \norm{\cdot }_{\lInfYSub{\teySup{}}}\leq L_\YC \norm{\cdot }_{\lInfYSub{\eySup{}}}$. Hence, we can consider the operators $\ACX,\ACY$ as mappings $$\begin{aligned} \tACX\colon \lInfYYSub{\teySup{}} \rightarrow \lInfXSub{}, \quad   \tACY\colon \lInfXSub{} \rightarrow \lInfYYSub{\teySup{}},\end{aligned}$$ respectively, and similarly define $\tAC$ as the mapping $\AC$ defined on this product space with norm. 
		Based on \eqref{eq:lowerBoundPiQuotient} we then note that $\|\tACX\|_{OP}\leq 1 - \eta/2$ and since $\sum_{x \in \XC}\frac{\pi^\lambda_{xy}}{ s_{y}}=1\leq \teySup{}(y)$ for all $y \in \YCC$, it follows that $\|\tACY\|_{OP} \leq 1$. Thus, we infer that $$\norm{\begin{pmatrix}
			0 &-\tACX\\
			- \tACY &0\\
		\end{pmatrix}^{\!\!\!2}}_{OP}\leq \max\big\{\norm{\tACX \tACY}_{OP}, \norm{\tACY \tACX}_{OP}\big\} \leq 1-\frac{\eta}{2},$$ which implies that the Neumann series $\sum_{k = 0}^{\infty}
		(\Id - \tAC)^k$ converges in operator norm with   $$ \begin{aligned}\sum_{k = 0}^{\infty}{\norm{(\Id - \tAC)^{k}}_{OP}}&\leq  \Big(\norm{\Id}_{OP} + \norm{(\Id - \tAC)}_{OP} \Big) \sum_{k = 0}^{\infty} \norm{(\Id - \tAC)^{2}}_{OP}^{k}  \\
			& \leq \frac{2}{1 - (1 - \eta/2)} = \frac{4}{\eta} < \infty.\end{aligned}$$ %
		This implies that $\tAC$ has a continuous inverse with operator norm dominated by $\|\tAC^{-1}\|_{OP} \leq 4/\eta$. Returning to the original norm with $\lInfYYSub{\eySup{}}$, we obtain that $\norm{\AC^{-1} }_{OP} \leq 4 L_\YC /\eta$.
		Concluding, there exists a unique pair of elements $(\hb^{\XF,\infty}, \hb_*^{\YC, \infty})\in \lInfXSub{} \times \lInfYYSub{\eySup{}} $ that solves equation \eqref{eq:LinearEquationForInverse} and $\AC^{-1}\BC$ is a bounded operator. 
	
		\emph{Step 5 - Representation of derivative for EROT plan. } 
		Combining Equations \eqref{eq:MasterEquation} and \eqref{eq:LinearEquationForInverse} thus yields an explicit expression for the $\pi$-component of the proposed derivative, 
		\begin{align*}
			h^{\XC\times \YC} &= \left[\DCW_{\pib, \alphab, \betab_*|( \varthetab( \rb, \sb_*),  \rb, \sb_* )}\FC\right]^{-1}_\pi \circ  \DCW_{\rb, \sb_*|( \varthetab( \rb, \sb_*),  \rb, \sb_*) }\FC(\hb^\XC, \hb^\YC)  \\
			&=\frac{\pi^\lambda(\rb, \sb)}{\lambda}\odot A_*^T\AC^{-1}\BC\begin{pmatrix}
				h^{\XC}\\
				h^{\YC}
			\end{pmatrix} - \frac{\pib^\lambda}{ \rb \otimes  \sb }\odot \Big[   \rb \otimes \hb^\YC + \hb^\XF \otimes  \sb \Big]\\
			&=\pi^\lambda\odot A_*^T\begin{pmatrix}
				(\Id_{\XC} -\ACX\ACY)^{-1}&-(\Id_{\XC} -\ACX\ACY)^{-1}\ACX\\
				-\ACY(\Id_{\XC} -\ACX\ACY)^{-1}&(\Id_{\YCC} -\ACY\ACX)^{-1}\\
			\end{pmatrix}\begin{pmatrix}
				0 & \BCX\\
				\BCY&0
			\end{pmatrix}\begin{pmatrix}
				h^{\XC}\\
				h^{\YC}
			\end{pmatrix} \\
			&- \frac{\pib^\lambda}{ \rb \otimes  \sb }\odot \Big[   \rb \otimes \hb^\YC + \hb^\XF \otimes  \sb \Big].
		\end{align*}
		Herein, the expression for $\AC^{-1}$ is well-defined since $\max(\|\tACX\tACY\|, \|\tACY\tACY\|)<1$. 
		
		\emph{Step 6 - Bounds for perturbation of EROT plan.}
		Finally, we prove that $\hb^{\XC\times \YC}$ from  \eqref{eq:ProofWelldefinedOperatorDefinitionH^XFtimesYC} is contained in $\lXYSub{\cx\oplus \cy}$. 
		This follows by the following calculation%
		\begin{align}
		\norm{\hb^{\XF\times \YC}}_\lXYSub{\cx\oplus \cy}&\leq \norm{\frac{\pib^\lambda}{\lambda}\odot \Ab_*^T\Big( \hb^{\XF, \infty}, \hb_{*}^{\YC, \infty}\Big)}_\lXYSub{\cx\oplus \cy}\notag\\
		& +\norm{\frac{\pib^\lambda}{ \rb \otimes  \sb }\odot \Big[   \rb \otimes \hb^\YC + \hb^\XF \otimes  \sb \Big]}_\lXYSub{\cx\oplus \cy}. \notag%
		\end{align}
		Invoking H\"older's inequality asserts 
		\begin{align}
			\norm{\hb^{\XF\times \YC}}_\lXYSub{\cx\oplus \cy}\leq\;\; 2 \norm{\frac{\pib^\lambda}{\lambda}}_\lXYSub{\cx\otimes\kySup{}} \Big( \norm{\hb^{\XF, \infty}}_{\lInfXSub{}} + \norm{\hb^{\YC, \infty}_*}_{\lInfYYSub{\eySup{}}} \Big) \notag\\
		 \norm{\frac{\pib^\lambda}{ \rb  \otimes  \sb }}_{\lInfXYSub{1_{\XC}\otimes\eySup{}}}2\bigg( \norm{ \rb }_{\lXSub{\cx}}\norm{\hb^\YC}_{\lYSub{\kySup{}}}  + \norm{\hb^\XF}_{\lXSub{\cx}}\norm{ \sb }_{\lYSub{\kySup{}}} \bigg), \label{eq:UpperboundNormPiComponent}
		\end{align}
		where we used in the second line for $(x,y) \in \XC\times \YC$ the inequality  $$\big(\cx(x) + \cy(x)\big) \eySup{}(y)  =  \cx(x)\eySup{}(y) + \kySup{}(y) \leq 2 \cx(x) \kySup{}(y).$$
		In particular, it holds by the upper bound for $\pib^\lambda$ (Proposition \ref{prop:BoundsOptimalPotentials}) that 
		\begin{align*}
			\|\pib^\lambda\|_\lXYSub{\cx\otimes\kySup{}} &= \sum_{(x,y) \in \XC\times \YC}\cx(x)\kySup{}(y) \pi^\lambda_{xy} \leq \norm{\exSup{}}^2_{\lInfX}\norm{\rb}_{\lXSub{\cx}} \norm{\sb}_{\kySup{4 }},
		\end{align*}
		which is finite since $(\rb,\sb)\in \lXSub{\cx}\times \lYSub{\kySup{4 }}$. By the same upper bound we see that the term $\norm{\pib^\lambda/ \, \rb  \otimes  \sb}_{\lInfXYSub{1_{\XC}\otimes\eySup{}}}$ is bounded by
		$$%
		\sup_{(x,y) \in \XC\times \YC} \frac{\pib^\lambda_{xy}/(r_xs_y)}{\eySup{}(y)} \leq \norm{\exSup{}}^2_\lInfX \langle \eySup{}, \sb\rangle. 
		$$
		This also shows that $\hb^{\XF\times \YC}$ continuously depends on $(\hb^{\XF, \infty}, \hb^{\YC, \infty}_*) \in \lInfXSub{}\times \lInfYSub{\eySup{}}$ and $(\hb^{\XF}, \hb^{\YC}_*)\in \lXSub{\cx}\times \lYSub{\kySup{4 }}$ and thus concludes the proof on well-definedness and boundedness of the operator of the claim.
		\end{proof}
		\begin{remark}[On costs with unbounded variation in both arguments]\label{rem:IssuesUnboundedOperator}
		Crucial for the well-definedness of the proposed derivative for the EROT plan (Proposition \ref{prop:ProposedDerivativeIsBounded}) for the setting $\norm{\cxpb- \cxmb}_{\lInfX}< \infty$ is that the operator $\AC \colon \lInfX\times \lInfYYSub{\eySup{}} \rightarrow \lInfX\times \lInfYYSub{\eySup{}}$ from the l.h.s. of \eqref{eq:LinearEquationForInverse} has a continuous inverse for which we employ the Neumann series calculus. To this end, we verify that there exists $\epsilon >0$ and construct a function $\teySup{} \colon \YC \rightarrow [1,\infty)$ with $\teySup{} \geq k \eySup{}$ for some  $k>0$ such that  \begin{align*}
			\sum_{y \in \YCC} \teySup{}(y)\frac{\pi^\lambda_{xy}}{ r_x} &\leq 1- \epsilon && \!\!\!\!\!\!\!\!\!\!\!\!\!\!\!\!\!\!\!\!\!\!\!\!\!\!\!\!\!\!\text{ for all }x \in \XC,  \\
			\frac{1}{\teySup{}(y)}\sum_{x \in \XC}\frac{\pi^\lambda_{xy}}{ s_y} &\leq  1&& \!\!\!\!\!\!\!\!\!\!\!\!\!\!\!\!\!\!\!\!\!\!\!\!\!\!\!\!\!\!\text{ for all }y \in \YCC.
		\intertext{	Generalizing this approach to ground costs with unbounded variation in both components $\norm{\cxpb- \cxmb}_{\lInfX}= \infty$ and $\norm{\cypb- \cymb}_{\lInfY}= \infty$ would require existence of $\epsilon>0$ and suitable functions $\psi_\XC\colon \rightarrow [1,\infty), \psi_\YC\colon \rightarrow [1,\infty)$ with $\psi_\XC \geq k \exSup{}$ and $\psi_\YC \geq k \eySup{}$ for some $k >0$ such that}
			\frac{1}{\psi_\XC(x)}\sum_{y \in \YCC} \psi_\YC(y)\frac{\pi^\lambda_{xy}}{ r_x} &\leq 1- \epsilon &&\!\!\!\!\!\!\!\!\!\!\!\!\!\!\!\!\!\!\!\!\!\!\!\!\!\!\!\!\!\!\text{ for all }x \in \XC,  \\
			\frac{1}{\psi_\YC(y)}\sum_{x \in \XC}\psi_\XC(x)\frac{\pi^\lambda_{xy}}{ s_y} &\leq 1&&\!\!\!\!\!\!\!\!\!\!\!\!\!\!\!\!\!\!\!\!\!\!\!\!\!\!\!\!\!\!\text{ for all }y \in \YCC.
			\end{align*}
		\end{remark}

		\begin{proof}[Proof of Proposition \ref{prop:DerivativeFiniteSupportPerturbation}]
			For the proof we rewrite in Step 1 the optimality criterion from Lemma \ref{lem:RewrittenOptimalityCriterionWithFunctionF} and obtain an equality consisting of the proposed derivative and additional error terms. In Step 2, we conclude using Appendix \ref{subsec:App:TechnicalLemmas} that these error terms tend to zero.

			\emph{Step 1 - Rewriting of optimality criterion (Lemma \ref{lem:RewrittenOptimalityCriterionWithFunctionF}).}
		Define the mapping from probability measures to optimizers of \eqref{eq:EntropicOptimalTransport} and \eqref{eq:DualEntropicOptimalTransportProblem}
		\begin{equation*}
		  \begin{aligned}\varthetab\colon \big(\probset{\XF} &\cap \lXSub{\cx}\big) \times \big(\probset{\YC}_*\cap \lYYSub{\kySup{4}}\big) \rightarrow \probset{\XF\times \YC} \times\R^\XC\times\R^{\YCC}, \\
		&\quad \quad \quad (\rb, \sb_*) \mapsto \Big(\pib^\lambda(\rb, \sb), \alphab^\lambda(\rb, \sb), \betab_*^\lambda(\rb, \sb)\Big),
		\end{aligned}%
		\end{equation*}
		where we select $(\alpha^\lambda, \beta^\lambda_*)$ according to Proposition \ref{prop:ConvergenceOptimalDualSolutions}, i.e., such that the element $(0,\beta^\lambda_*)\in \R^\YC$ represents a dual optimizer. %
		Recalling the function $\FC$ from Section \ref{sec:SensitivityAnalysis}, it holds by Lemma \ref{lem:RewrittenOptimalityCriterionWithFunctionF} for each $n\geq N$ that 
		$$
				0=\FC\big(\varthetab({\rb}, {\sb}_*), {\rb}, {\sb}_*\big) =\FC\big(  \varthetab({\rb}+t_n\hat{\hb}^\XF_l, {\sb}_*+t_n\hat{\hb}^\YC_{*,l}), {\rb}+t_n\hat{\hb}^\XF_l, {\sb}_*+t_n\hat{\hb}^\YC_{*,l}\big),
				$$
			which yields
			\begin{align*}
				0 =& \;\;\;\; \bigg(\FC\big(\varthetab({\rb}, {\sb}_*), {\rb}, {\sb}_*\big)-
				\FC\big(  \varthetab({\rb}, {\sb}_*), {\rb}+t_n\hat{\hb}^\XF_l, {\sb}_*+t_n\hat{\hb}^\YC_{*,l}\big)\bigg) \\
				 &-\bigg(\FC\big(  \varthetab({\rb}+t_n\hat{\hb}^\XF_l, {\sb}_*+t_n\hat{\hb}^\YC_{*,l}), {\rb}+t_n\hat{\hb}^\XF_l, {\sb}_*+t_n\hat{\hb}^\YC_{*,l}\big) \\
				 & \quad -   
				\FC\big(  \varthetab({\rb}, {\sb}_*), {\rb}+t_n\hat{\hb}^\XF_l, {\sb}_*+t_n\hat{\hb}^\YC_{*,l}\big)\bigg).
				\end{align*}
			Adding another three terms of non-trivial zeros leads to the following equation \begin{align}
				& \left[\DCW_{\pib, \alphab, \betab_*|(\varthetab( \rb, {\sb}_*),  \rb, {\sb}_* )}\FC\right]\left( \varthetab(\rb+t_n\hat{\hb   }^\XF_l, \sb_*+t_n\hat{\hb   }^\YC_{*,l}) - \varthetab(\rb, \sb_*) \right)\notag\\ 
				&=- t_n\left[\DCW_{\rb, \sb_*|(\varthetab( \rb, {\sb}_*),  \rb, {\sb}_*)}\FC\right] (\hat\hb^{\XF}_l, \hat \hb^{\YC}_{*,l})\label{eq:FinitePerturbationProof_1}
				\\ 
				&+  \bigg(\FC\big(\varthetab({\rb}, {\sb}_*), {\rb}, {\sb}_*\big)- \FC\big(  \varthetab({\rb}, {\sb}_*), {\rb}+t_n\hat{\hb}^\XF_l, {\sb}_*+t_n\hat{\hb}^\YC_{*,l}\big) \label{eq:FinitePerturbationProof_2}\\
				&\quad + t_n\left[\DCW_{\rb, \sb_*|(\varthetab( \rb, {\sb}_*),  \rb, {\sb}_*)} \FC\right](\hat\hb^{\XF}_l, \hat \hb^{\YC}_{*,l}) \bigg) \notag\\
				 &+ \bigg(   
				\FC\big(  \varthetab({\rb}, {\sb}_*), {\rb}+t_n\hat{\hb}^\XF_l, {\sb}_*+t_n\hat{\hb}^\YC_{*,l}\big)\label{eq:FinitePerturbationProof_3} \\
				& \quad - \FC\big(  \varthetab({\rb}+t_n\hat{\hb}^\XF_l, {\sb}_*+t_n\hat{\hb}^\YC_{*,l}), {\rb}+t_n\hat{\hb}^\XF_l, {\sb}_*+t_n\hat{\hb}^\YC_{*,l}\big)\notag\\
				& \quad + \left[\DCW_{\pib, \alphab, \betab_*|(\varthetab( \rb, {\sb}_*),  \rb + t_n \hat{\hb   }_l^\XF, {\sb}_* + t_n \hat{\hb   }_{*,l}^\YC)}\FC\right]\left( \varthetab(\rb+t_n\hat{\hb}^\XF_l, \sb_*+t_n\hat{\hb}^\YC_{*,l}) - \varthetab(\rb, \sb_*) \right)\bigg) \notag \\
				& + \bigg( \left[\DCW_{\pib, \alphab, \betab_*|(\varthetab( \rb, {\sb}_*),  \rb , {\sb}_*)}\FC\right]\left( \varthetab(\rb+t_n\hat{\hb}^\XF_l, \sb_*+t_n\hat{\hb}^\YC_{*,l}) - \varthetab(\rb, \sb_*) \right)\label{eq:FinitePerturbationProof_4}\\
				&\quad - \left[\DCW_{\pib, \alphab, \betab_*|(\varthetab( \rb, {\sb}_*),  \rb + t_n \hat \hb_l^\XF, {\sb}_* + t_n \hat{\hb   }_{*,l}^\YC)}\FC\right]\left( \varthetab(\rb+t_n\hat{\hb}^\XF_l, \sb_*+t_n\hat{\hb   }^\YC_{*,l}) - \varthetab(\rb, \sb_*) \right)\bigg),\notag
				\end{align}
			where $\D_{\rb, \sb_*}\FC$, $\D_{\pib, \alphab, \betab_*}\FC$ represent the na\" ive component-wise derivatives as employed in Section \ref{sec:SensitivityAnalysis}. 
			 For the term in \eqref{eq:FinitePerturbationProof_1} we already know by Proposition \ref{prop:ProposedDerivativeIsBounded} that applying $\big[\DCW_{\pib, \alphab, \betab_*|(\varthetab( \rb, {\sb}_*),  \rb, {\sb}_* )}\FC\big]^{-1}$ is well-defined.
			  
			 \emph{Step 2 - Convergence of difference quotient to asserted derivative.}
			  We need to show that applying $\big[\DCW_{\pib, \alphab, \betab_*|(\varthetab( \rb, {\sb}_*),  \rb, {\sb}_* )}\FC\big]^{-1}$ on each of the summands \eqref{eq:FinitePerturbationProof_2}, \eqref{eq:FinitePerturbationProof_3},  and \eqref{eq:FinitePerturbationProof_4} is also well-defined and that the $\lXYSub{\cx\oplus\cy}$-norm of the resulting component in $\lXYSub{\cx\oplus\cy}$, i.e., the $\pib$-component decreases with order $o(t_n)$ for $n\rightarrow \infty$. This part of the proof is technical and deferred to Lemma \ref{lem:FinishingLemma} in Appendix \ref{subsec:App:TechnicalLemmas}. Most notably, for this purpose we require $\sb\in \lXSub{\kxSup{4}}$.
			As a consequence, we obtain for $n\rightarrow \infty$ that \begin{align*}&\;\bigg\| \frac{\pib^\lambda(\rb + t_n\hat{\hb}^\XF_l,{\sb} + t_n\hat{\hb}^\YC_{l}) - \pib^\lambda(\rb,{\sb})}{t_n}  - \DH_{| \rb, {\sb} }\pib^\lambda(\hat\hb^{\XF}_l, \hat \hb^{\YC}_{l})\bigg\|_{\lXYSub{\cx\oplus\cy}}\\ 
			 =&\;\Bigg\| \frac{\varthetab_{\pib}(\rb + t_n\hat{\hb}^\XF_l,{\sb}_* + t_n\hat{\hb}^\YC_{*,l}) - \varthetab_{\pib}(\rb,{\sb}_*)}{t_n} \\
				&\;+ \bigg(\left[\DCW_{\pib, \alphab, \betab_*|(\varthetab( \rb, {\sb}_*),  \rb, {\sb}_* )}\FC\right]^{-1}_{\pib}\circ \DCW_{\rb, \sb_*|(\varthetab( \rb, {\sb}_*),  \rb, {\sb}_*)} \FC\bigg)(\hat\hb^{\XF}_l, \hat \hb^{\YC}_{*,l}) \Bigg\|_{\lXYSub{\cx\oplus\cy}} \\
				=& \; o(1),\end{align*}
				which proves the assertion.
			\end{proof}

		\begin{proof}[Proof of Proposition \ref{prop:TransportPlanIsLipschitz}]	
			The proof consists of three steps. After introducing in Step~1 some notation for finitely supported measures, we show in Step 2 and 3 local Lipschitz properties for the EROT potentials and plan, respectively, for finitely supported probability measures. In Step 4 we lift the bounds to countably supported measures. Overall, we prove that the operator norm of the proposed derivative from Proposition \ref{prop:ProposedDerivativeIsBounded} can be uniformly bounded for pairs of probability measures in a sufficiently small neighborhood of $(\rb,\sb)$. %

			\emph{Step 1 - Conventions and notation.}
			We first set $\tau \coloneqq \sup_{(\tilde\rb, \tilde\sb) \in B_{1}}\langle \eySup{}, \tilde \sb \rangle\in (0,\infty) $ and define \begin{align*}\kappa&\coloneqq 1+\tau\norm{\exSup{}}^2_\lInfX , \quad 	
		  \eta \coloneqq \left(\tau^{2}\norm{\exSup{}}^{3}_{\lInfX}\right)^{-1},\quad \rho_0\coloneqq \frac{\eta}{4\kappa}.%
		\end{align*}
			Further, consider $(\hat \rb, \hat \sb)\in B_{\rho_0}$ with finite support and let $\hat \XC \coloneqq \supp(\hat\rb)$, $\hat \YC \coloneqq \supp(\hat\sb)$. Note that the inequality $\hat s_{y_1}\geq s_{y_1}/2>0$ holds, which asserts $y_1\in \hat \YC$. %
			For a positive function $f \colon \XC \rightarrow (0,\infty)$ define the Banach  spaces $\ell^1_f(\tXC)\coloneqq \ell^1_{f\vert \tXC}(\tXC)$ and $\ell^\infty_f(\tXC)\coloneqq \ell^\infty_{f\vert \tXC}(\tXC)$. With this notation, we define the operator $\hat \Ab_*$ as $\Ab_*$ from Section~\ref{sec:SensitivityAnalysis} restricted to $\ell^1_{\cx}(\tXC)\times \ell^1_{\kySup{4 }}(\tYCC)$ and, similarly, introduce $\hat \FC$ as $\FC$ from Section~\ref{sec:SensitivityAnalysis} with a modified domain and range space
			\begin{align*}\hat{\FC} \!\colon \!\! \!\Big( \!\ell^1_{\cx \oplus \cy}(\tXC\! \times \!\tYC)\!\times\! \ell^\infty_{}(\tXC)\!\times\! \ell^\infty_{\eySup{}}(\tYCC) \!\Big)\!\! \times \!\!\Big( \!\ell^1_{\cx}(\tXC)\!\times \!\ell^1_{\kySup{4 }}(\tYCC)\! \Big) \!\!
		\rightarrow  \!\R^{|\tXC\times \tYC| + |\tXC| + |\tYC| - 1}.
		\end{align*}

		\emph{Step 2 - Lipschitz modulus of EROT potentials for finitely supported $(\hat \rb,\hat \sb)\in B_{\rho_0}$.}
		By Lemma \ref{lem:RewrittenOptimalityCriterionWithFunctionF} the triplet $(\hat\pib^\lambda, \hat\alphab^\lambda, \hat\betab^\lambda)\in  \Big( \ell^1_{\cx \oplus \cy}(\tXC \times \tYC)\times \ell^\infty_{}(\tXC)\times \ell^\infty_{\eySup{}}(\hat \YC) \Big)$ are optimizers of \eqref{eq:EntropicOptimalTransport} and \eqref{eq:DualEntropicOptimalTransportProblem} for the probability measures $(\hat\rb, \hat\sb)$ if and only if $\hat{\FC}\big((\hat\pib^\lambda, \hat\alphab^\lambda, \hat\betab^\lambda_*),(\hat\rb, \hat\sb)\big) = 0$.
			Furthermore, the function $\hat{\FC}$ is Fr\'echet differentiable\footnote{A map $f\colon U\rightarrow V$ between normed vector spaces is Fr\'echet differentiable if for any $u\in U$ there is a bounded linear map $A\colon U \rightarrow V$, the Fr\'echet derivative, such that $\lim_{\norm{h}_U\!\!\!\searrow\, 0 }\norm{f(u+h) - f(u) - A(h)}_{V}/\norm{h}_U = 0$. It is a stronger notion than Hadamard differentiability, see \citet[Chapter~3]{Averbukh1967}.}, the derivative in this notion will be denoted by $\DF$. Following the arguments by \cite{klatt2018empirical} the partial derivative of $\hat\FC$ with respect to $(\hat\pib, \hat\alphab, \hat\betab_*)$ at optimizers $(\hat\pib^\lambda, \hat\alphab^\lambda, \hat\betab^\lambda_*)$ for $(\hat \rb, \hat \sb)$ in matrix representation is then given by 
			$$\begin{aligned}
			 & \;\left[\DF_{\hat\pib, \hat\alphab, \hat\betab_*|( \hat\pib^\lambda, \hat\alphab^\lambda, \hat\betab_*^\lambda, \hat \rb, \hat\sb_* )}\hat\FC\right] \\
			= &\; \begin{pmatrix}
				\Id_{\ell^1_{\cx \oplus \cy}(\tXC \times \tYC)} &\quad  \frac{1}{\lambda}\exp\left( \frac{1}{\lambda}\left[\hat\Ab_*^T\big(\hat\alphab^\lambda, \hat\betab^\lambda_*\big) - \cb \right]\right)\odot (\hat\rb\otimes \hat\sb)\odot \hat\Ab_*^T\\
				\hat\Ab_*& 0
			\end{pmatrix},\end{aligned} $$ 
			which is an invertible operator since the identity operator is invertible in conjunction with $\Ab_*$ having full rank of order $|\hat\XC|+|\hat\YC| - 1$, and because $\frac{1}{\lambda}\exp\left( \frac{1}{\lambda}\left[\Ab_*^T\big(\hat\alphab^\lambda, \hat\betab^\lambda_*\big) - \cb \right]\right)\odot (\hat\rb\otimes \hat\sb)$ is component-wise strictly positive. By the implicit function theorem this induces a mapping on an open set $\mathcal{U} \subseteq \ell^1_{\cx}(\tXC)\times \ell^1_{\kySup{4 }}(\tYCC)$ with $(\hat \rb, \hat \sb)\in \mathcal{U}$ 
			$$\hat\vartheta \colon \mathcal{U} \rightarrow \Big( \ell^1_{\cx \oplus \cy}(\tXC \times \tYC)\times \ell^\infty_{\exSup{}}(\tXC)\times \ell^\infty_{\eySup{}}(\tYCC) \Big)$$
			such that for any $(\overline \rb, \overline \sb_*) \in \mathcal{U}$ the relation $\hat\FC(\hat\vartheta(\overline \rb, \overline \sb), (\overline \rb, \overline \sb)) = 0$ holds. In particular, for $(\overline \rb, \overline \sb_*)\in \mathcal{U}\cap \mathcal{P}(\hat \XC)\times \mathcal{P}(\hat\YC)_*$ it follows that $\hat\vartheta(\overline \rb, \overline \sb)$ coincides with  the triplet of EROT optimizers $(\hat\pib^\lambda, \hat\alphab^\lambda, \hat\betab^\lambda_*)$ for these respective probability measures. 
		  Further, the implicit function theorem yields that $\hat\vartheta$ is Fr\'echet differentiable at $(\hat{\rb},\hat{\sb})$ with derivative 
			$$\DF_{|(\hat{\rb},\hat{\sb}_*)} \vartheta = -\left[\DF_{\pib, \alphab, \betab_*|( \varthetab(\hat \rb, \hat\sb_*), \hat \rb, \hat\sb_* )}\hat \FC\right]^{-1} \circ  \DF_{\rb, \sb_*|( \varthetab(\hat \rb, \hat\sb_*), \hat \rb, \hat\sb_*) }\hat\FC.$$
			Hence, it remains to bound this operator. Adapting the notation of the proof for Proposition \ref{prop:ProposedDerivativeIsBounded} we know that there exist suitable operators $\hAC,\hBC$ such that the derivative for the component of $\hat \vartheta$ in  $\ell^\infty_{}(\tXC)\times \ell^\infty_{\eySup{}}(\hat \YC)$, i.e., the component for EROT potentials, is given by $\hAC^{-1}\hBC$. For the operator $\hat Q$ we know that $$\begin{aligned}\norm{\hat Q}_{OP} &\leq \lambda(1 + \kySup{4 }(y_1))  \norm{\exSup{}}_{\lInfX}^2\langle \eySup{}, \sb\rangle \leq \lambda (1+ \kySup{4 }(y_1) )\kappa  \eqqcolon \Lambda_{1} < \infty.
		\end{aligned}
		$$
			For a bound on the operator norm of $\hAC$ we obtain by 
			Proposition \ref{prop:BoundsOptimalPotentials} the lower bound  $\min_{x \in \tXC}\pi^\lambda_{xy_1}(\hat \rb, \hat \sb)/\hat r_x \geq \eta$. Moreover, we choose $N\in \N$ such that %
			\begin{align*}
				\sum_{i = N+1}^{\infty} \kySup{4 }(y_i) \kappa s_{y_i}\leq \frac{\eta}{4}.
			\end{align*}
			By definition of $\rho_0$ we obtain for all $(\overline\rb, \overline\sb) \in B_{\rho_0}$ that 
			\begin{align*}
				\norm{\overline s - s}_{\lXSub{\kxSup{2}}} =
				\sum_{i = 1}^{\infty} \kySup{4 }(y_i)   | \overline s_{y_i} -s_{y_i}|\leq \rho_0 = \frac{\eta}{4 \kappa},%
			\end{align*}
			which  by our choice on $N$ yields for each $(\overline\rb, \overline\sb) \in B_{\rho_0}$ and  $x \in \XC$ that 
			\begin{align*}
			\sum_{i = N+1}^{\infty} \big(\eySup{}(y_i) - 1\big)\frac{\pi^\lambda_{xy_i}(\overline\rb, \overline\sb)}{\overline r_x}
				\leq &\;\!\!\!\sum_{i = N+1}^{\infty} \big(\eySup{}(y_i) - 1\big)\eySup{}(y_i)\norm{\exSup{}}_\lInfX^2\langle \eySup{}, \overline s\rangle\overline s_{y_i}\\
				\leq &\; \!\!\!\sum_{i = N+1}^{\infty} \kySup{4 }(y_i) \kappa \overline s_{y_i} \leq\!\!\! \sum_{i = N+1}^{\infty} \kySup{4 }(y_i) \kappa (| \overline s_{y_i} - s_{y_i}|  + s_{y_i}) \leq  \frac{\eta}{2}.
			\end{align*}
			In particular, it follows that $\sum_{i = N+1}^{\infty} \big(\eySup{}(y_i) - 1\big)\pi^\lambda_{xy_i}(\hat \rb, \hat \sb)/\hat r_x\leq \frac{\eta}{2}$ for all $x \in \XC$ and the quantity $L_\YC = \sup_{i = 1, \dots N}\eySup{}(y)$ is finite. Hence, by the Neumann series calculus we obtain that the operator norm of $\hAC^{-1}$ can be bounded by $$\norm{\hAC^{-1}}_{OP} \leq \frac{4L_\YC}{\eta} \eqqcolon \Lambda_2,$$ 
			which yields $\|\hAC^{-1} \hBC\|_{OP} \leq \Lambda_1 \Lambda_2 \eqqcolon \tilde \Lambda'$. By definition, this bound is independent of $(\hat \rb, \hat \sb)$. This means for any two pairs of probability measures $(\tilde\rb, \tilde\sb), (\tilde \rb', \tilde \sb')\in B_{\rho_0}$ with finite, coinciding support it follows that \eqref{eq:LocalLipschitzForDualOptimizer} is valid for the Lipschitz-constant $\tilde \Lambda'$. Moreover, by Proposition \ref{prop:ConvergenceOptimalDualSolutions} we note that \eqref{eq:LocalLipschitzForDualOptimizer} generalizes to the setting of $\supp(\tilde \rb') \subseteq \supp(\tilde \rb)$ and $\supp(\tilde \sb') \subseteq \supp(\tilde\sb)$.
			
			\emph{Step 3 - Lipschitz modulus of EROT plan for finitely supported $(\hat \rb,\hat \sb)\in B_{\rho_0}$.}
			Next, we derive the Lipschitz property for the EROT plan $\pib^\lambda$ in case of finitely supported probability measures. To this end, we again consider the pair $(\hat \rb, \hat \sb)$ and note by \eqref{eq:ProofWelldefinedOperatorDefinitionH^XFtimesYC} from the proof of Proposition \ref{prop:ProposedDerivativeIsBounded} that the derivative for the component of $\hat\vartheta$ in $\ell^1_{\cx\oplus\cy}(\tXC \times \tYC)$, denoted by $\vartheta_{\hat \pib}$, is given~by  
			$$\begin{aligned}\DF_{|\hat \rb, \hat \sb_*}\hat \vartheta_{\hat \pib} \colon &\ell^1_{\cx}(\tXC)\times\ell^1_{\kySup{4 }}(\tYCC) \rightarrow  \ell^1_{\cx\oplus\cy}(\tXC \times \tYC),\\&(\hat h^\XC, \hat h^\YC_*) \mapsto \frac{\pib^\lambda(\hat \rb, \hat \sb)}{\lambda}\odot \hat\Ab_*^T \hAC^{-1}\hBC (\hat\hb^\XC, \hat\hb^\YC_*) + \frac{\pib^\lambda(\hat \rb, \hat \sb)}{\hat \rb \otimes \hat \sb }\odot \Big[  \hat \rb \otimes \hat \hb^\YC + \hat \hb^\XF \otimes \hat \sb \Big].\end{aligned}$$
			Similar to the upper bound for \eqref{eq:UpperboundNormPiComponent} we see that 
			$$\begin{aligned}
				&\norm{\DF_{|\hat \rb, \hat \sb_*}\hat \vartheta_{\hat \pib} }_{OP} \leq \sup_{(\overline\rb, \overline\sb)\in B_{\rho_0}} \bigg(2\lambda^{-1}\norm{\pib^\lambda(\overline\rb, \overline\sb)}_\lXYSub{\cx\otimes\kySup{}}\Lambda' \\
			  &\quad\quad\quad\quad\quad\quad\quad\quad +  2\norm{\frac{\pib^\lambda(\overline\rb, \overline\sb)}{\overline\rb  \otimes  \overline\sb }}_{\lInfXYSub{1_\XC\otimes\eySup{}}}\Big( \norm{\overline \rb }_{\lXSub{\cx}} + \norm{\overline\sb }_{\lYSub{\kySup{}}}\Big)\bigg)\eqqcolon \tilde\Lambda < \infty
			\end{aligned}$$
			is finite. 
			Since the upper bound is independent of $(\hat \rb, \hat \sb)$ we obtain for any two pairs of probability measures $(\tilde\rb, \tilde\sb), (\tilde\rb', \tilde\sb')\in B_{\rho_0}$ with coinciding support the inequality 
			$$ \norm{\pib^\lambda(\tilde\rb, \tilde\sb) - \pib^\lambda(\tilde\rb', \tilde\sb')}_{\lXYSub{\cx\oplus\cy}}
		  \leq \tilde\Lambda\norm{(\tilde\rb, \tilde\sb) - (\tilde\rb', \tilde\sb') }_{\lXSub{\cx} \times\lYSub{\kySup{4 }}}.$$
		  Again invoking Proposition \ref{cor:ConvergenceEROTP}, this inequality also holds in case of $\supp(\tilde \rb) \subseteq \supp( \rb)$ and $\supp(\tilde \sb) \subseteq \supp(\sb)$ and thus finishes the first step of the proof.
			  
		  \emph{Step 4 - Extension of Lipschitz bounds to unbounded support.}
			It remains to show that these Lipschitz bounds extend to pairs $(\tilde\rb, \tilde\sb), (\tilde\rb', \tilde\sb')\in B_{\rho_0}$ with $\supp(\tilde \rb') \subseteq \supp( \rb)$, $\supp(\tilde \sb') \subseteq \supp(\sb)$ where at least one (potentially all) probability measure has infinite support. Concerning the Lipschitz property for the EROT plan we consider finite support approximations, introduced in \eqref{eq:FiniteSupportApproximation}, for probability measures. By Proposition \ref{cor:ConvergenceEROTP} and Lemma \ref{lem:FiniteSupportApproximationIsUniformlyGoodForLargeL} it follows for given $\epsilon>0$ that there exists $l\in \N$ such that $(\hat{\tilde\rb}_l, \hat{\tilde\sb}_l),(\hat{\tilde\rb}_l', \hat{\tilde\sb}'_l) \in B_{\rho_0}$ with  $$\begin{aligned} \norm{\pib^\lambda(\tilde\rb, \tilde\sb)  - \pib^\lambda(\hat{\tilde\rb}_l, \hat{\tilde\sb}_l)}_{\lXYSub{\cx\oplus\cy}}< \frac{\epsilon}{2}, \quad 
			\norm{\pib^\lambda(\tilde\rb', \tilde\sb')  - \pib^\lambda(\hat{\tilde\rb}_l', \hat{\tilde\sb}'_l)}_{\lXYSub{\cx\oplus\cy}}< \frac{\epsilon}{2}. \end{aligned}$$
			By our Lipschitz bounds from Step 3 it then follows that $$\begin{aligned}
			\norm{\pib^\lambda(\tilde\rb, \tilde\sb) - \pib^\lambda(\tilde\rb', \tilde\sb')}_{\cx\oplus\cy} &\leq \norm{\pib^\lambda(\hat{\tilde\rb}_l, \hat{\tilde\sb}_l) - \pib^\lambda(\hat{\tilde\rb}_l', \hat{\tilde\sb}'_l)}_{\cx\oplus\cy} + \epsilon\\
			&\leq \tilde\Lambda \norm{(\hat{\tilde\rb}_l, \hat{\tilde\sb}_l) - (\hat{\tilde\rb}_l', \hat{\tilde\sb}'_l)}_{\lXSub{\cx} \times\lYSub{\kySup{4 }}} + \epsilon \\
			& \leq  \tilde\Lambda (\cx(x_1) + \kySup{4 }(y_1)) \norm{(\tilde\rb, \tilde\sb) - (\tilde\rb', \tilde\sb') }_{\lXSub{\cx} \times\lYSub{\kySup{4 }}} + \epsilon.
		\end{aligned}$$	
		As $\epsilon>0$ can be chosen arbitrarily small we deduce the local Lipschitz property with modulus $\Lambda \coloneqq\tilde\Lambda (\cx(x_1) + \kySup{4 }(y_1))$.
		 
		 For the EROT potentials, we only state the proof for $\alphab^\lambda$, the corresponding argument for $\betab^\lambda$ is analogous. 
			Consider $x \in \supp(\tilde \rb')$, then it follows by Proposition \ref{prop:ConvergenceOptimalDualSolutions} for given $\epsilon>0$ that there exists $l\in \N$ such that  $|\alpha^\lambda_{x}(\tilde\rb,\tilde\sb) -\alpha^\lambda_{x}(\hat{\tilde\rb}_l, \hat{\tilde\sb}_l)| < \epsilon/2$ and $|\alpha^\lambda_{x}(\tilde\rb',\tilde\sb') - \alpha^\lambda_{x}(\hat{\tilde\rb}'_l, \hat{\tilde\sb}'_l)| < \epsilon/2$ as well as $(\hat{\tilde\rb}_l, \hat{\tilde\sb}_l),(\hat{\tilde\rb}'_l, \hat{\tilde\sb}'_l) \in B_{\rho_0}$. Applying our Lipschitz bound form Step 2 then yields %
			 \begin{align*}
				|\alpha^\lambda_{x}(\tilde\rb, \tilde\sb) - \alpha^\lambda_{x}(\tilde \rb', \tilde \sb')| &\leq |\alpha^\lambda_{x}(\hat{\tilde \rb}_l, \hat{\tilde\sb}_l) - \alpha^\lambda_{x}(\hat{\tilde \rb}'_l, \hat{\tilde \sb}'_l)| + \epsilon  \\
				&\leq \tilde\Lambda' \norm{(\hat{\tilde\rb}_l, \hat{\tilde\sb}_l) - (\hat{\tilde\rb}'_l, \hat{\tilde\sb}'_l) }_{\lXSub{\cx} \times\lYSub{\kySup{4 }}} + \epsilon \\
			&\leq  \tilde\Lambda' (\cx(x_1) + \kySup{4 }(y_1)) \norm{(\tilde\rb, \tilde\sb) - (\tilde\rb', \tilde\sb') }_{\lXSub{\cx} \times\lYSub{\kySup{4 }}} + \epsilon.
			\end{align*}
			Choosing $\epsilon$ arbitrarily small and setting $\Lambda'\coloneqq \tilde\Lambda' (\cx(x_1) + \kySup{4 }(y_1)) $ gives $$ |\alpha^\lambda_{x}(\tilde\rb, \tilde\sb) - \alpha^\lambda_{x}(\tilde \rb', \tilde \sb')| \leq \Lambda' \norm{(\tilde\rb, \tilde\sb) - (\tilde\rb', \tilde\sb') }_{\lXSub{\cx} \times\lYSub{\kySup{4 }}}.$$
			Taking the supremum over $x \in \supp(\tilde\rb')$ then proves the claim.
		\end{proof}

	\section{Lemmata for Hadamard Differentiability of Entropic \\Optimal Transport Plan}\label{subsec:App:TechnicalLemmas}
		For the proof of Lemma \ref{lem:FinishingLemma} we employ the following result on invertibility of the operator $\big[\DCW_{\pib, \alphab, \betab_*|(\varthetab( \rb, \sb_*),  \rb, \sb_* )}\FC\big]^{-1}_\pib$ and the norm of the resulting element. 
	
	\begin{lemma}\label{lem:InverseOfDerivativeOfFIsWellDefined}
		Let  $ \rb\in \PC(\XC)\cap\lXSub{\cx}$ and $ \sb\in \PC(\YC)\cap\lYSub{\kySup{4}}$ be two probability measures with full support and consider a monotone, possibly unbounded, function $\psi\colon \N \rightarrow [1, \infty)$ such that $\sum_{i = 1}^{\infty}{\psi(i) \kySup{4}(y_i)  s_{y_i}} < \infty.$ Further, define the function %
		\begin{equation} \Gamma \colon \R^{\XC\times \YC} \rightarrow [0, \infty],\quad \xib \mapsto \max\left\{\sup_{x \in \XF} \sum_{y \in \YC}\frac{|\xi_{xy}|}{  r_{x} }, \sup_{y_i\in \YC}\sum_{x \in \XF}\frac{|\xi_{xy_i}|}{ s_{y_i} \psi(i)\eySup{3}(y_i)} \right\}\notag\label{eq:conditionForInverseOfDerivativeOfF}\end{equation}
		and consider an element $\xib\in \lXYSub{\cx\oplus \cy}$ with $\Gamma(\xib) < \infty$.
		Then applying the operator $\big[\DCW_{\pib, \alphab, \betab_*|(\varthetab( \rb, \sb_*),  \rb, \sb_* )}\FC\big]^{-1}_\pib$ onto $(\xib, 0, 0)\in \R^{\XC\times \YC}\times \R^\XC\times \R^{\YCC}$ is well-defined and gives an element in $ \lXYSub{\cx\oplus \cy}$.
		In particular, there is $\kappa>0$ independent of $\xib$ such that \begin{align}& \norm{\left[\DCW_{\pib, \alphab, \betab_*|( \varthetab( \rb, \sb_*),  \rb, \sb_* )}\FC\right]^{-1}_\pi(\xib, 0, 0)}_{\lXYSub{\cx\oplus \cy}}
		\leq  
		\kappa \Gamma(\xib).\notag\end{align}
		\end{lemma}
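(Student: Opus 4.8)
\textbf{Proof plan for Lemma \ref{lem:InverseOfDerivativeOfFIsWellDefined}.}

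The plan is to trace through the explicit solution of the linear system \eqref{eq:MasterEquation} already derived in the proof of Proposition \ref{prop:ProposedDerivativeIsBounded}, but now for the right-hand side $(\xib,0,0)$ in place of $(-\tfrac{\pib^\lambda}{\rb\otimes\sb}\odot[\rb\otimes\hb^\YC+\hb^\XF\otimes\sb], -\hb^\XF, -\hb^\YC_*)$. First I would set up the notation: applying $[\DCW_{\pib,\alphab,\betab_*|(\varthetab(\rb,\sb_*),\rb,\sb_*)}\FC]^{-1}_\pib$ to $(\xib,0,0)$ means solving for $\hb^{\XC\times\YC}$ in
\begin{align*}
\hb^{\XC\times\YC} - \tfrac{\pib^\lambda}{\lambda}\odot\Ab_*^T(\hb^{\XC,\infty},\hb^{\YC,\infty}_*) &= \xib,\\
\Ab_*(\hb^{\XC\times\YC}) &= 0.
\end{align*}
Exactly as in Step 1--2 of the proof of Proposition \ref{prop:ProposedDerivativeIsBounded}, substituting the first equation into the second reduces this to a countable linear system for $(\hb^{\XC,\infty},\hb^{\YC,\infty}_*)$ of the form $\AC(\hb^{\XC,\infty},\hb^{\YC,\infty}_*)^T = \widetilde{\BC}(\xib)$, where now the new right-hand side is $(\widetilde{\BC}(\xib))_x = \sum_{y\in\YC}\xi_{xy}/r_x$ for $x\in\XC$ and $(\widetilde{\BC}(\xib))_{y_i} = \sum_{x\in\XC}\xi_{xy_i}/s_{y_i}$ for $y_i\in\YCC$, coming from $\Ab_*(\tfrac{\pib^\lambda}{\lambda}\odot\Ab_*^T(\cdot))$ absorbing the marginal normalizations $\sum_y \pi^\lambda_{xy}/r_x = 1$ and $\sum_x \pi^\lambda_{xy}/s_y = 1$.

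The key observations are then: (i) the operator $\AC$ on the left is exactly the same operator as in Step 4 of Proposition \ref{prop:ProposedDerivativeIsBounded}, hence invertible with $\|\AC^{-1}\|_{OP}\leq 4L_\YC/\eta$ once we pass to the weight $\teySup{}$ built from the strictly positive $\eta = \inf_x \pi^\lambda_{xy_1}/r_x$ (which uses $\norm{\cxpb-\cxmb}_\lInfX<\infty$); and (ii) the new right-hand side $\widetilde\BC(\xib)$ is controlled by $\Gamma(\xib)$ precisely in the norm of $\lInfXSub{}\times\lInfYYSub{\eySup{}}$ — indeed the first block contributes $\sup_x|\sum_y\xi_{xy}/r_x|\leq \sup_x\sum_y|\xi_{xy}|/r_x$ and the second block, after dividing by $\eySup{}(y_i)$, contributes $\sup_{y_i}\eySup{}(y_i)^{-1}|\sum_x\xi_{xy_i}/s_{y_i}|$, and here the auxiliary monotone function $\psi$ with $\sum_i\psi(i)\kySup{4}(y_i)s_{y_i}<\infty$ lets us dominate $\eySup{}(y_i)^{-1}\cdot$(that sum) by $\psi(i)\eySup{3}(y_i)\cdot\Gamma(\xib)$ against a summable envelope when we later push through the $\pib^\lambda$-weights. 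Concretely I would show $\|\widetilde\BC(\xib)\|_{\lInfXSub{}\times\lInfYYSub{\eySup{}}}\lesssim \Gamma(\xib)$ with constant depending only on $\sb,\lambda$ and the dominating functions, so that $\|(\hb^{\XC,\infty},\hb^{\YC,\infty}_*)\|\leq \|\AC^{-1}\|_{OP}\cdot C\,\Gamma(\xib)$.

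Finally, to bound the $\pib$-component itself, recover $\hb^{\XC\times\YC} = \xib + \tfrac{\pib^\lambda}{\lambda}\odot\Ab_*^T(\hb^{\XC,\infty},\hb^{\YC,\infty}_*)$ and estimate its $\lXYSub{\cx\oplus\cy}$-norm by triangle inequality: the term $\xib$ is in $\lXYSub{\cx\oplus\cy}$ by hypothesis (and $\norm{\xib}_{\lXYSub{\cx\oplus\cy}}$ is itself comparable to $\Gamma(\xib)$ up to the $\pib^\lambda$-bounds, or can simply be absorbed), while for the second term I would repeat the H\"older estimate from \eqref{eq:UpperboundNormPiComponent} using the upper bound $\pi^\lambda_{xy}\le r_xs_y\exSup{}(x)\eySup{}(y)\langle\exSup{},\rb\rangle\langle\eySup{},\sb\rangle$ and the inequality $(\cx(x)+\cy(y))\eySup{}(y)\le 2\cx(x)\kySup{}(y)$, giving
\[
\Big\|\tfrac{\pib^\lambda}{\lambda}\odot\Ab_*^T(\hb^{\XC,\infty},\hb^{\YC,\infty}_*)\Big\|_{\lXYSub{\cx\oplus\cy}}\le \tfrac{2}{\lambda}\|\pib^\lambda\|_{\lXYSub{\cx\otimes\kySup{}}}\big(\|\hb^{\XC,\infty}\|_{\lInfXSub{}}+\|\hb^{\YC,\infty}_*\|_{\lInfYYSub{\eySup{}}}\big),
\]
with $\|\pib^\lambda\|_{\lXYSub{\cx\otimes\kySup{}}}\le\norm{\exSup{}}_\lInfX^2\norm{\rb}_{\lXSub{\cx}}\norm{\sb}_{\kySup{4}}<\infty$. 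Chaining the three bounds produces the claimed $\kappa\,\Gamma(\xib)$ with $\kappa$ independent of $\xib$. I expect the main obstacle to be bookkeeping: making the role of $\psi$ precise, i.e. verifying that the place where $\psi$ enters $\Gamma$ matches exactly the weight needed so that the second block of $\widetilde\BC(\xib)$ lands in $\lInfYYSub{\eySup{}}$ after absorbing the $\pib^\lambda/s_y$ factors — this is why the summability $\sum_i\psi(i)\kySup{4}(y_i)s_{y_i}<\infty$ is imposed rather than the weaker $\kySup{4}\in\ls$, and getting the exponents ($\eySup{3}$ in $\Gamma$ versus $\eySup{4}$ in the summability, with one factor of $\eySup{}$ spent on the norm of $\lInfYYSub{\eySup{}}$) to line up correctly is the delicate point.
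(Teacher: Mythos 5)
Your high-level plan is the right shape — reduce the equation $\DCW_{\pib,\alphab,\betab_*}\FC(\cdot)=(\xib,0,0)$ to a Schur-complement linear system $\AC(\zetab^{\XC,\infty},\zetab^{\YC,\infty}_*)=-\lambda\tilde\xib$ for the potential increments, invert $\AC$ by Neumann series, then recover the $\pib$-component and bound it by H\"older — and this matches what the paper does. But there is a concrete error in the choice of function space that breaks the argument as you have written it.

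You want to apply $\AC^{-1}$ as constructed in Step 4 of Proposition \ref{prop:ProposedDerivativeIsBounded}, i.e.\ as a bounded operator on $\lInfXSub{}\times\lInfYYSub{\eySup{}}$. However, the element $\tilde\xib$ that $\Gamma(\xib)<\infty$ puts a bound on lives only in the strictly \emph{larger} space $\lInfXSub{}\times\lInfYYSub{\psi\eySup{3}}$: the second block of $\Gamma$ reads $\sup_{y_i}\sum_x|\xi_{xy_i}|/(s_{y_i}\psi(i)\eySup{3}(y_i))$, and (as you yourself compute) dividing by $\eySup{}(y_i)$ instead only gives $\eySup{}(y_i)^{-1}\sum_x|\xi_{xy_i}|/s_{y_i}\le\psi(i)\eySup{2}(y_i)\Gamma(\xib)$, which is unbounded in $y_i$ whenever $\psi$ or $\eySup{}$ is unbounded. (Your displayed bound $\psi(i)\eySup{3}(y_i)\Gamma(\xib)$ also misses the $\eySup{}^{-1}$ factor, but either way the conclusion $\|\tilde\xib\|_{\lInfXSub{}\times\lInfYYSub{\eySup{}}}\lesssim\Gamma(\xib)$ is false.) Since $\tilde\xib$ is not in the domain of the $\AC^{-1}$ you constructed, you cannot apply it; the phrase "against a summable envelope when we later push through the $\pib^\lambda$-weights" does not help, because the inversion must happen \emph{before} any such multiplication, not after.

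The fix — and the reason the lemma is not a trivial corollary of Proposition \ref{prop:ProposedDerivativeIsBounded} — is that the Neumann-series argument has to be re-run for $\AC$ acting on $\lInfXSub{}\times\lInfYYSub{\psi\eySup{3}}$, i.e.\ with the enlarged weight matching $\Gamma$. Concretely one needs $\sum_{y_i\in\YCC}\psi(i)\eySup{3}(y_i)\,\pi^\lambda_{xy_i}/r_x\le 1-\epsilon'$ uniformly in $x$ for some tail-adjusted weight (the analogue of $\teySup{}$). Using $\pi^\lambda_{xy}/r_x\le\norm{\exSup{}}_\lInfX^2\eySup{}(y)\langle\eySup{},\sb\rangle s_y$ this reduces to $\sum_i\psi(i)\eySup{4}(y_i)s_{y_i}<\infty$, which is exactly why the lemma assumes $\sum_i\psi(i)\kySup{4}(y_i)s_{y_i}<\infty$ rather than merely $\kySup{4}\in\ls$; this strengthened summability is what makes $\AC$ invertible on the new space, and it is used precisely here, not only in the final H\"older bound. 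Once this is in place, the solution $(\zetab^{\XC,\infty},\zetab^{\YC,\infty}_*)$ lies in $\lInfXSub{}\times\lInfYYSub{\psi\eySup{3}}$ with norm $\lesssim\Gamma(\xib)$, and your final H\"older estimate on $\zetab^{\XC\times\YC}=\xib+\tfrac{\pib^\lambda}{\lambda}\odot\Ab_*^T(\zetab^{\XC,\infty},\zetab^{\YC,\infty}_*)$ goes through with the weight $\cx\otimes\psi\kySup{3}$ in place of $\cx\otimes\kySup{}$ (again absorbed by the $\psi\kySup{4}$-summability of $\sb$). Your treatment of the first block of $\Gamma$ and the structure of the final bound are otherwise correct.
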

	
	\begin{proof}
	For the proof we first show that a suitable solution exists and then verify a suitable upper bound for that expression.  
	
		\emph{Step 1 - Derivation of solution.}
		We first show that there exists a unique element $(\zetab^{\XF\times \YC}, \zetab^{\XF,\infty},\zetab^{\YC, \infty}_*)\in \lXYSub{\cx \oplus \cy} \times \lInfXSub{}\times \lInfYYSub{\psi\eySup{3}}$ such that 
		$$\begin{pmatrix}
		\zetab^{\XF\times \YC}- \frac{\pib^\lambda}{\lambda}\odot \Ab_*^T\Big( \zetab^{\XF, \infty}, \zetab_{*}^{\YC, \infty}\Big)\\[0.15cm]
		\Ab_*(\zetab^{\XF\times \YC})
	\end{pmatrix} =  \begin{pmatrix}
		\xib\\
		\begin{pmatrix}
			0\\
			0
		\end{pmatrix}
	\end{pmatrix},$$
	where we exploited on the l.h.s. the relation between primal and dual optimizers of \eqref{eq:EntropicOptimalTransport} (Proposition \ref{prop:DualEROT}). By setting \begin{equation}\zetab^{\XF\times \YC}\coloneqq \xib + \frac{\pib^\lambda}{\lambda}\odot \Ab_*^T\Big( \zetab^{\XF, \infty}, \zetab_{*}^{\YC, \infty}\Big)\label{eq:InverseOfFDefinitinoXiXFtimesYC}\end{equation}
	we reduce the system of countably many equalities. Then it remains to find $(\zetab^{\XF,\infty},\zetab^{\YC, \infty}_*)\in \lInfXSub{} \times \lInfYYSub{\psi\eySup{3}}$ such that $$ \Ab_* \left(\frac{\pib^\lambda}{\lambda}\odot \Ab_*^T\Big( \zetab^{\XF, \infty}, \zetab_{*}^{\YC, \infty}\Big)\right) = -\Ab_*(\xib). $$
	This relation means that for all $x\in \XF$ the following equation is valid \begin{align*}
	\frac{\pi^\lambda_{xy_1}}{\lambda}\zeta^{\XF, \infty}_x + \sum_{y \in \YCC} \frac{\pi^\lambda_{xy}}{\lambda}(\zeta^{\XF, \infty}_x + \zeta^{\YC, \infty}_{*,y}) &= -\sum_{y \in \YC} \xi_{xy}\\
	 \frac{ r_x}{\lambda}\zeta^{\XF, \infty}_x + \sum_{y \in \YCC} \frac{\pi^\lambda_{xy}}{\lambda}\zeta^{\YC, \infty}_{*,y} &= -\sum_{y \in \YC} \xi_{xy},
	\end{align*}
	which is equivalent to \begin{equation} \zeta^{\XF, \infty}_x + \sum_{y \in \YCC} \frac{\pi^\lambda_{xy}}{ r_x}\zeta^{\YC, \infty}_{*,y} = -\frac{\lambda}{ r_x}\sum_{y \in \YC} \xi_{xy}.\label{eq:InverseOfFProofEquation1}\end{equation}
	Similarly, we obtain for each $y\in \YCC$ that \begin{align*}
	\sum_{x \in \XF} \frac{\pi^\lambda_{xy}}{\lambda}(\zeta^{\XF, \infty}_x + \zeta^{\YC, \infty}_{*,y}) &= -\sum_{x \in \XF} \xi_{xy},%
	\end{align*}
	which implies that 
	\begin{align}
	\left(\sum_{x \in \XF}\frac{\pi^{\lambda}}{ s_y} \zeta_{x}^{\XC, \infty}\right) + \zeta^{\YC, \infty}_{*,y}  = -\frac{\lambda}{ s_y}\sum_{x \in \XF}\xi_{xy}.\label{eq:InverseOfFProofEquation2}
	\end{align}
	The equalities \eqref{eq:InverseOfFProofEquation1} and \eqref{eq:InverseOfFProofEquation2} can therefore be represented by \begin{equation}
	  \AC(\zetab^{\XC, \infty}, \zetab^{\YC, \infty}_*) = -\lambda \tilde\xib,  \label{eq:PertubationErrorEquationForSolutions}
	\end{equation}
	where $\AC \colon \lInfXSub{}\times \lInfYYSub{\psi\eySup{3}} \rightarrow \lInfXSub{}\times \lInfYYSub{\psi\eySup{3}}$ denotes the operator from the proof of Proposition \ref{prop:ProposedDerivativeIsBounded} with different domain and range. 
	Notably, by our assumption $\Gamma(\xib)< \infty$ it follows that \begin{equation}
	  \left(\sum_{y \in \YC}\frac{ \xi_{{x_1}y}}{ r_{x_1}}, \sum_{y \in \YC} \frac{\xi_{{x_2}y}}{ r_{x_2}}, \dots, \sum_{x \in \XF}\frac{\xi_{x{y_2}}}{ s_{y_2}}, \sum_{x \in \XF}\frac{\xi_{x{y_3}}}{ s_{y_3}}, \dots \right)\in\lInfXSub{}\times \lInfYYSub{\psi\eySup{3}}.\notag%
	\end{equation}
	Moreover, since $\sum_{i = 1}^{\infty}\psi(i)\eySup{3}(y_i)s_{y_i}<\infty,$ it follows by a similar argument as in the proof of Proposition \ref{prop:ProposedDerivativeIsBounded} that $\AC$ has a bounded inverse operator and thus there exists a unique solution $(\zetab^{\XF,\infty},\zetab^{\YC, \infty}_*)\in \lInfXSub{}\times \lInfYYSub{\psi\eySup{3}}$ for equation \eqref{eq:PertubationErrorEquationForSolutions}.
		Hence, we obtain that$$\;\;\norm{(\zetab^{\XF,\infty},\zetab^{\YC, \infty}_*)}_{\lInfXSub{} \times \lInfYYSub{\psi\eySup{3}}} %
		\leq  \norm{\AC^{-1}}_{OP}\lambda \Gamma(\xib).$$
	
		\emph{Step 2 - Upper bound for norm of $\zetab^{\XF\times \YC}$.}
	 It remains to show that $\zetab^{\XF\times \YC}$, as defined in \eqref{eq:InverseOfFDefinitinoXiXFtimesYC}, is contained in $\lXYSub{\cx\oplus \cy}$. Exploiting the upper bound for $\pib^\lambda$ (Proposition \ref{prop:BoundsOptimalPotentials}) yields \begin{align*}&\norm{\zetab^{\XF\times \YC}}_\lXYSub{\cx\oplus\cy} \leq \norm{\xib^{\XF\times \YC}}_\lXYSub{\cx\oplus\cy} +  \norm{\pib^{\lambda}\odot A_*^T(\zetab^{\XF, \infty}, \zetab^{\YC, \infty}_{*})}_\lXYSub{\cx\oplus\cy} \\%
	&\leq  \sum_{(x,y) \in \XC\times \YC}\cx(x)|\xib^{\XC\times \YC}_{xy}| + \sum_{(x,y) \in \XC\times \YC}\cy(y)|\xib^{\XC\times \YC}_{xy}|
	\\
	& + \norm{\pib^{\lambda}}_{\lXYSub{\cx\otimes \psi\kySup{3}}} \cdot 2\norm{(\zetab^{\XF, \infty}, \zetab^{\YC, \infty}_{*})}_{\lInfXSub{} \times \lInfYYSub{\psi\eySup{3}}}\\
	&\leq  \sum_{(x,y) \in \XC\times \YC}\cx(x) r_x\frac{|\xi^{\XC\times \YC}_{xy}|}{r_x} + \sum_{(x,y_i) \in \XC\times \YC}\psi(i)\cy(y_i)\eySup{3}(y_i) s_{y_i} \frac{|\xi^{\XC\times \YC}_{xy_i}|}{\psi(i)\eySup{3}(y_i)s_{y_i}}\\
	& + \norm{\rb}_{\lXSub{\cx}} \norm{\sb}_{\lYSub{\psi\kySup{4}}}\norm{\exSup{}}_{\lInfX}^2\langle\eySup{},\sb\rangle 2 \norm{M^{-1}}_{OP}\lambda \Gamma(\xib)\\
	&\leq \Gamma(\xib) \bigg(\!\!\norm{\rb}_{\lXSub{\cx}} + \norm{\sb}_{\lYSub{\psi\kySup{3}}} + \norm{\rb}_{\lXSub{\cx}} \norm{\sb}_{\lYSub{\psi\kySup{4}}}
	\!\norm{\exSup{}}_{\lInfX}^2\langle\eySup{},\sb\rangle
	2 \norm{M^{-1}}_{OP}\lambda\!\bigg)\\
	&\eqqcolon \Gamma(\xib) \kappa < \infty,
	\end{align*}
	which proves that the claim. %
	\end{proof}
	
	\begin{remark}\label{rem:UnboundedDomainInverseOperator}
		We like to discuss the domain of the operator $\big[\DCW_{\pib, \alphab, \betab_*|(\varthetab( \rb, \sb_*),  \rb, \sb_* )}\FC\big]^{-1}$.
		For an element $\xib\in \lXYSub{\cx\oplus \cy}$ the previous proof shows that the element $(\xib, 0, 0)^T\in \lXYSub{\cx\oplus\cy} \times\lInfXSub{}\times \lInfYYSub{\psi\eySup{3}}$ has a well-defined image under the mapping $\big[\DCW_{\pib, \alphab, \betab_*|(\varthetab( \rb, \sb_*),  \rb, \sb_* )}\FC\big]^{-1}$ in $\lXYSub{\cx\oplus\cy} \times\lInfXSub{}\times \lInfYYSub{\psi\eySup{3}}$ if and only if
		\begin{equation}
	   \left(\sum_{y \in \YC}\frac{ \xi_{{x_1}y}}{ r_{x_1}}, \sum_{y \in \YC} \frac{\xi_{{x_2}y}}{ r_{x_2}}, \dots, \sum_{x \in \XF}\frac{\xi_{x{y_2}}}{ s_{y_2}}, \sum_{x \in \XF}\frac{\xi_{x{y_3}}}{ s_{y_3}}, \dots \right)\in\lInfXSub{}\times \lInfYYSub{\psi\eySup{3}}.\label{eq:GoodEquation}
	\end{equation}
		In particular, by Lemma \ref{lem:ExistenceUnboundedFunctionPhi} one can construct for any $\epsilon>0$ and $\xib$ satisfying \eqref{eq:GoodEquation} an element $\xib' \in \lXYSub{\cx\oplus \cy}$ such that $\norm{\xib - \xib'}_{\lXYSub{\cx\oplus \cy}}< \epsilon$ but where $\xib'$ does not fulfill \eqref{eq:GoodEquation}. Hence, the domain of $\big[\DCW_{\pib, \alphab, \betab_*|(\varthetab( \rb, \sb_*),  \rb, \sb_* )}\FC\big]^{-1}$ does not even contain an open ball in $\lXYSub{\cx\oplus \cy}$. 
	\end{remark}

	We now proceed with the proof of the error bounds from Proposition \ref{prop:DerivativeFiniteSupportPerturbation}.	
	
	\begin{lemma}\label{lem:FinishingLemma}
		Assume the same setting as in Proposition \ref{prop:DerivativeFiniteSupportPerturbation} and denote by $\vartheta$ the mapping as defined in its proof. %
		Then it follows for $n\rightarrow \infty$ that 
		\begin{align}
			& \bigg\|\left[\DCW_{\pib, \alphab, \betab_*|(\varthetab( \rb, {\sb}_*),  \rb, {\sb}_* )}\FC\right]^{-1}_\pib\bigg(\FC\big(\varthetab({\rb}, {\sb}_*), {\rb}, {\sb}_*\big)\label{eq:FinitePerturbationProof_2Proof}\\
			&- \FC\big(  \varthetab({\rb}, {\sb}_*), {\rb}+t_n\hat{\hb}^\XF_l, {\sb}_*+t_n\hat{\hb}^\YC_{*,l}\big) \notag\\
			&+ t_n\left[\DCW_{\rb, \sb_*|(\varthetab( \rb, {\sb}_*),  \rb, {\sb}_*)} \FC\right](\hat\hb^{\XF}_l, \hat \hb^{\YC}_{*,l}) \bigg)\bigg\|_{\lXYSub{\cx\oplus\cy}}\notag \\
			 +& \bigg\|\left[\DCW_{\pib, \alphab, \betab_*|(\varthetab( \rb, {\sb}_*),  \rb, {\sb}_* )}\FC\right]^{-1}_\pib\bigg(   
			\FC\big(  \varthetab({\rb}, {\sb}_*), {\rb}+t_n\hat{\hb}^\XF_l, {\sb}_*+t_n\hat{\hb}^\YC_{*,l}\big)\label{eq:FinitePerturbationProof_3Proof} \\
			&  - \FC\big(  \varthetab({\rb}+t_n\hat{\hb}^\XF_l, {\sb}_*+t_n\hat{\hb}^\YC_{*,l}), {\rb}+t_n\hat{\hb}^\XF_l, {\sb}_*+t_n\hat{\hb}^\YC_{*,l}\big)\notag\\
			&  + \left[\DCW_{\pib, \alphab, \betab_*|(\varthetab( \rb, {\sb}_*),  \rb + t_n \hat{\hb   }_l^\XF, {\sb}_* + t_n \hat{\hb   }_{*,l}^\YC)}\FC\right]\Big( \varthetab(\rb+t_n\hat{\hb}^\XF_l, \sb_*+t_n\hat{\hb}^\YC_{*,l}) \notag\\
			& - \varthetab(\rb, \sb_*) \Big)\bigg) \bigg\|_{\lXYSub{\cx\oplus\cy}}\notag \\
			+& \bigg\| \left[\DCW_{\pib, \alphab, \betab_*|(\varthetab( \rb, {\sb}_*),  \rb, {\sb}_*)}\FC\right]^{-1}_\pib \bigg(\Big[\left[\D_{\pib, \alphab, \betab_*|(\varthetab( \rb, {\sb}_*),  \rb , {\sb}_*)}\FC\right]\label{eq:FinitePerturbationProof_4Proof}\\
			&- \left[\DCW_{\pib, \alphab, \betab_*|(\varthetab( \rb, {\sb}_*),  \rb + t_n \hat \hb_l^\XF, {\sb}_* + t_n \hat{\hb   }_{*,l}^\YC)}\FC\right]\Big]\notag\\
			& \Big( \varthetab(\rb+t_n\hat{\hb}^\XF_l, \sb_*+t_n\hat{\hb   }^\YC_{*,l})  - \varthetab(\rb, \sb_*) \Big)\bigg)\bigg\|_{\lXYSub{\cx\oplus\cy}}\notag=\;  o(t_n).\notag
		\end{align}
	\end{lemma}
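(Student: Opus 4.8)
The goal is to show that each of the three error terms \eqref{eq:FinitePerturbationProof_2Proof}, \eqref{eq:FinitePerturbationProof_3Proof}, and \eqref{eq:FinitePerturbationProof_4Proof}, after applying the (partial) inverse operator $[\DCW_{\pib, \alphab, \betab_*|(\varthetab(\rb,\sb_*),\rb,\sb_*)}\FC]^{-1}_\pib$, has $\lXYSub{\cx\oplus\cy}$-norm of order $o(t_n)$. My strategy is to treat each term separately using Lemma \ref{lem:InverseOfDerivativeOfFIsWellDefined}, which provides the bound $\norm{[\DCW\FC]^{-1}_\pi(\xib,0,0)}_{\lXYSub{\cx\oplus\cy}} \leq \kappa\,\Gamma(\xib)$, with $\Gamma$ measuring row- and column-sums of $\xib$ weighted by $1/r_x$ and $1/(s_{y_i}\psi(i)\eySup{3}(y_i))$ respectively. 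Thus for each of the three terms I must (a) verify that the relevant argument lies in the domain of the inverse operator (i.e.\ its image under $\Gamma$ is finite), and (b) show that the corresponding $\Gamma$-value is $o(t_n)$. Throughout I will use that the perturbations $\hat\hb^\XF_l$, $\hat\hb^\YC_l$ are \emph{finitely supported} (supported on $\{x_1,\dots,x_l\}$ and $\{y_1,\dots,y_l\}$), so the support of all perturbed measures coincides with that of $(\rb,\sb)$ by Lemma \ref{lem:GivenLChooseNBigSoAllProbMeasures}, and the quotients $\pi^\lambda_{xy}/r_x$, $\pi^\lambda_{xy}/s_y$ remain controlled via the quantitative bounds of Proposition \ref{prop:BoundsOptimalPotentials} together with $\norm{\cxp-\cxm}_{\lInfX}<\infty$.

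\textbf{First term \eqref{eq:FinitePerturbationProof_2Proof}.} This is the second-order Taylor remainder of $\FC$ in its \emph{second} pair of arguments $(\rb,\sb_*)$ at fixed first argument $\varthetab(\rb,\sb_*)$. Since $\FC$ is bilinear-affine in $(\rb,\sb)$ (only the $\rb\otimes\sb$ term and the linear marginal terms appear), the remainder after subtracting the linear part $t_n\DCW_{\rb,\sb_*}\FC(\hat\hb^\XF_l,\hat\hb^\YC_l)$ equals exactly the quadratic cross-term $-t_n^2\,\exp(\tfrac1\lambda[\Ab_*^T(\alphab^\lambda,\betab^\lambda_*)-\cb])\odot(\hat\hb^\XF_l\otimes\hat\hb^\YC_l)$ in the $\pi$-block and $0$ in the marginal blocks. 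I would compute $\Gamma$ of this term: its row-sums involve $\sum_y \pi^\lambda_{xy}/(r_x s_y)\,|\hat h^\XF_{l,x}|\,|\hat h^\YC_{l,y}|$, bounded by $C\,|\hat h^\XF_{l,x}|\sum_y |\hat h^\YC_{l,y}|$ using Proposition \ref{prop:BoundsOptimalPotentials}, which is finite since $\hat\hb^\YC_l$ is finitely supported; similarly for column-sums. Hence $\Gamma$ of the first term is $O(t_n^2 \norm{\hat\hb^\XF_l}\norm{\hat\hb^\YC_l})=o(t_n)$, and Lemma \ref{lem:InverseOfDerivativeOfFIsWellDefined} finishes this case.

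\textbf{Second and third terms.} Term \eqref{eq:FinitePerturbationProof_3Proof} is the first-order Taylor remainder of $\FC$ in its \emph{first} argument $(\pib,\alphab,\betab_*)$, evaluated along the segment from $\varthetab(\rb,\sb_*)$ to $\varthetab(\rb+t_n\hat\hb^\XF_l,\sb_*+t_n\hat\hb^\YC_l)$ (with the derivative taken at the \emph{perturbed} second argument). Here I would invoke the local Lipschitz bound for EROT plan and potentials, Proposition \ref{prop:DualOptimizerAreLipschitz}, to get $\norm{\varthetab(\rb+t_n\hat\hb^\XF_l,\sb_*+t_n\hat\hb^\YC_l)-\varthetab(\rb,\sb_*)} = O(t_n)$ in the appropriate weighted norms; then the $C^2$-smoothness of the exponential (since the exponents stay uniformly bounded on the common support, again by Proposition \ref{prop:BoundsOptimalPotentials}) yields that the remainder is $O(t_n^2)$ in the relevant norm, and I verify $\Gamma$ of it is $o(t_n)$ as before — here the hypothesis $\sb\in\lYSub{\kxSup{4}}$ (equivalently $\kySup{4}\in\ls$) is what makes the weighted sums defining $\Gamma$ converge. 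Term \eqref{eq:FinitePerturbationProof_4Proof} is the difference of the operator $\DCW_{\pib,\alphab,\betab_*}\FC$ evaluated at $(\rb,\sb_*)$ versus at the perturbed measures, applied to the increment $\varthetab(\cdot)-\varthetab(\rb,\sb_*)$; the operator depends on the base measures only through $\exp(\cdot)\odot(\rb\otimes\sb)$, which is Lipschitz in $(\rb,\sb)$ along the common support, so this is a product of two $O(t_n)$ factors, giving $O(t_n^2)$ and hence $o(t_n)$ after applying $\Gamma$ and Lemma \ref{lem:InverseOfDerivativeOfFIsWellDefined}. Summing the three $o(t_n)$ bounds proves the claim.

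\textbf{Main obstacle.} The delicate point is step (a): checking that each error term actually lies in the (small) domain of $[\DCW\FC]^{-1}_\pib$, i.e.\ that the row/column-sum functional $\Gamma$ is finite — not merely bounded by $o(t_n)$ but \emph{finite at all}. Because the inverse operator has a genuinely restricted domain (Remark \ref{rem:UnboundedDomainInverseOperator}), one cannot apply it to an arbitrary element of $\lXYSub{\cx\oplus\cy}$; one must exploit that each error term has the special structure $(\text{bounded factor})\odot\pi^\lambda\odot(\text{finitely supported or Lipschitz increment})$, so that dividing by $r_x$ or $s_y$ and summing still converges thanks to the quantitative bounds on $\pi^\lambda/(\rb\otimes\sb)$ and the summability $\kySup{4}\in\ls$. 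Getting the bookkeeping of weights right — in particular tracking the extra $\eySup{3}$ and the monotone function $\psi$ appearing in $\Gamma$, and confirming $\sum_i \psi(i)\kySup{4}(y_i)s_{y_i}<\infty$ can be arranged — is where the real work lies; the $o(t_n)$ decay itself is then a soft consequence of second-order smoothness.
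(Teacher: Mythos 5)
Your overall architecture is the right one and matches the paper's: decompose into the three error terms, bound each by the $\Gamma$-functional of Lemma \ref{lem:InverseOfDerivativeOfFIsWellDefined}, and show $\Gamma(\cdot) = o(t_n)$. Your treatment of the first term (exact bilinear remainder, finitely supported, $\Gamma = O(t_n^2)$) and the third term (product of $O(t_n)$ measure-difference and $O(t_n)$ potential-increment, hence $O(t_n^2)$) is essentially the paper's argument.

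However, there is a genuine gap in your treatment of the second term \eqref{eq:FinitePerturbationProof_3Proof}. You claim that ``the exponents stay uniformly bounded on the common support, again by Proposition \ref{prop:BoundsOptimalPotentials},'' and from this conclude that $C^2$-smoothness of $\exp$ gives an $O(t_n^2)$ remainder. This is false whenever $\|\cyp - \cym\|_{\lInfY} = \infty$ (permitted under the hypotheses of Theorem \ref{them:TransportPlanIsHadamardDiff}, which imposes bounded variation only on the $\XC$-side). In that case $\eySup{}$ is unbounded, and the bounds of Proposition \ref{prop:BoundsOptimalPotentials} give $|\beta^\lambda_{n,y} - \beta^\lambda_y| \lesssim \cyp(y) - \cym(y)$, which is unbounded in $y$; equivalently the local Lipschitz bound of Proposition \ref{prop:DualOptimizerAreLipschitz} controls $\beta^\lambda_n - \beta^\lambda$ only in $\lInfYSub{\eySup{}}$, so pointwise $|\beta^\lambda_{n,y} - \beta^\lambda_y| = O(t_n \eySup{}(y))$, not $O(t_n)$ uniformly. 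The Taylor remainder $\Delta(x) = e^x - 1 - x$ satisfies $|\Delta(a+b) - \Delta(a)| \leq e^{t}|b|$ only on $[-t,t]$, and the exponential Lipschitz constant is absorbed into an extra $\eySup{}(y)$ weight, which produces a bound on the $\beta$-contribution that is \emph{linear} in $|\beta^\lambda_{n,y} - \beta^\lambda_y|$ (cf.\ the paper's \eqref{eq:BoundDeltaFunction}), hence $O(t_n)$, not $O(t_n^2)$.

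Consequently, the $o(t_n)$ decay for the second term is not ``a soft consequence of second-order smoothness'' as you assert in your closing paragraph. The paper needs a genuinely non-soft device: the construction of an \emph{unbounded} monotone auxiliary weight $\psi$ with $\sum_i \psi(i) \kySup{4}(y_i) s_{y_i} < \infty$ (Lemma \ref{lem:ExistenceUnboundedFunctionPhi}), and a subsequent split of each row/column sum in $\Gamma(\xib_2(n))$ into indices $i \leq I$ and $i > I$. For the finite part one uses that finitely many potential differences converge to $0$ so that $\Delta(o(1)) = o(t_n)$; for the tail one uses that $\psi(I)^{-1}\cdot O(t_n) < \epsilon t_n$ for $I$ large, precisely because $\psi$ is unbounded. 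Your sketch acknowledges $\psi$'s existence in passing but does not use its unboundedness as the mechanism for the $o(t_n)$ bound, and your $O(t_n^2)$ claim would bypass it entirely — so the step as written would not close.
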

	
	\begin{proof}
		 We prove this assertion using Lemma \ref{lem:InverseOfDerivativeOfFIsWellDefined} by showing 
		 for each term on which we apply $\big[\D_{\pib, \alphab, \betab_*|(\varthetab( \rb, {\sb}_*),  \rb, {\sb}_* )}\FC\big]_{\pib}^{-1}$ 
		 that the corresponding quantity $\Gamma(\cdot)$, given some function $\psi$, decreases with order $o(t_n)$. 
		 The resulting image element then tends to zero in $\lXYSub{\cx \oplus \cy}$ with order $o(t_n)$.
			
		\emph{Step 1 - Convergence of first term.} We start with the term in \eqref{eq:FinitePerturbationProof_2Proof}. A straightforward calculation gives \begin{align}
				&\bigg(\FC\big(\varthetab({\rb}, {\sb}_*), {\rb}, {\sb}_*\big)- \FC\big(  \varthetab({\rb}, {\sb}_*), {\rb}+t_n\hat{\hb}^\XF_l, {\sb}_*+t_n\hat{\hb}^\YC_{*,l}\big) \notag\\
				& \quad + t_n\left[\D_{\rb, \sb_*|(\varthetab( \rb, {\sb}_*),  \rb, {\sb}_*)}\FC \right](\hat\hb^{\XF}_l, \hat \hb^{\YC}_{*,l}) \bigg)\notag = t_n^2 \begin{pmatrix}
					\frac{\pib^\lambda({\rb}, {\sb}_*)}{{\rb} \otimes{\sb}}\odot(\hat{\hb}_l^\XF \otimes \hat{\hb}_{l}^\YC)\\
					0 \\
					0
				\end{pmatrix} \eqqcolon \begin{pmatrix}
					\xib_1(n) \\
					0\\
					0
				\end{pmatrix}.\notag
			\end{align}
		Since $\xib_1(n)$ is only finitely supported, it is contained in $\lXYSub{\cx \oplus \cy}$ and, given $\psi\equiv 1$, we note that $\Gamma(\xib_1(n)) = \mathcal{O}(t_n^2) \subseteq o(t_n)$ which yields by Lemma~\ref{lem:InverseOfDerivativeOfFIsWellDefined}$$ \norm{\left[\DCW_{\pib, \alphab, \betab_*|(\varthetab( \rb, {\sb}_*),  \rb, {\sb}_* )}\FC\right]^{-1}_{\pib} (\xib_1(n), 0, 0)}_{\lXYSub{\cx\oplus \cy}} = o(t_n).$$
		
		\emph{Step 2 - Bounds for second term.} 
		For the term of \eqref{eq:FinitePerturbationProof_3Proof} denote the EROT potentials for $({\rb}, {\sb})$ and $({\rb}+t_n\hat{\hb}^\XF_l, {\sb}+t_n\hat{\hb}^\YC_{l})$ by $(\alphab^\lambda, \betab^\lambda)$ and $(\alphab^\lambda_{n},\betab^\lambda_{n})$, respectively,   with $\beta^\lambda_{y_1}= \beta^\lambda_{n,xy_1}=0$. Further, consider the closed ball $B_{\rho_0}( \rb,  \sb)$ from Proposition \ref{prop:DualOptimizerAreLipschitz} and let $n$ be sufficiently large such that $({\rb}+t_n\hat{\hb}^\XF_l,{\sb}_*+t_n\hat{\hb}^\YC_{*,l})\in B_{\rho_0}( \rb,  \sb)$ (Lemma \ref{lem:UpperboundNorms}). This guarantees that dual optimizers are Lipschitz (Proposition \ref{prop:DualOptimizerAreLipschitz}). %
		A straightforward calculation yields 
		\begin{align*}
			&\;\;\bigg[\; \FC\big(  \varthetab({\rb}, {\sb}_*), {\rb}+t_n\hat{\hb}^\XF_l, {\sb}_*+t_n\hat{\hb}^\YC_{*,l}\big) \\
			& \quad - \FC\big(  \varthetab({\rb}+t_n\hat{\hb}^\XF_l, {\sb}_*+t_n\hat{\hb}^\YC_{*,l}), {\rb}+t_n\hat{\hb}^\XF_l, {\sb}_*+t_n\hat{\hb}^\YC_{*,l}\big)\notag\\
			& \quad + \left[\D_{\pib, \alphab, \betab_*|(\varthetab( \rb, {\sb}_*),  \rb + t_n \hat{\hb   }_l^\XF, {\sb}_* + t_n \hat{\hb   }_{*,l}^\YC)}\FC\right]\left( \varthetab(\rb+t_n\hat{\hb   }^\XF_l, \sb_*+t_n\hat{\hb   }^\YC_{*,l}) - \varthetab(\rb, \sb_*) \right)\bigg] \\
			& =\begin{pmatrix}
				 -\big((\rb+t_n\hat{\hb}_l^\XF)\otimes(\sb+t_n\hat{\hb}_l^\YC)\big)\odot \exp\left(\frac{\Ab_*^T(\alphab^\lambda,\: \betab^\lambda_*)-\cb}{\lambda}\right)\odot \Delta\Big(\frac{\Ab_*^T(\alphab^\lambda_{n} - \alphab^\lambda,\: \betab^\lambda_{*,n}-\betab^\lambda_*)}{\lambda}\Big)\\
				 0\\
				 0
			 \end{pmatrix}\\
			&  \eqqcolon \begin{pmatrix}
				 \xib_2(n)\\
				 0\\
				 0
			 \end{pmatrix},
		\end{align*}
		where $\Delta$ is given by $\Delta\colon \R \rightarrow \R_{\geq 0}, x \mapsto \exp(x) - 1 - x$ and is evaluated component-wise for each entry of $\Ab_*^T(\alphab^\lambda_{n} - \alphab^\lambda, \betab^\lambda_{*,n}-\betab^\lambda_*)$. The function $\Delta$ is smooth, non-negative and its derivative vanishes at zero, hence, for a positive null-sequence $x_n \rightarrow 0$ it follows that $\Delta(x_n) = o(x_n)$. In addition, for each $x\geq 0$ the inequality $\Delta(-x)\leq \Delta(x)$ is satisfied.
		 Before we apply Lemma \ref{lem:InverseOfDerivativeOfFIsWellDefined} let us state a component-wise bound for $\xib_2(n)$. 
			 For this purpose, let  $$
			\kappa \coloneqq \norm{\exSup{}}_\lInfX\sup_{n \in \NN}\langle \eySup{}, s+t_n\hat h^\YC_l\rangle< \infty$$
			and recall from Proposition \ref{prop:BoundsOptimalPotentials} that componentwise we have $\exp\left(\lambda^{-1}[A_*^T(\alphab^\lambda,\betab^\lambda_*) - c ] \right)  \leq \kappa(\exSup{}\otimes \eySup{})$.
		 Hence, for all $n\in \N$ it holds componentwise that %
			\begin{align}
			|\xib_2(n)|=& \;\big|(\rb+t_n\hat{\hb}_l^\XF)\otimes(\sb+t_n\hat{\hb}_l^\YC)\big|\odot \exp\left(\frac{\Ab_*^T(\alphab^\lambda,\: \betab^\lambda_*)-\cb}{\lambda}\right) \notag \\ &\quad \quad \odot \left|\Delta\bigg(\frac{\Ab_*^T(\alphab^\lambda_{n} - \alphab^\lambda,\: \betab^\lambda_{*,n}-\betab^\lambda_*)}{\lambda}\bigg)\right|\notag\\
			 \leq & \;\big|(\rb+t_n\hat{\hb}_l^\XF)\otimes(\sb+t_n\hat{\hb}_l^\YC)\big|\odot 
			\kappa\big(\exSup{}\otimes\eySup{}\big) \label{eq:upperBoundSecondErrorTerm_1}\\ &\quad \quad \odot\Delta\bigg(\frac{A_*^T(|\alphab^\lambda_{n} - \alphab^\lambda|, |\betab^\lambda_{*,n} - \betab^\lambda_*|)}{\lambda }\bigg)\notag.
			\end{align}
			Since $\Delta$ is Lipschitz with modulus $\exp(t)$ on the bounded domain $[-t,t]$ for $t\geq 0$ it follows by our bounds for  $|\alphab^\lambda_{n} - \alphab^\lambda|$, $|\betab^\lambda_{*,n} - \betab^\lambda_*|$ from Proposition \ref{prop:BoundsOptimalPotentials} for some constant $\kappa'>0$ and  any 
			$(x,y) \in \XC\times (\YCC)$ that \begin{equation}
				\left|\Delta\bigg(\frac{|\alphab^\lambda_{n,x} - \alphab^\lambda_{x}|+ |\betab^\lambda_{*,n,y} - \betab^\lambda_{*,y}|}{\lambda }\bigg) - \Delta\bigg(\frac{|\alphab^\lambda_{n,x} - \alphab^\lambda_{x}|}{\lambda }\bigg)\right| \leq \kappa' \eySup{}|\betab^\lambda_{*,n,y} - \betab^\lambda_{*,y}|.\label{eq:BoundDeltaFunction}
			  \end{equation}
	This implies componentwise that 
	\begin{align}
		|\xib_2(n)|\leq & \; \big|(\rb+t_n\hat{\hb}_l^\XF)\otimes(\sb+t_n\hat{\hb}_l^\YC)\big|\odot 
		\kappa \big(\exSup{}\otimes\eySup{}\big)\label{eq:upperBoundSecondErrorTerm_2}\\ &\quad \quad \odot A_*^T\bigg(\Delta \bigg(\frac{|\alphab^\lambda_{n} - \alphab^\lambda|}{\lambda }\bigg),\kappa' \eySup{} \frac{|\betab^\lambda_{*,n} - \betab^\lambda_*|}{\lambda}\bigg).\notag %
	\end{align}
		Based on \eqref{eq:upperBoundSecondErrorTerm_2} we thus infer by $( \rb,  \sb)\in \lXSub{\cx}\otimes \lYSub{\kySup{4}}$ that $\xib_2(n) \in \lXYSub{\cx\oplus\cy}$. 
		
		\emph{Step 3 - Convergence of second term.}
		In order to use Lemma \ref{lem:InverseOfDerivativeOfFIsWellDefined} we consider a monotonous unbounded function $\psi\colon \N \rightarrow [1, \infty)$ with  $\sum_{i = 1}^{\infty}{\psi(i) \kySup{4}(y_i)  s_{y_i}} < \infty$. Such a function exists by Lemma \ref{lem:ExistenceUnboundedFunctionPhi}.
		 For this function $\psi$ we now show that $\Gamma(\xib_2(n))= o(t_n)$, so let $I\in \N$ and consider $x\in\XF$. Then it holds that %
		 \begin{align}
			&\;\sum_{y \in \YC}\frac{| r_x + t_n \hat h_{l,x}^\XF| \cdot| s_y + t_n \hat h_{l,y}^\YC |}{{r}_x} \exp\left(\frac{\alpha^\lambda_x+\beta^\lambda_y-c(x,y)}{\lambda}\right)\left|\Delta\left(\frac{\alpha^\lambda_{n,x} - \alpha^\lambda_{x} + \beta^\lambda_{n, y} - \beta^\lambda_{y}}{\lambda}\right)\right|\notag\\
			\leq &\;\sum_{i = 1}^{I}\frac{| r_x + t_n \hat h_{l,x}^\XF| \cdot| s_{y_i} + t_n \hat h_{l,y_i}^\YC |}{{r}_x} \kappa \exSup{}(x) \eySup{}(y_i) \Delta\left(\frac{\left|\alpha^\lambda_{n,x} - \alpha^\lambda_{x}\right| + \left|\beta^\lambda_{n, y_i} - \beta^\lambda_{y_i}\right|}{\lambda}\right)\notag\\
			+ &\;\sum_{i = I+1}^{\infty}\frac{| r_x + t_n \hat h_{l,x}^\XF| \cdot| s_{y_i} + t_n \hat h_{l,y_i}^\YC |}{{r}_x} \kappa \kappa' \exSup{}(x) \psi(y_i)\eySup{3}(y_i)\notag \\
			 &\;\quad \quad \quad\cdot \bigg(\left|\Delta\left(\frac{\alpha^\lambda_{n,x} - \alpha^\lambda_{x}}{\lambda}\right)\right|+\left|\frac{\beta^\lambda_{n, y_i} - \beta^\lambda_{y_i}}{\psi(y_i)\eySup{}(y_i) \lambda}\right|\bigg)\notag\\
			\leq & \; \left(1 + t_n\max_{x \in \{x_1, \dots x_l\}} \frac{|\hat h^\XF_{l,x}|}{{r_x}}\right)\kappa\norm{\exSup{}}_{\lInfX}\big\| \sb + t_n \hat \hb_{l}^\YC\big\|_\lYSub{\eySup{}}\notag\\
			& \quad \quad \quad\cdot \Delta\left(\frac{\norm{\alphab^\lambda_{n} - \alphab^\lambda}_\lInfX + \max_{i = 1}^{I}|\betab^\lambda_{n,y_i} - \betab^\lambda_{y_i}|}{\lambda}\right)\notag\\
			+ & \; \left(1 + t_n\max_{x \in \{x_1, \dots x_l\}} \frac{|\hat h^\XF_{l,x}|}{{r_x}}\right)\kappa \kappa'\norm{\exSup{}}_{\lInfX}\big\| \sb + t_n \hat \hb_{l}^\YC\big\|_\lYSub{\psi\eySup{2}}\Delta\left(\frac{\norm{\alphab^\lambda_{n} - \alphab^\lambda}_\lInfX}{\lambda}\right)\notag\\
			+  & \; \left(1 + t_n\max_{x \in \{x_1, \dots x_l\}} \frac{|\hat h^\XF_{l,x}|}{{r_x}}\right)\kappa \kappa'\norm{\exSup{}}_{\lInfX}\big\| \sb + t_n \hat \hb_{l}^\YC\big\|_\lYSub{\psi\eySup{3}} \frac{\norm{ \betab^\lambda_{n} - \betab^\lambda}_\lInfYSub{\eySup{}}}{\psi(I)\lambda }\notag\\
			= & \; \kappa''\left[\Delta\left(\lambda^{-1}\left(\norm{\alphab^\lambda_{n} - \alphab^\lambda}_\lInfX +  \textup{max}_{i = 1}^{I}|\betab^\lambda_{n,y_i} - \betab^\lambda_{y_i}|\right)\right)+ \psi(I)^{-1}\norm{ \betab^\lambda_{n} - \betab^\lambda}_\lInfYSub{\eySup{}}\right] \notag%
		\end{align}
		for some  $\kappa''>0$ that is independent of $I$ and $x \in \XC$. By the local Lipschitz property (Proposition \ref{prop:DualOptimizerAreLipschitz}) we know that $$\begin{aligned}
		 \norm{\alphab^\lambda_{n} - \alphab^\lambda}_\lInfX \leq t_n\Lambda' %
		\norm{(\hat \hb^\XC_l, \hat \hb^\YC_l)}_{\lXSub{\cx} \times\lYSub{\kySup{4 }}}, \\
		\norm{\betab^\lambda_{n} - \betab^\lambda}_\lInfYSub{\eySup{}} \leq t_n\Lambda' %
		\norm{(\hat \hb^\XC_l, \hat \hb^\YC_l)}_{\lXSub{\cx} \times\lYSub{\kySup{4 }}}.
			\end{aligned}$$ 
		Hence, for $\epsilon >0$ select $I\in \N$ large enough such that $$ \kappa''\frac{\norm{ \betab^\lambda_{n} - \betab^\lambda}_\lInfYSub{\eySup{}}}{\psi(I) } \leq  \psi(I)^{-1}t_n\kappa''\Lambda'\norm{(\hat \hb^\XC_l, \hat \hb^\YC_l)}_{\lXSub{\cx} \times\lYSub{\kySup{4 }}} < \frac{\epsilon t_n}{2},$$ 
		and choose $N\in \N$ sufficiently large such that for all $n \geq N$ holds $$\begin{aligned}
	& \kappa''\Delta\left(\lambda^{-1}\left(\norm{\alphab^\lambda_{n} - \alphab^\lambda}_\lInfX +  \textup{max}_{i = 1}^{I}|\betab^\lambda_{n,y_i} - \betab^\lambda_{y_i}|\right)\right)\\	\leq \;\;& \kappa''\Delta\left(t_n  \lambda^{-1}\Big(1+  \textup{max}_{i = 1}^{I}\exSup{}(i)\Big)\Lambda' \norm{(\hat \hb^\XC_l, \hat \hb^\YC_l)}_{\lXSub{\cx} \times\lYSub{\kySup{4 }}} \right) < \frac{\epsilon t_n}{2}.
	\end{aligned}
	 $$
	 Consequently, we obtain that $\sup_{x \in \XC} \sum_{y \in \YC} |\xi_2(n)_{xy}|/r_x < \epsilon t_n$, and thus it holds that $\sup_{x \in \XC} \sum_{y \in \YC} |\xi_2(n)_{xy}|/r_x = o(t_n)$. To show that $\Gamma(\xi_2(n))= o(t_n)$ let us again consider some integer $I\in \N$. 
	 For $y_i \not \in  \{y_1, \dots, y_I\}$ we employ the upper bound \eqref{eq:upperBoundSecondErrorTerm_2} and see for some $\kappa'''>0$ independent of $y_i$ and $I$ that 
		\begin{align}
			& \sum_{x \in \XF}\frac{| r_x + t_n \hat h_{l,x}^\XF| \cdot| s_{y_i} + t_n \hat h_{l,y_i}^\YC |}{{s}_{y_i}\psi(i)\eySup{3}(y_i)}\notag\\
			& \quad \quad  \exp\left(\frac{\alpha^\lambda_x + \beta^\lambda_{y_i}-c(x,y_i)}{\lambda}\right)\Delta\left(\frac{|\alpha^\lambda_{n,x} - \alpha^\lambda_{x}| + |\beta^\lambda_{n, y_i} - \beta^\lambda_{y_i}|}{\lambda}\right)\notag\\
			\leq &  \left(1 + t_n\max_{y \in \{y_1, \dots y_l\}} \frac{|\hat h^\YC_{l,y}|}{{s}_y}\right) \big\| \rb + t_n \hat \hb_{l}^\XF\big\|_\lXSub{}\kappa'''  \norm{\exSup{2}}_{\lInfX} \frac{|\betab^\lambda_{n,y_i} - \betab^\lambda_{y_i}|}{{\psi(i) \eySup{}(y_i)}}.\label{eq:UpperBounds_GammaXi2Beta}%
		\end{align}
		Note that the first four factors can be bounded by a constant independent of $I$ and $y\in \YC$. For the last factor in  \eqref{eq:UpperBounds_GammaXi2Beta} we employ the local Lipschitz property of dual optimizers with modulus $\Lambda'$ (Proposition \ref{prop:DualOptimizerAreLipschitz}) and obtain $$ \frac{|\betab^\lambda_{n,y_i} - \betab^\lambda_{y_i}|}{{\psi(i) \eySup{}(y_i)}} \leq   \psi(i)^{-1} t_n  \Lambda' \norm{(\hat \hb^\XC_l, \hat \hb^\YC_l)}_{\lXSub{\cx} \times\lYSub{\kySup{4 }}}.$$
		Hence, for $\epsilon>0$ we may choose $I$ sufficiently large such that \begin{equation}
	 \psi(i)^{-1} t_n  \Lambda' \norm{(\hat \hb^\XC_l, \hat \hb^\YC_l)}_{\lXSub{\cx} \times\lYSub{\kySup{4 }}} < \epsilon t_n \quad \text{ for all } i > I.\label{eq:UpperBoundInSmallOt_n_1}
	\end{equation}
	 For each $y_i\in \{y_1, \dots, y_I\}$ we then use the upper bound from \eqref{eq:upperBoundSecondErrorTerm_1} in conjunction with a similar bound as in \eqref{eq:BoundDeltaFunction} and obtain for some $\kappa''''>0$ independent of $y_i$ and $I$ that 
		\begin{align}
			& \sum_{x \in \XF}\frac{| r_x + t_n \hat h_{l,x}^\XF| \cdot| s_{y_i} + t_n \hat h_{l,y_i}^\YC |}{{s}_{y_i}\psi(i)\eySup{3}(y_i)} \exp\left(\frac{\alpha^\lambda_x + \beta^\lambda_{y_i}-c(x,y_i)}{\lambda}\right)\Delta\left(\frac{\alpha^\lambda_{n,x} - \alpha^\lambda_{x} + \beta^\lambda_{n, y_i} - \beta^\lambda_{y_i}}{\lambda}\right)\notag\\
			\leq &  \left(1 + t_n\max_{y \in \{y_1, \dots y_l\}} \frac{|\hat h^\YC_{l,y}|}{{s}_y}\right) \frac{\big\| \rb + t_n \hat \hb_{l}^\XF\big\|_\lXSub{}}{\psi(i) \eySup{3}(y_i)} \kappa''''\norm{\exSup{2}}_{\lInfX}\eySup{}(y_i) \Delta\left(\frac{|\betab^\lambda_{n,y_i} - \betab^\lambda_{y_i}|}{\lambda}\right)\notag\\
			= & \; \landau\big(\Delta(|\betab^\lambda_{n,y_i} - \betab^\lambda_{y_i}|)\big) \subseteq o(|\betab^\lambda_{n,y_i} - \betab^\lambda_{y_i}|) \subseteq o(t_n).\label{eq:UpperBoundInSmallOt_n_2}
		\end{align}
		Here we exploited the local Lipschitz property of $\beta^\lambda$ for the components $y_i \in \{y_1, \dots, y_I\}$.
		As a result we obtain by \eqref{eq:UpperBoundInSmallOt_n_1} and \eqref{eq:UpperBoundInSmallOt_n_2} that $\Gamma(\xib_2(n)) = o(t_n)$. Hence, Lemma \ref{lem:InverseOfDerivativeOfFIsWellDefined}  asserts $$ \norm{\left[\DCW_{\pib, \alphab, \betab_*|(\varthetab( \rb, {\sb}_*),  \rb, {\sb}_* )} \FC \right]_\pi^{-1}(\xib_2 (n), 0, 0)}_{\cx \oplus \cy} = o(t_n).$$ 

		\emph{Step 4 - Convergence of last term. }
		For the last term, i.e., for \eqref{eq:FinitePerturbationProof_4Proof} we obtain\begin{align*}
			&\bigg[ \left[\DCW_{\pib, \alphab,\betab_*|(\varthetab( \rb, {\sb}_*),  \rb , {\sb}_*)}\FC\right]\left( \varthetab(\rb+t_n\hat{\hb}^\XF_l, \sb_*+t_n\hat{\hb}^\YC_{*,l}) - \varthetab(\rb, \sb_*) \right)\notag\\
			&\quad - \left[\DCW_{\pib, \alphab,\betab|(\varthetab( \rb, {\sb}_*),  \rb + t_n \hat \hb_l^\XF, {\sb}_* + t_n \hat{\hb   }_{*,l}^\YC)}\FC\right]\left( \varthetab(\rb+t_n\hat{\hb}^\XF_l, \sb_*+t_n\hat{\hb   }^\YC_{*,l}) - \varthetab(\rb, \sb_*) \right) \bigg]\\
			= & \begin{pmatrix}
				-(t_n\hat{\hb}_l^\XF\otimes \sb + t_n\rb \otimes  \hat{\hb   }_l^\YC + t_n^2\hat{\hb   }_l^\XF\otimes \hat{\hb   }_l^\YC)\odot \exp\left(\frac{\Ab_*^T(\alphab^\lambda,\:\betab^\lambda_*)-\cb}{\lambda}\right)\odot\frac{\Ab_*^T(\alphab^\lambda_{n} -\alphab^\lambda,\: \betab^\lambda_{*,n} - \betab^\lambda_*)}{\lambda}  \\
				0\\
				0
			\end{pmatrix}\\
			\eqqcolon  & \begin{pmatrix}
				\xib_3(n)\\
				0\\
				0
			\end{pmatrix}.
		\end{align*}
		Since $( \rb,  \sb)\in \lXSub{\cx}\otimes \lYSub{\kySup{4}}$ and as $\hat\hb^\XC_l, \hat \hb^\YC_l$ both have finite support, we obtain that $\xib_3(n) \in \lXYSub{\cx\oplus \cy}$.  
		To~finish the proof, we show that $\Gamma(\xib_3(n))= o(t_n)$, where we select $\psi \equiv 1$. %
		For any $x\in \XF$ it then follows using the Lipschitz property for dual optimizers (Proposition \ref{prop:DualOptimizerAreLipschitz}) that \begin{align}
			& \;\;\frac{1}{ r_x} t_n \bigg[\sum_{y\in \YC}\left| \hat h_{l,x}^\XF s_y+ r_x \hat h_{l,y}^\YC+ t_n \hat h_{l,x}^\XF \hat h_{l,y}^\YC\right|\notag\\
			&\quad \quad \quad \quad \;\;\;\;\;\cdot \exp\bigg(\frac{\alpha^\lambda_{x} +\beta^\lambda_{y} - c(x,y)}{\lambda} \bigg)(|\alpha^\lambda_{n,x} - \alpha^\lambda_{x}| + |\beta^\lambda_{n,y} - \beta^\lambda_{y}|)\bigg]\notag\\
			\leq & \;\;t_n \left(\norm{\alphab^\lambda_{n} - \alphab^\lambda}_{\lInfXSub{}} + \norm{\betab^\lambda_{n} - \betab^\lambda}_{\lInfYSub{\eySup{}}}\right)\exp\bigg(\frac{\norm{\cxpb - \cxmb}_{\lInfX}}{\lambda}\bigg) \notag\\
			& \cdot \kappa \left(1 + \max_{x\in \{x_1, \dots, x_l\}}\frac{|\hat h_{l,x}^\XF|}{ r_x} \right) \left[\sum_{y \in \YC}({s}_y + (1+t_n)|\hat h_{l,y}^\YC|)\eySup{2}(y)\right]\notag \\%
			= & \;\;\landau \left(t_n \norm{\alphab^\lambda_{n} - \alphab^\lambda}_{\lInfXSub{}} + t_n\norm{\betab^\lambda_{n} - \betab^\lambda}_{\lInfYSub{\eySup{}}}\right) \subseteq \mathcal{O}(t_n^2)\subseteq o(t_n).\notag %
		\end{align}
		Likewise, we see for all $y \in \YC$ that
		\begin{align}
			& \frac{1}{ s_y\eySup{3}(y)} t_n\bigg[ \sum_{x\in \XC}\left| \hat h_{l,x}^\XF s_y+ r_x \hat h_{l,y}^\YC+ t_n \hat h_{l,x}^\XF \hat h_{l,y}^\YC\right|\notag\\
			&\quad \quad \quad \;\;\;\;\cdot \exp\left(\frac{\alpha^\lambda_{x} +\beta^\lambda_{y} - c(x,y)}{\lambda} \right)\left(|\alpha^\lambda_{n,x} - \alpha^\lambda_{x}| + |\beta^\lambda_{n,y} - \beta^\lambda_{y}|\right)\bigg]\notag\\
			\leq & \;\;t_n \left(\eySup{2}(y)^{-1}\norm{\alphab^\lambda_{n} - \alphab^\lambda}_{\lInfXSub{}} + \norm{\betab^\lambda_{n} - \betab^\lambda}_{\lInfYSub{\eySup{2}}}\right)\exp\bigg(\frac{\norm{\cxpb - \cxmb}}{\lambda}\bigg)\notag\\
			&\;\;\cdot \kappa\left(1 + \max_{y\in \{y_1, \dots, y_l\}}\frac{|\hat h_{l,y}^\YC|}{ s_y} \right) \left[\sum_{x \in \XC}({r}_x + (1+t_n)|\hat h_{l,x}^\XC|)\right]\notag \\%
			= & \;\;\landau \left(t_n \norm{\alphab^\lambda_{n} - \alphab^\lambda}_{\lInfXSub{}} + t_n\norm{\betab^\lambda_{n} - \betab^\lambda}_{\lInfYSub{\eySup{2}}}\right) \subseteq \mathcal{O}(t_n^2)\subseteq o(t_n).\notag 
		\end{align}
		This yields $\Gamma(\xib_3(n))=o(t_n)$ which concludes by Lemma \ref{lem:InverseOfDerivativeOfFIsWellDefined} the proof since $$ \norm{\left[\DCW_{\pib, \alphab, \betab_*|(\varthetab( \rb, {\sb}_*),  \rb, {\sb}_* )}\FC\right]_\pi^{-1} (\xib_3 (n), 0, 0)}_{\lXYSub{\cx\oplus\cy}}= o(t_n).\qedhere$$
	\end{proof}
	\begin{lemma}\label{lem:ExistenceUnboundedFunctionPhi}
		Let $(a_n)_{n \in \N}$ be a non-negative sequence such that $\sum_{i = 1}^{\infty} a_i< \infty$. Then there exists an unbounded and monotonously increasing function $\psi\colon \N\rightarrow[1, \infty)$ such that $\sum_{i= 1}^{\infty}\psi(i) a_i <\infty$.
	\end{lemma}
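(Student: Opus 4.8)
The plan is to construct $\psi$ explicitly from the tail sums of $(a_n)$. First I would define the tail sums $T_n \coloneqq \sum_{i = n}^{\infty} a_i$, which satisfy $T_n \searrow 0$ as $n \to \infty$ since $\sum_i a_i < \infty$. The idea is that $1/\sqrt{T_n}$ grows to infinity slowly enough to remain summable against $a_n$. If some $T_n = 0$, then $a_i = 0$ for all $i \geq n$ and the claim is trivial (take $\psi(i) = i$ for instance), so assume $T_n > 0$ for all $n$.

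Next I would set $\psi(n) \coloneqq 1/\sqrt{T_n}$ (or $\max(1, 1/\sqrt{T_n})$ to ensure the range is $[1,\infty)$, though $T_n \leq T_1$ gives $1/\sqrt{T_n} \geq 1/\sqrt{T_1}$, so a constant rescaling also works). This function is monotonically increasing because $T_n$ is non-increasing, and it is unbounded because $T_n \to 0$. The key estimate is a standard Abel/telescoping-type bound: since $a_n = T_n - T_{n+1}$,
\begin{align*}
	\psi(n) a_n = \frac{T_n - T_{n+1}}{\sqrt{T_n}} \leq \frac{T_n - T_{n+1}}{\sqrt{T_{n+1}}} \leq 2\left(\sqrt{T_n} - \sqrt{T_{n+1}}\right),
\end{align*}
where the last inequality uses $\sqrt{T_n} - \sqrt{T_{n+1}} = \frac{T_n - T_{n+1}}{\sqrt{T_n} + \sqrt{T_{n+1}}} \geq \frac{T_n - T_{n+1}}{2\sqrt{T_{n+1}}}$. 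Wait — I should be careful about the direction; I would instead use $\frac{a_n}{\sqrt{T_n}} \le \frac{a_n}{\sqrt{T_{n+1}}}$ only when $T_{n+1}>0$, and then telescope. Summing over $n$ yields $\sum_{n=1}^{\infty} \psi(n) a_n \leq 2\sqrt{T_1} < \infty$ by telescoping.

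The only minor obstacle is the bookkeeping around the case $T_{n+1} = 0$ (i.e. the sequence is eventually zero) and ensuring the normalization places $\psi$ in $[1,\infty)$ while keeping it monotone; both are handled by trivial case distinctions or a constant rescaling. I do not anticipate any genuine difficulty — this is the classical fact that for any convergent non-negative series one can find a slowly divergent multiplier keeping it convergent, and the tail-sum construction is the cleanest route. An alternative phrasing would group the indices into blocks where $T_n$ drops below $2^{-k}$ and set $\psi$ constant equal to $2^{k/2}$ on each block, but the continuous tail-sum version above is more direct and avoids block bookkeeping.
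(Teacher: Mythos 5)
Your construction via tail sums $T_n = \sum_{i\geq n} a_i$ and $\psi(n) = 1/\sqrt{T_n}$ is a genuinely different route from the paper, which instead builds $\psi$ by an iterative doubling scheme ($\psi_k$ doubles $\psi_{k-1}$ beyond a cutoff index chosen so the series stays bounded by $(2-1/k)\sum a_i$, then passes to the limit). Your approach is the more classical and explicit one for this type of Dini-style lemma, and it buys a closed-form $\psi$ with a clean telescoping bound; the paper's iterative scheme is more combinatorial but avoids any square roots. Both are valid; yours is arguably cleaner.

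However, your inequality chain has a direction error. You claim
\begin{equation*}
\frac{T_n - T_{n+1}}{\sqrt{T_{n+1}}} \leq 2\big(\sqrt{T_n} - \sqrt{T_{n+1}}\big),
\end{equation*}
justified by $\sqrt{T_n}-\sqrt{T_{n+1}} = \frac{T_n - T_{n+1}}{\sqrt{T_n}+\sqrt{T_{n+1}}} \geq \frac{T_n - T_{n+1}}{2\sqrt{T_{n+1}}}$. But since $T_n \geq T_{n+1}$ one has $\sqrt{T_n}+\sqrt{T_{n+1}} \geq 2\sqrt{T_{n+1}}$, which forces $\frac{T_n - T_{n+1}}{\sqrt{T_n}+\sqrt{T_{n+1}}} \leq \frac{T_n - T_{n+1}}{2\sqrt{T_{n+1}}}$ — the opposite of what you wrote — so the displayed inequality is false. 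Your in-text ``Wait, I should be careful about the direction'' does not actually repair this, because passing to $a_n/\sqrt{T_{n+1}}$ is precisely the step that ruins the telescoping. The fix is simply to \emph{not} make that first enlargement: keep $\sqrt{T_n}$ in the denominator and compare directly,
\begin{equation*}
\frac{a_n}{\sqrt{T_n}} = \frac{T_n - T_{n+1}}{\sqrt{T_n}} \leq \frac{2(T_n - T_{n+1})}{\sqrt{T_n}+\sqrt{T_{n+1}}} = 2\big(\sqrt{T_n} - \sqrt{T_{n+1}}\big),
\end{equation*}
which uses $\sqrt{T_n}+\sqrt{T_{n+1}} \leq 2\sqrt{T_n}$ (true). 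This telescopes to $\sum_n \psi(n)a_n \leq 2\sqrt{T_1}$, so your construction and final bound are correct; only the intermediate estimate needs to be dropped.
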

	
	\begin{proof}
		We define the function $\psi$ by an iterative scheme. Let $\psi_{0}\equiv 1$ and select for any $k \in \N$ the integer  $N_{(k)}\geq \max(k, N_{(k-1)})$ as small as possible such that $$\sum_{i= 1}^{N(k)}\psi_{k-1}(i) a_i + \sum_{i= N(k)+1}^{\infty}2\psi_{k-1}(i) a_i \leq \left(2 - \frac{1}{k}\right)\sum_{i = 1}^{\infty}a_i.$$ We then define the monotone function$$\psi_{k}(i) \coloneqq \begin{cases}
	 \psi_{k-1}(i) & \text{ if }i \leq N(k),\\
	2\psi_{k-1}(i) & \text{ if }i > N(k).
	 \end{cases}
	$$
	Note that $\psi_{k-1}\leq \psi_k$ for each $k \in \N$ and since $N(k)<\infty$ it holds for all $i>N(k)$ that $\psi_{k-1}(i) < \psi_k(i)$. Further, as $N(k)\geq k$ for all $k\in \N$ the sequence $\psi_{k}$ converges entry-wise to a monotone function $\psi$ which is  unbounded and satisfies by monotone convergence $\sum_{i= 1}^{\infty}\psi(i) a_i \leq  2\sum_{i= 1}^{\infty} a_i$.
	\end{proof}

	\section{Auxiliary Results of Finite Support Approximation}\label{subsec:App:FSAProperties}
	In this section we show a number of characteristics  of finite support approximations. Based on the notation in the proof of Theorem \ref{them:TransportPlanIsHadamardDiff}, we consider a probability measure $ \rb\in \lXSub{\cx}$ on $\XC$ with full support. Further, let $(t_n)_{n \in \N}$ be a sequence such that $t_n \searrow 0$ and $(\hb_n^\XF)_{n \in \N}\subseteq \lXSub{\cx}$ with limit $\hb^\XF$ and that $\rb + t_n \hb_n^\XF\in \probset{\XC}\cap \lXSub{\cx}$ for each $n\in \N$. Additionally, for an integer $l \geq 2$ we denote the finite support approximations for $\hb^\XF$ by $\hat\hb_l^\XF$, i.e., labelling 
	$$ \hat h^\XF_{l,x} \coloneqq \begin{cases}
			h^\XF_{x_1} + \sum_{i = l+1}^{\infty}h^\XF_{x_i} & \text{ if } x = x_1, \\
			h^\XF_{x} & \text{ if } x \in \{x_2, \dots, x_l\},\\
			0 & \text{ else.}
		\end{cases} 	
		$$ 

	\begin{lemma}\label{lem:FiniteSupportApproximationIsUniformlyGoodForLargeL}
		For $\delta >0$ there exist $l\in \N$ such that $$\norm{\hb^\XF - \hat\hb^\XF_{l}}_\lXSub{\cx} \leq \delta.$$
		Additionally, there is $N \in \N$ such that for all $n \geq N$ holds $$\norm{\hat\hb^\XF_l - \hat\hb^\XF_{n,l}}_\lXSub{\cx} \leq \delta, \quad \norm{\hb^\XF_n - \hat\hb^\XF_{n,l}}_\lXSub{\cx} \leq \delta.$$
	\end{lemma}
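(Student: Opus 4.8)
The statement is a routine but careful convergence argument for the finite support approximation operator, so the plan is to unravel the definition of $\hat\hb^\XF_l$ and estimate each summand in the weighted $\ell^1_{\cx}$-norm. First I would handle the deterministic limit $\hb^\XF$. By definition of $\hat\hb^\XF_l$, the difference $\hb^\XF - \hat\hb^\XF_l$ is supported on $\{x_1\}\cup\{x_{l+1},x_{l+2},\dots\}$, with value $-\sum_{i=l+1}^\infty h^\XF_{x_i}$ at $x_1$ and value $h^\XF_{x_i}$ at each $x_i$ with $i>l$. Hence
\begin{align*}
	\norm{\hb^\XF - \hat\hb^\XF_l}_{\lXSub{\cx}}
	&= \cx(x_1)\Bigl|\sum_{i=l+1}^\infty h^\XF_{x_i}\Bigr| + \sum_{i=l+1}^\infty \cx(x_i)|h^\XF_{x_i}|\\
	&\leq \bigl(\cx(x_1)+1\bigr)\sum_{i=l+1}^\infty \cx(x_i)|h^\XF_{x_i}|,
\end{align*}
using $\cx\geq 1$. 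Since $\hb^\XF\in\lXSub{\cx}$, the tail sum $\sum_{i=l+1}^\infty \cx(x_i)|h^\XF_{x_i}|$ tends to $0$ as $l\to\infty$, so choosing $l$ large enough makes this bound at most $\delta$. This proves the first assertion.

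For the two remaining bounds, fix such an $l$. The key point is that $\hat\hb^\XF_l$ and $\hat\hb^\XF_{n,l}$ are both supported on the fixed finite set $\{x_1,\dots,x_l\}$, and their difference is governed entirely by the coordinates of $\hb^\XF$ versus $\hb^\XF_n$. Concretely, for $x\in\{x_2,\dots,x_l\}$ the coordinate of $\hat\hb^\XF_l - \hat\hb^\XF_{n,l}$ equals $h^\XF_x - h^\XF_{n,x}$, while at $x_1$ it equals $(h^\XF_{x_1}-h^\XF_{n,x_1}) + \sum_{i=l+1}^\infty(h^\XF_{x_i}-h^\XF_{n,x_i})$. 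Bounding the $x_1$-term by $\norm{\hb^\XF-\hb^\XF_n}_{\lXSub{\cx}}$ (again using $\cx\geq1$) and summing, one gets
\[
	\norm{\hat\hb^\XF_l - \hat\hb^\XF_{n,l}}_{\lXSub{\cx}} \leq \bigl(\cx(x_1)+1\bigr)\norm{\hb^\XF - \hb^\XF_n}_{\lXSub{\cx}},
\]
which tends to $0$ since $\hb^\XF_n\to\hb^\XF$ in $\lXSub{\cx}$. For the third bound I would use the triangle inequality $\norm{\hb^\XF_n - \hat\hb^\XF_{n,l}}_{\lXSub{\cx}} \leq \norm{\hb^\XF_n - \hb^\XF}_{\lXSub{\cx}} + \norm{\hb^\XF - \hat\hb^\XF_l}_{\lXSub{\cx}} + \norm{\hat\hb^\XF_l - \hat\hb^\XF_{n,l}}_{\lXSub{\cx}}$; the first term vanishes by assumption, the second is $\leq\delta$ by the choice of $l$, and the third vanishes by the previous estimate. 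Alternatively, and more directly, one can bound $\norm{\hb^\XF_n - \hat\hb^\XF_{n,l}}_{\lXSub{\cx}} \leq (\cx(x_1)+1)\sum_{i=l+1}^\infty \cx(x_i)|h^\XF_{n,x_i}|$ and dominate the tail uniformly in $n$ using that $\norm{\hb^\XF_n}_{\lXSub{\cx}}$ is bounded (being convergent) together with the tail convergence of the limit; I would pick whichever route is cleaner. Choosing $N$ so that all the relevant $n$-dependent terms are below $\delta$ for $n\geq N$ completes the proof.

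The argument is essentially elementary; the only mild subtlety — and the point worth being careful about — is that the "extra mass" collected at $x_1$ in the definition of the finite support approximation involves an \emph{infinite} tail sum, so one must consistently absorb it into the weighted $\ell^1_{\cx}$ tail of the relevant sequence, exploiting $\cx\geq1$ to control the $\cx(x_1)$-weighted $x_1$-coordinate by the full tail norm. No genuine obstacle arises, but writing the estimates so that the dependence on $l$ and on $n$ is disentangled (choose $l$ first, then $N=N(l)$) is what makes the statement come out in the stated order.
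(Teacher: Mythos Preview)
Your proposal is correct and follows essentially the same route as the paper: unravel the definition of the finite support approximation, bound the $x_1$-coordinate by the weighted tail using $\cx\geq 1$, control $\hat\hb^\XF_l-\hat\hb^\XF_{n,l}$ by a constant times $\norm{\hb^\XF-\hb^\XF_n}_{\lXSub{\cx}}$, and close the third inequality by the triangle inequality through $\hb^\XF$ and $\hat\hb^\XF_l$. The only cosmetic difference is in the constants ($\cx(x_1)+1$ versus the paper's $\cx(x_1)$), and one minor bookkeeping point: to make the triangle-inequality bound for $\norm{\hb^\XF_n-\hat\hb^\XF_{n,l}}_{\lXSub{\cx}}$ come out $\leq\delta$ rather than $\leq\delta+o(1)$, you should choose $l$ so that the first estimate is $\leq\delta/3$ (as the paper does) rather than $\leq\delta$.
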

	
	\begin{proof}
		Choose $l$ large enough such that $\cx(x_1)\sum_{i = l+1}^{\infty}{\cx(x_i)|h^\XF_{x_i}|}\leq \delta/6$. Then we see $$\begin{aligned} \norm{\hb^\XF - \hat\hb^\XF_{l}}_\lXSub{\cx} %
		\leq  \cx(x_1)\sum_{i = l+1}^{\infty}{|h^\XF_{x_i}|} +   \sum_{i = l+1}^{\infty}\cx(x_i) |h^\XF_{x_i}|\leq\frac{\delta}{3}.\end{aligned}$$ Let $N$ be large enough such that $\cx(x_1)\norm{\hb^\XF - \hb^\XF_{n}}_{\lXSub{\cx}}\leq \delta/3$ for all $n \geq N$. This yields 
		\begin{align*}\norm{\hat\hb^\XF_{l} - \hat\hb^\XF_{n,l}}_\lXSub{\cx} &= \cx(x_1) \left| h^\XF_{x_1} - h^\XF_{n, x_1} + \sum_{i = l+1}^{\infty}( h^\XF_{x_i} - h^\XF_{n, x_i})\right|+ \sum_{i = 2}^l\cx(x_i)\left|h^\XF_{x_i} -  h^\XF_{n, x_i}\right|\\
		&\leq\cx(x_1) \sum_{x \in \XF}\cx(x)\left| h^\XF_{x} -  h^\XF_{n, x}\right| = \cx(x_1)\norm{\hb^\XF - \hb^\XF_{n}}_\lXSub{\cx}\leq \frac{\delta}{3} ,
		\end{align*}
		which implies by triangle inequality 
		$$\begin{aligned}&\norm{\hb^\XF_{n} - \hat\hb^\XF_{n,l}}_\lXSub{\cx}\leq \norm{\hb^\XF_{n} - \hb^\XF}_\lXSub{\cx} + \norm{\hb^\XF - \hat\hb^\XF_{l}}_\lXSub{\cx} +\norm{\hat\hb^\XF_{l} - \hat\hb^\XF_{n,l}}_\lXSub{\cx}
		\leq  \delta,
		\end{aligned}$$
		and thus finishes the proof.
	\end{proof}

	\begin{lemma}\label{lem:UpperboundNorms}
	It holds that $$K \coloneqq \norm{\hb^\XC}_{\lXSub{\cx}} +\sup_{n\in \N}\norm{\hb^\XC_{n}}_{\lXSub{\cx}}  + \sup_{n,l\in \N}\norm{\hat\hb^\XC_{n,l}}_{\lXSub{\cx}}< \infty.$$ Further, for $\rho>0$ there is $N\in \N$ such that it follows for all $n \geq N$ and $l\in \N$ that $$ \rb + t_n \hb^\XC_n, 	 \rb + t_n \hat \hb^\XC_{n,l}, \rb + t_n \hat \hb^\XC_{l}\in \left\{ \tilde\rb \in \lXSub{\cx} \colon \norm{\tilde\rb -  \rb}_{\lXSub{\cx} } \leq \rho  \right\}. $$
	\end{lemma}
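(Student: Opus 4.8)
\textbf{Proof plan for Lemma \ref{lem:UpperboundNorms}.}

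The plan is to treat the two assertions in sequence, as the second one builds on the uniform bound established in the first. For the claim $K < \infty$, the key observation is that the finite support approximation is a ``mass-redistribution'' operator that never increases the weighted $\ell^1$-norm by more than a fixed factor. First I would recall from Lemma \ref{lem:FiniteSupportApproximationIsUniformlyGoodForLargeL} that $\norm{\hat\hb^\XC_{n,l}}_{\lXSub{\cx}}$ can be compared to $\norm{\hb^\XC_n}_{\lXSub{\cx}}$: by the triangle-inequality computation already carried out in that proof, $\norm{\hb^\XC_n - \hat\hb^\XC_{n,l}}_{\lXSub{\cx}} \le \cx(x_1)\norm{\hb^\XC_n}_{\lXSub{\cx}}$ up to the obvious modifications (the estimate there bounded $\norm{\hat\hb^\XC_l - \hat\hb^\XC_{n,l}}$ by $\cx(x_1)\norm{\hb^\XC - \hb^\XC_n}_{\lXSub{\cx}}$, and the same argument with $\hb^\XC_n$ in place of $\hb^\XC - \hb^\XC_n$ gives the desired comparison). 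Hence $\norm{\hat\hb^\XC_{n,l}}_{\lXSub{\cx}} \le (1+\cx(x_1))\norm{\hb^\XC_n}_{\lXSub{\cx}}$, uniformly in $l$. So it suffices to show $\sup_{n}\norm{\hb^\XC_n}_{\lXSub{\cx}} < \infty$. This follows because $\hb^\XC_n \to \hb^\XC$ in $\lXSub{\cx}$ by hypothesis, so the sequence is in particular norm-bounded; and $\norm{\hb^\XC}_{\lXSub{\cx}} < \infty$ since $\hb^\XC \in \lXSub{\cx}$. Combining these three facts yields $K < \infty$.

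For the second assertion, I would argue that all three perturbations lie in a $\rho$-ball around $\rb$ once $t_n$ is small enough, using the uniform bound $K$ just obtained. Indeed $\norm{(\rb + t_n \hb^\XC_n) - \rb}_{\lXSub{\cx}} = t_n\norm{\hb^\XC_n}_{\lXSub{\cx}} \le t_n K$, and likewise $\norm{(\rb + t_n \hat\hb^\XC_{n,l}) - \rb}_{\lXSub{\cx}} = t_n\norm{\hat\hb^\XC_{n,l}}_{\lXSub{\cx}} \le t_n K$. For the third term $\rb + t_n\hat\hb^\XC_l$, note that $\hat\hb^\XC_l$ is a fixed element of $\lXSub{\cx}$ (it depends only on $\hb^\XC$ and $l$), with $\norm{\hat\hb^\XC_l}_{\lXSub{\cx}} \le (1+\cx(x_1))\norm{\hb^\XC}_{\lXSub{\cx}} \le K$ by the same mass-redistribution estimate; so $\norm{(\rb + t_n\hat\hb^\XC_l) - \rb}_{\lXSub{\cx}} \le t_n K$ as well. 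Since $t_n \searrow 0$, choose $N \in \N$ with $t_N K \le \rho$; then for all $n \ge N$ all three displayed quantities have $\lXSub{\cx}$-distance at most $\rho$ from $\rb$, which is the claim. (Strictly, one should note $l$ ranges over all integers, but the bound $\norm{\hat\hb^\XC_{n,l}}_{\lXSub{\cx}} \le K$ is uniform in $l$, so the choice of $N$ does not depend on $l$.)

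The main obstacle, such as it is, is purely bookkeeping: verifying that the finite support approximation operator $\hb \mapsto \hat\hb_l$ has weighted-$\ell^1$ operator norm bounded by $1 + \cx(x_1)$ uniformly in $l$ (and in $n$, for $\hb^\XC_n \mapsto \hat\hb^\XC_{n,l}$). This is not deep --- the only subtlety is that the ``folded'' coordinate at $x_1$ collects the tail mass $\sum_{i>l} h^\XC_{x_i}$, which by the triangle inequality contributes at most $\cx(x_1)\sum_{i>l}\cx(x_i)|h^\XC_{x_i}| \le \cx(x_1)\norm{\hb^\XC}_{\lXSub{\cx}}$ since $\cx \ge 1$ --- but it is the one place where the weight $\cx(x_1)$ enters and must be tracked. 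Everything else is an immediate consequence of norm convergence of $(\hb^\XC_n)_n$ together with $t_n \searrow 0$.
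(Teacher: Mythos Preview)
Your proposal is correct and follows essentially the same approach as the paper's proof: bound $\sup_n\norm{\hb^\XC_n}_{\lXSub{\cx}}$ via norm convergence, control $\norm{\hat\hb^\XC_{n,l}}_{\lXSub{\cx}}$ by a constant multiple of $\norm{\hb^\XC_n}_{\lXSub{\cx}}$ using the mass-redistribution estimate, and then pick $N$ so that $t_N K \leq \rho$. The only cosmetic difference is that the paper obtains the slightly sharper constant $\cx(x_1)$ (rather than $1+\cx(x_1)$) by absorbing the retained coordinates $x_2,\dots,x_l$ into the same bound via $\cx\geq 1$, and the paper leaves the $\hat\hb^\XC_l$ case implicit (it follows since $\hat\hb^\XC_{n,l}\to\hat\hb^\XC_l$ for fixed $l$, so $\norm{\hat\hb^\XC_l}_{\lXSub{\cx}}\leq\sup_{n,l}\norm{\hat\hb^\XC_{n,l}}_{\lXSub{\cx}}\leq K$).
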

	
	\begin{proof}
	By convergence of $\hb^\XC_n$ towards $\hb^\XC$ with respect to $\lXSub{\cx}$ we see that $$\norm{\hb^\XC}_{\lXSub{\cx}} +\sup_{n\in \N}\norm{\hb^\XC_{n}}_{\lXSub{\cx}}< \infty.$$
	Further, for given $l,n\in \N$ it follows that $\norm{\hat\hb^\XC_{n,l}}_{\lXSub{\cx}} \leq \cx(x_1) \norm{\hb^\XC_{n}}_{\lXSub{\cx}}$, which yields $K< \infty$. For $N\in \N$ large enough such that $t_n< \rho K^{-1}$ we conclude the second claim. 
	\end{proof}

	With these auxiliary results we can state the proof of Lemma \ref{lem:GivenLChooseNBigSoAllProbMeasures}
	
	\begin{proof}[Proof of Lemma \ref{lem:GivenLChooseNBigSoAllProbMeasures}]
		First, we note that $\sum_{x \in \XC}h^\XC_{n,x} = 0$ for all $n\in \N$ implies $\sum_{x \in \XC}h^\XC_{x} = 0$. By construction of the finite support approximation, it also follows for any $l,n \in \N$ that $\sum_{x \in \XC}\hat h^\XC_{n,l,x} = \sum_{x \in \XC}\hat h^\XC_{l,x} = 0$. Hence, it remains to show for given $l\in \N$ that there exists $N\in \N$ such that the elements $\rb + t_n\hat\hb^\XF_{l},\rb + t_n\hat\hb^\XF_{n,l}$ are strictly positive in each entry. To this end, consider $K>0$ as in Lemma \ref{lem:UpperboundNorms} and choose $N\in \N$ large enough such that it holds for all $n\geq N$ that $t_n < (2K)^{-1}\min_{i = 1, \dots, l} \min({r}_{x_i}, 1 - {r}_{x_i})$. %
		 This yields for any $i\in \{1, \dots, l \}$ that 
		$${r}_{x_i} + t_n \hat{h}_{l, x_i}^\XF = {r}_{x_i} + t_n K \cdot \frac{\hat{h}_{l,x_i}^\XF}{K} \;\;\begin{cases}
			\leq {r}_{x_i} + \frac{1}{2}\min({r}_{x_i}, 1 - {r}_{x_i})\leq \frac{1}{2}{r}_{x_i}+ \frac{1}{2}<1, \\
			\geq {r}_{x_i} - \frac{1}{2}\min({r}_{x_i}, 1 - {r}_{x_i})\geq \frac{1}{2}{r}_{x_i}\;\;\;\;\;\;\,>0.\\
		\end{cases}$$
		Further, since $\hat h^\XF_x=0$ for  all $x\not\in \{x_1, \dots, x_l\}$ it follows for all $n\geq N$ that  ${r}_{x} + t_n \hat{h}_{l,x}^\XC = {r}_{x} \in (0,1)$, which shows that ${\rb} + t_n \hat{\hb   }^\XC_{l}\in \probset{\XC}$ and $\supp(\rb + t_n \hat{\hb   }^\XC_{l}) = \XC$. The same argument implies for all $n\geq N$ that ${\rb} + t_n \hat{\hb}^\XF_{n,l}\in \probset{\XC}$ and $\supp({\rb} + t_n \hat{\hb}^\XC_{n,l}) = \XC$.
	\end{proof}

	\section{Auxiliary Results for Banach spaces}\label{app:BanachSpacesNorms}

	In this section we prove correspondences between the different Banach spaces employed within this work. More precisely, for a positive function $f\colon \XC\rightarrow (0,\infty)$ on the countable space $\XC$ we consider the function class $\FC\coloneqq \{\tilde f\colon \XC\rightarrow \RR \colon |\tilde f|\leq f\}$ and %
	 relate the Banach spaces $\lXSub{f}$ and $\ell^\infty(\FC)$ to each other.

	\begin{lemma}\label{lem:equivalenceNorms}
		 The linear evaluation map $$e\colon \ell^1_f(\XC)\rightarrow \ell^\infty(\FC), \quad a \mapsto \left( e(a)\colon \FC\mapsto \RR,  \tilde f\mapsto \langle \tilde f, a\rangle \right)$$
		is an isometric embedding of $\ell^1_f(\XC)$ into $\ell^\infty(\FC)$, i.e., $ \norm{e(a)}_{\ell^\infty(\FC)}=\norm{a}_{\ell^1_f(\XC)}$ for $a \in \ell^1_f(\XC)$.
	\end{lemma}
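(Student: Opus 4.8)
The statement asserts that the linear evaluation map $e\colon \ell^1_f(\XC)\to\ell^\infty(\FC)$, $a\mapsto(\tilde f\mapsto\langle\tilde f,a\rangle)$, satisfies $\norm{e(a)}_{\ell^\infty(\FC)}=\norm{a}_{\ell^1_f(\XC)}$ for every $a\in\ell^1_f(\XC)$, where $\FC=\{\tilde f\colon\XC\to\RR\colon|\tilde f|\le f\}$ and $\langle\tilde f,a\rangle=\sum_{x\in\XC}\tilde f(x)a_x$. Since an isometry is in particular injective and bounded, once the norm identity is established the ``isometric embedding'' claim follows immediately, so the plan is to prove the single equality $\sup_{\tilde f\in\FC}|\langle\tilde f,a\rangle|=\sum_{x\in\XC}f(x)|a_x|$.

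First I would verify the map is well defined: for $\tilde f\in\FC$ and $a\in\ell^1_f(\XC)$ we have $|\langle\tilde f,a\rangle|\le\sum_x|\tilde f(x)||a_x|\le\sum_x f(x)|a_x|=\norm{a}_{\ell^1_f(\XC)}<\infty$, so $e(a)$ is indeed a bounded functional on $\FC$ and moreover this chain of inequalities already gives the inequality $\norm{e(a)}_{\ell^\infty(\FC)}\le\norm{a}_{\ell^1_f(\XC)}$. Linearity of $e$ is clear from linearity of the dual pairing in the $a$-argument.

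For the reverse inequality $\norm{e(a)}_{\ell^\infty(\FC)}\ge\norm{a}_{\ell^1_f(\XC)}$, the idea is to choose a near-optimal test function. The natural candidate is $\tilde f(x)=f(x)\,\mathrm{sign}(a_x)$, which lies in $\FC$ since $|\tilde f|\le f$, and for which $\langle\tilde f,a\rangle=\sum_x f(x)|a_x|=\norm{a}_{\ell^1_f(\XC)}$; this shows the supremum is attained and the two norms agree. To be fully rigorous about the convergence of the series when $\XC$ is infinite one can instead argue with finite truncations: for each finite $F\subseteq\XC$ the function $\tilde f_F=f\cdot\mathrm{sign}(a)\cdot\charfunc_F$ belongs to $\FC$ and yields $\langle\tilde f_F,a\rangle=\sum_{x\in F}f(x)|a_x|$, and letting $F$ exhaust $\XC$ gives $\norm{e(a)}_{\ell^\infty(\FC)}\ge\sup_F\sum_{x\in F}f(x)|a_x|=\sum_{x\in\XC}f(x)|a_x|$. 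Combining the two inequalities gives equality.

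I do not anticipate a genuine obstacle here; the argument is a weighted version of the standard duality $\ell^1_f(\XC)\hookrightarrow(\ell^\infty_f(\XC))^*$ and the only mild point of care is the passage from finite to infinite sums, handled by the truncation argument above, together with the convention $\mathrm{sign}(0)=0$ so that the optimal test function is genuinely in $\FC$. The resulting isometry then justifies, as used elsewhere in the paper, identifying $\lXSub{f}$ with a closed subspace of $\ell^\infty(\FC)$ so that weak convergence and Donsker-type statements can be transferred between the two formulations.
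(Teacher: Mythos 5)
Your proof is correct and takes essentially the same route as the paper: the forward inequality $\norm{e(a)}_{\ell^\infty(\FC)}\le\norm{a}_{\ell^1_f(\XC)}$ follows from the pointwise bound $|\tilde f|\le f$, and the reverse direction uses the test function $\tilde f = f\cdot\mathrm{sign}(a)\in\FC$, exactly as the paper does. The finite-truncation refinement you add is harmless but unnecessary, since $a\in\ell^1_f(\XC)$ already guarantees that $\sum_x f(x)|a_x|$ converges absolutely, so $\langle f\cdot\mathrm{sign}(a),a\rangle$ is directly a well-defined (absolutely convergent) series equal to $\norm{a}_{\ell^1_f(\XC)}$.
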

	\begin{proof}
		First note for any $a \in \ell^1_f(\XC)$ that $e(a) \in \ell^\infty(\FC)$ since  $$\norm{e(a)}_{\FC}= \sup_{\tilde f\in \FC}\big|e(a)(\tilde f)\big|  = \sup_{\tilde f\in \FC}\big|\langle \tilde f, a\rangle \big|\leq \sup_{\tilde f\in \FC}\big\|\tilde f\big\|_{\lInfXSub{f}}\norm{a}_{\lXSub{f}} = \norm{a}_{\lXSub{f}}.$$
		Hence, the linear mapping $e$ is continuous. Moreover, it holds that 
		$$ \norm{a}_{\lXSub{f}} = \sum_{x\in \XC}f(x)|a_x| = \sum_{x\in \XC} \sign(a_x) f(x) a_x \leq \sup_{\tilde f\in \FC}\big|e(a)(\tilde f)\big| =\norm{e(a)}_{\FC},$$
		where the inequality holds since the function $x\mapsto \sign(a_x) f(x)$ is contained in $\FC$. 
	\end{proof}

		The previous lemma shows that the weighted $\ell^1$-space can be identified as a subset of $\ell^\infty(\FC)$. In particular, this implies that the intersections $\PC(\XC)\cap \ell^1_f(\XC)$ and $\PC(\XC)\cap \ell^\infty(\FC)$ can be identified. 
	Moreover, given another function class $\tilde \FC$ on $\XC$ with $\FC \subseteq \tilde \FC$ the following inclusion from the intersections $\PC(\XC)\cap \ell^\infty(\tilde \FC)$ into $\ell^1_f(\XC)$ can be formalized.

	\begin{lemma} \label{lem:InclusionMeasures}
	The canonical (linear) projection map 
	\begin{align*}
		\iota \colon (\PC(\XC)\cap \ell^\infty(\tilde \FC), \norm{\cdot}_{\ell^\infty(\tilde \FC)})\rightarrow (\PC(\XC)\cap \ell^\infty(\FC), \norm{\cdot}_{\ell^\infty(\FC)}), \quad (\langle f, \rb\rangle)_{f\in \FC_2} \mapsto (\langle f, \rb\rangle)_{f\in \FC_1} 
	\end{align*}
	is continuous and injective. In particular, $\PC(\XC)\cap \ell^\infty(\tilde \FC)$ can be embedded into $\ell^1_f(\XC)$.
	\end{lemma}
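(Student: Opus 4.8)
\textbf{Proof sketch for Lemma \ref{lem:InclusionMeasures}.}

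The plan is to view $\iota$ as simply the restriction of a functional on $\tilde\FC$ to the subclass $\FC\subseteq\tilde\FC$, and to verify continuity and injectivity directly from the definitions of the two $\ell^\infty$-norms. For continuity: given $\rb\in\PC(\XC)\cap\ell^\infty(\tilde\FC)$, the element $\iota(\rb)$ assigns to each $f\in\FC$ the value $\langle f,\rb\rangle$, and since $\FC\subseteq\tilde\FC$ we immediately get
\begin{align*}
\norm{\iota(\rb)}_{\ell^\infty(\FC)}=\sup_{f\in\FC}|\langle f,\rb\rangle|\leq\sup_{f\in\tilde\FC}|\langle f,\rb\rangle|=\norm{\rb}_{\ell^\infty(\tilde\FC)},
\end{align*}
so $\iota$ is $1$-Lipschitz, hence continuous. (Linearity is clear from linearity of the pairing $\langle\,\cdot\,,\rb\rangle$ in its second argument, noting that both intersections are affine subsets on which the map is the restriction of a genuinely linear map on the ambient sequence space.)

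For injectivity: suppose $\iota(\rb)=\iota(\rb')$ for $\rb,\rb'\in\PC(\XC)\cap\ell^\infty(\tilde\FC)$, i.e.\ $\langle f,\rb\rangle=\langle f,\rb'\rangle$ for all $f\in\FC=\{\tilde f\colon\XC\to\RR\colon|\tilde f|\leq f\}$. The key observation is that $\FC$ contains the indicator functions $\Indicator{\{x\}}$ for every $x\in\XC$ (since $f(x)\geq$ some positive value $\geq 1$ on the relevant weight setups, and in any case $|\Indicator{\{x\}}|\leq f$ whenever $f\geq 1$; more precisely $\FC$ contains $\Indicator{\{x\}}/\!\sup$-bounds, and by rescaling any single-point indicator lies in $\FC$ up to a positive scalar, which suffices). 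Evaluating the equality at $f=\Indicator{\{x\}}$ gives $r_x=r'_x$ for each $x\in\XC$, hence $\rb=\rb'$. This proves injectivity.

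Finally, the embedding into $\ell^1_f(\XC)$ follows by composing with Lemma \ref{lem:equivalenceNorms}: since $\FC=\{\tilde f\colon|\tilde f|\leq f\}$ is exactly the class for which $e\colon\ell^1_f(\XC)\to\ell^\infty(\FC)$ is an isometry onto its image, and since any $\rb\in\PC(\XC)\cap\ell^\infty(\tilde\FC)$ maps under $\iota$ into $\PC(\XC)\cap\ell^\infty(\FC)$ with finite norm $\sup_{|\tilde f|\leq f}|\langle\tilde f,\rb\rangle|=\norm{\rb}_{\ell^1_f(\XC)}<\infty$, we conclude $\rb\in\ell^1_f(\XC)$. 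The main (and only mildly delicate) point is the argument that single-point indicators — or more generally sign-weighted versions of $f$ — belong to $\FC$, which is what makes the pairing separate points and makes the norm recover the weighted $\ell^1$-norm; everything else is a one-line estimate.
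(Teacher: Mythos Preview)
Your proposal is correct and follows essentially the same approach as the paper. The continuity estimate is identical, and the embedding into $\ell^1_f(\XC)$ via Lemma~\ref{lem:equivalenceNorms} is the same. The only cosmetic difference is in the injectivity step: the paper invokes the isometry of Lemma~\ref{lem:equivalenceNorms} directly to conclude $\norm{\iota(\rb)-\iota(\sb)}_{\ell^\infty(\FC)}=\norm{\rb-\sb}_{\ell^1_f(\XC)}$, whereas you argue via point-separation using (scaled) single-point indicators in $\FC$; these are two phrasings of the same fact.
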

	
	\begin{proof}
		First note that any $\rb\in \PC(\XC)\cap \ell^\infty(\tilde \FC)$ fulfills since $\FC\subseteq \tilde \FC$ that $$\norm{\iota(\rb)}_{\ell^\infty(\FC)}=\sup_{f\in \FC}| \langle f, \rb\rangle| \leq \sup_{f\in \tilde \FC}| \langle f, \rb\rangle| = \norm{\rb}_{\ell^\infty(\tilde \FC)},$$
		which shows continuity. Moreover, if $\norm{\iota(\rb)-\iota(\sb)}_{\ell^\infty(\FC)} = \norm{\iota(\rb)-\iota(\sb)}_{\ell^1_f(\XC)} = 0$, and hence $\rb = \sb$ which shows injectivity. The second assertion then follows from the identification $\PC(\XC)\cap \ell^\infty(\FC)=\PC(\XC)\cap \ell^1_f(\XC)$ where latter is clearly contained in $\ell^1_f(\XC)$. 
	\end{proof}

	\end{appendix}

\end{document}